 \newtheorem{Conjecture}{Conjecture}
  \newtheorem{Question}{Question}
\newtheorem{lem}{Lemma}[section]
\newtheorem{teo}[lem]{Theorem}
\newtheorem{pro}[lem]{Proposition}
\newtheorem{cor}[lem]{Corollary}
\newtheorem*{Claim*}{Claim}
\newtheorem*{rem*}{Remark}
\newtheorem*{teo*}{Theorem}
\newcounter{claimcounter}
\numberwithin{claimcounter}{lem}
\newcommand{\myeq}[1]{\ensuremath{\stackrel{\text{#1}}{=}}}
\newcommand{\s}{\mathcal S}
\newcommand{\D}{\mathcal D}
\newcommand{\U}{\mathcal U}
\newcommand{\PP}{\mathcal P}
\newcommand{\E}{\mathcal E}
 \DeclareMathOperator{\proj}{proj}\DeclareMathOperator{\RP}{RP}\DeclareMathOperator{\LP}{LP}
\DeclareMathOperator{\Id}{Id}
\DeclareMathOperator{\Tr}{Tr}
\DeclareMathOperator{\lcm}{lcm}
\DeclareMathOperator{\fm}{fam}
\DeclareMathOperator{\hei}{height}
\DeclareMathOperator{\wid}{width}
\DeclareMathOperator{\Tree}{Tree}
\DeclareMathOperator{\Rat}{Rat }
\DeclareMathOperator{\MG}{MG}
\DeclareMathOperator{\im}{Im}
\DeclareMathOperator{\rk}{rk}
\DeclareMathOperator{\wrk}{\widetilde{\rk}}
\DeclareMathOperator{\wdim}{\widetilde{\dim}}
\DeclareMathOperator{\End}{End}
\DeclareMathOperator{\source }{source}
\DeclareMathOperator{\Mat}{Mat}
\newcommand{\Z}{\mathbb{Z}}
\newcommand{\N}{\mathbb{N}}
\newcommand{\CC}{\mathbb{C}}
\newcommand{\Q}{\mathbb{Q}}
 \date{November 2019}
\title[The strong Atiyah  and  L\"uck approximation conjectures]{The strong Atiyah  and  L\"uck approximation conjectures for one-relator groups}
\author{Andrei Jaikin-Zapirain}
\address{Departamento de Matem\'aticas, Universidad Aut\'onoma de Madrid \and  Instituto de Ciencias Matem\'aticas, CSIC-UAM-UC3M-UCM}
\email{andrei.jaikin@uam.es}
\email{diego.lopez@icmat.es}
\author{Diego L\'opez-\'Alvarez}
\begin{document}

\begin{abstract}  It is shown that the strong Atiyah conjecture and the L\"uck approximation conjecture in the space of marked groups hold for locally indicable groups. In particular, this implies that one-relator groups satisfy both conjectures.  We also show that the center conjecture,  the independence conjecture and the strong eigenvalue conjecture hold for these groups. 

As a byproduct we prove that the group algebra of a locally indicable group over a field of characteristic zero has a Hughes-free  {epic} division algebra and, in particular, it is embedded in a division algebra.
 \end{abstract}
 \maketitle
 \setcounter{tocdepth}{1}
\tableofcontents
 
 \section{Introduction}\label{intro}
 \subsection{The strong Atiyah conjecture} Let $G$ be a group and assume that   the orders of finite subgroups of $G$ are bounded above. We denote by $\lcm(G)$ the least common multiple of the orders of finite subgroups of $G$. Let $X$ be a cocompact free proper $G$-$CW$ complex. The strong Atiyah conjecture for $G$ over $\Q$ predicts that the $L^2$-Betti numbers $\beta_i^{(2)}(X, G)$ belong  to $\frac 1{\lcm(G)} \Z$. In this paper we consider an algebraic  reformulation  of this conjecture  which also leads  to a natural generalization of it over an arbitrary subfield $K$ of the field of complex numbers $\CC$.

Let $G$ be a countable group. Then $G$  acts by   left and right multiplication     on $l^2( G ) $. A  finitely generated {\bf Hilbert}  $G$-module is a closed subspace $V\le (l^2(G))^n$,  invariant under the left action of $G$. We denote by  $\proj_{V}: (l^2(G))^n\to( l^2(G))^n$ the orthogonal projection onto $V$ and we define $$\dim_{G}V:=\Tr_{G}(\proj_{V}):=\sum_{i=1}^n\langle (\mathbf 1_i)\proj_{V},\mathbf 1_i\rangle_{(l^2(G))^n},$$
 where $\mathbf 1_i $ is the element of $ (l^2(G))^n$ having 1 in the $i$th entry and $0$ in the rest of the entries.     The number $\dim_{G}V $ is the {\bf von Neumann dimension} of $V$.  
 
 Let   $A \in \Mat_{n\times m}(\CC[G])$ be a matrix over $\CC[G]$. 
   The action of $A$ by  right multiplication on $l^2(G)^n$  induces   a bounded linear operator  $\phi^A_{G}:(l^2(G))^n\to( l^2(G))^m$.  We put  
   \begin{equation} \label{rkG} \rk_G(A)=\dim _G \overline{\im   \phi^A_{G}}=n-\dim_G \ker \phi^A_{G}. \end{equation}

     If $G$ is not  countable  then $\rk_G$ is defined as follows. Take a matrix $A$ over $\CC[G]$. Then the group elements that appear in $A$ are contained in  a finitely generated group $H$. We will put $\rk_G(A)=\rk_H(A)$. One easily checks that the value $\rk_H(A)$ does not depend on the subgroup $H$.

\begin{Conjecture} [{\bf The  strong Atiyah conjecture over  $K$ for a group $G$}] \label{conjat} Let  $K$ be a subfield of $\CC$. Assume that there exists an upper bound for the orders of finite subgroups of $G$.   Then for every $A\in \Mat_{n\times m}(K[G])$, $\rk_G(A)\in \frac 1{\lcm(G)} \Z.$
\end{Conjecture}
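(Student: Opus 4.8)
The plan is to follow Linnell's strategy: realize $\rk_G$ as a genuine rank function coming from a semisimple Artinian (ideally division) ring sitting between $K[G]$ and the algebra $\U(G)$ of unbounded operators affiliated to the group von Neumann algebra. Recall that $\rk_G(A)$ is extracted from the von Neumann dimension inside $\U(G)^{n}$; the key point is that if $A$ can be inverted ``as much as possible'' inside a well-behaved subring $\D\subseteq\U(G)$, then $\rk_G(A)$ coincides with the $\D$-rank of $A$, and the possible values of the latter are governed by the $K_0$-group of $\D$. So the whole conjecture should be recast as a statement about the structure of such an intermediate ring.

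Concretely, I would introduce the division closure $\D=\D_K(G)$ of $K[G]$ in $\U(G)$, i.e. the smallest subring containing $K[G]$ and closed under inverting those of its elements that are units in $\U(G)$. Since $\U(G)$ is von Neumann regular, $\D$ inherits enough regularity that every matrix over it has a well-defined rank, and one checks $\rk_G(A)=\rk_\D(A)$ for $A\in\Mat_{n\times m}(K[G])$. The conjecture then reduces to the ring-theoretic assertion that $\D$ is semisimple Artinian and that the composite $K_0(\D)\to K_0(\U(G))\xrightarrow{\dim_G}\R$ has image exactly $\tfrac1{\lcm(G)}\Z$. Equivalently, writing the Wedderburn decomposition $\D\cong\prod_j\Mat_{n_j}(D_j)$, one must show that each $n_j$ divides $\lcm(G)$, so that every finitely generated projective $\D$-module has von Neumann dimension in $\tfrac1{\lcm(G)}\Z$.

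Next I would separate the torsion-free and the torsion cases. If $G$ is torsion-free then $\lcm(G)=1$ and the target is $\rk_G(A)\in\Z$; here $\D$ should be a division ring, and the claim is equivalent to every nonzero element of $K[G]$ becoming invertible in $\U(G)$ (no zero divisors survive), so that every matrix over $\D$ has integral rank. For a group with torsion, the finite subgroups $F\le G$ contribute matrix factors $\Mat_{|F|}$ to the structure of $\D$; the task is to show the decomposition $\D\cong\prod_j\Mat_{n_j}(D_j)$ involves only sizes $n_j$ dividing $\lcm(G)$, which one controls via induction from the finite subgroups and the compatibility of $\dim_G$ with the inclusions $K[F]\hookrightarrow K[G]$, the bound on torsion ensuring the whole process stabilizes with denominators dividing $\lcm(G)$.

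The hard part, and the reason this is phrased as a conjecture rather than a theorem in this generality, is precisely the structural input in the torsion-free case: for an arbitrary torsion-free group there is no general mechanism guaranteeing that the division closure $\D_K(G)$ is a division ring at all. This is exactly the point at which an extra hypothesis on $G$ becomes indispensable. The cleanest way to discharge it is to produce, in a manner compatible with the ambient $\U(G)$, a division algebra $\D_{K[G]}$ into which $K[G]$ embeds and which is \emph{Hughes-free}: once such a $\D_{K[G]}$ exists and is identified with the division closure, its uniqueness and functoriality force the rank values into $\tfrac1{\lcm(G)}\Z$ automatically. Constructing and controlling this Hughes-free division algebra is therefore the crux of the whole argument; I would expect it to succeed when $G$ carries enough order-like structure (orderability, and more robustly local indicability), whereas for groups whose only hypothesis is bounded torsion the existence of the required division ring appears genuinely out of reach by present methods, so I would not anticipate that the bare bound on finite subgroups suffices without a new idea supplying this ring.
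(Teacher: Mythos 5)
Your assessment of the statement's status is the right one: this is stated as a conjecture, it is open in the generality given, and the paper does not prove it for an arbitrary $G$ with bounded torsion either — it establishes it (Theorem \ref{main}) only for locally indicable groups, which, being torsion-free, concern exactly the case $\lcm(G)=1$ that you isolate as the structural crux. Your reduction also matches the paper's framework: the paper likewise works inside $\mathcal U(G)$, views $\rk_G$ as a Sylvester matrix rank function pulled back from the ($*$-regular, hence containing the division) closure of $K[G]$ in $\mathcal U(G)$, and reduces the torsion-free case to showing that this closure is a division ring; and the Hughes-free property you single out is indeed the pivotal notion, recast in the paper as a property of Sylvester rank functions, with $\rk_G$ shown to be Hughes-free on $K[G]$ for locally indicable $G$ in Proposition \ref{mainexample}.

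The gap, however, sits exactly at the point you defer: ``constructing and controlling this Hughes-free division algebra'' is not an input one can cite — it is the theorem, and before this paper it was not even known that $K[G]$ embeds in a division ring for $G$ locally indicable (Corollary \ref{exh} is a byproduct, not a prerequisite). The mechanism missing from your sketch is the paper's entire content: for a non-trivial finitely generated $H=N\rtimes \langle t\rangle$, one embeds $\U_H$ into the ring $\PP_{\omega,\tau}^{\U_N}$ of (\ref{eq4}) alongside the skew Laurent series ring $\U_N((t,\tau))$, giving diagram (\ref{dia6}); one parametrizes the division closure by the universal rational semiring $\Rat(K^{\times}G)$ with its rooted-tree complexity $\Tree$; and one proves by induction on complexity (Proposition \ref{key}, then Theorem \ref{maintheorem}) that every nonzero element of the closure is a Laurent series whose coefficients are strictly less complex, hence invertible by induction — so the closure is a division ring. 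One of your sentences is also too quick even granting existence: uniqueness and functoriality of the Hughes-free division ring do not force values into $\frac{1}{\lcm(G)}\Z$ ``automatically.'' For torsion-free $G$ integrality is immediate once the envelope is a division ring, since a Sylvester rank over a division ring is integer-valued; for groups with torsion no Hughes-free argument applies at all, and the paper handles one-relator groups with torsion by a completely different route, via Wise's virtual specialness and the results of \cite{Ja17base}, not via any $K_0$-analysis of a semisimple Artinian closure as in your second paragraph.
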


 There are many different reasons to be interested in this conjecture. From a topological point of view it is important because it imposes a strong restriction on possible values of  $\beta_i^{(2)}(X, G)$. 
 
 Ring theorists study  the strong Atiyah conjecture because it implies  that the ring $\mathcal R_{K[G]}$ (see Subsection \ref{*reg} for the definition) has a very particular structure and, in particular, when $G$ is torsion-free, the conjecture predicts that $\mathcal R_{K[G]}$ is a  division ring. This is a strong version of the Kaplansky zero-divisor conjecture for $K[G]$.  
 
 The strong Atiyah conjecture has also importance in group theory. For example, a question of R. Bieri asks whether a  group $G$ of homological dimension one is locally free. P. Kropholler, P. Linnell and W. L\"uck \cite{KLL} showed that the answer is positive provided that $G$ satisfies the  strong Atiyah conjecture over $\Q$.
 
 During the last 25 years it has been shown that many families of groups satisfy the strong Atiyah conjecture. We refer the reader to a recent survey \cite{Ja17surv} of the first author, where all  these results are described. In this paper we show that the strong Atiyah conjecture over $\CC$ holds for locally indicable groups. Recall that a group   $G$ is {\bf indicable} if either $G$ is trivial or $G$ maps onto $\mathbb{Z}$.
We say that $G$ is {\bf locally indicable} if every finitely generated subgroup   of $G$ is indicable. 
 \begin{teo}\label{main}
 Let $G$ be a locally indicable group.
 Then $G$ satisfies the strong Atiyah conjecture over $\CC$.
 \end{teo}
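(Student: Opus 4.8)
The plan is to deduce the conjecture from the existence of a well-behaved division ring of fractions of $\CC[G]$, together with a comparison of rank functions. Since a finite group admits no surjection onto $\Z$, indicability forces a locally indicable group to be torsion-free, so $\lcm(G)=1$ and the assertion reduces to showing $\rk_G(A)\in\Z$ for every $A\in\Mat_{n\times m}(\CC[G])$. Moreover, the entries of $A$ involve only finitely many elements of $G$, and by the very definition of $\rk_G$ on uncountable groups we may replace $G$ by the finitely generated subgroup they generate; as finitely generated subgroups of locally indicable groups are again locally indicable, it suffices to treat finitely generated $G$.

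The strategy is then to produce a division ring $\D_{\CC[G]}$ containing $\CC[G]$ such that the induced Sylvester matrix rank function $\rk_{\D}$ agrees with the von Neumann rank $\rk_G$ on $\Mat(\CC[G])$. Since the rank of a matrix over a division ring is an integer, this immediately yields $\rk_G(A)=\rk_{\D}(A)\in\Z$. The natural candidate for $\D_{\CC[G]}$ is the Hughes-free division ring of $\CC[G]$, whose uniqueness (Hughes) makes it canonical once existence is known.

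I would construct $\D_{\CC[G]}$ by an induction reflecting the locally indicable structure, with base case $G$ trivial, where $\CC[G]=\CC$ is already a field. For finitely generated $G$, indicability provides a short exact sequence $1\to N\to G\to\Z\to 1$ with $N$ locally indicable, though in general not finitely generated. Viewing $\CC[G]=\CC[N]*\Z$ as a crossed product, I would assume inductively that a Hughes-free division ring $\D_{\CC[N]}$ has been built, extend the $\Z$-action to it, and take $\D_{\CC[G]}$ to be the Ore division ring of the twisted Laurent ring $\D_{\CC[N]}*\Z$. Since $N$ may be infinitely generated, $\D_{\CC[N]}$ itself is obtained as a direct limit over the finitely generated subgroups of $N$, where the continuity property $\rk_G(A)=\rk_H(A)$ for $A$ supported on $H$ keeps the ranks coherent; Hughes' uniqueness theorem ensures that these local division rings patch together canonically into a single division ring containing $\CC[G]$.

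The main obstacle is matching the two rank functions, i.e. proving $\rk_{\D}=\rk_G$, and this is where the analytic input enters. Both sides are Sylvester rank functions on $\CC[G]$, so it suffices to check that $\rk_G$ obeys the same recursion as $\rk_{\D}$ along the inductive scheme above. The delicate point is the $\Z$-extension step: one must show that the embedding $\D_{\CC[N]}*\Z\hookrightarrow\U(G)$ into the algebra of affiliated operators is rank-preserving, and that the von Neumann rank of a matrix over $\CC[N]*\Z$ is computed by its Ore localization exactly as $\rk_{\D}$ predicts. I expect to establish this through a L\"uck-type approximation argument, realized here in the space of marked groups so as to accommodate groups that need not be residually finite, expressing $\rk_G$ as a limit of ranks over approximating groups for which the identity $\rk_{\D}=\rk_G$ is already available. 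Verifying that this approximation is valid and interacts correctly with both the crossed-product structure and the passage to affiliated operators is, to my mind, the crux of the whole argument.
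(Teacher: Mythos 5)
Your reductions (torsion-freeness, passage to finitely generated $G$, and the observation that it suffices to realize $\rk_G$ as the rank function of a division ring) are all fine, and the high-level goal matches the paper's. But both of the substantive steps in your plan have genuine gaps. First, the inductive construction of the Hughes-free division ring $\D_{\CC[G]}$ is not well-founded. For finitely generated $G$ with $G=N\rtimes\Z$, the kernel $N$ is in general infinitely generated, and its finitely generated subgroups are not ``smaller'' than $G$ in any sense that makes the recursion terminate: already for $G=F_2$ the kernel of $F_2\to\Z$ is an infinitely generated free group whose finitely generated subgroups include copies of $F_2$ itself and of every $F_n$. Locally indicable groups carry no hierarchy analogous to the transfinite hierarchy of elementary amenable groups, which is precisely why Hughes, Dicks--Herbera--S\'anchez, and this paper induct not on the group but on the complexity (measured by finite rooted trees) of \emph{elements} of the universal rational semiring $\Rat(\CC^{\times}G)$. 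Note also that the existence of a Hughes-free epic division $\CC[G]$-algebra is Corollary \ref{exh} of the paper, deduced \emph{from} Theorem \ref{main}; it cannot be taken as an input to its proof without a genuinely new construction.

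Second, your rank-matching step $\rk_{\D}=\rk_G$ via L\"uck approximation in the space of marked groups is circular in this setting. Before this paper, that approximation statement was known only when the approximating groups are sofic; locally indicable groups are not known to be sofic, and for a general locally indicable $G$ there is no approximating sequence of groups for which the identity $\rk_{\D}=\rk_G$ is ``already available.'' Indeed, the paper proves the approximation result (Theorem \ref{mainluck}) \emph{after} Theorem \ref{main}, using the same machinery, not the other way around. The paper sidesteps rank matching altogether: the candidate ring is the $*$-regular closure $\mathcal R_{\CC[G]}$ of $\CC[G]$ inside $\mathcal U(G)$, which carries $\rk_G$ by construction, so the only von Neumann-theoretic input needed is that $\rk_G$ is Hughes-free as a Sylvester rank function (Proposition \ref{mainexample}); the remaining work, Theorem \ref{maintheorem}, is the purely algebraic statement that any positive definite $*$-regular envelope of a Hughes-free rank function is a division ring, proved by induction on element complexity via Proposition \ref{key}. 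To repair your argument you would need to replace both the group-theoretic induction and the approximation step by something of this kind.
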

 Recall that the Kaplansky zero-divisor conjecture for these groups was solved by G. Higman \cite{Hi40} before I. Kaplansky formulated it. Observe   that Theorem \ref{main} implies the corresponding result for  all subfields of $\CC$.

 \subsection{Consequences of Theorem \ref{main}.}
   Let us present several applications of Theorem \ref{main}.  
 
 \begin{cor} Let $G$ be a countable locally indicable group. 
 \begin{enumerate}
 \item {\bf The strong algebraic eigenvalue conjecture}.
 
 Let $K$ be a subfield of $\CC$ closed under complex conjugation
and  $A\in\Mat_n(\mathcal R_{K[G]} )$. Then  the eigenvalues of $\phi_G^A$ are algebraic over $K$.
 \item  {\bf The center conjecture}.
  
 Let $K$ be a subfield of $\CC$ closed under complex conjugation. Then  we have $ {\mathcal R_{K[G]}}\cap \CC=K.$ 

 \item {\bf The independence conjecture}.
 
 Let $K$ be a field and let ${\varphi_1,\varphi_2}:K\to \CC$ be  two embeddings of $K$ into $\CC$. Then for every  matrix $A\in \Mat_{n\times m}(K[G]) $, 
   $\rk_G({\varphi_1}(A))=\rk_G({\varphi_2}(A)).$

 \end{enumerate}

 \end{cor}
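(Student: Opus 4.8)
The plan is to derive all three statements from Theorem~\ref{main} by reducing them to the independence statement (3), which I would prove first. I begin with the standing reductions. A locally indicable group is torsion-free, so $\lcm(G)=1$ and Theorem~\ref{main} gives $\rk_G(A)\in\Z$ for every $A\in\Mat_{n\times m}(\CC[G])$. It is classical that integrality of the rank together with torsion-freeness forces $\mathcal R_{\CC[G]}$ to be a division ring (here $K=\overline K$ guarantees that $K[G]$ is a $*$-subalgebra, so that $\mathcal R_{K[G]}$ of Subsection~\ref{*reg} is defined), and that for every subfield $K\subseteq\CC$ the ring $\mathcal R_{K[G]}$ is the division closure of $K[G]$ inside it. On matrices over these division rings $\rk_G$ coincides with the inner rank.

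The key step is (3). Given $A\in\Mat_{n\times m}(K[G])$, I would first replace $K$ by the subfield $K_0$ generated over $\Q$ by the finitely many coefficients of $A$; then $\varphi_1,\varphi_2$ restrict to field isomorphisms $\varphi_i\colon K_0\to L_i:=\varphi_i(K_0)\subseteq\CC$. Here I invoke the two main structural inputs established in the course of proving Theorem~\ref{main}: that for any field of characteristic zero the group algebra of a locally indicable group possesses a Hughes-free epic division ring, and that over subfields of $\CC$ its inner rank is computed by $\rk_G$. Each isomorphism $\varphi_i$ extends to the group-ring isomorphism $K_0[G]\to L_i[G]$ which is the identity on $G$, and by the \emph{uniqueness} of Hughes-free division rings it lifts to an isomorphism $\mathcal R_{K_0[G]}\to\mathcal R_{L_i[G]}$. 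Since isomorphisms preserve inner rank,
\[
\rk_G(\varphi_i(A))=\rank_{\mathcal R_{L_i[G]}}(\varphi_i(A))=\rank_{\mathcal R_{K_0[G]}}(A),
\]
and the right-hand side does not depend on $i$, so $\rk_G(\varphi_1(A))=\rk_G(\varphi_2(A))$. I expect this to be the only real obstacle, and it lies entirely in the two cited inputs; granting them, (3) is formal.

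Applying (3) to the inclusion $K_0\hookrightarrow\CC$ and to $\sigma|_{K_0}$ for $\sigma\in\Aut(\CC/\Q)$ yields $\rk_G(\sigma(B))=\rk_G(B)$ for every matrix $B$ over $\CC[G]$. Because $\mathcal R_{\CC[G]}$ is the epic division ring determined by the rank function $\rk_G$, every such rank-preserving $\sigma$ extends to a ring automorphism $\widetilde\sigma$ of $\mathcal R_{\CC[G]}$; if $\sigma$ fixes $K$ pointwise then $\widetilde\sigma$ fixes $K[G]$, hence fixes $\mathcal R_{K[G]}$ pointwise. For (1), let $A\in\Mat_n(\mathcal R_{K[G]})$ and suppose $\lambda$ is an eigenvalue of $\phi_G^A$, that is $\rk_G(A-\lambda I)<n$. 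I would first bound the number of eigenvalues: the eigenspaces $\ker\phi_G^{A-\lambda I}$ for distinct $\lambda$ are linearly independent Hilbert $G$-submodules, so the bounded injective $G$-map from their external orthogonal sum into $(l^2(G))^n$ gives $\sum_\lambda\dim_G\ker\phi_G^{A-\lambda I}\le n$; as each nonzero summand is a positive integer by Theorem~\ref{main}, $\phi_G^A$ has at most $n$ eigenvalues. Now for $\sigma\in\Aut(\CC/K)$ the automorphism $\widetilde\sigma$ fixes the entries of $A$ and sends $\lambda I$ to $\sigma(\lambda)I$ while preserving $\rk_G$, whence $\rk_G(A-\sigma(\lambda)I)<n$ and $\sigma(\lambda)$ is again an eigenvalue. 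Thus the eigenvalue set is a finite $\Aut(\CC/K)$-invariant subset of $\CC$; an element with finite $\Aut(\CC/K)$-orbit is algebraic over $K$, which proves (1).

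Finally, (2) follows from the same extension. The inclusion $K\subseteq\mathcal R_{K[G]}\cap\CC$ is clear. For the reverse, take $z\in\mathcal R_{K[G]}\cap\CC$ and assume $z\notin K$. Since $\CC$ is algebraically closed of characteristic zero, the fixed field of $\Aut(\CC/K)$ equals $K$, so there is $\sigma\in\Aut(\CC/K)$ with $\sigma(z)\neq z$. On one hand $z\in\mathcal R_{K[G]}$ gives $\widetilde\sigma(z)=z$; on the other hand $z\in\CC\cdot 1\subseteq\CC[G]$ gives $\widetilde\sigma(z)=\sigma(z)\neq z$, a contradiction. Hence $z\in K$ and $\mathcal R_{K[G]}\cap\CC=K$.
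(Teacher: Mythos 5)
Your proof is correct, and while your part (3) follows essentially the paper's own route, your parts (1) and (2) take a genuinely different one. For (3) you do what the paper does in its Corollary \ref{ind}: the division closures of $\varphi_1(K)[G]$ and $\varphi_2(K)[G]$ inside $\mathcal R_{\CC[G]}$ are Hughes-free epic division rings whose rank function is $\rk_G$ (Corollary \ref{cor5.4}), and Hughes' uniqueness theorem together with Theorem \ref{isomdiv} forces the two ranks to agree. One caution here: you cite as input the existence of a Hughes-free epic division ring over an \emph{arbitrary} field of characteristic zero; in the paper that statement (Corollary \ref{existence}) is itself deduced from the independence conjecture, so taking it as given would be circular --- but your argument only ever uses the two subfields $\varphi_i(K_0)\subseteq\CC$, for which Corollary \ref{cor5.4} suffices, so the slip is harmless. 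Where you genuinely diverge is in (1) and (2). The paper proves (1) as Corollary \ref{eigen} via the base-change Lemma \ref{basechange}: $\D_{K(\lambda^{-1})[G]}$ is identified with the Ore field of fractions $\D_{K[G]}(x)$ and embedded into $\D_{K[G]}((x))$ with $\lambda\mapsto x^{-1}$, where $A-x^{-1}I$ is visibly invertible; this yields the stronger conclusion that $A-\lambda I$ is invertible in $\mathcal U(G)$ for every $\lambda$ not algebraic over $K$. It proves (2) as Corollary \ref{cent} by the complexity machinery: a non-scalar element of the division closure has a Laurent-series expansion with at least two terms, whereas a complex number has only one; that version holds for every subfield of $\CC$, not only those closed under conjugation. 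You instead upgrade (3) to an automorphism statement --- every $\sigma\in\Aut(\CC/K)$ preserves $\rk_G$ on $\CC[G]$, hence by Theorem \ref{isomdiv} extends to an automorphism $\widetilde\sigma$ of the division ring $\mathcal R_{\CC[G]}$ fixing $\mathcal R_{K[G]}$ pointwise --- and then (1) follows because the eigenvalue set is finite and $\Aut(\CC/K)$-invariant, and (2) because the fixed field of $\Aut(\CC/K)$ is $K$. This soft Galois argument is the style used for sofic groups in \cite{Ja17base}; it buys uniformity (both statements follow formally from (3), the uniqueness theorem for epic division rings, and the division-ring property of $\mathcal R_{\CC[G]}$, with no base-change lemma and no complexity induction), at the price of a weaker form of (1) (algebraicity only, rather than invertibility of $A-\lambda I$ in $\mathcal U(G)$) and of needing $K$ closed under conjugation in (2). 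Two minor imprecisions, both repairable inside your own framework: the integrality of $\dim_G$ of the eigenspaces of $\phi_G^A$ cannot be quoted from Theorem \ref{main} verbatim, since $A-\lambda I$ has entries in $\mathcal R_{\CC[G]}$ rather than $\CC[G]$ --- it follows instead from $\mathcal R_{\CC[G]}$ being a division ring, which is part of your standing reductions; and the notation $\mathcal R_{K_0[G]}$ for an abstract field $K_0$ is undefined, so one should work throughout with the images $\varphi_i(K_0)\subseteq\CC$.
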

 
  All these conjectures were proved for sofic groups in \cite{Ja17base}.
 
One-relator groups with torsion are virtually special by a theorem of D. Wise \cite{Wi}. The strong Atiyah conjecture for  virtually special  groups over $\CC$ is proved in \cite{Ja17base}. Also virtually special groups are sofic. One-relator groups without torsion are locally indicable by a result of S. Brodskii \cite{Br84}. Thus, we obtain the following corollary.
 \begin{cor}
 The strong Atiyah conjecture, the strong algebraic eigenvalue conjecture, the center conjecture and the independence conjecture hold for one-relator groups.
 \end{cor}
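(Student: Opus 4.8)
The plan is to use the fundamental dichotomy for one-relator groups: a one-relator group $G=\langle X\mid w\rangle$ is either torsion-free or has torsion, according to whether the relator $w$ is a proper power. Writing $w=v^{n}$ with $v$ not a proper power, the classical structure theory of one-relator groups (Karrass--Magnus--Solitar) shows that every finite subgroup of $G$ is conjugate into the cyclic group $\langle v\rangle$ of order $n$; in particular the orders of finite subgroups are bounded and $\lcm(G)=n$, so Conjecture \ref{conjat} and its companions are well posed. Moreover $G$ is finitely presented, hence countable, which is the standing hypothesis of the preceding corollary. I would then split into the cases $n=1$ and $n\ge 2$, which require entirely different external inputs.

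For $n=1$, the torsion-free case, I would invoke Brodskii's theorem \cite{Br84} that a torsion-free one-relator group is locally indicable. Theorem \ref{main} then gives the strong Atiyah conjecture over $\CC$ directly, and the preceding corollary, whose hypotheses (countable and locally indicable) are met, simultaneously delivers the strong algebraic eigenvalue conjecture, the center conjecture and the independence conjecture.

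For $n\ge 2$, the case with torsion, I would instead use Wise's theorem \cite{Wi} that a one-relator group with torsion is virtually special. Such groups are both sofic and covered by the virtually special case of the strong Atiyah conjecture over $\CC$; since all four conjectures were established for sofic groups, and the strong Atiyah conjecture for virtually special groups, in \cite{Ja17base}, the torsion case is complete.

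As the two cases are exhaustive, the corollary follows by combining them. There is no genuine internal obstacle here: the statement is an assembly of Theorem \ref{main}, the preceding corollary, and the cited results of Brodskii, Wise and \cite{Ja17base}. The only points demanding attention are bookkeeping ones, namely verifying that the countability hypothesis and the bound on the orders of finite subgroups are indeed available for one-relator groups in both cases, which the structure theory above supplies.
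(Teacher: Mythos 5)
Your proposal is correct and follows the same route as the paper: the torsion-free case via Brodskii's theorem combined with Theorem \ref{main} and the corollary for locally indicable groups, and the torsion case via Wise's virtual specialness together with the sofic/virtually special results of \cite{Ja17base}. The additional bookkeeping you supply (Karrass--Magnus--Solitar control of finite subgroups, countability) is accurate but not needed beyond what the cited results already guarantee.
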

 In Subsection \ref{hughes} we introduce the notion of Hughes-free {epic} division $K[G]$-algebra. In \cite{Hu70} I. Hughes showed  that up to $K[G]$-isomorphism there exists at most one  Hughes-free {epic} division $K[G]$-algebra. Our main result implies   the following consequence.
 \begin{cor}\label{exh} Let $G$ be a locally indicable group and let $K$ be a field of characteristic zero. Then there exists  {a} Hughes-free  {epic} division $K[G]$-algebra.
 
 \end{cor}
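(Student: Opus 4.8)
The plan is to establish the statement first for subfields $K$ of $\CC$, where Theorem \ref{main} applies directly, and then to climb up to an arbitrary field of characteristic zero by a direct-limit argument over finitely generated subfields. To begin, note that a locally indicable group is torsion-free, since a nontrivial finite group admits no surjection onto $\Z$ and so cannot be indicable; hence $\lcm(G)=1$. Fix a subfield $K$ of $\CC$. As $K[G]\subseteq \CC[G]$ and $\rk_G$ is unchanged, Theorem \ref{main} yields the strong Atiyah conjecture over $K$, and because $G$ is torsion-free this forces, as recalled in the introduction, the $*$-regular closure $\mathcal R_{K[G]}$ of $K[G]$ inside $\mathcal U(G)$ to be a division ring. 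I take as candidate the division closure $\mathcal D_{K[G]}$ of $K[G]$ inside $\mathcal U(G)$: it is a division ring (being the division closure of $K[G]$ inside the division ring $\mathcal R_{K[G]}$), and it is epic by construction, since it is generated by $K[G]$ as a division ring.

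The technical heart is to verify that $\mathcal D_{K[G]}$ is Hughes-free. Let $H\le G$ be finitely generated, let $N\triangleleft H$ with $H/N\cong\Z$ generated by the image of some $t\in H$, and write $\mathcal D_{K[N]}$ for the division closure of $K[N]$ inside $\mathcal D_{K[G]}$; I must show that $\{t^i:i\in\Z\}$ is $\mathcal D_{K[N]}$-linearly independent. For this I would use the orthogonal coset decomposition $l^2(H)=\bigoplus_{i\in\Z}l^2(Nt^i)$, together with the natural inclusion $\mathcal D_{K[N]}\subseteq\mathcal U(N)\subseteq\mathcal U(H)$. Since $N$ is normal, right multiplication by an element of $\mathcal U(N)$ preserves each summand $l^2(Nt^i)$, while right multiplication by $t$ shifts $l^2(Nt^i)$ to $l^2(Nt^{i+1})$. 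A relation $\sum_i a_i t^i=0$ with $a_i\in\mathcal D_{K[N]}$ therefore splits, upon evaluation on $l^2(N)$, into homogeneous components of distinct degrees that must vanish separately, forcing each $a_i t^i=0$ and hence $a_i=0$. The main obstacle is making this grading argument rigorous for the affiliated (unbounded) operators in $\mathcal D_{K[N]}$ rather than for bounded ones, which is exactly where the $L^2$-machinery developed earlier is needed.

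Finally, for an arbitrary field $K$ of characteristic zero, write $K=\varinjlim_\alpha K_\alpha$ as the directed union of its finitely generated subfields. Each $K_\alpha$ has finite transcendence degree over $\Q$ and hence embeds into $\CC$, so the previous steps supply a Hughes-free epic division $K_\alpha[G]$-algebra $D_\alpha$. Whenever $K_\alpha\subseteq K_\beta$, the division closure of $K_\alpha[G]$ inside $D_\beta$ is again an epic division $K_\alpha[G]$-algebra, and it is Hughes-free because linear independence of the powers of $t$ over the larger coefficient ring $\mathcal D_{K_\beta[N]}$ forces it over the smaller one; by the uniqueness part of Hughes' theorem recalled above, this division closure is $K_\alpha[G]$-isomorphic to $D_\alpha$, yielding a canonical embedding $D_\alpha\hookrightarrow D_\beta$. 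The direct limit $D=\varinjlim_\alpha D_\alpha$ along these injective maps is then a division ring containing $K[G]$; it is epic and Hughes-free because each of these properties is witnessed by finitely many elements and therefore already holds inside some $D_\alpha$. This produces the desired Hughes-free epic division $K[G]$-algebra.
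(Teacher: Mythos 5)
Your overall architecture (first subfields of $\CC$ via Theorem \ref{main}, then a direct limit over finitely generated subfields) is workable, but there is one genuine gap, and you flag it yourself: the verification that $\mathcal D_{K[G]}$ is Hughes-free in your first step. Theorem \ref{main} gives that $\mathcal R_{\CC[G]}$, hence the division closure $\mathcal D_{K[G]}$, is a division ring, but it does \emph{not} by itself give the linear independence of the powers of $t$ over $\mathcal D_{K[N]}$; that is an extra input, and your appeal to ``the $L^2$-machinery developed earlier'' points at nothing, since the paper develops no such machinery. The paper's own route to this statement is algebraic: Proposition \ref{mainexample} (quoting \cite[Corollary 12.2]{Ja17surv}) says that $\rk_G$ is a Hughes-free \emph{rank function}, and this feeds into Theorem \ref{maintheorem}, Lemma \ref{rkdiv} and Corollaries \ref{cor4.3} and \ref{cor5.4} --- the last of which is exactly your Step 1. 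Your grading argument can nevertheless be completed without that machinery: since $\mathcal U(N)$ is the classical (Ore) ring of quotients of $\mathcal{N}(N)$ (see \cite{Lubook}), the finitely many coefficients $a_i\in\mathcal D_{K[N]}\subseteq\mathcal U(N)$ admit a common denominator, i.e.\ a non-zero-divisor $s\in\mathcal{N}(N)$ with $sa_i\in\mathcal{N}(N)$ for all $i$; the relation $\sum_i (sa_i)t^i=0$ then involves only bounded operators, the orthogonal decomposition $l^2(H)=\bigoplus_i l^2(t^iN)$ splits its restriction to $l^2(N)$ into homogeneous components lying in distinct summands, so each $sa_i=0$, and $a_i=0$ because $s$ is invertible in $\mathcal U(N)$. (A cosmetic point: for $K$ not closed under complex conjugation, $\mathcal R_{K[G]}$ is not defined as a $*$-regular closure; work with the division closure of $K[G]$ inside $\mathcal R_{\CC[G]}$, as in Corollary \ref{cor5.4}.)

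Your second step is correct but genuinely different from the paper's. The paper does not take a limit of division rings: it uses the independence conjecture (Corollary \ref{ind}) to show that pulling back $\rk_G$ along \emph{any} embedding of a finitely generated subfield $K_0\subseteq K$ into $\CC$ yields a well-defined, integer-valued Sylvester matrix rank function on all of $K[G]$, then invokes Malcolmson's theorem to produce its epic division envelope $\D$, and converts Hughes-freeness of the rank function into Hughes-freeness of $\D$ via Lemma \ref{rkdiv}. Your gluing of the $D_\alpha$ by Hughes' uniqueness theorem does the same job at the level of division rings instead of rank functions; note that both routes rest on the same two pillars, namely the statement of Corollary \ref{cor5.4} and Hughes' uniqueness (the paper's proof of Corollary \ref{ind} is itself an application of uniqueness). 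Your route avoids Sylvester rank functions and Malcolmson's envelope entirely, at the cost of the limit bookkeeping; the paper's route is shorter given the machinery already in place and exhibits the division ring as an envelope of an explicit rank function. One point you must make explicit for the colimit to exist: the transition maps $D_\alpha\hookrightarrow D_\beta$ are compatible for $\alpha\le\beta\le\gamma$ because $K_\alpha[G]\to D_\alpha$ is epic, so any two $K_\alpha[G]$-ring homomorphisms out of $D_\alpha$ coincide; ``canonical'' is carrying that argument.
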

 Thus, a group algebra of a locally indicable group over a field of characteristic zero is embedded in a division algebra. This solves the Malcev problem for this class of group algebras (for more details about this problem see \cite {GS15}).

 \subsection{The L\"uck approximation}
  Let $F$ be a free group freely generated by a finite set $S$.
 The {\bf space of marked groups} $\MG(F)$ can be identified with the set of normal subgroups of $F$ with the metric  $d(M_1,M_2)=e^{-n}$   where $n$ is the
largest integer such that the balls of radius $n$ in the Cayley graphs of $F/M_1$ and $F/M_2$ with respect to the generators $S$ are simplicially isomorphic (with respect to an isomorphism respecting the labelings).
For example, if $M_1\ge M_2\ge \cdots $ is a chain of normal subgroups of $F$ and $M=\displaystyle \cap_{i\in \N} M_i$, then $(M_i)_i$ converges to $M$ in $\MG(F)$.

Observe that the metric on $\MG(F)$ depends on the choice of $S$, but the topology does not.  Thus, we are thinking of $\MG(F)$ as a
topological space. 

Let $M$ be a normal subgroup of $F$  and $A$ a matrix over $\CC[F]$. By abuse of notation we write $\rk_{F/M}(A)$ instead of $\rk_{F/M}(\bar A)$, where $\bar A$ is the matrix over $\CC[F/M]$ obtained from $A$ using the canonical map $\CC[F]\to \CC[F/M]$.

 Now we can formulate {\bf the   L\"uck approximation conjecture in the space of marked groups over  $K$ for a finitely generated group $G$}.  
 \begin{Conjecture} \label{luckmarked} Let $G$ be a finitely generated group and let $K$ be a subfield of $\CC$. Let $F$ be a finitely generated free group and assume that  $(M_k\in \MG(F))_k$ converges  to $M\in \MG(F)$ with   $G\cong F/M$. Then for every     $A\in  \Mat_{n\times m}(  K[F])$,
    $$\displaystyle \lim_{k\to\infty} \rk_{F/M_k}(A)=\rk_{F/M}(A).$$   \end{Conjecture}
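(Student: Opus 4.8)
The plan is to pass from the combinatorial data of the convergence $M_k\to M$ to the spectral measures of a single positive operator, and then to reduce the whole statement to controlling an atom at $0$. Enlarging $K$ so that it is closed under complex conjugation (this only enlarges the coefficient set, so it suffices), set $B=AA^{*}\in\Mat_{n\times n}(K[F])$, where $A^{*}$ is the adjoint of $A$ under the involution on $K[F]$ given by coefficientwise conjugation together with $g\mapsto g^{-1}$. Then $\ker\phi^{A}_{F/M_k}=\ker\phi^{B}_{F/M_k}$, so that $\rk_{F/M_k}(A)=n-\mu_k(\{0\})$, where $\mu_k$ denotes the spectral measure of the positive self-adjoint operator $\phi^{B}_{F/M_k}$ associated with the trace $\Tr_{F/M_k}$; likewise $\rk_{F/M}(A)=n-\mu(\{0\})$ for the measure $\mu$ over $F/M$. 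The conjecture thus becomes the single assertion $\lim_k\mu_k(\{0\})=\mu(\{0\})$.

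First I would establish weak convergence $\mu_k\to\mu$. The $j$-th moment $\int\lambda^{j}\,d\mu_k(\lambda)=\Tr_{F/M_k}(B^{j})$ is a finite sum of coefficients of the fixed matrix $B^{j}\in\Mat_{n\times n}(K[F])$, read off along those group elements that lie in $M_k$. Since $B^{j}$ has finite support and $M_k\to M$ in $\MG(F)$, the balls of any fixed radius in $F/M_k$ and $F/M$ eventually agree, so for each $j$ the moments are eventually constant and equal to $\Tr_{F/M}(B^{j})$. As every $\mu_k$ is supported in $[0,R]$ for an $R$ bounding the operator norm of $\phi^{B}_{F/M_k}$ uniformly in $k$, convergence of all moments yields $\mu_k\to\mu$ weakly. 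Portmanteau applied to the closed set $\{0\}$ then gives $\limsup_k\mu_k(\{0\})\le\mu(\{0\})$, that is, the semicontinuity estimate $\rk_{F/M}(A)\le\liminf_k\rk_{F/M_k}(A)$. This is the easy half, and it holds with no hypothesis on the groups involved.

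The substance of the conjecture is the reverse inequality $\mu(\{0\})\le\liminf_k\mu_k(\{0\})$, equivalent to ruling out escape of spectral mass toward, but not onto, $0$; concretely one needs $\lim_{\varepsilon\to 0}\sup_k\mu_k((0,\varepsilon))=0$. The standard route is a uniform Fuglede--Kadison log-determinant lower bound $\int_{(0,R]}\log\lambda\,d\mu_k(\lambda)\ge -C$ with $C$ independent of $k$, which forbids accumulation of mass near $0$ and, combined with weak convergence, upgrades the semicontinuity estimate to an equality. For integer coefficients and \emph{finite} quotients this is L\"uck's classical estimate, since a nonzero integer determinant is at least $1$; the difficulty here is that the quotients $F/M_k$ are in general infinite, so the bound must be proved at the level of the von Neumann algebra of each $F/M_k$.

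The uniform determinant bound is the step I expect to be the main obstacle, and it is exactly the point at which structural information about the approximating groups must enter. When each $F/M_k$ satisfies the strong Atiyah conjecture, the ring $\mathcal R_{K[F/M_k]}$ acquires the rigid structure recalled in Subsection \ref{*reg}, the nonzero part of the spectrum of $B$ is controlled, and the Fuglede--Kadison determinant of $B$ off its kernel is bounded below uniformly. Thus the general conjecture reduces to supplying, uniformly in $k$, both the strong Atiyah conjecture and such a determinant estimate for all the groups $F/M_k$, not merely for the limit $G=F/M$. This is precisely the input that Theorem \ref{main} and the Hughes-free epic division $K[F/M_k]$-algebra of Corollary \ref{exh} furnish when the quotients are locally indicable; obtaining it for unrestricted approximating sequences is the genuinely hard part, and no such control is available in full generality by the elementary spectral machinery above.
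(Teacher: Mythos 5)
Your first half is sound: the reduction to $B=AA^{*}$, the stabilization of the moments $\Tr_{F/M_k}(B^{j})$ for fixed $j$ (a finite-support computation that only sees a ball of bounded radius, hence eventually agrees with $\Tr_{F/M}(B^{j})$ by the definition of convergence in $\MG(F)$), the uniform norm bound, and the Portmanteau estimate $\limsup_k\mu_k(\{0\})\le\mu(\{0\})$ together give exactly $\rk_{F/M}(A)\le\liminf_k\rk_{F/M_k}(A)$. This is the Kazhdan inequality, which the paper also takes as its starting point (Proposition \ref{kazhdan}, quoted from the literature rather than reproved), and you are right that it needs no hypotheses on any of the groups.

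The gap is in your proposed route to the reverse inequality, and it is twofold. First, the strong Atiyah conjecture for a group does not yield a Fuglede--Kadison determinant bound: Atiyah controls the possible values of the rank function, i.e.\ the size of the atom of $\mu_k$ at $0$, but says nothing about the distribution of spectral mass in $(0,\varepsilon)$; determinant-class statements are logically independent of Atiyah and, where known, come from soficity-type approximation arguments, not from the structure of $\mathcal R_{K[F/M_k]}$ --- and you would need such a bound \emph{uniformly} in $k$, for arbitrary coefficients in $K\subseteq\CC$, which is unavailable even in the sofic case by these means. Second, and more fundamentally, your reduction places hypotheses on the wrong groups: the conjecture (and the paper's Theorem \ref{mainluck}) assumes only that the \emph{limit} $G\cong F/M$ is (virtually) locally indicable, while the approximating quotients $F/M_k$ are completely arbitrary marked quotients, so one may not assume they are locally indicable or satisfy Atiyah. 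The paper's actual proof avoids determinant estimates altogether: it fixes a non-principal ultrafilter $\omega$, forms the positive definite $*$-regular envelope $(\mathcal U,\psi,\rk)$ of the ultralimit rank $\lim_{\omega}\rk_{F/M_k}$, uses precisely your easy half to see that this rank dominates $\rk_{F/M}$ and hence factors through $\CC[G]$, and then proves (Proposition \ref{maintheoremluck}) by induction on the complexity of elements of the rational semiring $\Rat(\CC^{\times}G)$ --- with the inductive step supplied by Proposition \ref{key}, the Hughes-freeness of $\rk_G$ (Proposition \ref{mainexample}), and the rank comparisons of Propositions \ref{appchange} and \ref{key2} --- that the division closure of $\CC[G]$ in $\mathcal R_{\CC[G]}\times\mathcal U$ is a division ring; the two projections are then $\CC[G]$-isomorphisms, forcing $\lim_{\omega}\rk_{F/M_k}=\rk_{F/M}$ on every matrix. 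Note finally that the statement you were given is labelled a Conjecture and remains open in general; your closing admission that the elementary spectral machinery cannot settle it is accurate, but the structural reduction you sketch is not the one that succeeds where the conjecture is actually proved.
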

    We say that a   group  satisfies the   L\"uck approximation conjecture in the space of marked groups over  $K$ if its finitely generated subgroups $G$ satisfy the conclusion of Conjecture \ref{luckmarked}.
    
 Conjecture \ref{luckmarked} has a long story which starts from the paper of W. L\"uck \cite{Lu94}, where it is proved when $K=\Q$ and the groups $F/M_k$ are finite. Different extensions of L\"uck's result were obtained in \cite{DM98, Sch01, DLMSY, ES05, El06}. In \cite{Ja17base} the first author proved the conjecture in the case where the groups $F/M_k$ are sofic and $K$ is an arbitrary subfield of $\CC$.   The case where $G$ is free and $K$ is an arbitrary subfield of $\CC$ was proved in \cite{Ja17HN}. 

In our next result we prove  the   L\"uck approximation conjecture in the space of marked groups over  an arbitrary subfield of $\CC$ for  virtually locally indicable groups.

\begin{teo}\label{mainluck}
Let $G$ be a virtually locally indicable group. Then $G$ satisfies  the   L\"uck approximation conjecture in the space of marked groups over  an arbitrary subfield of $\CC$.
\end{teo}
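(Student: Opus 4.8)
The plan is to reduce the statement to the locally indicable case, and there to trap the limit between two one-sided estimates: a soft lower estimate valid for any group, and an upper estimate that encodes local indicability through the Hughes-free division algebra furnished by Theorem~\ref{main} and Corollary~\ref{exh}.

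First I would carry out two reductions. Since the conclusion of Conjecture~\ref{luckmarked} concerns the finitely generated subgroups of $G$, and a finitely generated subgroup of a virtually locally indicable group is again virtually locally indicable, I may assume $G$ is finitely generated. To remove the word ``virtually'', suppose $H\le G$ is a normal locally indicable subgroup of finite index $d$ (replace $H$ by its normal core), and write $G\cong F/M$ with $M_k\to M$ in $\MG(F)$. Let $\bar H\le F$ be the preimage of $H$; by Nielsen--Schreier $\bar H$ is a finitely generated free group and $\bar H/(\bar H\cap M)\cong H$. Because $\bar H$ is a fixed finite-index subgroup, its Schreier data are visible on a bounded ball, so for all large $k$ the image $\bar H M_k/M_k$ has index $d$ in $F/M_k$ and $\{\bar H\cap M_k\}$ converges to $\bar H\cap M$ in $\MG(\bar H)$. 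Induction of matrices along $K[\bar H]\subseteq K[F]$ turns an $n\times m$ matrix $A$ into an $nd\times md$ matrix $\tilde A$ over $K[\bar H]$ satisfying $\rk_{F/M_k}(A)=\tfrac1d\rk_{\bar H/(\bar H\cap M_k)}(\tilde A)$ for all large $k$, and likewise for $M$; hence the locally indicable case for $H$, applied to $\tilde A$, gives the statement for $G$.

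So assume $G=F/M$ is finitely generated locally indicable, hence torsion-free, and fix $A\in\Mat_{n\times m}(K[F])$. The easy half is $\liminf_k\rk_{F/M_k}(A)\ge\rk_{F/M}(A)$, for which I would run Lück's spectral-measure argument. The spectral measures $\mu_k$ of the positive operators $(\phi^{A}_{F/M_k})^{*}\phi^{A}_{F/M_k}$ all live on a common interval $[0,C]$, and their moments $\int x^j\,d\mu_k=\Tr_{F/M_k}((A^*A)^j)$ are coefficients at the identity of a fixed element of the group ring, hence are determined by a ball in $F/M_k$ and converge, as $M_k\to M$, to $\int x^j\,d\mu$. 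Moment convergence on a common compact set yields $\mu_k\to\mu$ weakly, and upper semicontinuity of mass on the closed set $\{0\}$ gives $\limsup_k\dim_{F/M_k}\ker\phi^A\le\dim_{F/M}\ker\phi^A$, which is the desired inequality; this half uses nothing about $G$.

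It remains to prove the upper estimate $\limsup_k\rk_{F/M_k}(A)\le\rk_{F/M}(A)$, and this is where local indicability is indispensable. The rank functions $\rk_{F/M_k}$ lie in the compact space of Sylvester matrix rank functions on $\CC[F]$, so any subsequence has a further subsequence converging pointwise to a Sylvester rank function $\rho$. Since $M_k\to M$, each element of $M$ eventually lies in $M_k$; consequently a fixed matrix with entries in the kernel of $\CC[F]\to\CC[G]$ has vanishing image over $\CC[F/M_k]$ for all large $k$, so $\rho$ agrees on matrices with the same image in $\CC[G]$ and descends to a Sylvester rank function $\bar\rho$ on $\CC[G]$. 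By the easy half, $\bar\rho\ge\rk_{F/M}$, and by Theorem~\ref{main} together with Corollary~\ref{exh} the von Neumann rank on $\CC[G]$ coincides with the integer-valued rank $\rk_{\D}$ attached to the Hughes-free epic division $\CC[G]$-algebra $\D$, which by Hughes' uniqueness theorem is the unique Hughes-free one. Proving $\bar\rho=\rk_{\D}$ forces every subsequential limit of $\{\rk_{F/M_k}(A)\}$ to equal $\rk_{F/M}(A)$, and hence the full sequence to converge, completing the proof.

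The main obstacle is exactly the identification $\bar\rho=\rk_{\D}$. The plan is to show that the limit rank function $\bar\rho$ is itself Hughes-free and then invoke uniqueness. Concretely, I would induct on the locally indicable structure: a finitely generated locally indicable $G$ splits as $G=N\rtimes\Z$ with $N$ locally indicable, $\D$ arises as a twisted Laurent construction over $\D_{\CC[N]}$, and the Hughes-free condition is a compatibility between the $\Z$-grading and the rank on $\CC[N]$. The hard part is to propagate $\bar\rho=\rk_{\D}$ through this $\Z$-extension, that is, to show that no spectral mass of $(\phi^{A}_{F/M_k})^{*}\phi^{A}_{F/M_k}$ escapes to $0$ in the limit. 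This is precisely the point at which the absence of any division algebra over the approximating quotients $F/M_k$ defeats a naive clearing-of-denominators transfer of kernel vectors, and where the genuine input of local indicability, through the structure of $\D$ and Hughes-freeness, must replace any soft limiting argument.
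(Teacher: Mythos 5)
Your setup for the locally indicable case (compactness of the space of Sylvester rank functions, descent of a subsequential limit $\rho$ to a rank $\bar\rho$ on $\CC[G]$, and the lower bound via the spectral/Kazhdan argument, which is the paper's Proposition~\ref{kazhdan}) matches the paper's framework, but the core of the theorem is missing. Your whole argument for the upper bound is the sentence ``show that the limit rank function $\bar\rho$ is itself Hughes-free and then invoke uniqueness,'' and you offer no proof of that Hughes-freeness; you yourself flag it as ``the hard part.'' This is not a technical remainder — it is essentially equivalent to the statement being proved. What can be obtained softly is exactly the paper's Proposition~\ref{appchange}: writing a finitely generated $H\le G$ as $N\rtimes\Z$ and pulling back to $F'\le F$, the natural extension of $\bar\rho|_{\CC[N]}$ is the limit rank $\rk_{\{F'/K_i'\}}$ built from $K_i'=M_i'\cap N'$, and it \emph{dominates} $\bar\rho|_{\CC[H]}=\rk_{\{F'/M_i'\}}$; Hughes-freeness of $\bar\rho$ is precisely the reverse inequality, and nothing in your outline produces it. Moreover, the induction you propose ``on the locally indicable structure $G=N\rtimes\Z$'' is not well founded: $N$ is in general not finitely generated, and no ordinal attached to the group decreases. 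The paper's induction is instead on the complexity $\Tree(\alpha)$ of elements of $\Rat(\CC^{\times}G)$, run simultaneously over all locally indicable groups and all approximations (Proposition~\ref{maintheoremluck}), and it needs the auxiliary ring $\PP_{\omega,\tau}^{\U_N}$, the chain of ranks $\rk_H\le\rk_{\{F'/M_i'\}}\le\rk_{\{F'/K_i'\}}$ exploited through Corollary~\ref{key2}, and the decomposition result Proposition~\ref{key}. Note also that even granting Hughes-freeness of $\bar\rho$, ``Hughes' uniqueness theorem'' does not apply directly, since $\bar\rho$ is a priori not integer-valued: one must first invoke Theorem~\ref{maintheorem} to see that its $*$-regular envelope is a division ring.

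Your reduction of the virtual case also breaks at a specific step. The claim that, because ``the Schreier data of $\bar H$ are visible on a bounded ball,'' the image $\bar H M_k/M_k$ has index $d$ in $F/M_k$ for all large $k$ is false: whether $t_it_j^{-1}\in \bar H M_k$ depends on arbitrarily long elements of $M_k$ and is not a ball condition. For instance, take $F=\langle x\rangle$, $M=\{1\}$, $M_k=\langle x^{2k+1}\rangle$, and $\bar H=\langle x^2\rangle$ of index $2$; then $M_k\to M$ in $\MG(F)$ but $\bar H M_k=F$ for \emph{every} $k$. The induction formula fails along with it: for $A=1-x$ one computes $\rk_{F/M_k}(A)=1-\tfrac{1}{2k+1}$, while $\tfrac 12\rk_{\bar H/(\bar H\cap M_k)}(\tilde A)=1-\tfrac{1}{2(2k+1)}$. (The same collapse of the index occurs for genuinely virtually locally indicable targets such as $G=\Z\times\Z/2$.) This is why the paper never induces matrices along the finite-index subgroup but only \emph{restricts} ranks — the restriction of $\rk_{F/M_i}$ to $\CC[F']$ is always $\rk_{F'/(F'\cap M_i)}$, with no hypothesis on indices in the quotients — and must then do real work to extend the resulting identification from $\CC[G']$ back up to $\CC[G]$: it uses the crossed-product decomposition $\mathcal R_{\CC[G]}=\mathcal R_{\CC[G']}*G/G'$, simplicity of $\mathcal R_{\CC[G]}$ (which requires the ICC property), and the passage to $G*\Z$ to remove the ICC assumption.
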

 Since a one-relator group is virtually torsion-free \cite{FKS72} and a torsion-free subgroup of a one-relator group is locally indicable \cite{Ho02}, we obtain the following immediate corollary.
 \begin{cor}
 One-relator groups satisfy  the   L\"uck approximation conjecture in the space of marked groups over  an arbitrary subfield of $\CC$.
  \end{cor}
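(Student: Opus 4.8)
The plan is to reduce the statement entirely to Theorem \ref{mainluck}: since that theorem already establishes the L\"uck approximation conjecture in the space of marked groups, over an arbitrary subfield of $\CC$, for every virtually locally indicable group, the only thing that remains is the purely group-theoretic verification that one-relator groups belong to this class.

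To carry this out I would proceed in two steps. First, by the theorem of \cite{FKS72} a one-relator group $G$ is virtually torsion-free, so I may fix a torsion-free subgroup $H \le G$ of finite index. Second, $H$ is a torsion-free subgroup of the one-relator group $G$, and hence is locally indicable by \cite{Ho02}. Combining the two, $H$ is a finite-index locally indicable subgroup of $G$, which is precisely the statement that $G$ is virtually locally indicable.

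Once virtual local indicability is in hand, Theorem \ref{mainluck} applies directly to $G$ and delivers the conclusion; note that the definition of ``satisfying the conjecture'' (via all finitely generated subgroups) has already been incorporated into the statement of Theorem \ref{mainluck}, so no separate treatment of subgroups is required. There is no genuine obstacle at this stage: the full analytic and algebraic weight of the result lies in Theorem \ref{mainluck}, and the corollary is a one-line consequence of two classical facts in combinatorial group theory. The only point to keep straight is the direction of the inheritance---torsion-freeness is used to pass from $G$ to a finite-index subgroup, while local indicability is then supplied on that subgroup by \cite{Ho02}.
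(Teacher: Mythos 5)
Your proposal is correct and coincides exactly with the paper's argument: the paper also cites \cite{FKS72} for virtual torsion-freeness, \cite{Ho02} for local indicability of torsion-free subgroups of one-relator groups, and then invokes Theorem \ref{mainluck}. Nothing to add.
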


 \subsection{A description of the proof}

 There are two points that make our results about the strong Atiyah conjecture and the L\"uck approximation conjecture different from previous ones. 
 
  The first aspect concerns the methods that we use in the proof of the strong Atiyah conjecture in Theorem \ref{main}. Algebraic methods were already widely used in previous results on the strong Atiyah conjecture. However, all these proofs also contained some  analytic parts (as, for example, the use of the theory of Fredholm operators in \cite{Li93}  or the use of L\"uck approximation in \cite{DLMSY}). Our proof of Theorem \ref{main} is almost purely algebraic, and, in particular, this gives the first   algebraic proof of the strong Atiyah conjecture for free groups.  
  
  The second aspect is about the groups that we consider.  All the previous instances of  both conjectures concerned groups which are known to be sofic. This is not the case of locally indicable groups. In fact, it is not known yet whether one-relator groups are sofic. 
  
  Let us describe briefly the ideas behind the proofs of Theorems \ref{main} and \ref{mainluck}. In \cite{DHS}, W. Dicks, D. Herbera and J. S\'{a}nchez enlightened the argument of I. Hughes and gave a different proof of his result on the uniqueness of the Hughes-free epic division $E*G$-algebra. In order to get some insight in the techniques that they use and we adopt here, let us give a short summary of the fundamental steps of \cite{DHS}. For this purpose, let $E*G$ be a crossed product of a division ring $E$ with a group $G$, and let $\D$ be a Hughes-free epic division $E*G$-algebra. For every subgroup $H$ of $G$, denote by $\D_{H,\D}$ the division closure of $E*H$ in $\D$. 
  
   First of all, for any multiplicative group $U$, the authors introduce a universal object $\Rat (U)$, whose construction is a formal analog of the construction of a division closure, and that can be endowed with a measure of complexity that allows to compare elements. As a consequence of its universality and construction, W. Dicks, D. Herbera and J. S\'{a}nchez  get, for every subgroup $H$ of $G$, a surjective morphism
$$
\Phi_{H,\D}: \Rat (E^{\times}H)\twoheadrightarrow \D_{H,\D}\cup \{\infty\} .
$$

If $H$ is non-trivial, finitely generated and splits as a semidirect product $H = N \rtimes C$ where $C$ is infinite cyclic, $t$ is an element in $E^{\times}H$ whose image under $E^{\times}H\rightarrow E^{\times}H/E^{\times}=H$ generates $C$, and $\tau$ denotes left conjugation by $t$,   then they prove the existence of the following commutative diagram 
\begin{equation} \label{dia1}
\begin{gathered}
\xymatrix{ \Rat (E^{\times}H) \ar@{->>}[r]^{\Phi_{H,D}} \ar[d]_{\Psi} & D_{H,\D}\cup \{\infty\} \ar@{^{(}->}[d] \\
           \Rat (E^{\times}N)((t;\tau))\cup \{\infty\} \ar@{->>}[r]_{\Omega} & D_{N,\D}((t;\tau)) \cup \{\infty\},
}
\end{gathered}
\end{equation}
where $\D_{N,\D}((t;\tau))$ denotes the formal skew Laurent series (the action of $t$ by conjugation on  $E^{\times}N$   extends canonically to an action on $\D_{N,\D}$).

Finally, they proved  that for every element $\alpha\in \Rat (E^{\times}G)$, there exists an appropriate finitely generated  subgroup $ \source (\alpha)$ of $E^{\times}G$, and hence a finitely generated subgroup $H$ of $G$ (given by the image of $ \source (\alpha)$ under $E^{\times} G \rightarrow E^{\times} G/E^{\times} = G$), such that up to multiplication by a unit we have that $\alpha\in \Rat (E^{\times}H)$ and, in the above diagram, $\Psi(\alpha)$ is a series whose summands are strictly less complex than $\alpha$. This allows them to make proofs by induction on the complexity of the elements. A great reference to learn how the details work is the PhD Thesis of Javier S\'{a}nchez  \cite{Sthesis}. 

In our setting we will consider division $E*G$-closures inside rings which are non-necessarily division rings. If $(S,\phi)$ is an $E*G$-ring and, for any subgroup $H$ of $G$, $\D_{H,S}$ denotes the division closure of $\phi(E*H)$ inside $S$, then we also have a surjective morphism
$$
\Phi_{H,S}: \Rat(E^{\times}H) \twoheadrightarrow \D_{H,S}
$$
The complexity of elements of $\Rat(E^{\times}G)$ induces a notion of $G$-complexity of elements of $\D_{G,S}$. It would also be desirable to have an analog for diagram (\ref{dia1}) which permits expressing any element in $\mathcal D_{G,S}$ as a sum of less complex elements, and so using induction on this complexity. However, at first sight we can say nothing about the relation between $\D_{H,S}$ and $\D_{N,S}((t;\tau))$. 
In Proposition \ref{key} we show that, if there exists a diagram
$$
  \xymatrix{ E*N \ar@{->}[r]^{\phi} \ar@{^{(}->}[]+<0ex,-2ex>;[d] & \mathcal A \ar@{^{(}->}[]+<0ex,-2ex>;[d] & \\
             E*H \ar@{->}[r]^{\phi} & \mathcal A((t;\tau))~ \ar@{^{(}->}[r] & \PP
     }
$$
where $(\mathcal A,\phi)$ is a von Neumann regular $E*N$-ring and $\tau$ is an automorphism of $\mathcal A$ such that $\phi \circ \tau = \tau \circ \phi$, then we can develop the same sort of inductive method for $\D_{H,\PP}$ and $\D_{N,\PP}((t;\tau))$. The proofs of the strong Atiyah and the L\"uck approximation conjectures rely then on the construction of such scenarios.

 To prove the strong Atiyah conjecture, we introduce a generalization of the notion of Hughes-freeness for epic $*$-regular $E*G$-rings, expressed in terms of $*$-regular Sylvester rank functions. 
 Then, we show in Theorem \ref{maintheorem} that if $K$ is a subfield of $\CC$ closed under complex conjugation, any epic positive definite $*$-regular $K[G]$-ring $\mathcal U$ with Hughes-free Sylvester rank $\rk$ is, in fact, a division algebra. Since $\rk_G$ (defined in (\ref{rkG})) is a canonical example of a Hughes-free rank on $\CC[G]$, this, in particular, implies that $\mathcal R_{\CC[G]}$ is a division algebra, and so $G$ satisfies the strong Atiyah conjecture over $\CC$. 

What we do to deduce Theorem \ref{maintheorem} is the following. Let $\mathcal U_{H}$ (respectively $\U_N$) denote the $*$-regular closure of $K[H]$ (respectively $K[N]$) in $\mathcal U$ and set $\mathcal A = \mathcal U_N$. Using some results on epic $*$-regular $R$-rings proved in \cite{Ja17base}, we construct an environment $\PP = \PP_{\omega,\tau}^{\mathcal U_{N}}$ containing $\mathcal A((t;\tau))$ as in the latter diagram. In addition, the Hughes-free condition allows us to embed $\mathcal U_{H}$ in $\PP$, and the regularity of $\mathcal U$ implies that $\D_{H,\PP} = \D_{H,\mathcal U}$ and $\D_{N,\PP} = \D_{N,\mathcal U}$. This means that we can talk about the intersection $\D_{H,\mathcal U}\cap \D_{N,\mathcal U}((t;\tau))$ in $\PP$. Using induction on the complexity, we eventually manage to show that $\mathcal D_{H,\mathcal U} $ is a subalgebra of $\mathcal D_{N,\mathcal U}((t;\tau))$, and that every $0\ne  a\in \mathcal D_{H,\mathcal U}\setminus K^{\times}H$ can be expressed as a series with coefficients in $\mathcal D_{N,\mathcal U}$ of strictly lower complexity. Therefore, again by induction on the complexity, we obtain that $a$ is invertible, which shows that $\D_{G,\mathcal U}$, and hence $\mathcal U$, is a division algebra.  

 The paper is structured as follows. In Section \ref{sylv} we include the preliminary results on $*$-regular rings, Sylvester rank functions and the theory of $*$-regular $R$-rings. In Section \ref{nat} we introduce the notion of natural extension and Hughes-free Sylvester rank function on a group algebra of a locally indicable group. Here we also construct the aforementioned ring $\mathcal P_{\omega,\tau}^{\mathcal U_{N}}$. Section \ref{rat} is devoted to recalling the notion of rational $U$-semiring  and the examples we will use later. In Section \ref{mainsection} we will present the proof of the key proposition regarding the inductive step. Sections \ref{proofs} and \ref{Lucksection} contain the proofs of the Atiyah and the L\"uck approximation conjectures in the setting of locally indicable groups, and of its corollaries.  Finally, in Section \ref{universal} we discuss the problem of universality of Hughes-free Sylvester rank functions. 
 
 \section*{Acknowledgments} This paper is partially supported by the grant   MTM2017-82690-P of the Spanish MINECO and by the ICMAT Severo
Ochoa project SEV-2015-0554.

We would  like to thank Fabian Henneke and Dawid Kielak for suggesting that our argument  can be used  to prove that the class of torsion-free groups satisfying the  Atiyah conjecture is closed  under extensions by locally indicable  groups. We are also  very grateful  to  Javier S\'anchez for useful  discussions, to Fabian Henneke for many comments on a previous version of this paper and to an anonymous referee for a thorough reading of the paper and useful suggestions.

 \section{$*$-regular Sylvester rank functions}
\label{sylv}
In this section we recall the notions of $*$-regular ring and  Sylvester rank function, and explain the main results about epic $*$-regular $R$-rings. More information about these topics can be found in \cite{Ja17base, Ja17surv}.
\subsection{$*$-regular rings}\label{*reg}
An element $x$ of a ring $R$ is called  {\bf von Neumann regular} if  there exists $y\in  R$ satisfying  $xyx=x$. 
 A   ring $\mathcal  U$ is called  {\bf von Neumann regular} if all the elements of $\mathcal U$ are von Neumann regular.  For the sake of brevity, we will often refer to von Neumann regular rings simply as {\bf regular} rings. For instance, a division ring is regular and, for any $n$, the ring of $n\times n$ matrices over a regular ring is also regular (\cite[Lemma 1.6, Theorem 1.7]{Gdr}).  
 
 By a {\bf $*$-regular} ring $\mathcal U$ we mean a von Neumann regular ring together with a proper involution (i.e. an involution $*:\mathcal U\rightarrow \mathcal U$ for which $x^*x=0$ implies $x=0$). In this setting, for every element $x\in \mathcal U$, we can distinguish an element $x^{[-1]}$ with $xx^{[-1]}x = x$ among the others, called {\bf the relative inverse} of $x$ (see, for example, \cite[Proposition 3.2]{Ja17base}). The element $x^{[-1]}$ is characterized by the property that $\RP(x)=x^{[-1]}x$ and $\LP(x)=xx^{[-1]}$ are  {projections} (self-adjoint idempotents) and  $x^{[-1]}xx^{[-1]}=x^{[-1]}$.
 A useful remark about $*$-regular rings is the following  proposition. 
 \begin{pro} \cite[Proposition 3.3]{Ja17surv} \label{proper}
Let $\mathcal U$ be  a $*$-regular ring and $I$ a (two-sided) ideal of $\mathcal U$. Then $I$ is $*$-closed and $*$ is proper in $\mathcal U/I$, i.e., $\mathcal U/I$ is also a $*$-regular ring.
\end{pro}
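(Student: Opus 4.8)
The plan is to verify in turn the three assertions hidden in the statement: that $I=I^*$, that the involution descends to a well-defined involution on $\mathcal U/I$, and that this induced involution is again proper. Von Neumann regularity of $\mathcal U/I$ is then automatic, since for $\bar x\in\mathcal U/I$ any von Neumann inverse of a lift $x$ descends to one of $\bar x$. The only non-formal inputs I will need are the relative inverse $x^{[-1]}$ and the right projection $\RP(x)=x^{[-1]}x$ recalled above, together with the identities $x\RP(x)=x$ and $(x^*x)\,\RP(x^*x)=x^*x$ coming from $yy^{[-1]}y=y$.

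To prove $I$ is $*$-closed I would take $x\in I$ and observe that $\RP(x)=x^{[-1]}x$ lies in $I$ because $I$ absorbs left multiplication. Since $x=x\RP(x)$ and $\RP(x)$ is self-adjoint, applying $*$ gives $x^*=\RP(x)\,x^*$, and as $I$ absorbs right multiplication this forces $x^*\in I$. Thus $I^*\subseteq I$, and applying the same reasoning to $x^*$ yields $I^*=I$. This closedness is exactly what is needed for the rule $\bar x^*:=x^*+I$ to be a well-defined involution on $\mathcal U/I$.

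The heart of the matter is properness of the induced involution, that is, the implication $x^*x\in I\Rightarrow x\in I$. Here I would set $e=\RP(x^*x)=(x^*x)^{[-1]}(x^*x)$, which is a projection lying in $I$ (again because $I$ absorbs left multiplication). The defining property of the relative inverse gives $(x^*x)e=x^*x$, hence $(x^*x)(1-e)=0$. The key computation is then
$$\bigl(x(1-e)\bigr)^*\,x(1-e)=(1-e)\,(x^*x)(1-e)=(1-e)\cdot 0=0,$$
using that $e$, and therefore $1-e$, is self-adjoint. Properness of $*$ in $\mathcal U$ now forces $x(1-e)=0$, that is $x=xe$, and since $e\in I$ we conclude $x=xe\in I$.

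I expect this last step to be the genuine obstacle: it is the only place where properness of the original involution is actually invoked, and it is where one must pass from information about $x^*x$ back to information about $x$. The device that makes it work is replacing a naive idempotent decomposition of $x$ by the \emph{projection} $\RP(x^*x)$, so that the self-adjointness of $1-e$ can be fed into the positivity-type argument $z^*z=0\Rightarrow z=0$. Once properness in the quotient is secured, the three pieces assemble immediately into the conclusion that $\mathcal U/I$ is a $*$-regular ring.
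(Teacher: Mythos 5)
Your proof is correct and follows essentially the same route as the argument the paper relies on (it cites \cite[Proposition 3.3]{Ja17surv} rather than reproving it): use the relative inverse to see that $\RP(x)\in I$ forces $x^*=\RP(x)x^*\in I$, and then use the projection $e=\RP(x^*x)\in I$ together with $(x(1-e))^*x(1-e)=0$ and properness in $\mathcal U$ to get $x=xe\in I$. No gaps; the well-definedness of the induced involution and the regularity of the quotient are handled exactly as expected.
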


 We say that a $*$-regular ring $\mathcal U$ is {\bf positive definite} if $\Mat_n(\mathcal U)$ is $*$-regular for every $n\geq 1$.
 
 If $R$ is a $*$-subring of a $*$-regular ring $\mathcal U$, then we can construct the smallest $*$-regular subring of $\mathcal U$ containing $R$, as follows.   
\begin{pro} \cite[Proposition 6.2]{AG17} \label{closure}
Let $R$ be a $*$-subring of a $*$-regular ring $\mathcal U$. Then there exists a smallest $*$-regular subring $\mathcal{R}(R,\mathcal U)$ of $\mathcal U$ containing $R$. Moreover, it can be constructed as follows.
\begin{itemize}
 \item[-] Put $\mathcal{R}_0(R,\mathcal U):=R$, a $*$-subring of $\mathcal U$.
 \item[-] Suppose $n\geq 1$ and that we have constructed a $*$-subring $\mathcal{R}_n(R,\mathcal U)$ of $\mathcal U$. Then $\mathcal{R}_{n+1}(R,\mathcal U)$ is the $*$-subring of $\mathcal{U}$ generated by the elements of $\mathcal{R}_n(R,\mathcal U)$ and the relative inverses of its elements.
 \item[-] $\mathcal{R}(R,\mathcal U) = \bigcup_{n=0}^{\infty} \mathcal{R}_n(R,\mathcal U)$.
\end{itemize}
\end{pro}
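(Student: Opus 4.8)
The plan is to verify directly that the set $S := \bigcup_{n=0}^{\infty} \mathcal{R}_n(R,\mathcal U)$ produced by the iterative construction is a $*$-regular subring of $\mathcal U$ containing $R$, and then to establish its minimality by an induction whose crux is the ambient-independence of the relative inverse. First I would record that the sequence $\{\mathcal{R}_n(R,\mathcal U)\}$ is increasing: since $\mathcal{R}_{n+1}(R,\mathcal U)$ is by definition the $*$-subring generated by $\mathcal{R}_n(R,\mathcal U)$ together with the relative inverses of its elements, it contains $\mathcal{R}_n(R,\mathcal U)$. Being a directed union of $*$-subrings, $S$ is itself a $*$-subring of $\mathcal U$, and it contains $R = \mathcal{R}_0(R,\mathcal U)$.

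Next I would check that $S$ is $*$-regular. Given $x \in S$, it lies in some $\mathcal{R}_n(R,\mathcal U)$, whence its relative inverse $x^{[-1]}$ (computed in $\mathcal U$) belongs to $\mathcal{R}_{n+1}(R,\mathcal U) \subseteq S$. As $xx^{[-1]}x = x$, the element $x$ is von Neumann regular inside $S$, so $S$ is von Neumann regular. The involution inherited from $\mathcal U$ remains proper on the subring $S$, since $x^*x = 0$ with $x \in S$ already forces $x = 0$ in $\mathcal U$; hence $S$ is $*$-regular.

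The key step is minimality. Let $\mathcal T$ be any $*$-regular subring of $\mathcal U$ with $R \subseteq \mathcal T$; I would show $\mathcal{R}_n(R,\mathcal U) \subseteq \mathcal T$ for all $n$ by induction. The base case is $\mathcal{R}_0(R,\mathcal U) = R \subseteq \mathcal T$. For the inductive step, assume $\mathcal{R}_n(R,\mathcal U) \subseteq \mathcal T$ and take $x \in \mathcal{R}_n(R,\mathcal U) \subseteq \mathcal T$. Because $\mathcal T$ is itself $*$-regular, $x$ possesses a relative inverse $y \in \mathcal T$ satisfying $xyx = x$, $yxy = y$ with $xy$ and $yx$ projections. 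These identities persist verbatim in the overring $\mathcal U$, which carries the same multiplication and involution; by the uniqueness embodied in the characterization of the relative inverse recalled above (where $\RP(x)=x^{[-1]}x$ and $\LP(x)=xx^{[-1]}$ are projections and $x^{[-1]}xx^{[-1]}=x^{[-1]}$), the element $y$ must coincide with $x^{[-1]}$ as computed in $\mathcal U$. Hence every relative inverse used to build $\mathcal{R}_{n+1}(R,\mathcal U)$ already lies in $\mathcal T$, and since $\mathcal T$ is a $*$-subring it contains the $*$-subring these generate together with $\mathcal{R}_n(R,\mathcal U)$; that is, $\mathcal{R}_{n+1}(R,\mathcal U) \subseteq \mathcal T$. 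Taking the union gives $S \subseteq \mathcal T$, so $S$ is the smallest $*$-regular subring of $\mathcal U$ containing $R$, and we set $\mathcal{R}(R,\mathcal U) = S$.

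I expect the only delicate point to be this absoluteness of the relative inverse: one must use that $x^{[-1]}$ is determined by internal equational and self-adjointness conditions rather than by the ambient ring, so that its value is unchanged on passing between the $*$-regular subrings $\mathcal T$ and $\mathcal U$. Everything else is a routine verification that directed unions preserve both the ring structure and von Neumann regularity, together with the trivial inheritance of properness of the involution by a $*$-subring.
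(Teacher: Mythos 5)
Your proof is correct; the paper itself does not prove this proposition (it is quoted from \cite[Proposition 6.2]{AG17}), and your argument is essentially the standard one given there. The crux you isolated is exactly the right one: the relative inverse is pinned down by purely equational and self-adjointness conditions (Moore--Penrose uniqueness), so for $x$ in a $*$-regular subring $\mathcal T\subseteq\mathcal U$ its relative inverse computed in $\mathcal T$ coincides with the one computed in $\mathcal U$, which is what makes the minimality induction go through.
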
 We call $\mathcal{R}(R,\mathcal U)$ the {\bf $*$-regular closure} of $R$ in $\mathcal U$.

\subsection{The algebra of   affiliated operators}\label{UG}
For a countable group $G$ we denote by $\mathcal N(G)$ and $\mathcal U(G)$ its group von Neumann algebra and the algebra  affiliated to $\mathcal N(G)$. We direct the reader to the book  by  W. L\"uck   \cite{Lubook} for a detailed account of the subject. In this subsection we only recall the facts that we use in the paper.

The algebra $\U(G)$ is an example of a positive definite $*$-regular ring, where the $*$ is the adjoint operation.
The rank function $\rk_G$ can be extended to matrices over $\U(G)$. First observe that, given $A\in \Mat_{n\times m}(\mathcal N(G))$, we can see $A$ as an operator $l^2(G)^n\to l^2(G)^m$ and, by analogy with (\ref{rkG}), define $$\rk_G(A)=n-\dim_G \ker A.$$
Since  $\U(G)$ is isomorphic to the completion of $\mathcal N(G)$ with respect to the metric induced by $\rk_G$,  we can extend continuously $\rk_G$ on the matrices over $\mathcal U(G)$ as well.

If $H$ is a subgroup of $G$,  then $l^2(G)$ can be thought of as the Hilbert completion of $\oplus_{t\in T} \,t\,l^2(H)$, where $T$ is a right transversal of $H$ in $G$. Hence, we can identify any element $\varphi$ of the group von Neumann algebra $\mathcal{N}(H)$ with the element of $\mathcal{N}(G)$ that assigns to any tuple in $\oplus_{t\in T} \,t\,l^2(H)$ the tuple obtained by applying $\varphi$ component-wise. This gives an embedding of $\mathcal N(H)$ into $\mathcal N(G)$, that can be extended uniquely to an embedding of $\mathcal U(H)$ into $\mathcal U(G)$. In the following, we will often consider $\U(H)$ as a subalgebra of $\U(G)$ without further explanations.

Let  $K$ be a subfield of $\CC$ closed under complex conjugation. The group algebra $K[G]$ is a $*$-subring of $\mathcal U(G)$ with the usual involution given by $(\lambda g)^* = \bar{\lambda}g^{-1}$, and the $*$-regular closure of $K[G]$ in $\mathcal U(G)$ is denoted by $\mathcal R_{K[G]}$. For an arbitrary group $G$, $\mathcal R_{K[G]}$ is defined as the direct union of $\{\mathcal R_{K[H]}$:  $H$  is a finitely generated subgroup of $G\}$.

\color{black}

\subsection{Epic homomorphisms}
We say that a homomorphism of rings $\varphi: R\rightarrow S$ is {\bf epic} if it is right cancellable, i.e., for every ring $Q$ and homomorphisms $\psi,\phi: S\rightarrow Q$, we have that equality of compositions $\psi \circ \varphi = \phi \circ \varphi$ implies $\psi = \phi$. There exists a characterization of epic morphisms in terms of the tensor product $S\otimes_RS$. 
\begin{pro} \cite[Proposition 4.1.1]{CohSF} \label{epic}  Let $\varphi: R \rightarrow S$ be a ring homomorphism. Then, the following are equivalent:
\begin{itemize}
  \item[(i)] $\varphi$ is epic.
  \item[(ii)] in the $S$-bimodule $S\otimes_R S$, we have $x\otimes 1 = 1 \otimes x$ for every $x\in S$.
  \item[(iii)] the multiplication map $m: S\otimes_R S \rightarrow S$ given by $x\otimes y \mapsto xy$ is an isomorphism of $S$-bimodules.
\end{itemize}
\end{pro}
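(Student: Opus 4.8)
The plan is to verify the equivalences by proving (i)$\Rightarrow$(ii), the easy pair (ii)$\Leftrightarrow$(iii), and finally (ii)$\Rightarrow$(i); this closes all the loops. The recurring idea is that the abstract right-cancellation property in (i) and the concrete tensor relation in (ii) are two faces of the same fact, mediated by the $S$-bimodule $M=S\otimes_R S$ and its multiplication map $m$.

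First I would prove (i)$\Rightarrow$(ii) by manufacturing a test ring. Since ``epic'' only sees ring homomorphisms out of $S$, I would form the square-zero (trivial) extension $Q=S\oplus M$, with multiplication $(s,u)(s',u')=(ss',\,su'+us')$ built from the bimodule structure of $M$, and define $\phi,\psi\colon S\to Q$ by $\phi(s)=(s,0)$ and $\psi(s)=(s,\,s\otimes 1-1\otimes s)$. A direct computation using $s(s'\otimes 1-1\otimes s')+(s\otimes 1-1\otimes s)s'=ss'\otimes 1-1\otimes ss'$ shows that $\psi$ is a ring homomorphism. Because the tensor product is balanced over $R$ acting through $\varphi$, one has $\varphi(r)\otimes 1=1\otimes\varphi(r)$, so $\psi$ and $\phi$ agree on $\varphi(R)$; epicness then forces $\psi=\phi$, which says precisely that $x\otimes 1=1\otimes x$ for all $x\in S$.

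The equivalence (ii)$\Leftrightarrow$(iii) is bookkeeping around $m$, which is always a surjective $S$-bimodule map, so the content is injectivity. Assuming (ii), the identity $s\otimes t=s\cdot(1\otimes t)=s\cdot(t\otimes 1)=st\otimes 1$ shows that any $u=\sum_i s_i\otimes t_i$ equals $m(u)\otimes 1$; hence $m(u)=0$ implies $u=0$, and $s\mapsto s\otimes 1$ is a two-sided inverse of $m$. Conversely, assuming (iii), from $m(s\otimes 1)=s=m(1\otimes s)$ and the injectivity of $m$ I recover $s\otimes 1=1\otimes s$, which is (ii).

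Finally, for (ii)$\Rightarrow$(i), given $\psi,\phi\colon S\to Q$ with $\psi\circ\varphi=\phi\circ\varphi$, the equality $\psi(\varphi(r))=\phi(\varphi(r))$ makes $\theta(s\otimes s'):=\psi(s)\phi(s')$ well defined (it is $R$-balanced), giving an $S$-bimodule map $\theta\colon S\otimes_R S\to Q$ with $\theta(s\otimes 1)=\psi(s)$ and $\theta(1\otimes s)=\phi(s)$; applying (ii) yields $\psi=\phi$. The only genuinely creative step is (i)$\Rightarrow$(ii): recognizing that the square-zero extension together with the derivation-like map $\psi$ translates the cancellation hypothesis into the tensor identity. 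The rest I expect to be routine verification of bimodule compatibilities.
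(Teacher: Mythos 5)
Your proof is correct. Note that the paper itself gives no argument for this proposition: it is quoted directly from Cohn's book \cite[Proposition 4.1.1]{CohSF}. Your argument is essentially the classical one found there (and going back to Silver and Mazet): the square-zero extension $Q=S\oplus(S\otimes_R S)$ with the ``derivation-like'' homomorphism $s\mapsto (s,\,s\otimes 1-1\otimes s)$ for (i)$\Rightarrow$(ii), the observation that every $u\in S\otimes_R S$ equals $m(u)\otimes 1$ for (ii)$\Leftrightarrow$(iii), and the balanced map $\theta(s\otimes s')=\psi(s)\phi(s')$ for (ii)$\Rightarrow$(i). All the verifications (that $\psi$ is multiplicative, that $\varphi(r)\otimes 1=1\otimes\varphi(r)$ by $R$-balancedness, and that $\theta$ is well defined) are accurate, so the proposal stands as a complete proof of the cited result.
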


In addition, when $\varphi$ is a $*$-homomorphism from a $*$-ring to a $*$-regular ring, we have another nice characterization in terms of $*$-regular closures.
\begin{pro}
  Let $R$ be a $*$-ring, $\mathcal U$ a $*$-regular ring and $\varphi: R\rightarrow \mathcal U$ a $*$-homomorphism. Then $\varphi$ is epic if and only if $\mathcal U$ is the $*$-regular closure of $\varphi(R)$ in $\mathcal U$, i.e., $\mathcal U = \mathcal{R}(\varphi(R),\mathcal U)$.
\end{pro}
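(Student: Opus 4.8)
The plan is to prove the stronger statement that the \emph{dominion} of $\varphi(R)$ in $\mathcal U$, namely the set $T=\{x\in\mathcal U : x\otimes 1=1\otimes x \text{ in } \mathcal U\otimes_R\mathcal U\}$, coincides with $\mathcal R(\varphi(R),\mathcal U)$. By Proposition \ref{epic}(ii), $\varphi$ is epic exactly when $T=\mathcal U$, so the identity $T=\mathcal R(\varphi(R),\mathcal U)$ yields both implications at once: $\varphi$ epic $\iff T=\mathcal U \iff \mathcal R(\varphi(R),\mathcal U)=\mathcal U$. I would therefore establish the two inclusions $\mathcal R(\varphi(R),\mathcal U)\subseteq T$ and $T\subseteq\mathcal R(\varphi(R),\mathcal U)$ separately, the first delivering the ``if'' direction and the second the ``only if'' direction.

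For $\mathcal R(\varphi(R),\mathcal U)\subseteq T$, I would first check that $T$ is a $*$-subring of $\mathcal U$ containing $\varphi(R)$: that $T$ is a subring follows from the $\mathcal U$-bimodule structure of $\mathcal U\otimes_R\mathcal U$, that it contains $\varphi(R)$ is immediate since the tensor product is taken over $R$, and that it is $*$-closed follows because $(u\otimes v)^*=v^*\otimes u^*$ is a well-defined involution on $\mathcal U\otimes_R\mathcal U$ (here using that $R$ is a $*$-ring and $\varphi$ a $*$-homomorphism) which sends $x\otimes 1=1\otimes x$ to $x^*\otimes 1=1\otimes x^*$. In view of the explicit description of the $*$-regular closure in Proposition \ref{closure}, it then suffices to prove that $T$ is closed under relative inverses; an immediate induction on the construction of $\mathcal R(\varphi(R),\mathcal U)$ gives the inclusion.

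The closure of $T$ under relative inverses is the heart of the argument and the step I expect to be the main obstacle. Using the formula $x^{[-1]}=(x^*x)^{[-1]}x^*$, valid in any $*$-regular ring, and the fact that $T$ is a $*$-subring, I would reduce to showing $a^{[-1]}\in T$ for a \emph{self-adjoint} $a\in T$. Writing $p=\RP(a)=\LP(a)$ (these projections coincide for self-adjoint $a$, as $a^{[-1]}$ is then self-adjoint), the key observation is that $a\otimes(1-p)=0$ and $(1-p)\otimes a=0$ in $\mathcal U\otimes_R\mathcal U$; both follow from $a\otimes 1=1\otimes a$ together with $p=a^{[-1]}a=aa^{[-1]}$ and $ap=pa=a$. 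Multiplying these identities by $a^{[-1]}$ on the appropriate side and using $a^{[-1]}p=pa^{[-1]}=a^{[-1]}$ propagates the vanishing to $p\otimes(1-p)=(1-p)\otimes p=0$ and then to $a^{[-1]}\otimes(1-p)=(1-p)\otimes a^{[-1]}=0$; combined with $a^{[-1]}\otimes p=p\otimes a^{[-1]}$ (itself coming from $a^{[-1]}\otimes a=p\otimes 1$), this gives $a^{[-1]}\otimes 1=1\otimes a^{[-1]}$, i.e. $a^{[-1]}\in T$.

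For the reverse inclusion $T\subseteq\mathcal R(\varphi(R),\mathcal U)$, I would pass to $\mathcal R:=\mathcal R(\varphi(R),\mathcal U)$, which is von Neumann regular by construction, so every $\mathcal R$-module is flat. Since $\varphi(R)\subseteq\mathcal R$, the canonical surjection $\mathcal U\otimes_R\mathcal U\twoheadrightarrow\mathcal U\otimes_{\mathcal R}\mathcal U$ shows $T$ is contained in the dominion of $\mathcal R$ in $\mathcal U$, so it is enough to prove that this latter dominion equals $\mathcal R$. Given $x$ with $x\otimes 1=1\otimes x$ in $\mathcal U\otimes_{\mathcal R}\mathcal U$, I would apply the natural map $\mathcal U\otimes_{\mathcal R}\mathcal U\to\mathcal U\otimes_{\mathcal R}(\mathcal U/\mathcal R)$, under which $x\otimes 1\mapsto 0$ while $1\otimes x\mapsto 1\otimes\overline x$, forcing $1\otimes\overline x=0$. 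Because $\mathcal U/\mathcal R$ is flat, tensoring the inclusion $\mathcal R\hookrightarrow\mathcal U$ with it keeps the map $c\mapsto 1\otimes c$ injective, so $\overline x=0$, i.e. $x\in\mathcal R$. This proves $T\subseteq\mathcal R$ and completes the identification $T=\mathcal R(\varphi(R),\mathcal U)$, from which the proposition follows.
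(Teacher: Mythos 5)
Your proof is correct, and it takes a genuinely different, self-contained route from the paper. The paper disposes of both implications by citation: the ``if'' direction is taken verbatim from \cite[Proposition 6.1]{Ja17base}, and the ``only if'' direction follows by observing that the inclusion $\mathcal{R}(\varphi(R),\mathcal U)\hookrightarrow\mathcal U$ is epic (since $\varphi$ factors through it) and then invoking \cite[Proposition XI.1.4]{Ste}, i.e.\ the fact that an epic ring monomorphism out of a von Neumann regular ring is surjective. You instead prove the sharper identity that the dominion $T$ of $\varphi(R)$ in $\mathcal U$ equals $\mathcal{R}(\varphi(R),\mathcal U)$, from which both implications drop out at once via Proposition \ref{epic}(ii). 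Your inclusion $\mathcal{R}(\varphi(R),\mathcal U)\subseteq T$ replaces the appeal to \cite{Ja17base}: the reduction to self-adjoint elements via $x^{[-1]}=(x^*x)^{[-1]}x^*$ (a valid identity, by uniqueness of the relative inverse), and the propagation of $a\otimes(1-p)=(1-p)\otimes a=0$ through $p\otimes(1-p)=(1-p)\otimes p=0$ and $a^{[-1]}\otimes p=p\otimes a^{[-1]}$ to $a^{[-1]}\otimes 1=1\otimes a^{[-1]}$, all check out; this is exactly where the $*$-regular structure is used, and the induction over the stages $\mathcal{R}_n(\varphi(R),\mathcal U)$ of Proposition \ref{closure} then closes the argument. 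Your reverse inclusion $T\subseteq\mathcal{R}(\varphi(R),\mathcal U)$ is in substance the standard proof of the cited Stenström result: von Neumann regularity of the closure makes $\mathcal U/\mathcal{R}$ flat, so the map $c\mapsto 1\otimes c$ into $\mathcal U\otimes_{\mathcal R}(\mathcal U/\mathcal R)$ is injective, forcing any element of the dominion into $\mathcal R$. What your approach buys is self-containedness and a strictly stronger conclusion (the dominion of $\varphi(R)$ is the $*$-regular closure for \emph{any} $*$-homomorphism into a $*$-regular ring, epic or not); what the paper's buys is brevity.
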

\begin{proof}
  The ``if" part is  \cite[Proposition 6.1]{Ja17base}.  In order to see the ``only if" part, observe that if $\varphi$ is epic, then the inclusion map $\mathcal{R}(\varphi(R),\mathcal U)\rightarrow \mathcal U$ is clearly epic, and so surjective by \cite[Proposition XI.1.4]{Ste}.
\end{proof}

The following lemma shows that in the above setting, the center $Z(\varphi(R))$ of the image of $R$ is contained in the center $Z(\mathcal{U})$ of $\mathcal{U}$:

\begin{lem} \label{center}
  Let $R$ be a subring of a ring $S$ with epic embedding $R\hookrightarrow S$. Then $Z(R)\subseteq Z(S)$. 
\end{lem}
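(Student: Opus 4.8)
The plan is to exploit the tensor-product characterization of epic homomorphisms from Proposition \ref{epic}: since the inclusion $R\hookrightarrow S$ is epic, in the $S$-bimodule $S\otimes_R S$ one has $x\otimes 1 = 1\otimes x$ for every $x\in S$. Fix $z\in Z(R)$; the goal is to show $sz=zs$ for every $s\in S$. The idea is to manufacture a well-defined map out of $S\otimes_R S$ into which I can feed the identity $s\otimes 1 = 1\otimes s$, arranged so that the two sides read off as $sz$ and $zs$ respectively.

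Concretely, I would define a map $g\colon S\otimes_R S\to S$ on simple tensors by $g(a\otimes b)=azb$ and extend it additively. First I would check that $g$ is well defined, i.e. that the assignment $(a,b)\mapsto azb$ is $R$-balanced: for $r\in R$ one has $g(ar\otimes b)=arzb$ while $g(a\otimes rb)=azrb$, and these agree precisely because $z\in Z(R)$ gives $rz=zr$, so $g$ factors through the tensor product over $R$. Now evaluate $g$ on both sides of the epic identity $s\otimes 1=1\otimes s$: since $g$ is a genuine function, $g(s\otimes 1)=g(1\otimes s)$, that is $sz=zs$. As $s\in S$ was arbitrary, $z\in Z(S)$, and hence $Z(R)\subseteq Z(S)$.

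The one step that carries all the content is the well-definedness of $g$, and this is exactly where centrality of $z$ in $R$ is used; everything else is formal manipulation with the universal property of $\otimes_R$ and the characterization of epimorphisms, so I expect no real obstacle beyond being careful that the balancing relation is the only relation of $S\otimes_R S$ that must be respected. It is worth noting that the same argument, applied to any $c\in S$ commuting with all of $R$ (define instead $g(a\otimes b)=acb$), shows more generally that the centralizer of $R$ in $S$ is contained in $Z(S)$; the lemma is the special case $c=z\in Z(R)\subseteq R$.
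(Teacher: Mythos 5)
Your proof is correct and follows essentially the same route as the paper: the paper defines a homomorphism $\phi\colon S\otimes_R S\to S\otimes_R S$, $x\otimes y\mapsto x\otimes zy$ (well defined by centrality of $z$ in $R$), and composes it with the multiplication map $m$ of Proposition \ref{epic}, which yields exactly your map $g(a\otimes b)=azb$; evaluating on $s\otimes 1=1\otimes s$ then gives $sz=zs$ in both arguments. The only difference is cosmetic — you build $g$ in one step rather than as $m\circ\phi$ — and your closing remark that the argument really shows the centralizer of $R$ in $S$ lies in $Z(S)$ is a valid observation.
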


\begin{proof}
  For every $a\in Z(R)$, the map $ S\times S \rightarrow S\otimes_R S$ given by $(x,y)\mapsto x\otimes ay$ is $R$-bilinear, and so there exists a well-defined homomorphism $\phi: S\otimes_R S \rightarrow S\otimes_R S$ with $\phi(x\otimes y) = x\otimes ay$. If $m: S\otimes_R S \rightarrow S$ denotes the multiplication map, then in view of Proposition \ref{epic}, we deduce that for all $x\in S$,
  $$
  xa = m\phi(x\otimes 1) = m \phi (1\otimes x) = ax
  $$
Therefore, $a\in Z(S)$.  
\end{proof}
\subsection{Sylvester rank functions}
The notions of Sylvester matrix rank function $\rk$ and Sylvester module rank function (on finitely presented modules) $\dim$ were introduced in \cite{Mlc}, and to learn more about their properties in our setting one can consult \cite[Section 5]{Ja17surv}. 

Let $R$  be a ring. A {\bf Sylvester matrix rank function} $\rk$ on $R$ is a function that assigns a non-negative real number to each matrix over $R$ and satisfies the following conditions.
 \begin{enumerate}
\item [(SMat1)] $\rk(M)=0$ if $M$ is any zero matrix and $\rk(1)=1$;
\item [(SMat2)]  $\rk(M_1M_2) \le \min\{\rk(M_1), \rk(M_2)\}$ for any matrices $M_1$ and $M_2$ which can be multiplied;
\item[(SMat3)] $\rk\left (\begin{array}{cc} M_1 & 0\\ 0 & M_2\end{array}\right ) = \rk(M_1) + \rk(M_2)$ for any matrices $M_1$ and $M_2$;
\item[(SMat4)] $\rk \left (\begin{array}{cc} M_1 & M_3\\ 0 & M_2\end{array}\right ) \ge \rk(M_1) + \rk(M_2)$ for any matrices $M_1$, $M_2$ and $M_3$ of appropriate sizes.
\end{enumerate}
Observe that over a von Neumann regular ring the notion of Sylvester matrix rank function coincides with the notion of pseudo-rank function that appears in \cite{Gdr}, and hence it is determined by its values on elements.

A {\bf Sylvester module rank function} $\dim$ on $R$ is a function that   assigns a non-negative real number to each finitely presented $R$-module   and satisfies the following conditions.
  \begin{enumerate}
\item [(SMod1)] $\dim \{0\} =0$, $\dim R =1$;
\item [(SMod2)]  $\dim(M_1\oplus M_2)=\dim M_1+\dim M_2$;
\item[(SMod3)] if $M_1\to M_2\to M_3\to 0$ is exact then
$$\dim M_1+\dim M_3\ge \dim M_2\ge \dim M_3.$$
\end{enumerate}
There exists  a natural bijection between Sylvester matrix and module rank functions over a ring.
\begin{pro}
Let $R$ be a ring. 
\begin{itemize}
   \item[(i)] If $\rk$ is a Sylvester matrix rank function on $R$, then we can define a Sylvester module rank function by assigning to any finitely presented module with presentation $M = R^m/R^nA$ for some $A\in Mat_{n\times m}(R)$, the value
   $$
   \dim(M):= m-\rk(A).
   $$
   This value does not depend on the given presentation.
   \item[(ii)] If $\dim$ is a Sylvester module rank function on $R$, then we can define a Sylvester matrix rank function by assigning to each $A\in Mat_{n\times m}(R)$, the value
   $$
   \rk(A):= m-\dim(R^m/R^nA).
   $$
\end{itemize}
We say in this case that $\rk$ and $\dim$ are {\bf associated}.
\end{pro}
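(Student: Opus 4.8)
The plan is to check that both assignments are well defined, that they land in the prescribed classes of rank functions, and that they are mutually inverse. I will freely use a few elementary consequences of (SMat1)--(SMat4): that $\rk$ is unchanged under left or right multiplication by invertible matrices and under adjoining zero rows (from (SMat2)--(SMat3)); that $\rk\bigl(\begin{smallmatrix} A & 0\\ 0 & I_k\end{smallmatrix}\bigr)=\rk(A)+k$ by (SMat3); and that adjoining rows cannot decrease the rank, since $A=(I\mid 0)\bigl(\begin{smallmatrix} A\\ B\end{smallmatrix}\bigr)$ forces $\rk(A)\le\rk\bigl(\begin{smallmatrix} A\\ B\end{smallmatrix}\bigr)$ via (SMat2).

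For part (i) the essential point is independence of the presentation, and this is the step I expect to be the \emph{main obstacle}. I would first observe that each elementary move relating two presentation matrices of a fixed module preserves $m-\rk(A)$: adjoining a zero relation row changes neither $m$ nor $\rk(A)$; passing to $\bigl(\begin{smallmatrix} A\\ CA\end{smallmatrix}\bigr)$ preserves $\rk$ because $\bigl(\begin{smallmatrix} A\\ CA\end{smallmatrix}\bigr)=\bigl(\begin{smallmatrix} I & 0\\ C & I\end{smallmatrix}\bigr)\bigl(\begin{smallmatrix} A\\ 0\end{smallmatrix}\bigr)$; a change of basis of generators or relations multiplies $A$ by invertible matrices; and the stabilization $A\mapsto\bigl(\begin{smallmatrix} A & 0\\ 0 & 1\end{smallmatrix}\bigr)$ raises both $m$ and $\rk$ by one. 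It then remains to prove the module-theoretic fact that any two finite presentations of isomorphic modules are linked by such moves. I would do this in the usual way: an isomorphism $\theta\colon R^m/R^nA\to R^{m'}/R^{n'}A'$ with chosen matrix representatives $P$ of $\theta$ and $Q$ of $\theta^{-1}$ yields identities $AP=XA'$, $A'Q=X'A$, $PQ=I_m+YA$ and $QP=I_{m'}+Y'A'$, which assemble into invertible matrices realizing $\diag(A,I_{m'})$ and $\diag(I_m,A')$ as associated; comparing numbers of columns and ranks then gives $m-\rk(A)=m'-\rk(A')$. Granting this, the verification that $\dim$ obeys (SMod1)--(SMod3) is routine: (SMod1) follows from $\rk(1)=1$ and the fact that an empty relation matrix has rank $0$; (SMod2) is (SMat3) applied to the block-diagonal presentation of $M_1\oplus M_2$; and for (SMod3) with $M_1\to M_2\to M_3\to 0$ exact, the bound $\dim M_2\ge\dim M_3$ comes from a presentation of $M_3$ whose relation matrix extends one of $M_2$ by extra rows, while $\dim M_1+\dim M_3\ge\dim M_2$ comes from assembling a block upper-triangular presentation $\bigl(\begin{smallmatrix} A_3 & *\\ 0 & A_1\end{smallmatrix}\bigr)$ of $M_2$ out of compatible presentations of $M_3$ and of the image of $M_1$, and invoking (SMat4).

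For part (ii) I would verify (SMat1)--(SMat4) for $\rk(A):=m-\dim(R^m/R^nA)$. (SMat1) is immediate, (SMat3) is (SMod2) applied to $\coker\bigl(\begin{smallmatrix} M_1 & 0\\ 0 & M_2\end{smallmatrix}\bigr)\cong\coker(M_1)\oplus\coker(M_2)$, and (SMat4) follows by setting $C=\bigl(\begin{smallmatrix} M_1 & M_3\\ 0 & M_2\end{smallmatrix}\bigr)$ and projecting $\coker(C)$ onto its first block of coordinates to obtain an exact sequence $\coker(M_2)\to\coker(C)\to\coker(M_1)\to 0$, to which (SMod3) applies. In (SMat2), writing $M_1\in\Mat_{n\times p}$ and $M_2\in\Mat_{p\times m}$, the inclusion $R^nM_1M_2\subseteq R^pM_2$ gives a surjection $\coker(M_1M_2)\twoheadrightarrow\coker(M_2)$ and hence $\rk(M_1M_2)\le\rk(M_2)$ by (SMod3).

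The companion inequality $\rk(M_1M_2)\le\rk(M_1)$ is the one delicate point, since it is not visible from a cokernel surjection: the construction is inherently asymmetric and favors the right factor. Here I would extend $\dim$ additively to all finitely generated modules (the standard extension preserving additivity on short exact sequences), identify $\rk(A)$ with the dimension of the image $R^nA$, and note that $R^nM_1M_2=(R^nM_1)M_2$ is a homomorphic image of $R^nM_1$, so its dimension is at most $\rk(M_1)$. Finally, the two constructions are mutually inverse essentially by definition, since the associated $\dim$ satisfies $\dim(R^m/R^nA)=m-\rk(A)$, so feeding this back produces $m-(m-\rk(A))=\rk(A)$ and symmetrically for the other composition; this yields the claimed bijection. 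I expect independence of presentation in (i) to be the principal difficulty, with the left-factor inequality in (SMat2) the secondary point requiring care.
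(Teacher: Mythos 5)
Most of your write-up is sound: part (i), including the presentation-independence argument via elementary moves, and the verifications of (SMat1), (SMat3), (SMat4) and the right-factor half of (SMat2) in part (ii) all go through (for comparison, the paper gives no proof of this proposition at all, deferring to Malcolmson \cite{Mlc}, so correctness is the only benchmark). The genuine gap is exactly at the step you yourself call delicate: the inequality $\rk(M_1M_2)\le\rk(M_1)$. Your argument extends $\dim$ ``additively'' to all finitely generated modules and identifies $\rk(A)$ with the dimension of the image $R^nA$. But such an additive, isomorphism-invariant extension exists only for \emph{exact} Sylvester module rank functions --- this is precisely the content of Proposition \ref{extlength}, quoted from Virili --- and exactness is an extra hypothesis, not a consequence of (SMod1)--(SMod3). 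Concretely, take $R=\Z$ and $\dim(M)=\dim_{\F_p}(M\otimes_{\Z}\F_p)$, the module rank function associated to reduction modulo $p$. Then $\rk\bigl((p)\bigr)=0$ while $\im\,(p)=p\Z\cong\Z$, and every finitely presented $L$ mapping onto $p\Z$ has $\dim L\ge 1$. Worse, since $p\Z\cong\Z$ but $\rk\bigl((p)\bigr)=0\ne 1=\rk\bigl((1)\bigr)$, the rank of a matrix is simply not an invariant of the isomorphism type of its image, so no isomorphism-invariant assignment of dimensions to finitely generated modules can validate the chain $\rk(M_1M_2)\le\dim(\im(M_1M_2))\le\dim(\im M_1)\le\rk(M_1)$.

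The inequality does follow from (SMod1)--(SMod3) alone, but by an argument that stays within finitely presented modules. Right multiplication by the invertible matrix $\bigl(\begin{smallmatrix} I_p & -M_2\\ 0 & I_m\end{smallmatrix}\bigr)$ turns $(M_1\mid M_1M_2)$ into $(M_1\mid 0)$, so $\coker(M_1\mid M_1M_2)\cong \coker(M_1)\oplus R^m$. The projection of $R^{p+m}$ onto the last $m$ coordinates carries the row space of $(M_1\mid M_1M_2)$ onto $R^nM_1M_2$, giving a surjection $\coker(M_1\mid M_1M_2)\twoheadrightarrow\coker(M_1M_2)$ whose kernel is the image of $R^p\oplus 0$. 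Hence
$$R^p\longrightarrow \coker(M_1)\oplus R^m\longrightarrow \coker(M_1M_2)\longrightarrow 0$$
is exact with all three terms finitely presented, and (SMod3) yields $p+\dim\coker(M_1M_2)\ge \dim\coker(M_1)+m$, which is precisely $\rk(M_1M_2)\le\rk(M_1)$. With this replacement (the remaining details of your outline being routine to complete), the proof is correct.
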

The proof of this proposition can be found in \cite{Mlc} for integer-valued Sylvester rank functions but the proof works {similarly} without this additional assumption.

{As an easy example, if $R = \D$ is a division ring, we obtain from (SMod2) that there exists only one Sylvester module (and hence, matrix) rank function on $\D$, namely, the usual dimension $\dim_{\D}$ on vector spaces over $\D$.}

A Sylvester matrix rank function $\rk$ on $R$ is said to be {\bf faithful} if it does not vanish on elements of $R$, i.e., the (two-sided) ideal of $R$
$$\ker \rk=\{a\in R: \ \rk(a)=0\}$$
is equal  to $\{0\}$. From the property (SMat4) of a Sylvester matrix rank function it follows that if $\rk$ is faithful, then for any non-zero matrix $A$ over $R$, $\rk(A)\ne 0$.  Although the following lemma is just a standard observation, it is helpful to record it for future reference:
\begin{lem} \label{faithreg}
Let $\rk$ be  a faithful Sylvester matrix rank function on  a regular ring $\mathcal U$. Then a square matrix $A\in \Mat_n(\mathcal U)$ is invertible if and only if $\rk(A)=n$.
\end{lem}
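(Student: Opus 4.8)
The statement to prove is Lemma~\ref{faithreg}: for a faithful Sylvester matrix rank function $\rk$ on a von Neumann regular ring $\mathcal U$, a square matrix $A\in\Mat_n(\mathcal U)$ is invertible if and only if $\rk(A)=n$.

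The plan is to prove both implications using the basic axioms (SMat1)--(SMat4) together with the faithfulness hypothesis and the regularity of $\mathcal U$. First I would handle the easy direction: if $A$ is invertible, then by (SMat2) applied to $A$ and $A^{-1}$ we get $n=\rk(\Id_n)=\rk(AA^{-1})\le\min\{\rk(A),\rk(A^{-1})\}$, and since $\rk$ of any $n\times n$ matrix is at most $n$ (this itself follows from (SMat2) together with (SMat1), as $\rk(A)=\rk(A\cdot\Id_n)\le\rk(\Id_n)=n$), we conclude $\rk(A)=n$.

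The substantive direction is the converse: assuming $\rk(A)=n$, I want to show $A$ is invertible. The key idea is to exploit that $\mathcal U$ is von Neumann regular, so $\Mat_n(\mathcal U)$ is again von Neumann regular (a standard fact). Hence $A$ admits a von Neumann inverse $B\in\Mat_n(\mathcal U)$ with $ABA=A$. Then $E:=\Id_n-BA$ is an idempotent with $AE=A-ABA=0$, so $E$ measures the failure of left-invertibility; dually $E':=\Id_n-AB$ satisfies $E'A=0$. My aim is to show $E=0$ (and $E'=0$), which forces $BA=\Id_n$ and $AB=\Id_n$, i.e. invertibility. To see $\rk(E)=0$, I would compare the ranks using the block and product axioms: from $ABA=A$ and the submultiplicativity (SMat2) one gets $\rk(A)\le\rk(BA)\le\rk(A)$, so $\rk(BA)=n=\rk(A)$. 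Over a von Neumann regular ring $\rk$ is a pseudo-rank function determined on idempotents, and the identity $\Id_n=BA+E$ with $BA$ and $E$ orthogonal-type idempotents should give $n=\rk(\Id_n)=\rk(BA)+\rk(E)$; since $\rk(BA)=n$ this yields $\rk(E)=0$. I would make the additivity precise by writing $BA$ and $E=\Id_n-BA$ as complementary idempotents and invoking the remark in the excerpt that over a von Neumann regular ring the rank is a pseudo-rank function (so additive on orthogonal idempotents, as in \cite{Gdr}).

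The main obstacle will be passing from $\rk(E)=0$ to $E=0$. Faithfulness as stated says $\rk$ does not vanish on nonzero elements of $\mathcal U$ itself, so I must upgrade it to $\Mat_n(\mathcal U)$: the point is that if the idempotent $E=(e_{ij})$ were nonzero, some entry $e_{ij}\neq 0$, and I would need to bound $\rk(e_{ij})$ below by a quantity controlled by $\rk(E)$. Concretely, the entry $e_{ij}$ is extracted from $E$ by multiplying on the left and right by the appropriate elementary (partial identity) matrices, so by (SMat2) $\rk(e_{ij})\le\rk(E)=0$; but faithfulness then gives $e_{ij}=0$, a contradiction. Thus every entry of $E$ vanishes, so $E=0$ and $BA=\Id_n$; the symmetric argument gives $AB=\Id_n$, completing the proof that $A$ is invertible. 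The only delicate bookkeeping is verifying the additivity $\rk(\Id_n)=\rk(BA)+\rk(E)$ rigorously from the axioms, which is where the regularity of $\Mat_n(\mathcal U)$ and the pseudo-rank description are essential.
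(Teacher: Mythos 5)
Your proof is correct, and its skeleton is the same as the paper's: the easy direction via (SMat2)--(SMat3), then regularity of $\Mat_n(\mathcal U)$ to produce a von Neumann inverse $B$, a rank computation showing the defect of $BA$ from $\Id_n$ has rank zero, and faithfulness to kill it. The difference is in the key sub-lemma. The paper first proves the \emph{element} case: for $x\in\mathcal U$ with $\rk(x)=1$ and $xyx=x$ it gets $\rk(yx-1)=\rk(x(yx-1))=0$ from a Frobenius-type rank inequality (citing \cite[Proposition 5.1(3)]{Ja17base}), and then settles matrices by applying this to the faithful rank $\rk/n$ on the regular ring $\Mat_n(\mathcal U)$ --- faithfulness of $\rk/n$ on nonzero matrices being the remark, derived from (SMat4), stated just before the lemma. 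You instead stay at the matrix level and obtain $\rk(\Id_n-BA)=0$ from additivity of the (pseudo-)rank on complementary orthogonal idempotents, i.e. $n=\rk(BA)+\rk(\Id_n-BA)$, and then recover faithfulness for matrices by extracting entries with selector matrices and (SMat2). Both sub-lemmas are legitimate: the idempotent additivity you want is literally an axiom of pseudo-rank functions in \cite{Gdr}, and it can also be derived directly from (SMat1)--(SMat4), since for orthogonal idempotents $e,f$ one has $\left(\begin{smallmatrix} e & 0\\ 0 & f\end{smallmatrix}\right)=\left(\begin{smallmatrix} e\\ f\end{smallmatrix}\right)(e+f)\left(\begin{smallmatrix} e & f\end{smallmatrix}\right)$, giving $\rk(e)+\rk(f)\le\rk(e+f)$, while subadditivity gives the reverse. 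So your argument is marginally more self-contained (it avoids the external Frobenius-inequality citation), at the cost of having to justify the idempotent additivity; the paper's version is shorter on the page precisely because it outsources that step.
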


\begin{proof}
 It is clear that any invertible matrix has maximum rank. Now, assume $x\in \mathcal U$ has rank $\rk(x)=1$ and let $y\in \mathcal U$ be such that $xyx=x$. Then, using \cite[Proposition 5.1(3)]{Ja17base},
 $$  \rk(yx-1) = \rk(x(yx-1))=0,$$ 
 and so, by faithfulness, $yx=1$. Similarly $xy=1$. Thus, $x$ is invertible. 
 
 For the general case, take $A\in \Mat_{n}(\mathcal U)$ with $\rk(A)=n$, and notice that $\rk^\prime = \frac{\rk}{n}$ defines a faithful rank on the regular ring $\Mat_n(\mathcal U)$ and $\rk^\prime(A)=1$. By the above reasoning, $A$ is invertible.
\end{proof}

We denote by $\mathbb{P}(R)$ the set of Sylvester matrix rank functions on $R$, which is a compact convex subset of the space of functions on matrices over $R$. A useful observation is that a ring homomorphism $\varphi: R \rightarrow S$ induces a continuous map $\varphi^{\sharp}: \mathbb{P}(S) \rightarrow \mathbb{P}(R)$, i.e., we can pull back any rank function $\rk$ on $S$ to a rank function $\varphi^{\sharp}(\rk)$ on $R$ by just defining
$$
\varphi^{\sharp}(\rk)(A) = \rk(\varphi(A))
$$  
for every matrix $A$ over $R$. We will often abuse the notation and write $\rk$ instead of $\varphi^{\sharp}(\rk)$ when it is clear that we speak about the rank function on $R$. Recently, H. Li \cite{Li19} proved that if $\phi$ is epic then $\phi^\sharp$ is injective, and so  $\mathbb{P}(S)$ can be seen as a closed subset of $\mathbb{P}(R)$.  

If $\rk$ is a Sylvester matrix rank function on a ring $S$, then  $\rk$ induces a faithful rank function on $S/\ker \rk$. 
 If $\rk $ is faithful on $S$, then we   say that $(S,\rk,\varphi)$ (or simply $S$, when  $\rk$ and $\varphi$ are clear from the context) is {\bf an envelope} of $\varphi^{\sharp}(\rk)$.

 We denote by $\mathbb{P}_{reg}(R)$ the space of Sylvester matrix rank functions that come from rank functions on a regular ring, {and we refer to its elements as \bf{regular rank functions}}. Since any quotient of a regular ring is also regular, this is the space of rank functions that admit {\bf a regular envelope}, i.e., an envelope $(\mathcal U,\rk ,\varphi)$ with $\mathcal U$ regular. Observe that a (regular) envelope is not unique in general.

If $\rk$ takes only integer values, then by a result of P. Malcolmson \cite{Mlc} there exists a  division algebra $\D$ such that $(\D,\rk_\D,{\varphi})$ is a regular envelope of $\rk$. Moreover we can assume that ${\varphi}$ is epic by passing to the division closure of ${\varphi}(R)$ in $\D$. Under these conditions $(\D,\rk_\D,{\varphi})$ { or, to shorten up, $(\D,\varphi)$,}  is called {\bf epic division $R$-ring}.  Two epic division $R$-rings $(\D_1,\varphi_1)$ and $(\D_2, \varphi_2)$ are said to be isomorphic if there exists an isomorphism of rings between them respecting the $R$-structure, i.e., there exists an isomorphism $\tau: \D_1\rightarrow \D_2$ such that $\varphi_2=\tau\circ\varphi_1$.

\begin{teo} \label{isomdiv} (\cite[Theorem 4.4.1]{CohSF}, \cite[Theorem 2]{Mlc}) Two {epic} division $R$-rings $(\D_1,\varphi_1)$ and $ (\D_2,\varphi_2)$ are isomorphic if and only if, for every matrix $A$ over $R$,
$$
\rk_{\D_1}(\varphi_1(A)) = \rk_{\D_2}(\varphi_2(A)).
$$ 
\end{teo}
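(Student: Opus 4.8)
The plan is to prove the two implications separately, the forward direction (isomorphism $\Rightarrow$ equal ranks) being routine and the converse resting on Cohn's classification of epic division $R$-rings by their singular kernels.

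First I would dispose of the easy direction. Suppose $\tau\colon \D_1\to \D_2$ is a ring isomorphism with $\varphi_2=\tau\circ\varphi_1$. On a division ring the Sylvester matrix rank function is unique: conditions (SMat1) and (SMat2) force $\rk(x)=1$ for every nonzero $x$, and multiplying by invertible matrices (which preserves rank in both directions) reduces any matrix to its rank normal form, so by (SMat3) the rank equals the inner (determinantal) rank. Consequently any ring isomorphism carries the rank function of $\D_1$ to that of $\D_2$. Applying $\tau$ entrywise to $\varphi_1(A)$ yields $\varphi_2(A)$, whence $\rk_{\D_1}(\varphi_1(A))=\rk_{\D_2}(\tau(\varphi_1(A)))=\rk_{\D_2}(\varphi_2(A))$ for every matrix $A$ over $R$.

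For the converse the key is to extract from each $\varphi_i$ its \emph{singular kernel} $\PP_i$, the set of square matrices $A$ over $R$ for which $\varphi_i(A)$ fails to be invertible over $\D_i$. Since $\D_i$ is a regular ring and $\rk_{\D_i}$ is faithful on it, Lemma \ref{faithreg} tells us that an $n\times n$ matrix is invertible over $\D_i$ precisely when it has full rank $n$; hence $A\in\PP_i$ if and only if $\rk_{\D_i}(\varphi_i(A))<n$. The hypothesis that the rank functions agree on all matrices therefore forces $\PP_1=\PP_2$. I would then invoke Cohn's localization theory: because each $\varphi_i$ is epic, $\D_i$ is the universal localization of $R$ inverting exactly the matrices outside $\PP_i$ (equivalently, the residue division ring at the prime matrix ideal $\PP_i$), and every element of $\D_i$ is a component of the solution of a linear system $\varphi_i(A)\mathbf{x}=\varphi_i(\mathbf{b})$ with $A\notin\PP_i$, by Cramer's rule together with the epic hypothesis. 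One defines $\tau\colon\D_1\to\D_2$ by sending the element of $\D_1$ represented by such a system to the element of $\D_2$ represented by the same matrices $A,\mathbf{b}$ over $R$; the identity $\PP_1=\PP_2$ guarantees that every matrix being inverted remains invertible over $\D_2$.

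The main obstacle is the well-definedness of $\tau$: two distinct linear systems may represent the same element of $\D_1$, and one must verify they also represent the same element of $\D_2$. Equality of two such representatives is itself encoded as the condition that a certain larger, generally non-square, matrix assembled from the two systems drops rank in a prescribed way — which is exactly why the hypothesis is stated for all matrices $A$ over $R$ and not merely square ones. Granting this, injectivity of $\tau$ follows by the symmetric argument interchanging $\D_1$ and $\D_2$, surjectivity follows from epicness of $\varphi_2$, and compatibility with the ring operations and with the $R$-structure is built into the construction. Rather than redevelop this machinery I would cite Cohn's Theorem 4.4.1 and Malcolmson's reworking, which establish precisely that the singular kernel is a complete $R$-isomorphism invariant of an epic division $R$-ring.
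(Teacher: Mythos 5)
Your proposal is correct and takes essentially the same route as the paper: the paper offers no proof of Theorem \ref{isomdiv} at all, simply citing Cohn's Theorem 4.4.1 and Malcolmson's Theorem 2, and your argument amounts to the easy direction done by hand plus a reduction of rank equality to equality of singular kernels, which is exactly the content of those cited results. One small inaccuracy, harmless to the proof: equality of ranks on \emph{square} matrices already suffices, since over a division ring the rank of an arbitrary matrix is the maximal size of an invertible square submatrix, so the singular kernel determines the whole rank function and your claim that non-square matrices are essential for well-definedness is not actually needed.
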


Therefore, the epic regular envelope of an integer-valued rank function, which we will refer to as the \textbf{epic division envelope}, is completely determined by $\rk$ and hence unique up to isomorphism. We denote the set of integer-valued rank functions on a ring $R$ by $\mathbb{P}_{div}(R)$.  In the following, if $\D$ is an epic division $R$-ring we will also use $\rk_\D$ to denote the induced rank function on $R$.
 
When $R$ is a $*$-ring, $\mathcal U$ a $*$-regular ring, $\rk\in \mathbb P(\mathcal U)$ and $\varphi:R\to \mathcal U$ is a $*$-homomorphism we say that $\varphi^{\sharp}(\rk)$ is a $*$-regular rank, and we denote by $\mathbb{P}_{*reg}(R)$ the space of Sylvester matrix rank functions on $R$ obtained that way. Again, we can assume that $\rk$ is faithful, since $\mathcal U/\ker \rk$ is $*$-regular by Proposition \ref{proper}, and moreover we can assume that $\varphi$ is epic by passing to the $*$-regular closure of $\varphi(R)$ in $\mathcal U$. Under these conditions, the \textbf{$*$-regular envelope} $(\mathcal U,\rk,\varphi)$ will be called {\bf epic $*$-regular $R$-ring}. In view of the previous reasoning, anytime we consider a $*$-regular envelope $(\mathcal U,\rk,\varphi)$, we will assume that $\rk$ is faithful and $\varphi$ is epic.   
Both $\mathbb{P}_{reg}(R)$ and $\mathbb{P}_{*reg}(R)$ can be shown to be closed convex subsets of $\mathbb{P}(R)$ (\cite[Propositions 5.9 and 6.4]{Ja17base}).

 Two epic $*$-regular $R$-rings $(\mathcal U_1,\rk_1,\varphi_1)$ and $(\mathcal U_2,\rk_2, \varphi_2)$ are said to be isomorphic if there exists a $*$-isomorphism of rings between them respecting the $R$-structure and the rank, i.e., there exists a $*$-isomorphism $\tau: \mathcal U_1\rightarrow \mathcal U_2$ such that the following diagram commutes
$$
 \xymatrix{                    & \mathcal U_1 \ar[dr]^{\rk_1} \ar[dd]_{\tau} &   \\ 
 R \ar[ur]^{\varphi_1} \ar[dr]_{\varphi_2}  &                                    & \mathbb{R}_{\ge 0} \\
                               & \mathcal U_2 \ar[ur]_{\rk_2}                  &      
 }
$$ 

Notice that, inasmuch as $\mathcal U_1$ is regular, if the equality $\rk_2(\tau(x))=\rk_1(x)$ holds for every element $x\in\mathcal U_1$, then $\rk_2(\tau(A)) = \rk_1(A)$ for every matrix  over $\mathcal U_1$. 

In \cite{Ja17base}, the first author proved that, as it happens with epic division rings, an epic $*$-regular $R$-ring is completely determined by the values of the rank function on matrices over $R$.   
\begin{teo} \label{isom} \cite[Theorem 6.3]{Ja17base} Two epic $*$-regular $R$-rings $(\mathcal U_1,\rk_1,\varphi_1)$ and $(\mathcal U_2,\rk_2,\varphi_2)$ are isomorphic if and only if, for every matrix $A$ over $R$,
$$
\rk_1(\varphi_1(A)) = \rk_2(\varphi_2(A)).
$$ 
\end{teo}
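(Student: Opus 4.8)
The \emph{only if} direction is immediate. If $\tau\colon\mathcal U_1\to\mathcal U_2$ is a $*$-isomorphism with $\tau\circ\varphi_1=\varphi_2$ and $\rk_1=\rk_2\circ\tau$, then by the remark preceding the theorem the element-wise rank identity $\rk_2(\tau(x))=\rk_1(x)$ extends to matrices over the regular ring $\mathcal U_1$, so for every matrix $A$ over $R$ we get $\rk_1(\varphi_1(A))=\rk_2(\tau(\varphi_1(A)))=\rk_2(\varphi_2(A))$. All the content is therefore in the \emph{if} direction.

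Assume $\rk_1(\varphi_1(A))=\rk_2(\varphi_2(A))$ for every matrix $A$ over $R$; I want to produce a $*$-isomorphism $\tau\colon\mathcal U_1\to\mathcal U_2$ respecting the $R$-structure and the rank. The plan is to build $\tau$ along the inductive description of the $*$-regular closure in Proposition \ref{closure}. Since each $\varphi_i$ is epic, $\mathcal U_i=\mathcal R(\varphi_i(R),\mathcal U_i)=\bigcup_n\mathcal R_n(\varphi_i(R),\mathcal U_i)$, so every element of $\mathcal U_i$ is the value of a \emph{$*$-regular expression}: a formal term built from elements of $R$ by ring operations, the involution, and the relative inverse $x\mapsto x^{[-1]}$, evaluated via $\varphi_i$ in $\mathcal U_i$. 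The map $\tau$ should send the value in $\mathcal U_1$ of such an expression to the value in $\mathcal U_2$ of the identical formal expression. Because $\varphi_i$ are $*$-homomorphisms and relative inverse commutes with the involution ($(x^{[-1]})^*=(x^*)^{[-1]}$) in any $*$-regular ring, such a $\tau$ is automatically a $*$-homomorphism with $\tau\circ\varphi_1=\varphi_2$; the only issue is well-definedness and bijectivity.

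The engine for well-definedness is faithfulness together with the rank hypothesis. Since $\rk_i$ is faithful on $\mathcal U_i$, an element vanishes iff its rank is $0$, so $\tau$ is well defined and injective once I know that for \emph{every} matrix $M$ of $*$-regular expressions the two evaluations have equal rank, i.e. $\rk_1(M_1)=\rk_2(M_2)$, uniformly in $i$ (surjectivity is then automatic, since every element of $\mathcal U_2$ is itself the value of an expression). I would prove this rank identity by induction on the total number of relative inverses occurring in $M$. The base case, matrices over $\varphi_i(R)$, is exactly the hypothesis $\rk_1(\varphi_1(A))=\rk_2(\varphi_2(A))$. For the inductive step one isolates a single innermost occurrence $y^{[-1]}$, where $y$ is an expression with fewer relative inverses, and uses that the relative inverse is \emph{canonical}: $y^{[-1]}$ is the unique element with $yy^{[-1]}y=y$, $y^{[-1]}yy^{[-1]}=y^{[-1]}$, and $\RP(y)=y^{[-1]}y$, $\LP(y)=yy^{[-1]}$ projections. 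Via Cohn-style block manipulation (linearization by enlargement), the appearance of $y^{[-1]}$ in an entry is traded for an extra row and column encoding these defining equations, reducing the rank of a matrix with $k$ relative inverses to that of a larger matrix with $k-1$, by a transformation that is literally the same in $\mathcal U_1$ and in $\mathcal U_2$; Proposition \ref{proper} guarantees $*$-regularity is preserved through the quotients used, and the rank identities of \cite{Ja17base} (as in the proof of Lemma \ref{faithreg}) control how these enlargements affect the rank. This closes the induction.

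The main obstacle is precisely this inductive step: showing that the rank of a matrix built from relative-inverse expressions is a \emph{universal} function of the ranks of matrices over $R$, independent of the ambient ring $\mathcal U_i$. The difficulty is that relative inverse is not a polynomial operation, so the hypothesis cannot be applied directly; the resolution is to replace each $y^{[-1]}$ by the projection data $\RP(y),\LP(y)$ and the equations $yy^{[-1]}y=y$, and then encode this data by enlarging the relevant matrix over $R$, so that faithfulness of $\rk_i$ forces the two evaluations to agree rank by rank. Once this universal rank formula is established, the element-wise correspondence is a rank-preserving bijective $*$-homomorphism intertwining $\varphi_1$ and $\varphi_2$, which is the desired isomorphism of epic $*$-regular $R$-rings.
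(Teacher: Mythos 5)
First, a remark on the comparison target: this paper does not actually prove Theorem \ref{isom} — it is imported verbatim from \cite[Theorem 6.3]{Ja17base} — so your proposal can only be measured against that reference. Your overall architecture is the right one and is essentially the standard one: the ``only if'' direction is formal; for the ``if'' direction one defines $\tau$ on values of formal $*$-regular expressions (using Proposition \ref{closure} and epicity), and everything reduces to showing that the rank of any matrix of such expressions is determined by the ranks of matrices over $R$, by induction on the number of relative inverses, with faithfulness converting rank agreement into well-definedness and injectivity. Up to that point your reduction is correct.

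The genuine gap is in the announced crux, the inductive step. For honest inverses the enlargement identity $\rk(a-by^{-1}c)=\rk\begin{pmatrix} y & c\\ b & a\end{pmatrix}-\rk(y)$ is exact, but for relative inverses it is simply false: in $\mathcal U=\CC\times\CC$ with the rank $r=\tfrac12(\rk_1+\rk_2)$, taking $y=(1,0)$, $b=c=(1,1)$, $a=0$, one gets $r(a-by^{[-1]}c)=\tfrac12$ while the right-hand side equals $\tfrac32$. The identity only holds under the range conditions $b=b\RP(y)$, $c=\LP(y)c$, i.e.\ for the corrected matrix $\begin{pmatrix} y & \LP(y)c\\ b\RP(y) & a\end{pmatrix}$ — and the projections $\RP(y),\LP(y)$ are themselves relative-inverse expressions, not entries over $R$. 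So ``trading $y^{[-1]}$ for an extra row and column encoding the defining equations, by a transformation literally the same in $\mathcal U_1$ and $\mathcal U_2$'' does not decrease the number of relative inverses, and your induction does not close; your proposed resolution (``replace each $y^{[-1]}$ by the projection data and encode it by enlarging the matrix over $R$'') is precisely the statement that needs proof, asserted circularly. The missing idea is that the involution must enter the rank computation itself, not merely the verification that $\tau$ is a $*$-map: for instance, reduce to self-adjoint elements via $y^{[-1]}=(y^*y)^{[-1]}y^*$; for self-adjoint $z$ one has $\LP(z)=\RP(z)$, $(z^{[-1]})^2=(z^2)^{[-1]}$ and $B\LP(z)C=(Bz)(z^2)^{[-1]}(zC)$, whose factors satisfy the range conditions automatically, giving the exact formula
$$
r\bigl(A+B\LP(z)C\bigr)=r\begin{pmatrix} z^2 & zC\\ -Bz & A\end{pmatrix}-r(z^2)
$$
with a matrix over the base ring; similar manipulations then handle $A+Bz^{[-1]}D$, and with identities of this kind your induction does close. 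That such a device is unavoidable is signalled by the paper itself: plain regular envelopes of a rank function are \emph{not} unique, so any argument (like the one you sketched) that never genuinely uses $*$ in the rank-determination step cannot be correct.
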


\section{Natural extensions and Hughes-free Sylvester rank functions}\label{nat}

The notion of natural extension was introduced in \cite{Ja17base} in the context of (Laurent) polynomial rings (see also \cite[Section 8]{Ja17surv} for other variations of this concept). In this section we define the natural extension in the context of skew (Laurent) polynomial rings and we use it to define the notion of Hughes-free Sylvester rank function.

\subsection{The definition of the natural extension for skew (Laurent) polynomial rings}
Let $R$ be a ring and let $\tau$ be an automorphism of $R$.  
Recall that the  {\bf skew polynomial ring} $R[t;\tau]$  is a ring of polynomials in $t$ with coefficients in $R$ and subject to the relation $ta = \tau(a)t$, $a\in R$. The {\bf skew Laurent polynomial ring} $R[t^{\pm 1}; \tau]$ is a localization of $R[t;\tau]$ with respect to the set of powers of $t$. 
Similarly we can define the {\bf skew Taylor series ring} $R[[t;\tau]]$ and the   {\bf skew Laurent series ring} $R((t;\tau))$.

In the first place, to construct a rank function over  $R[t^{\pm 1}; \tau]$ from a rank function over $R$, we will need some compatibility between the latter and the twisted product, namely, $\tau$ has to preserve the rank.
We say that
a Sylvester matrix rank function $\rk$ on a ring $R$ is {\bf $\tau$-compatible}  if $\rk=\tau^{\sharp}(\rk) $, i.e., for every matrix $A$ over $R$, $\rk(A) = \rk(\tau(A))$.

We can rewrite this property in terms of the associated Sylvester module rank function. Let $M$ be a finitely presented left $R$-module, and denote by $t^nM$, $n\in \mathbb{Z}$, the finitely presented left $R$-module whose elements are of the form $t^nm$ for $m\in M$, with natural sum and $R$-product given by $r(t^nm) = t^n(\tau^{-n}(r)m)$. 
 {Observe that it is not true in general that $M\cong t^nM$. The next lemma states that $\tau$-compatibility is equivalent to both having the same rank for all $n$.}
\begin{lem}
Let $\rk$ be a Sylvester matrix rank function on a ring $R$ and $\dim$ its associated Sylvester module rank function. Let $\tau$ be an automorphism of $R$. Then $\rk$ is $\tau$-compatible   if and only if for every finitely presented $R$-module $M$, $\dim(M) = \dim(t M)$.
\end{lem}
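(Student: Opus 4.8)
The plan is to reduce the statement to an explicit computation of a finite presentation of the twisted module $tM$. Recall that $\dim$ is defined on a finitely presented module by $\dim(R^m/R^nA) = m - \rk(A)$. So if I can show that whenever $M = R^m/R^nA$ the twist $tM$ is presented as $R^m/R^n\tau(A)$, where $\tau(A)$ denotes the entrywise image of $A$ under $\tau$, then both directions of the equivalence fall out immediately: $\dim(M) = m - \rk(A)$ and $\dim(tM) = m - \rk(\tau(A))$ are equal for every finitely presented $M$ precisely when $\rk(A) = \rk(\tau(A))$ for every matrix $A$, which is the definition of $\tau$-compatibility. For the forward direction one additionally uses that every matrix $A$ arises as a presentation matrix of some $M$.

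The crux is therefore the identification $tM \cong R^m/R^n\tau(A)$, which I would establish by tracking generators and relations. If $\bar e_1,\dots,\bar e_m$ denote the images in $M$ of the standard generators of $R^m$, then $t\bar e_1,\dots,t\bar e_m$ generate $tM$ over $R$ (using that $\tau^{-1}$ is surjective). A tuple $(r_1,\dots,r_m)$ is a relation $\sum_j r_j(t\bar e_j)=0$ in $tM$ exactly when $t\bigl(\sum_j \tau^{-1}(r_j)\bar e_j\bigr)=0$, i.e.\ when $(\tau^{-1}(r_1),\dots,\tau^{-1}(r_m))\in R^nA$; writing this as $\tau^{-1}(r)=vA$ and applying $\tau$ yields $r=\tau(v)\tau(A)\in R^n\tau(A)$, the map $v\mapsto\tau(v)$ being a bijection of $R^n$. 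Hence the module of relations is exactly $R^n\tau(A)$ and $tM\cong R^m/R^n\tau(A)$, as wanted. Equivalently, I could phrase this as: $tM$ is the restriction of scalars of $M$ along the automorphism $\tau^{-1}$, an exact auto-equivalence of the module category that sends $R^k$ to a free module of rank $k$ and turns right multiplication by $A$ into right multiplication by $\tau(A)$.

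The step requiring the most care is getting the direction of the twist right, namely that the presentation matrix of $tM$ is $\tau(A)$ rather than $\tau^{-1}(A)$, which comes down to correctly propagating the defining action $r\cdot(tm)=t(\tau^{-1}(r)m)$ through the relations. Apart from this bookkeeping there is no real obstacle, since $\tau$ being an automorphism makes the twisting functor exact and invertible, so finite presentations are preserved. Finally, I would remark that running the same argument with $\tau^n$ in place of $\tau$ gives $\dim(M)=\dim(t^nM)$ for all $n\in\Z$ under $\tau$-compatibility, matching the $t^nM$ notation introduced just before the lemma.
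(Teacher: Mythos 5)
Your proposal is correct and follows essentially the same route as the paper: the paper's proof also rests on the isomorphism $t(R^m/R^nA)\cong R^m/R^n\tau(A)$ (stated there via the explicit map $v+R^n\tau(A)\mapsto t(\tau^{-1}(v)+R^nA)$, which is exactly the map your generators-and-relations computation produces) and then reads off both directions from $\dim(R^m/R^nA)=m-\rk(A)$. Your write-up merely makes the verification of that isomorphism more explicit than the paper does.
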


\begin{proof}
 First notice that for every matrix $A\in \Mat_{n\times m}(R)$, the finitely presented left $R$-modules $R^m/R^n\tau(A)$ and $t(R^m/R^nA)$ are isomorphic, via $v+R^n\tau(A)\mapsto t(\tau^{-1}(v)+R^n A)$. Thus, if $\rk$ is $\tau$-compatible, then
 $$
 \begin{matrix*}[l]
  \dim(R^m/R^nA) &=& m-\rk(A) &=&m - \rk(\tau(A)) \\
        &=& \dim(R^m/R^n\tau(A)) &=& \dim(t(R^m/R^nA)).
  \end{matrix*}
 $$  
 Conversely, if $\dim(M) = \dim(tM)$ for every finitely presented $R$-module and we take a matrix $A\in \Mat_{n\times m}(R)$, then we can apply the same reasoning to the finitely presented module $R^m/R^nA$ to obtain that $\rk(\tau(A)) = \rk(A)$.
\end{proof}
Observe that the previous proposition  implies also that $\dim(M) = \dim(t^nM)$ for every $n\in \mathbb{Z}$ if $\rk$ is $\tau$-compatible.  

Suppose that we have a ring $R$ and a Sylvester rank function $\rk$ on $R$.   Let $\dim$ be the associated Sylvester matrix rank function. Then, for every $i$, we have a ring homomorphism
$$
\begin{matrix}
R[t;\tau] &\longrightarrow& \End_{R}(R[t;\tau]/R[t;\tau]t^i)\\
p&\longmapsto&\phi_{R,i}^p
\end{matrix}
$$ 
where $\phi_{R,i}^p$ is given by right multiplication by $p$.
Since the codomain is isomorphic to $\Mat_i(R)$, we can pull back to $R[t;\tau]$ the rank induced by $\rk$ on $\Mat_i(R)$. This means that we have rank functions $\wrk_i$ on $R[t;\tau]$ such that if $A\in \Mat_{n\times m}(R[t;\tau])$, then
$$
\wrk_i(A) = \frac{\rk(B)}{i}
$$
where $B\in \Mat_{in\times im}(R)$ is the matrix associated to the $R$-homomorphism of free $R$-modules $\phi_{R,i}^A:(R[t;\tau]/R[t;\tau]t^i)^n\rightarrow (R[t;\tau]/R[t;\tau]t^i)^m$ given by right multiplication by $A$ with respect to some bases in the domain and codomain. Of course, this is independent of the choice of the bases, and so we can write $\rk(\phi_{R,i}^A)$ instead of $\rk(B)$. 

Assume that $\rk$ is $\tau$-compatible.  Let $\wrk \in \mathbb  \mathbb \mathbb P(R[t;\tau])$. We say that $\wrk$ is the {\bf natural extension of $\rk$} if 
 $$\wrk=\displaystyle \lim _{i\to \infty} \wrk_i \in \mathbb P(R[t;\tau]),$$
{i.e., for every $A\in \Mat_n(R[t;\tau])$ there exists the limit $\displaystyle \lim_{i\to \infty} \wrk_i(A)$ and it is equal to $\wrk(A)$}. 
 
 Observe that in this case $\wrk(t)$ is equal to 1. Indeed, the matrix associated to $\phi_{R,i}^t$ with respect to the canonical basis in both the domain and codomain is the $i\times i$ matrix $\begin{pmatrix} 0 & I_{i-1} \\ 0 & 0\end{pmatrix}$, which is of rank $i-1$, by the properties of rank functions.
 Therefore,
 $$\wrk(t) = \displaystyle \lim _{i\to \infty} \wrk_i (t) = \displaystyle \lim _{i\to \infty}  \frac{i-1}{i} = 1$$   Thus, $\wrk$ can be extended to $R[t^{\pm 1};\tau]$ (see \cite[Corollary 5.5]{Ja17base}).   We also denote this extension by $\wrk$  and  we will call it {\bf the natural extension of $\rk$} to $R[t^{\pm 1};\tau]$. 
 
We do not know what are the necessary conditions for the existence of natural extensions. In \cite[Proposition 7.5]{Ja17base} it is shown that if $\tau$ is the identity automorphism, then the natural extension exists if $\rk$ is regular. In the next section we give an analog of this result in the case where  $\tau$ is an arbitrary automorphism.

\subsection{On the existence and characterizations of the natural extension}
 A Sylvester module rank function $\dim$ on a ring $R$ is {\bf exact} if for every surjective map between finitely presented modules $\phi: M\twoheadrightarrow N$ we have
  $$
  \dim(M)-\dim(N) = \inf\{\dim(L): L~\textrm{finitely presented}~\textrm{and}~ L\twoheadrightarrow ker~\phi\}.
  $$

Since every finitely presented module over a von Neumann regular ring is projective, we have that every short exact sequence of finitely presented modules splits, and so any Sylvester module rank function over a von Neumann regular ring is exact. Notice that the exactness condition seems to be necessary if one wants to obtain an extension which behaves additively on exact sequences. Indeed, we have the following proposition due to S.Virili (see also \cite[Corollary 4.3]{Li19}).

 \begin{pro} \label{extlength}\cite{Vi1}
  Let $\dim$ be an exact Sylvester module rank function on a ring $R$. Consider, for every finitely generated module,
  $$
    \dim(M) = \inf\{\dim(L): L~\textrm{finitely presented and}~ L\twoheadrightarrow M\}
  $$
  and set for any $R$-module
  $$
    \dim(M) = \sup\{\dim(L): L~\textrm{finitely generated and}~L\leq M\}
  $$
  The extended function $\dim: R$-$mod \rightarrow \mathbb{R}_{\geq 0}\cup\{\infty\}$ is a well-defined normalized length function, i.e., it satisfies:
  \begin{itemize}
    \item[(1)] (Normalization) $\dim(R)=1$.
    \item[(2)] (Continuity) For every $R$-module, $$\dim(M) = \sup\{\dim(L): L~\textrm{finitely generated and}~L\leq M\}$$
    \item[(3)] (Additivity) For every exact sequence $0\rightarrow M_1\rightarrow M_2 \rightarrow M_3 \rightarrow 0$, we have $\dim(M_2) = \dim(M_1)+\dim(M_3)$
  \end{itemize}
In addition, the correspondence between exact Sylvester module rank functions and normalized length functions is  {bijective}. More precisely, the restriction of a normalized length function to finitely presented modules is an exact Sylvester module rank function, and from this restriction we can recover it by means of the previous procedure.  
\end{pro}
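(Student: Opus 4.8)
The plan is to construct the extension in the two stages dictated by the two displayed formulas, check the three length-function axioms as we go, and then prove the bijection by showing the two constructions are mutually inverse. First I would extend $\dim$ to finitely generated modules by the infimum formula and record its basic compatibilities. For a finitely presented $M$, axiom (SMod3) applied to any finitely presented surjection $L \twoheadrightarrow M$ gives $\dim L \ge \dim M$, while $M$ itself is an admissible cover; hence the infimum equals the original value, so the extension is consistent on finitely presented modules. The same composition-of-surjections argument yields monotonicity under surjections: if $M \twoheadrightarrow N$ with both finitely generated, then every finitely presented cover of $M$ is also one of $N$, whence $\dim N \le \dim M$.

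The heart of the matter is additivity along a short exact sequence $0 \to M_1 \to M_2 \to M_3 \to 0$ of finitely generated modules, and this is exactly where the exactness hypothesis is essential. Unwinding the definition, the exactness condition says precisely that for any surjection $\phi \colon M \twoheadrightarrow N$ of finitely presented modules (whose kernel is automatically finitely generated) one has $\dim M = \dim N + \dim(\ker\phi)$; it thus upgrades the inequalities of (SMod3) to a genuine equality whenever the middle and right-hand terms are finitely presented. To reach general finitely generated terms I would approximate: fix a finitely generated free module $F$ mapping onto $M_2$, approximate $M_2$ and $M_3$ by the finitely presented quotients $F/G$ arising from finitely generated submodules $G$ of the relation module, apply the exactness identity to the induced finitely presented surjections, and pass to the limit over $G$. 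The main obstacle is exactly this limiting step: one must show that cofinally many finitely presented covers of $M_2$ are compatible, via the exactness identity, with covers of $M_1$ and $M_3$ whose ranks sum to within $\varepsilon$ of $\dim M_1 + \dim M_3$, so that the three infima match up. Once additivity on finitely generated modules is established, monotonicity under submodule inclusion follows from it, since $0 \to L \to L' \to L'/L \to 0$ gives $\dim L' = \dim L + \dim(L'/L) \ge \dim L$.

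Next I would pass to arbitrary modules through the supremum formula. Continuity (2) then holds essentially by construction, once one checks that the finitely generated submodules form a directed set under inclusion and that $\dim$ is monotone along inclusions, both already in hand; and normalization (1) is immediate from (SMod1). For additivity (3) on an arbitrary short exact sequence $0 \to M_1 \to M_2 \to M_3 \to 0$, I would write $M_2$ as the directed union of its finitely generated submodules $N$, apply finitely generated additivity to $0 \to N\cap M_1 \to N \to N/(N\cap M_1) \to 0$, observe that $N\cap M_1$ runs cofinally through the finitely generated submodules of $M_1$ while $N/(N\cap M_1) \cong (N+M_1)/M_1$ runs cofinally through those of $M_3$, and invoke that the supremum of a sum of two monotone nets over a common directed set is the sum of the suprema (with the usual bookkeeping for the value $\infty$).

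Finally, for the bijection I would restrict a normalized length function $\ell$ to finitely presented modules and verify (SMod1)--(SMod3) together with exactness directly from the additivity and continuity of $\ell$: (SMod2) comes from additivity on a split sequence, and (SMod3) from additivity combined with monotonicity. The only nontrivial point is that re-extending the restriction recovers $\ell$, which reduces to proving $\ell(M) = \inf\{\ell(L) : L \text{ finitely presented}, \ L \twoheadrightarrow M\}$ for finitely generated $M$; this in turn yields the exactness identity for the restricted function. I would prove it by choosing a finitely generated free cover $F \twoheadrightarrow M$ with kernel $H$, exhausting $H$ by finitely generated submodules $H_\alpha$, and using additivity together with the finiteness $\ell(F) < \infty$ to force $\ell(H/H_\alpha) \to 0$, so that the finitely presented covers $F/H_\alpha$ realize the infimum. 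Combined with the consistency of the first stage on finitely presented modules, this shows the two assignments are inverse to one another, giving the claimed bijection.
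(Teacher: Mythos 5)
Since the paper imports this proposition from \cite{Vi1} without giving a proof, your attempt has to be judged on its own terms; its architecture (infimum-extension to finitely generated modules, supremum-extension to all modules, then the bijection) is indeed the standard one, and several parts are correct as sketched: consistency on finitely presented modules, monotonicity under surjections, and the recovery of a length function $\ell$ from its restriction by exhausting the kernel $H$ of a free cover $F\twoheadrightarrow M$ and using $\ell(F)<\infty$. Nevertheless there is a genuine gap on the critical path to additivity (3). In your passage to arbitrary short exact sequences you ``apply finitely generated additivity to $0\to N\cap M_1\to N\to N/(N\cap M_1)\to 0$,'' but $N\cap M_1$ is in general \emph{not} finitely generated (already for $M_2=R$, $M_1$ a non-finitely generated left ideal and $N=R$), so that lemma simply does not apply; likewise the assertion that the modules $N\cap M_1$ ``run cofinally through the finitely generated submodules of $M_1$'' is false as stated. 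What this step really needs is the stronger statement that for a finitely generated $N$ and an \emph{arbitrary} submodule $D\le N$ one has $\dim N=\dim D+\dim(N/D)$, with $\dim D$ the supremum-extension. That statement does not follow formally from finitely generated additivity: it requires a continuity lemma, $\dim(N/D)=\inf\{\dim(N/D'):\ D'\le D\ \textrm{finitely generated}\}$, which you neither state nor prove. (It can be deduced from the Schanuel-type fact that, for a fixed finitely generated free cover $F\twoheadrightarrow N$ with kernel $K$, the covers $F/G$ with $G\le K$ finitely generated realize the infimum defining the rank of any quotient of $N$.)

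The other weak point is the one you flag yourself: additivity on short exact sequences of finitely generated modules is left as a plan (``one must show that cofinally many finitely presented covers of $M_2$ are compatible\dots''), and the mechanism that makes it work is absent. The inequality $\dim M_2\le \dim M_1+\dim M_3$ is an exactness-free construction: from covers $L_1\twoheadrightarrow M_1$ and $L_3\twoheadrightarrow M_3$ one builds a finitely presented $Q\twoheadrightarrow M_2$ by collapsing finitely many relations, with $L_1\to Q\to L_3\to 0$ exact, and applies (SMod3). The reverse inequality is where exactness enters: fixing $F\twoheadrightarrow M_2$ with kernel $K$, letting $K'$ be the preimage of $M_1$ and choosing a finitely generated $C$ with $C+K=K'$, one applies the exactness identity to the surjections of finitely presented modules $F/G\twoheadrightarrow F/(C+G)$, whose kernel $(C+G)/G$ is finitely generated and surjects onto $M_1$, and then takes the infimum over finitely generated $G\le K$. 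Both this mechanism and the continuity lemma above are indispensable, so as written your proposal is the correct strategy --- essentially the one carried out in \cite{Vi1} --- but not yet a proof.
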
 

In view of this proposition, we can (and sometimes we will) indistinctly talk about an exact Sylvester module rank function and its associated normalized length function. Nevertheless, we will usually try to maintain the corresponding terminology in order to keep in mind the extent of the definition. It is important to notice that if $\dim$ is an exact $\tau$-compatible Sylvester module rank function, then its associated normalized length function is also $\tau$-compatible in the sense that for any $R$-module $M$, we have $\dim(M) = \dim(tM)$. This follows easily from the property for finitely presented modules and the way we extend $\dim$.

 We are now in position to present the construction of the natural extension of an exact Sylvester rank function using the construction from \cite[Theorem B and Definition 4.3]{Vi2}.

\begin{pro}\label{natext}
 Let $\dim$ be a $\tau$-compatible normalized length function on a ring $\mathcal U$ and let $\rk$ be the Sylvester matrix rank function associated with $\dim$. Define, for every $\mathcal U[t^{\pm 1};\tau]$-module $M$
 $$
  \wdim(M) = \sup \{E_{M,N}: N \textrm{\ is $\mathcal U$-submodule of M and\ } \dim(N)<\infty\},
 $$
 where
 $$
  E_{M,N} = \lim_{i\rightarrow \infty} \frac{\dim(N + tN+\dots+t^{i-1}N)}{i}.
 $$
Then $\wdim$ is a well-defined normalized length function on $\mathcal U[t^{\pm 1};\tau]$, and its associated Sylvester matrix rank function $\wrk$ is the natural extension of $\rk$   to $\mathcal U[t^{\pm 1};\tau]$. 
\end{pro}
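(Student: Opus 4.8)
The plan is to separate the statement into two independent parts: first, that $\wdim$ is a well-defined normalized length function on $\mathcal U[t^{\pm 1},\tau]$; and second, that its associated Sylvester matrix rank function $\wrk$ coincides with the natural extension $\lim_{i\to\infty}\wrk_i$ of $\rk$ as defined above.

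For the first part, I would begin by checking that each $E_{M,N}$ is well defined. Writing $a_i=\dim(N+tN+\dots+t^{i-1}N)$, the modular law for the normalized length function (which gives $\dim(X+Y)\le \dim(X)+\dim(Y)$, by additivity applied to $0\to X\cap Y\to X\oplus Y\to X+Y\to 0$) together with the $\tau$-compatibility $\dim(t^iX)=\dim(X)$ yields subadditivity $a_{i+j}\le a_i+a_j$, since $N+\dots+t^{i+j-1}N=(N+\dots+t^{i-1}N)+t^i(N+\dots+t^{j-1}N)$. By Fekete's lemma the limit $E_{M,N}=\lim_i a_i/i=\inf_i a_i/i$ then exists and is finite, because $\dim(N)<\infty$ forces every $a_i<\infty$. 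That $\wdim$ is a normalized length function is exactly the output of the construction in \cite[Theorem B and Definition 4.3]{Vi2} applied to the $\tau$-compatible length function $\dim$; here I would only verify that our hypotheses match theirs and then invoke that result for normalization, continuity, and additivity (Proposition \ref{extlength} guarantees $\dim$ extends to the required normalized length function in the first place).

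The second part is the heart of the argument, and it suffices to compare the two rank functions on matrices $A\in\Mat_{n\times m}(\mathcal U[t,\tau])$ (one may clear denominators by a power of $t$, which is harmless since $t$ is invertible and $\wrk(t)=1$). Put $P=\mathcal U[t,\tau]$ and $Q_i=\bigoplus_{k=0}^{i-1}t^k\mathcal U^m$, viewed simultaneously as $P^m/P^mt^i$ and as a $\mathcal U$-submodule of $\mathcal U[t^{\pm 1},\tau]^m$, and set $M=\mathcal U[t^{\pm 1},\tau]^m/\mathcal U[t^{\pm 1},\tau]^nA$. Unwinding the definition of $\wrk_i$ identifies $\rk(\phi^A_{\mathcal U,i})$ with $\dim_{\mathcal U}(I_i)$, where $I_i=(P^nA+P^mt^i)/P^mt^i$ is the image of right multiplication by $A$ inside $P^m/P^mt^i$; hence the natural extension evaluates to $\lim_i\frac1i\dim_{\mathcal U}(I_i)$. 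On the other hand, the rank associated with $\wdim$ is $m-\wdim(M)$. Taking $N_0$ to be the image of $\mathcal U^m$ in $M$, I would show $\wdim(M)=E_{M,N_0}$: every finitely generated $\mathcal U$-submodule $N$ of $M$ is supported in bounded degree, hence contained in some $\sum_{k=-s}^{s}t^kN_0$, and monotonicity of $a_i$ in $N$ together with the shift-invariance $E_{M,\sum_{-s}^{s}t^kN_0}=E_{M,N_0}$ (which follows from $\tau$-compatibility, since $\sum_{l=0}^{i-1}t^l\sum_{-s}^{s}t^kN_0=t^{-s}\sum_{j=0}^{i-1+2s}t^jN_0$) forces the supremum to be attained at $N_0$. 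Since $N_0+\dots+t^{i-1}N_0$ is the image of $Q_i$ in $M$, additivity gives $E_{M,N_0}=m-\lim_i\frac1i\dim_{\mathcal U}(Q_i\cap \mathcal U[t^{\pm 1},\tau]^nA)$.

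Everything therefore reduces to the single identity $\lim_i\frac1i\dim_{\mathcal U}(I_i)=\lim_i\frac1i\dim_{\mathcal U}(Q_i\cap \mathcal U[t^{\pm 1},\tau]^nA)$, and this is the step I expect to be the main obstacle. Both sides measure the growth of the relation module truncated to degrees $[0,i-1]$, but computed through the one-sided polynomial ring $P$ on one side and through the two-sided Laurent ring on the other. The discrepancy comes only from relations whose support crosses the endpoints $0$ and $i-1$, and I would bound it by a quantity of size $O(1)$ depending on $\deg(A)$, $n$, and $m$ but not on $i$: every element of $Q_i\cap \mathcal U[t^{\pm 1},\tau]^nA$ agrees, up to a boundary correction of bounded degree-width, with an element of $I_i$, using that $t$ is invertible and that $\dim$ is invariant under the $t$-shift. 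Dividing by $i$ and letting $i\to\infty$, these boundary terms vanish and the two limits coincide, which simultaneously shows that the natural extension exists and equals the rank associated with $\wdim$. This F\o lner-type estimate, rather than the formal bookkeeping, is where the genuine work lies.
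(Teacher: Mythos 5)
Your proposal is correct, but it is considerably more self-contained than what the paper itself does: the paper gives no proof of Proposition \ref{natext} at all; it invokes \cite[Theorem B and Definition 4.3]{Vi2} for the statement that $\wdim$ is a normalized length function and then asserts that the proofs of \cite{Ja17base} for the untwisted case $R[t^{\pm 1}]$ ``apply in this setting with very slight modifications regarding the twist''. Your first part therefore coincides with the paper's route (Fekete plus the citation of \cite{Vi2}), while your second part supplies precisely the content that the paper delegates to \cite{Ja17base}. I can confirm that your key boundary estimate is not merely plausible but provable with a uniform bound: with $P=\mathcal U[t,\tau]$, $L=\mathcal U[t^{\pm 1},\tau]$, $d=\deg A$ and $I_i'=\{uA:\ u\in P^n,\ \deg(uA)\le i-1\}$, one has (a) $I_i\subseteq I_i'+\bigoplus_{k=i-d}^{i-1}t^k\mathcal U^m$, by splitting a polynomial $u$ into its part of degree at most $i-1-d$ and the remaining part, whose truncation lies in the top $d$ layers; and (b) $Q_i\cap L^nA\subseteq I_i'+\bigoplus_{k=0}^{d-1}t^k\mathcal U^m$ for $i\ge d$, because if $u\in L^n$ and $uA\in Q_i$, then the product $u_{<0}A$ of the negative-degree part of $u$ with $A$ must itself be supported in degrees $[0,d-1]$ (its negative-degree components cannot be cancelled by $u_{\ge 0}A$), so that $u_{\ge 0}A=uA-u_{<0}A\in I_i'$. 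Since $I_i'\subseteq I_i\cap(Q_i\cap L^nA)$, both inclusions give $|\dim(I_i)-\dim(Q_i\cap L^nA)|\le dm$ for all $i\ge d$, so after dividing by $i$ the two limits exist and coincide; your $O(1)$ claim is exactly right, and it simultaneously yields the existence of $\lim_i \wrk_i$. Two details you should still write out: the supremum defining $\wdim(M)$ runs over all $\mathcal U$-submodules $N$ with $\dim(N)<\infty$, not only finitely generated ones, so to conclude $\wdim(M)=E_{M,N_0}$ you also need the continuity step (choose a finitely generated $N'\le N$ with $\dim(N/N')\le\delta$; then $a_i(N)\le a_i(N')+i\delta$, hence $E_{M,N}\le E_{M,N'}+\delta\le E_{M,N_0}+\delta$); and the identification $\rk(\phi^A_{\mathcal U,i})=\dim_{\mathcal U}(I_i)$ silently uses the additivity (exactness) of $\dim$, which is available here precisely because $\dim$ is a length function.
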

 
This has been studied in \cite{Ja17base} for the case of Laurent polynomial rings $R[t^{\pm 1}]$, and, in fact, almost the same proofs apply in this setting with very slight modifications regarding the twist $ta= \tau(a)t$. In particular, the following characterizations of the natural extension hold.

\begin{pro} \label{prop7.2}
  Let $R$ be a ring, $\tau$ an automorphism of $R$ and $\dim$ a $\tau$-compatible normalized length function on $R$ with associated Sylvester matrix rank function $\rk$. Then, for every left ideal $I$ of $R[t^{\pm 1};\tau]$,
  $$
   \wdim(I) = \lim_{k\rightarrow \infty} \frac{\dim(P_{k-1})}{k}
  $$
where $P_{k}$ is the set of polynomials in $R[t;\tau]$ of degree at most $k$ contained in $I$. Moreover, if $R$ is regular, then the above limit equals
  $$
  \wdim(I) = \sup~\left\lbrace \rk(a_0): a_0\in R ~\textrm{and}~\exists n\geq 0, \exists a_1,\dots,a_n\in R~ \textrm{s.t.}~\sum_{i=0}^n a_it^i \in I\right\rbrace
  $$
\end{pro}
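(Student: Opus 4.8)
The plan is to establish the two displayed formulas in turn, beginning with the general expression for $\wdim(I)$ valid over any ring and then specializing to the regular case. The starting point is the definition of $\wrk$ as the limit $\wrk = \lim_{i\to\infty}\wrk_i$, where $\wrk_i(A) = \rk(\phi^A_{R,i})/i$ measures the rank of right multiplication by $A$ on $(R[t,\tau]/R[t,\tau]t^i)^n$. Via the associated module rank function $\wdim$, the quantity $\wrk_i$ translates into a statement about the finitely presented module $R[t^{\pm1},\tau]^1/I$ truncated modulo $t^i$: concretely, for a left ideal $I$ the relevant data is the image of $I$ in the cyclic module $R[t,\tau]/R[t,\tau]t^k$, and the dimension of that image is governed by $\dim(P_{k-1})$, the $R$-dimension of the space of polynomials of degree at most $k-1$ lying in $I$. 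The first task is therefore to make the dictionary between the truncated endomorphism-ring picture used to define $\wrk_i$ and the polynomial-degree filtration $\{P_k\}$ precise, and then to check that
\begin{equation*}
  \wdim(I) = \lim_{k\to\infty}\frac{\dim(P_{k-1})}{k}.
\end{equation*}

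To carry this out I would first record that each $P_k$ is a finitely generated (indeed finitely presented, if $R$ is coherent, but in general one uses the continuity of the extended length function from Proposition \ref{extlength}) $R$-submodule of $I$, that the $P_k$ form an increasing chain exhausting $I$, and that $\tau$-compatibility gives $\dim(tP_{k-1}) = \dim(P_{k-1})$, so the filtration behaves additively up to the shift. The key estimate is a subadditivity/superadditivity sandwich: multiplication by $t$ maps $P_{k-1}$ into $P_k$, and comparing $\dim(P_{k-1})$, $\dim(P_k)$, and the ``new'' directions contributed at each degree shows that the normalized sequence $\dim(P_{k-1})/k$ converges. This is exactly the type of Fekete-style limit argument underlying the quantity $E_{M,N}$ in Proposition \ref{natext}, and the cleanest route is to identify $\lim_k \dim(P_{k-1})/k$ with $\wdim(I)$ as defined there by matching the sup-over-submodules description against the degree filtration. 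Since the excerpt explicitly tells us that ``almost the same proofs apply in this setting with very slight modifications regarding the twist $tx = \tau(x)t$,'' the honest plan is to transcribe the untwisted argument of \cite[Proposition 7.2 and surrounding results]{Ja17base} and verify at each step that the only change is carrying the automorphism $\tau$ through the identities $r(t^n m) = t^n(\tau^{-n}(r)m)$; the $\tau$-compatibility hypothesis is precisely what guarantees these modifications are invisible to $\dim$.

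For the regular case, the additional input is that over a von Neumann regular ring every finitely presented module is projective, so every short exact sequence splits and $\dim$ is exact. Concretely, I expect the chain $0 \to P_{k-1} \xrightarrow{\ t\ } P_k \to P_k/tP_{k-1} \to 0$ (using that $t\cdot P_{k-1}\subseteq P_k$) to split, and the quotient $P_k/tP_{k-1}$ to be controlled by the leading-coefficient data: an element $\sum_{i=0}^k a_i t^i \in I$ contributes its top coefficient, and exactness lets one peel off degrees one at a time without any loss. Iterating, $\dim(P_{k-1})$ grows like $k$ times the dimension of the $R$-module spanned by the achievable constant terms, which is exactly $\sup\{\rk(a_0) : a_0 \in R,\ \exists\, a_1,\dots,a_n \in R \text{ with } \sum_i a_i t^i \in I\}$. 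The main obstacle I anticipate is the bookkeeping in this peeling argument: one must show that the supremum of $\rk(a_0)$ over the constant terms realizable from $I$ genuinely equals the asymptotic slope, which requires knowing that every constant term arising as a top/bottom coefficient at some degree can be realized in the limit without interference between degrees — and this is precisely where regularity (splitting) is doing the essential work, converting the inequality $\wdim(I) \ge \sup \rk(a_0)$ (easy) into an equality. Verifying that the two one-sided bounds meet, rather than the transcription of the untwisted proof, is the delicate point.
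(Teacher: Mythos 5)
Your plan for the first displayed formula coincides with what the paper actually does: the paper offers no argument beyond the remark that the untwisted proofs of \cite{Ja17base} go through once the twist $tx=\tau(x)t$ is carried along, and your proposal to transcribe that argument, using $\tau$-compatibility to make the shifted modules $t^nM$ invisible to $\dim$, is exactly that deferral. Your telescoping skeleton for the ``moreover'' clause is also the right one, but it needs one repair and contains one genuine gap. The repair: the quotient $P_k/tP_{k-1}$ records \emph{constant} terms, not top coefficients. Since $I$ is a left ideal of the Laurent ring, $t^{-1}I\subseteq I$, so the polynomials in $I$ of degree at most $k$ with zero constant term are exactly $tP_{k-1}$; hence the constant-term map yields an exact sequence $0\to tP_{k-1}\to P_k\to T_k\to 0$, where $T_k\leq R$ is the left $R$-module of constant terms realizable in degree at most $k$. (Quotienting by the inclusion $P_{k-1}\subseteq P_k$ would give leading-coefficient data instead; both filtrations compute the same limit, but the statement is a supremum over constant terms $a_0$, so the first is the one you want.)

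The gap is that you misplace the role of regularity. No splitting is needed anywhere: $\dim$ is by hypothesis a normalized length function, so by Proposition \ref{extlength} it is additive on \emph{all} short exact sequences over any ring; hence $\dim(P_k)=\dim(tP_{k-1})+\dim(T_k)=\dim(P_{k-1})+\dim(T_k)$ holds in full generality, and since the $T_k$ increase, a Ces\`aro (or Fekete) argument gives $\lim_k\dim(P_{k-1})/k=\dim(T)$ with $T=\bigcup_k T_k$ --- again over any ring, which is consistent with the first formula of Proposition \ref{prop7.2} carrying no regularity hypothesis. What regularity is actually for is the step you assert without justification, namely that $\dim(T)$ ``is exactly'' $\sup\{\rk(a_0):a_0\in T\}$. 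The inequality $\geq$ is easy: for $a_0\in T$ one has $Ra_0\subseteq T$ and $\dim(Ra_0)=\rk(a_0)$, because $Ra_0=R(ya_0)$ with $ya_0$ idempotent whenever $a_0ya_0=a_0$. But for $\leq$ you need: by continuity $\dim(T)=\sup\{\dim(L): L\leq T \text{ finitely generated}\}$, and over a von Neumann regular ring every finitely generated left ideal $L$ is generated by a single idempotent $e$, so $\dim(L)=\rk(e)$ with $e\in T$. This principal-generation fact is the missing idea; the splitting of the degree filtration you invoke does not produce it, so as written your argument for the regular case stops short at precisely the point you yourself flagged as the delicate one.
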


\begin{pro} \label{prop7.7}
Let $\mathcal U$ be a regular ring, $\tau$ an automorphism of $\mathcal U$ and $\rk$ a $\tau$-compatible Sylvester matrix rank function on $\mathcal U$. Let $\rk'$ be a rank on $\mathcal U[t^{\pm 1};\tau]$ that extends $\rk$. Then $\rk' $ is the natural extension of $\rk$ if and only if, for any matrix $A\in \Mat_{n}(\mathcal U)$, we have 
$$
 \rk'(I_n+At) = n
$$
\end{pro}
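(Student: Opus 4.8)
The plan is to prove the two implications separately: the forward one is a direct computation with the truncations defining $\wrk$, while the converse is the substantial direction and requires reducing an arbitrary matrix to pencils of the shape $I_n+At$. For the forward implication, suppose $\rk'=\wrk$ and compute $\wrk(I_n+At)$ from the definition $\wrk=\lim_i\wrk_i$. Fix $i$ and consider right multiplication by $I_n+At$ on $(\mathcal U[t,\tau]/\mathcal U[t,\tau]t^i)^n$. Right multiplication by $At$ raises the $t$-degree by one and sends degree $i-1$ to $0$ in the truncation, so on this finite-length module it is nilpotent; hence right multiplication by $I_n+At$ is unipotent, and in particular invertible as an endomorphism of a free $\mathcal U$-module of rank $in$. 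Any invertible matrix over $\mathcal U$ has full rank for $\rk$ by (SMat1)--(SMat3), so $\wrk_i(I_n+At)=in/i=n$ for every $i$, whence $\wrk(I_n+At)=n$ and therefore $\rk'(I_n+At)=n$.

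For the converse, assume $\rk'$ extends $\rk$ and satisfies $\rk'(I_n+At)=n$ for all $n\ge1$ and all $A\in\Mat_n(\mathcal U)$; the goal is $\rk'=\wrk$ on every matrix over $\mathcal U[t^{\pm1},\tau]$. Since $t$ is a unit of full rank for both functions, left multiplication by a scalar power $t^kI$ changes neither rank, so we may assume the matrix has entries in $\mathcal U[t,\tau]$. Using invertible row and column operations over $\mathcal U[t,\tau]$ together with direct sums with identity blocks --- operations under which $\rk'$ and $\wrk$ transform identically --- the usual linearization (companion matrix) construction reduces any polynomial matrix to a linear pencil $\Lambda_0+\Lambda_1t$ with $\Lambda_0,\Lambda_1$ over $\mathcal U$. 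It therefore suffices to prove $\rk'(\Lambda_0+\Lambda_1t)=\wrk(\Lambda_0+\Lambda_1t)$ for such pencils.

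Here the regularity of $\mathcal U$ enters. When $\Lambda_0$ is square and invertible one factors $\Lambda_0+\Lambda_1t=\Lambda_0(I_N+\Lambda_0^{-1}\Lambda_1t)$, and both ranks equal $N$: $\rk'$ by hypothesis and $\wrk$ by the forward computation applied to $\Lambda_0^{-1}\Lambda_1$. In general, regularity provides a relative inverse $\Lambda_0^{[-1]}$ and an idempotent $\epsilon=\Lambda_0\Lambda_0^{[-1]}$, allowing one to peel off the invertible directions of $\Lambda_0$ and isolate its degenerate part; on the degenerate directions the value of $\wrk$ is computed through the explicit ideal formula of Proposition \ref{prop7.2} (regular case), and $\rk'$ is then forced to agree. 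This is precisely the step where the argument for Laurent polynomials in \cite{Ja17base} is transcribed, the only modifications being the bookkeeping of the twist $tx=\tau(x)t$.

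The main obstacle is this last point: pinning down $\rk'$ on the non-invertible part of $\Lambda_0$ from a hypothesis that only speaks about unipotent pencils $I_n+At$. Regularity (generalized inverses, splitting by idempotents) lets one transport the given information onto the degenerate block, and Proposition \ref{prop7.2} prescribes the value that $\wrk$ must take there; reconciling these two --- that is, showing that no extension of $\rk$ satisfying the hypothesis can deviate from $\wrk$ along the degenerate directions --- is the delicate part of the argument.
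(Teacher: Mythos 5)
Your ``only if'' direction is complete and correct: right multiplication by $At$ is nilpotent on each truncation $(\mathcal U[t,\tau]/\mathcal U[t,\tau]t^i)^n$, so $\wrk_i(I_n+At)=n$ for every $i$ and hence $\wrk(I_n+At)=n$. The opening reductions of the converse are also sound: both $\rk'$ and $\wrk$ are unchanged by multiplication by units of $\mathcal U[t^{\pm 1},\tau]$ and by stable association (adding identity blocks), so Higman-type linearization legitimately reduces the problem to pencils $\Lambda_0+\Lambda_1 t$, and the subcase where $\Lambda_0$ is invertible follows at once from the hypothesis.

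The gap is that the remaining case --- $\Lambda_0$ not invertible, which is the whole content of the proposition --- is never actually argued. You assert that regularity lets one ``peel off the invertible directions'' of $\Lambda_0$ and that Proposition \ref{prop7.2} then ``forces'' $\rk'$ to agree with $\wrk$, and you end by conceding that reconciling the two is ``the delicate part''. But Proposition \ref{prop7.2} only computes $\wrk$; it imposes no constraint whatsoever on $\rk'$, whose only known properties are that it extends $\rk$ and assigns full rank to pencils $I_n+At$. What is missing is precisely the argument that these two properties determine the rank of an arbitrary pencil: for instance, an induction in which regularity of $\mathcal U$ is used to split $\Lambda_0$, as a map of free modules, into an isomorphism plus zero relative to projective decompositions of domain and codomain, column operations by the unit $t$ strip $t$ from suitable blocks, identity blocks are split off by stable association, and one descends to strictly smaller pencils, terminating in matrices of the form $(I+Ct)\oplus Dt$ with $D$ over $\mathcal U$, where the hypothesis and the extension property pin down $\rk'$. (Two technical points you elide: over a general regular ring a matrix need not be unit-equivalent to an idempotent --- that is unit-regularity --- and the relative inverse $x^{[-1]}$ is a $*$-regular notion not available here, so your device $\epsilon=\Lambda_0\Lambda_0^{[-1]}$ must be replaced by splittings into projective summands.) For what it is worth, the paper itself writes no proof of Proposition \ref{prop7.7}: it cites \cite{Ja17base} for the untwisted case and notes that the proof adapts to the twist. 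Your proposal defers exactly the same step to exactly the same source, so as a blind proof it supplies the easy half and the standard reductions while omitting the part that constitutes the cited result.
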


Now  assume that $\mathcal U$ is positive definite $*$-regular and $\tau$ is a $*$-automorphism. In this case we will show that the natural extension  $\wrk$ of a Sylvester matrix rank function $\rk$ on $\mathcal U$ is  a $*$-regular Sylvester rank function on $\mathcal U[t^{\pm 1};\tau]$. To do so first observe that, provided $\tau$ is a $*$-automorphism, we can endow $\mathcal U[t^{\pm 1};\tau]$ with an involution by setting $t^*=t^{-1}$. This is indeed consistent with the twist $ta = \tau(a)t$ because $(ta)^*=a^*t^{-1}=t^{-1}\tau(a^*) = t^{-1}\tau(a)^*=(\tau(a)t)^*$. 

 Since $\mathcal U$ is positive definite, $\Mat_n(\mathcal U)$, and so $\End_{\mathcal U}(\mathcal U[t;\tau]/\mathcal U[t;\tau]t^n)$, is $*$-regular for every $n$. In this ring we have the rank $\rk_n=\frac{\rk}{n}$ (from where we obtained $\wrk_n$).  Let us fix a non-principal ultrafilter $\omega$ on $\mathbb{N}$.   We can construct a rank function $\rk_{\omega}:=\displaystyle{\lim_{\omega}}  ~\pi_n^{\sharp}(\rk_n)$ on the $*$-regular ring $\displaystyle{\prod_{n=1}^\infty} \End_\mathcal U(\mathcal U[t;\tau]/\mathcal U[t;\tau]t^n)$, where $\pi_n$ is the natural projection onto the $n$-th factor.  We denote 

\begin{equation}\label{eq4}
\mathcal P_{\omega, \tau}^{\mathcal U}:= \left(\prod_{n=1}^\infty \End_{\mathcal U}(\mathcal U[t;\tau]/\mathcal U[t;\tau]t^n)\right) \Big{/} \ker \rk_{\omega} .
\end{equation}
and $\rk_{\omega}$ defines a faithful rank function on $\mathcal P_{\omega, \tau}^{\mathcal U}$. Consider the natural map $f_{\omega}: \mathcal U[t;\tau] \rightarrow \mathcal P_{\omega, \tau}^{\mathcal U}$, where $p\mapsto (\phi_{\mathcal U,n}^p)_n + \ker \rk_{\omega}$, and observe that  the definition of natural extension  tells us that  as a rank over $\mathcal U[t;\tau]$, $\wrk = {f_{\omega}}^{\sharp}(\rk_{\omega})$. Finally, since $\wrk(t)=1$, $f_{\omega}$ extends to a homomorphism
\begin{equation}\label{eq5}
 f_{\omega}: \mathcal U[t^{\pm 1};\tau]\rightarrow \mathcal P_{\omega, \tau}^{\mathcal U}
\end{equation} 
and $\wrk = {f_{\omega}}^{\sharp}(\rk_{\omega})$. As in \cite{Ja17base} one may check that $f_{\omega}$ is a $*$-homomorphism, and consequently, the following proposition.

\begin{pro}\label{prop7.8}
Let $\mathcal U$ be a positive definite $*$-regular ring, $\tau$ a $*$-automorphism of $\mathcal U$. If $\rk$ is a $\tau$-compatible Sylvester matrix rank function on $\mathcal U$, then the natural extension of $\rk$ on $\mathcal U[t^{\pm 1};\tau]$ is a $*$-regular rank function on $\mathcal U[t^{\pm 1};\tau]$.
\end{pro}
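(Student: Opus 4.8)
The plan is to verify that $\wrk$ matches the definition of a $*$-regular rank function by exhibiting it as the pullback, along a $*$-homomorphism, of a faithful rank on a $*$-regular ring. All the ingredients are already assembled in the construction preceding the statement: the ring $\mathcal P_{\omega,\tau}^{\mathcal U}$ is $*$-regular, being the quotient of the $*$-regular product ring $\prod_{n}\End_{\mathcal U}(\mathcal U[t,\tau]/\mathcal U[t,\tau]t^n)$ by the ideal $\ker \rk_\omega$ (Proposition \ref{proper}); the induced rank $\rk_\omega$ is faithful on it; and the homomorphism $f_\omega\colon \mathcal U[t^{\pm 1},\tau]\to \mathcal P_{\omega,\tau}^{\mathcal U}$ satisfies $\wrk = f_\omega^\sharp(\rk_\omega)$. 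Hence everything reduces to the single claim that $f_\omega$ is a $*$-homomorphism; once this is checked, the definition of $\mathbb P_{*reg}(\mathcal U[t^{\pm 1},\tau])$ immediately gives that $\wrk$ is $*$-regular.

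First I would pin down the involution on each factor $\End_{\mathcal U}(V_n)$, where $V_n = \mathcal U[t,\tau]/\mathcal U[t,\tau]t^n$. Under the isomorphism $\End_{\mathcal U}(V_n)\cong \Mat_n(\mathcal U)$ coming from the left $\mathcal U$-basis $\{1,t,\dots,t^{n-1}\}$, the $*$-regular involution is the conjugate transpose, that is, the adjoint with respect to the Hermitian form making this basis orthonormal. I would then compute the operators $\phi_{\mathcal U,n}^{p}$ on generators. Using the twist $t^i a = \tau^i(a)t^i$, right multiplication by $a\in \mathcal U$ acts diagonally, $t^i\mapsto \tau^i(a)t^i$, and since $\tau$ is a $*$-automorphism we have $\tau^i(a)^*=\tau^i(a^*)$; a short adjoint computation then yields $(\phi_{\mathcal U,n}^{a})^* = \phi_{\mathcal U,n}^{a^*}$ at every level $n$. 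Consequently $f_\omega(a^*) = f_\omega(a)^*$ in $\mathcal P_{\omega,\tau}^{\mathcal U}$ for all $a\in \mathcal U$.

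The hard part will be the generator $t$, because here the truncation of $V_n$ interferes with the involution $t^*=t^{-1}$. Right multiplication by $t$ is the shift $t^i\mapsto t^{i+1}$ that annihilates $t^{n-1}$, so its adjoint is the down-shift that annihilates $t^0$; neither operator is literally $\phi_{\mathcal U,n}^{t^{-1}}$, since $t^{-1}$ is not even available in the polynomial truncation $V_n$. The point is that the discrepancy is confined to the edge: a direct computation gives $(\phi_{\mathcal U,n}^{t})^*\,\phi_{\mathcal U,n}^{t} = I_n - E$, where $E$ is the rank-one projection onto $t^{n-1}$, so that $\rk_n\big((\phi_{\mathcal U,n}^{t})^*\phi_{\mathcal U,n}^{t}-I_n\big) = \tfrac 1n \to 0$. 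Passing to the limit along $\omega$ kills this edge term, whence $f_\omega(t)^* f_\omega(t) = 1$ in $\mathcal P_{\omega,\tau}^{\mathcal U}$. Since $\wrk(t)=1$ forces $f_\omega(t)$ to be invertible in the regular ring $\mathcal P_{\omega,\tau}^{\mathcal U}$ by Lemma \ref{faithreg}, the one-sided inverse $f_\omega(t)^*$ must coincide with the two-sided inverse, giving $f_\omega(t)^* = f_\omega(t)^{-1} = f_\omega(t^{-1}) = f_\omega(t^*)$.

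Finally, since $f_\omega$ respects the involution on the generating set $\mathcal U\cup\{t,t^{-1}\}$ of $\mathcal U[t^{\pm 1},\tau]$, it respects it everywhere, so $f_\omega$ is a $*$-homomorphism. By the definition of a $*$-regular rank, $\wrk = f_\omega^\sharp(\rk_\omega)$ is then a $*$-regular Sylvester matrix rank function on $\mathcal U[t^{\pm 1},\tau]$, as claimed. The only genuinely delicate step is the edge-effect estimate for $t$; everything else is bookkeeping with the orthonormal basis and the fact that $\tau$ commutes with $*$.
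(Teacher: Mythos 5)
Your proof is correct and follows essentially the same route as the paper: the paper's argument consists precisely of the construction of $\mathcal P_{\omega,\tau}^{\mathcal U}$, $\rk_\omega$ and $f_\omega$ given just before the proposition, the identity $\wrk = f_\omega^{\sharp}(\rk_\omega)$, and the assertion (deferred to \cite{Ja17base}) that $f_\omega$ is a $*$-homomorphism. Your only addition is to carry out that deferred verification explicitly --- the diagonal computation for $a\in\mathcal U$ and the vanishing rank-$\tfrac1n$ edge term forcing $f_\omega(t)^*=f_\omega(t)^{-1}$ --- which is exactly the check the paper leaves to the reference, and your computation is sound.
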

 
We can use the previous results to show the existence  of the natural extension for either a $*$-regular or an integer-valued Sylvester rank function. We describe this in two separate propositions for the sake of clarity.  Although the proof of the following proposition is similar to the proof of \cite[Proposition 7.5]{Ja17base}, it presents   additional technical difficulties that do not appear when $\tau$ is the identity automorphism.

\begin{pro} \label{prop7.4}
 Let $R$ be a $*$-ring, $\tau$ a $*$-automorphism of $R$ and $\rk$ a $\tau$-compatible $*$-regular Sylvester matrix  rank function   on $R$.  
Let  $(\mathcal U,\rk^{\prime},\varphi)$  be  the $*$-regular envelope of $\rk$.

\begin{itemize}
\item[(1)]
 Then $\tau$ can be extended to a $*$-automorphism of $\mathcal U$ (also denoted $\tau$) such that  $\rk^{\prime}$ is $\tau$-compatible. 
\item[(2)] Denote also by $\varphi$ the induced map $R[t^{\pm 1};\tau]\to \mathcal U[t^{\pm 1};\tau]$. Then there exists the natural extension $\widetilde{rk^\prime}$ of $\rk^\prime$ to  $	\mathcal U[t^{\pm 1};\tau]$ and $ \wrk=\varphi^{\sharp}(\widetilde{\rk^{\prime}})$ is  the natural extension of  $\rk$ to  $R[t^{\pm 1};\tau]$.
 \item[(3)] Endow $R[t^{\pm 1};\tau]$ and $\mathcal U[t^{\pm 1};\tau]$ with an involution by setting $t^*=t^{-1}$. If $\mathcal U$ is positive definite, then  $\wrk$ is a $*$-regular Sylvester matrix rank function on $R[t^{\pm 1};\tau]$.
\end{itemize}
\end{pro}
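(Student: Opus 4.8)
The plan is to treat the three parts in order, with part (1)---extending $\tau$ to the envelope---being the one genuinely new ingredient relative to the case $\tau=\iid$ of \cite[Proposition 7.5]{Ja17base}; once $\tau$ is defined on $\mathcal U$ and shown to preserve $\rk^{\prime}$, parts (2) and (3) follow formally from the machinery already in place. For part (1) I would exploit the uniqueness of the $*$-regular envelope (Theorem \ref{isom}). Consider the two structure maps $\varphi$ and $\varphi\circ\tau$ from $R$ to $\mathcal U$. Because $\tau$ is an automorphism of $R$ we have $(\varphi\circ\tau)(R)=\varphi(R)$, so the $*$-regular closure of the image is unchanged and $(\mathcal U,\rk^{\prime},\varphi\circ\tau)$ is again an epic $*$-regular $R$-ring. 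It induces on $R$ the rank $(\varphi\circ\tau)^{\sharp}(\rk^{\prime})=\tau^{\sharp}(\rk)=\rk$, the last equality being precisely the $\tau$-compatibility of $\rk$; this is the same rank that $(\mathcal U,\rk^{\prime},\varphi)$ induces. Theorem \ref{isom} then supplies a $*$-isomorphism $\mathcal U\to\mathcal U$ respecting the $R$-structure and the rank; denoting it again by $\tau$ gives a $*$-automorphism of $\mathcal U$ with $\tau\circ\varphi=\varphi\circ\tau$ (so it extends $\tau$, as $\varphi$ is injective by faithfulness of $\rk$) and $\rk^{\prime}(\tau(x))=\rk^{\prime}(x)$ for every $x$ (so $\rk^{\prime}$ is $\tau$-compatible), which is exactly (1).

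For part (2), observe first that $\mathcal U$ is regular, hence every Sylvester module rank function on it is exact and $\rk^{\prime}$ corresponds to a $\tau$-compatible normalized length function (Proposition \ref{extlength}); thus the natural extension $\widetilde{\rk^{\prime}}$ on $\mathcal U[t^{\pm 1},\tau]$ exists by Proposition \ref{natext}. To identify $\varphi^{\sharp}(\widetilde{\rk^{\prime}})$ with the natural extension of $\rk$, I would compare the approximating ranks $\wrk_i$ level by level. The key computation is that, representing right multiplication by a matrix $A$ over $R[t,\tau]$ on the free module $R[t,\tau]/R[t,\tau]t^i$ in the basis $1,t,\dots,t^{i-1}$, the resulting matrix $B$ over $R$ is carried by $\varphi$ to the matrix representing right multiplication by $\varphi(A)$ on $\mathcal U[t,\tau]/\mathcal U[t,\tau]t^i$; this step uses exactly the intertwining $\varphi\circ\tau=\tau\circ\varphi$ that encodes the twist $ta=\tau(a)t$. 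Hence $\rk^{\prime}(\phi_{\mathcal U,i}^{\varphi(A)})=\rk^{\prime}(\varphi(B))=\rk(B)=\rk(\phi_{R,i}^A)$, so $\widetilde{\rk^{\prime}}_i(\varphi(A))=\wrk_i(A)$ for every $i$. Letting $i\to\infty$ shows both that the limit defining the natural extension of $\rk$ exists and that it equals $\varphi^{\sharp}(\widetilde{\rk^{\prime}})$; the passage to $R[t^{\pm 1},\tau]$ is then automatic because $\wrk(t)=1$.

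For part (3), suppose $\mathcal U$ is positive definite; by part (1) the extended $\tau$ is a $*$-automorphism. Proposition \ref{prop7.8} then gives that $\widetilde{\rk^{\prime}}=f_{\omega}^{\sharp}(\rk_{\omega})$ is a $*$-regular rank on $\mathcal U[t^{\pm 1},\tau]$, factoring through the $*$-regular ring $\mathcal P_{\omega,\tau}^{\mathcal U}$ along the $*$-homomorphism $f_{\omega}$. Since $\varphi\colon R[t^{\pm 1},\tau]\to\mathcal U[t^{\pm 1},\tau]$ is a $*$-homomorphism for the involution $t^*=t^{-1}$, the composite $f_{\omega}\circ\varphi$ is a $*$-homomorphism into a $*$-regular ring, and $\wrk=\varphi^{\sharp}(\widetilde{\rk^{\prime}})=(f_{\omega}\circ\varphi)^{\sharp}(\rk_{\omega})$ is therefore a $*$-regular Sylvester matrix rank function on $R[t^{\pm 1},\tau]$, which is (3).

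The main obstacle I expect is part (1): the extension of $\tau$ is obtained not by continuity or a direct formula but by invoking the rigidity of envelopes, and one must take care to verify that Theorem \ref{isom} applies with $\varphi\circ\tau$ in place of $\varphi$ (in particular that $\varphi\circ\tau$ is still epic and induces the same rank). The computational heart of part (2), namely that $\varphi$ sends the multiplication matrix $B$ to $\varphi(B)$, is the place where a nontrivial $\tau$ genuinely complicates the picture relative to $\tau=\iid$, although it becomes routine once the commutation $\varphi\circ\tau=\tau\circ\varphi$ from part (1) is available.
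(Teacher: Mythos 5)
Your proposal follows the paper's proof essentially verbatim: part (1) applies Theorem \ref{isom} to the two epic $*$-regular $R$-rings $(\mathcal U,\rk^{\prime},\varphi)$ and $(\mathcal U,\rk^{\prime},\varphi\circ\tau)$ (which induce the same rank on $R$ by $\tau$-compatibility), part (2) rests on Proposition \ref{natext} for the existence of $\widetilde{\rk^{\prime}}$ together with the level-by-level identity $\wrk_i=\varphi^{\sharp}(\widetilde{\rk^{\prime}_i})$ that you spell out, and part (3) invokes Proposition \ref{prop7.8} using that $\varphi: R[t^{\pm 1},\tau]\to\mathcal U[t^{\pm 1},\tau]$ is a $*$-homomorphism. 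The only blemish is your parenthetical claim that $\varphi$ is injective ``by faithfulness of $\rk$'' --- $\rk$ is not assumed faithful on $R$, so ``extends'' should just be read as the intertwining relation $\tau\circ\varphi=\varphi\circ\tau$; this does not affect the argument.
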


\begin{proof}  Observe that $(\mathcal U, \rk^\prime, \varphi\circ \tau)$ is also an epic $*$-regular $R$-ring. Since $\rk$ is $\tau$-compatible, by  Theorem \ref{isom} $\tau$ can be extended to a $*$-automorphism of $\mathcal U$ preserving the rank $\rk^\prime$. 
Hence  we have the following commutative diagram
 $$
 \xymatrix{                    & \mathcal U \ar[dr]^{\rk^{\prime}} \ar@{.>}[dd]_{\exists\tau} &   \\ 
R \ar[ur]^{\varphi} \ar[dr]_{\varphi \circ \tau}  &                                    & \mathbb{R}_{\ge 0} \\
                               & \mathcal U \ar[ur]_{\rk^{\prime}}                  &      
 }
$$ 
Now, inasmuch as $\rk^{\prime}$ is exact and $\tau$-compatible, using Proposition \ref{natext}, we obtain that there exists its natural extension $\widetilde{\rk^{\prime}}$, which is a regular Sylvester rank function on $\mathcal U[t^{\pm 1};\tau]$.  Since $\rk=\varphi^\sharp(\rk^\prime)$  and $\wrk_i=\varphi^\sharp(\widetilde{\rk^\prime_i})$, we conclude that   $\wrk=\varphi^\sharp(\widetilde{\rk^\prime})$ is the natural extension of $\rk$. 

 Part 3 follows from Proposition \ref{prop7.8} because the extension $\varphi: R[t^{\pm 1};\tau]\rightarrow \mathcal U[t^{\pm 1};\tau]$ is a $*$-homomorphism. 
\end{proof}
As a consequence of the latter proposition and Proposition \ref{prop7.7} we have the following corollary.
 
\begin{cor} \label{wlimext}
 Let $R$ be a $*$-ring, $\tau$ a $*$-automorphism of $R$ and $\{\rk_i\}$ a family of $\tau$-compatible $*$-regular rank functions. For every $i\in \N$, let $\wrk_i$ be the natural extension of $\rk_i$ to $R[t^{\pm 1};\tau]$. Then, for every non-principal ultrafilter on $\N$, $\displaystyle \lim_{\omega} \wrk_i$ is the natural extension of $\rk_{\omega} = \displaystyle \lim_{\omega} \rk_i$.
\end{cor}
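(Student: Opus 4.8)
The plan is to express $\lim_\omega\wrk_i$ as a pullback of a natural extension living over a single regular ring, and then to pin that natural extension down with the limit-free test of Proposition~\ref{prop7.7}.

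First I would check that $\rk_\omega:=\lim_\omega\rk_i$ is an admissible input for Proposition~\ref{prop7.4}. The ultralimit exists and is unique because $\mathbb{P}(R)$ is compact, it is $*$-regular because $\mathbb{P}_{*reg}(R)$ is a closed subset, and it is $\tau$-compatible since $\rk_\omega(\tau(A))=\lim_\omega\rk_i(\tau(A))=\lim_\omega\rk_i(A)=\rk_\omega(A)$. Thus Proposition~\ref{prop7.4} provides the natural extension $\widetilde{\rk_\omega}$ on $R[t^{\pm1},\tau]$. Likewise $\rk':=\lim_\omega\wrk_i$ exists by compactness of $\mathbb{P}(R[t^{\pm1},\tau])$, and it extends $\rk_\omega$ because each $\wrk_i$ extends $\rk_i$.

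Next I would build the regular ring on which to run the test. Let $(\mathcal U_i,\rk_i',\varphi_i)$ be the $*$-regular envelope of $\rk_i$, with $\tau$ extended to a $*$-automorphism of $\mathcal U_i$ as in Proposition~\ref{prop7.4}(1), and set $\mathcal U:=\big(\prod_i\mathcal U_i\big)\big/\ker\big(\lim_\omega\rk_i'\big)$. By Proposition~\ref{proper} this is a $*$-regular ring with faithful rank $\rk_{\mathcal U}=\lim_\omega\rk_i'$; the coordinatewise $\tau$ descends to a $*$-automorphism, $\rk_{\mathcal U}$ is $\tau$-compatible, and $\varphi:=\lim_\omega\varphi_i\colon R\to\mathcal U$ satisfies $\varphi^{\sharp}(\rk_{\mathcal U})=\rk_\omega$. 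As $\mathcal U$ is regular, $\rk_{\mathcal U}$ is exact, so its natural extension $\widetilde{\rk_{\mathcal U}}$ exists by Proposition~\ref{natext}. The degree approximations defining a natural extension commute with pullback along any ring homomorphism — this is the identity, with the running index playing the role of the degree, that is used in the proof of Proposition~\ref{prop7.4}(2) — so, applied to $\varphi$ and letting the degree tend to infinity, and using that pullback commutes with pointwise limits while both natural extensions already exist, we obtain $\widetilde{\rk_\omega}=\varphi^{\sharp}(\widetilde{\rk_{\mathcal U}})$.

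It then remains to locate $\rk'$ in this picture. By Proposition~\ref{prop7.4}(2) applied to each $\rk_i$ we have $\wrk_i=\varphi_i^{\sharp}(\widetilde{\rk_i'})$, so for any matrix $B$ over $R[t^{\pm1},\tau]$ we get $\rk'(B)=\lim_\omega\widetilde{\rk_i'}(\varphi_i(B))$. Accordingly I would define $\rho$ on $\mathcal U[t^{\pm1},\tau]$ by sending a matrix $C$, represented through its component matrices $C_i$ over $\mathcal U_i[t^{\pm1},\tau]$, to $\lim_\omega\widetilde{\rk_i'}(C_i)$; a routine estimate based on $|\rk(X)-\rk(Y)|\le\rk(X-Y)$ shows that this is independent of the chosen representative and defines a Sylvester rank function on $\mathcal U[t^{\pm1},\tau]$ which extends $\rk_{\mathcal U}$ and satisfies $\rk'=\varphi^{\sharp}(\rho)$. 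The gain from working over $\mathcal U$ is that $\rho$ now meets a limit-free hypothesis: for $C\in\Mat_n(\mathcal U)$ with components $C_i$, Proposition~\ref{prop7.7} over each regular $\mathcal U_i$ gives $\widetilde{\rk_i'}(I_n+C_it)=n$, hence $\rho(I_n+Ct)=\lim_\omega n=n$. Therefore Proposition~\ref{prop7.7}, now invoked over the regular ring $\mathcal U$, forces $\rho=\widetilde{\rk_{\mathcal U}}$, and assembling the identities yields $\rk'=\varphi^{\sharp}(\rho)=\varphi^{\sharp}(\widetilde{\rk_{\mathcal U}})=\widetilde{\rk_\omega}$.

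The crux is the interchange of the family ultralimit $\lim_\omega$ with the degree limit that defines the natural extension: a direct termwise argument would demand uniform control of the convergence of the approximations across the index $i$, which is not available. The proof bypasses this by using Proposition~\ref{prop7.7} to trade the defining limit for the single algebraic condition $I_n+At\mapsto n$, a condition stable under ultralimits; it can thus be verified on the ultraproduct $\mathcal U$ componentwise and transported back to $R[t^{\pm1},\tau]$ along $\varphi$.
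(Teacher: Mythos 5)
Your proposal is correct and follows essentially the same route as the paper's proof: both pass to the product of the $*$-regular envelopes $(\mathcal U_i,\rk_i',\varphi_i)$, use the criterion of Proposition \ref{prop7.7} (that $\rk'(I_n+At)=n$ characterizes the natural extension over a regular ring, a condition stable under ultralimits) to identify $\lim_\omega$ of the natural extensions with the natural extension of $\lim_\omega \rk_i'$ on the regular ring, and then pull back along $\varphi$ to $R[t^{\pm 1},\tau]$. The only differences are cosmetic: you quotient the product by $\ker(\lim_\omega\rk_i')$ and therefore add a (correctly handled) well-definedness check, whereas the paper works with the full product $\prod_i\mathcal U_i$ directly.
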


\begin{proof}
  Since $\rk_i$ is $*$-regular and $\tau$-compatible for every $i$, $\rk_{\omega}$ is also $*$-regular and $\tau$-compatible, and therefore, their natural extensions exist by Proposition \ref{prop7.4}. Let $(\mathcal U_i, \rk'_i, \varphi_i)$ be the $*$-regular envelope of $\rk_i$ and set $\mathcal U = \prod \mathcal U_i$, $\varphi=(\varphi_i)$. Consider
  $$
  \begin{matrix}
   \xymatrix{ R \ar@{->}[r]^{\varphi} \ar@{->}[rd]_{\varphi_i} & \mathcal{U} \ar@{->}^{\pi_i}[d] \\
      & \mathcal U_i
     } &    \xymatrix{ R[t^{\pm 1};\tau] \ar@{->}[r]^{\varphi} \ar@{->}[rd]_{\varphi_i} & \mathcal{U}[t^{\pm 1};\tau'] \ar@{->}^{\pi_i}[d] \\
      & \mathcal U_i[t^{\pm 1};\tau_i]
     }
   \end{matrix}
 $$
  where $\tau_i$ is the $*$-automorphism of $\mathcal U_i$ given in Proposition \ref{prop7.4}(1), $\tau' = (\tau_i)$ and $\pi_i$ is the natural projection. By construction, $\rk'_{\omega}:= \displaystyle \lim_{\omega} \pi_i^{\sharp}(\rk'_i)$ satisfies $\rk_{\omega} = \varphi^{\sharp}(\rk'_{\omega})$. Now, since $\mathcal U$ is regular and $\rk'_{\omega}$ is $\tau'$-compatible, there exists its natural extension on $\mathcal U[t^{\pm 1};\tau']$, and so $\wrk_{\omega} = \varphi^{\sharp}(\wrk'_{\omega})$.   In addition, notice that $\displaystyle\lim_{\omega} \pi_i^{\sharp}(\wrk_i')$ extends $\rk_{\omega}'$ and that, since $\wrk_i'$ is the natural extension of $\rk_i$, we obtain from Proposition \ref{prop7.7} that
$$
 \left(\lim_{\omega} \pi_i^\sharp(\wrk_i')\right)(I_n+At) = \lim_{\omega} \wrk_i'(I_n+\pi_i(A)t) = \lim_{\omega} n = n.
$$
As a consequence, another application of Proposition 3.5 gives us that
  $$\wrk'_{\omega} = \lim_{\omega} \pi_i^{\sharp}(\wrk'_i)$$
and, therefore, $\wrk_{\omega} = \varphi^{\sharp}(\wrk'_{\omega}) = \displaystyle\lim_{\omega} \varphi_i^{\sharp}(\wrk_i') = \displaystyle\lim_{\omega} \wrk_i$.
\end{proof}

Now we consider integer-valued Sylvester rank functions.
\begin{pro} \label{prop7.4iv}
 Let $R$ be a ring, $\tau$ an automorphism of $R$ and $\rk$ an integer-valued $\tau$-compatible  Sylvester  matrix rank function on $R$.  
Let  $(\D ,\varphi)$  be  the epic division envelope of $\rk$.

\begin{itemize}
\item[(1)]
 Then $\tau$ can be extended to an automorphism of $\D$ (also denoted $\tau$) and  $\rk_\D$ is (automatically) $\tau$-compatible. 
\item[(2)] Denote also by $\varphi$ the induced map $R[t^{\pm 1};\tau]\to \D[t^{\pm 1};\tau]$. Then there exists the natural extension $\wrk_\D$ of $\rk_\D$ to  $\D[t^{\pm 1};\tau]$ and $\wrk= \varphi^{\sharp}(\wrk_\D)$ is  the natural extension of  $\rk$ to  $R[t^{\pm 1};\tau]$.
\item[(3)] The function $\wrk$ is integer-valued and its epic division envelope is isomorphic to the {Ore division} ring of fractions of $\D[t^{\pm 1};\tau]$.
\end{itemize}
\end{pro}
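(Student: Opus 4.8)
The plan is to follow the blueprint of the proof of Proposition \ref{prop7.4}, replacing the isomorphism theorem for epic $*$-regular rings (Theorem \ref{isom}) by its division-ring counterpart (Theorem \ref{isomdiv}), and replacing the positive-definiteness input by the classical fact that a skew Laurent polynomial ring over a division ring is an Ore domain.

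For (1), I would compare the two epic division $R$-rings $(\D,\varphi)$ and $(\D,\varphi\circ\tau)$; the latter is again epic because $\tau$ is an automorphism of $R$, so $\varphi(\tau(R))=\varphi(R)$ has division closure $\D$. Since $\rk$ is $\tau$-compatible, for every matrix $A$ over $R$ we have
$$(\varphi\circ\tau)^{\sharp}(\rk_\D)(A)=\rk_\D(\varphi(\tau(A)))=\rk(\tau(A))=\rk(A)=\rk_\D(\varphi(A)),$$
so both rings induce the same rank on $R$. By Theorem \ref{isomdiv} they are isomorphic as epic division $R$-rings, which yields an automorphism of $\D$, again denoted $\tau$, with $\tau\circ\varphi=\varphi\circ\tau$. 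As $\rk_\D$ is the unique rank attached to the division-ring structure of $\D$, it is preserved by every ring automorphism, so $\rk_\D$ is automatically $\tau$-compatible.

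For (2), a division ring is von Neumann regular, and over a regular ring every Sylvester module rank function is exact, so the function associated with $\rk_\D$ extends to a normalized length function (Proposition \ref{extlength}); being $\tau$-compatible, Proposition \ref{natext} produces the natural extension $\wrk_\D$ on $\D[t^{\pm 1},\tau]$. The equality $\wrk=\varphi^{\sharp}(\wrk_\D)$ is then obtained exactly as in Proposition \ref{prop7.4}: at each finite level the endomorphism-ring construction is compatible with $\varphi$, so $\wrk_i=\varphi^{\sharp}(\widetilde{\rk^\prime_{\D,i}})$, and passing to the limit gives both the existence of the natural extension of $\rk$ and the identity $\wrk=\varphi^{\sharp}(\wrk_\D)$.

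Part (3) is where I expect the main work to lie. Since $\D$ is a division ring and $\tau$ an automorphism, $\D[t^{\pm 1},\tau]$ is a (left and right) Ore domain; let $Q$ be its Ore division ring of fractions and $\rk_Q$ the induced rank on $\D[t^{\pm 1},\tau]$. The division-ring embedding $\D\hookrightarrow Q$ preserves matrix rank, so $\rk_Q$ extends $\rk_\D$. The crux is to identify $\rk_Q$ with $\wrk_\D$ via Proposition \ref{prop7.7}, for which it suffices to check $\rk_Q(I_n+At)=n$ for every $A\in\Mat_n(\D)$, i.e. that $I_n+At$ is invertible over $Q$. Writing a putative kernel vector as $v=\sum_i v_i t^i$ with $v_i\in\D^n$ and expanding $(I_n+At)v=\sum_j (v_j+A\tau(v_{j-1}))t^j$, vanishing of the degree-$0$ term forces $v_0=0$, and an induction on $j$ gives $v_j=-A\tau(v_{j-1})=0$ for all $j$; clearing denominators by the Ore condition (and multiplying by a power of $t$) reduces the general case to this polynomial one, so $I_n+At$ has trivial kernel and is invertible. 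Proposition \ref{prop7.7} then gives $\wrk_\D=\rk_Q$, which is integer-valued, whence $\wrk=\varphi^{\sharp}(\wrk_\D)$ is integer-valued as well. Finally, the division closure of $\varphi(R)$ in $Q$ is $\D$ and $t^{\pm 1}\in Q$, so the division closure of the image of $R[t^{\pm 1},\tau]$ is all of $Q$; thus $Q$ is an epic division $R[t^{\pm 1},\tau]$-ring inducing $\wrk$, and by the uniqueness in Theorem \ref{isomdiv} the epic division envelope of $\wrk$ is $Q$, the Ore division ring of fractions of $\D[t^{\pm 1},\tau]$.
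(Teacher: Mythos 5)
Your proposal is correct, and parts (1) and (2) are exactly the paper's argument: the paper disposes of them by saying they are proved ``as in Proposition \ref{prop7.4}'', invoking Theorem \ref{isomdiv} in place of Theorem \ref{isom} together with the fact that a division ring carries a unique Sylvester rank function, which is precisely your comparison of the two epic division $R$-rings $(\D,\varphi)$ and $(\D,\varphi\circ\tau)$. Where you genuinely diverge is part (3), where your argument runs in the opposite direction to the paper's. The paper starts from the natural extension $\wrk_\D$: using Proposition \ref{prop7.2} it observes that any $p=a_it^i+a_{i+1}t^{i+1}+\cdots\in\D[t,\tau]$ with $a_i\neq 0$ satisfies $\wrk_\D(p)\ge\rk_\D(a_i)=1$, so by \cite[Corollary 5.5]{Ja17base} the rank $\wrk_\D$ extends from $\D[t^{\pm 1},\tau]$ to the Ore division ring of fractions $\D(t,\tau)$, and uniqueness of the rank on a division ring then gives $\wrk_\D=\rk_{\D(t,\tau)}$, hence integrality and the identification of the envelope. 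You instead start from the Ore quotient ring $Q=\D(t,\tau)$ and its induced rank $\rk_Q$, and identify $\rk_Q=\wrk_\D$ via the criterion of Proposition \ref{prop7.7}, checking that $I_n+At$ is invertible over $Q$ by the explicit recursion $v_j=-A\tau(v_{j-1})$. Both routes are sound: the paper's is shorter because the localization result \cite[Corollary 5.5]{Ja17base} is already available, while yours replaces that citation (and Proposition \ref{prop7.2}) by an elementary matrix computation. One detail to fix in your kernel argument: to clear denominators of $v\in Q^n$ you must use \emph{right} fractions $v=wu^{-1}$, which exist since $\D[t^{\pm 1},\tau]$ is a two-sided Ore domain; a common \emph{left} denominator $s^{-1}$ cannot be pulled across $I_n+At$, so the reduction to a polynomial kernel vector needs this (harmless) adjustment. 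Your closing step, that the division closure of the image of $R[t^{\pm 1},\tau]$ in $Q$ is all of $Q$ and hence the map is epic, is the same observation with which the paper concludes.
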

\begin{proof}
 The first two statements are proved as in the previous proposition invoking Theorem \ref{isomdiv} instead of Theorem \ref{isom} and taking into account that in a division ring there exists only one rank function.
 
  To prove (3) observe that, for every $p = a_it^i+a_{i+1}t^{i+1}+\dots\in \D[t;\tau]$ with $a_i\neq 0$, we have that $\wrk_\D(p)\ge \rk_\D(a_i) = 1$ by Proposition \ref{prop7.2}, and so we can use \cite[Corollary 5.5]{Ja17base} to extend $\wrk_\D$ not only to $\D[t^{\pm 1};\tau]$ but to the Ore division ring of fractions $\D(t;\tau)$. Again, by uniqueness of rank in a division ring, $\wrk_\D = \rk_{\D(t;\tau)}$ takes integer values, and since the composition $R[t^{\pm 1};\tau] \to \D[t^{\pm 1};\tau] \to \D(t;\tau)$ is also epic, we conclude that $\D(t;\tau)$ is the epic division envelope of $\wrk$.
\end{proof}

To finish this section, let $G$ be a group and suppose that $H$ is a non-trivial finitely generated indicable subgroup of $G$. Take a decomposition $H = N\rtimes_{\tau} <t>$ with $t\in H$, and notice that $K[H]\cong  K[N][t^{\pm 1};\tau]$. Assume that $\rk$ is a $*$-regular Sylvester matrix  rank  function on  $K[G]$ with positive definite $*$-regular envelope $(\mathcal U,\rk^\prime,\varphi)$, and denote by $\rk_{|K[H]}$ and $\rk_{|K[N]}$ the restrictions of $\rk$ to $K[H]$ and $K[N]$, respectively. 

In  Section \ref{intro}, we anticipated that for the proof of the main theorem we were going to construct an environment in which we could compare $\mathcal D_{H,\mathcal U}$ and $\mathcal D_{N,\mathcal U}((t;\tau))$, where $\D_{H,\U}$ and $\D_{N,\U}$ denote, respectively, the division closures of $\varphi(K[H])$ and $\varphi(K[N])$ inside $\U$. This object will be the $*$-regular ring $\mathcal P_{\omega,\tau}^{\mathcal U_{N}}$ that appeared in (\ref{eq4}), where  $\U_N$ is the $*$-regular closure of $\varphi(K[N])$ in $\U$.

On the one hand,  
we have an injective homomorphism 
$$
\mathcal U_{N}[[t;\tau]] \rightarrow \prod_{n=1}^{\infty} End_{\mathcal U_{N}}(\mathcal U_{N}[t;\tau]/\mathcal U_{N}[t;\tau]t^n).
$$ 
This induces a homomorphism  $\psi: \mathcal U_{N}[[t;\tau]] \rightarrow  \mathcal P_{\omega,\tau}^{\mathcal U_{N}}$. Moreover, $\psi$ is injective, because if $p=a_it^i+a_{i+1}t^{i+1}+\ldots \in  \mathcal U_{N}[[t;\tau]]$ with $a_i\ne 0$, then we have that $\rk_{\omega}'(\psi(p))\ge \rk^\prime(a_i)>0$.   Indeed, by the properties of Sylvester rank functions and by $\tau$-compatibility and faithfulness of $\rk'$ we have that 
$$\rk_{\omega}'(\psi(p)) = \lim_{\omega} \frac{\rk'(\phi_{\mathcal U_N,n}^p)}{n} \ge \lim_{\omega} \frac{(n-i)\rk'(a_i)}{n} = \rk'(a_i) > 0. $$  

 Since $t$ is invertible in $ \mathcal P_{\omega,\tau}^{\mathcal U_{N}}$, the property of universal localization allows us to extend $\psi$ to an embedding  
\begin{equation} \label{eqLaurent}
\mathcal U_{N}((t;\tau))\hookrightarrow  \mathcal P_{\omega,\tau}^{\mathcal U_{N}}.
\end{equation}

On the other hand, to show that $\U_{H}$ can be identified with a subring of $\PP_{\omega,\tau}^{\U_{N}}$, we need the rank $\rk$ to satisfy the following additional property 
\begin{equation}\label{cond}
 \rk_{|K[H]} \textrm{is the natural extension of\ }{\rk_{|K[N]}}. 
\end{equation} 
(Observe that the existence of  the natural extension of $\rk_{|K[N]}$ follows from Proposition \ref{prop7.4}.)  Indeed, let us  assume that (\ref{cond}) holds for $H$ and $N$.
Since $K[H]\cong  (K[N])[t^{\pm 1};\tau]$, we can consider the $*$-map $f_{\omega}: \mathcal{U}_{N}[t^{\pm 1};\tau]\rightarrow \mathcal P_{\omega,\tau}^{\mathcal U_{N}}$ as in  (\ref{eq5}).   Denote by $\rk^\prime_{\mathcal U_H}$ the restriction of $\rk^\prime$ to $ \mathcal U_{H}$. Then, if $\varphi$ denotes also the induced homomorphism $\varphi: (K[N])[t^{\pm 1};\tau] \rightarrow \mathcal U_N[t^{\pm 1};\tau]$,
$$\rk_{|K[H]}=\widetilde{\rk_{|K[N]}}={\varphi^{\sharp}}\circ f_\omega^{\sharp}(\rk_{\omega}').$$
This means that $(\mathcal U_{H}, \rk^{\prime}_{\mathcal U_H}, \varphi)$ and $(\mathcal{R}(f_{\omega} \circ {\varphi}(K[H]), \mathcal P_{\omega,\tau}^{\mathcal U_{N}}), \rk_{\omega}', f_{\omega}\circ {\varphi})$ are both $*$-regular envelopes for $\rk_{K[H]}$, and so they are isomorphic. Thus, we can think that $\mathcal U_{H}\subseteq \mathcal P_{\omega,\tau}^{\mathcal U_{N}}$ and $\rk^{\prime}_{|\mathcal U_H}$ is the restriction of $\rk_{\omega}'$.

Thus, assuming the condition (\ref{cond}), we have constructed the following diagram.
\begin{equation} \label{dia6}
\begin{gathered}
 \xymatrix{ \mathcal U_{N}~ \ar@{^{(}->}[r] \ar@<-1em>@{^{(}->}[d]  & \mathcal U_{H} \ar@<-1em>@{^{(}->}[d] \\
 \mathcal U_{N}((t;\tau))~ \ar@{^{(}->}[r] & \mathcal P_{\omega,\tau}^{\mathcal U_{N}} 
 }
\end{gathered}
\end{equation}
with $\rk_{\omega|\mathcal U_{H}}' = \rk^{\prime}_{|\mathcal U_H}$.

\subsection{Hughes-free rank functions}\label{hughes} 
  In this subsection we are going to introduce a property for rank functions on certain crossed products. Given an algebra $R$ and a group $G$, a crossed product $R*G$ is a $G$-graded ring $R*G = \bigoplus_{g\in G} R_g$ such that $R_{1_G} = R$ and for every $g\in G$ there exists an element $u_g\in R_g$ invertible in $R*G$.

As we have done in the previous sections, for group rings $R[G]$ we can (and we will) just set $u_g = g$. Since the multiplication is extended from the one in $G$, this way we canonically embed $G$ (as a group) in $R[G]$. Therefore, we can define $R^{\times}G$ to be the subgroup of the group of units in $R[G]$ consisting on the elements $rg$ for a unit $r\in R$ and a group element $g\in G$. 

In the general setting this identification is no longer possible, but still the set $R^{\times}G:=\{ru_g: r\in R^{\times}, g\in G\}$ is a subgroup of units of $R*G$ containing $R^{\times}$ as a normal subgroup and such that $R^{\times}G/R^{\times} \cong G$. When $E$ is a division ring and $G$ is locally indicable, G. Higman proved (\cite{Hi40}) that $E^{\times}G$ is precisely the set of units of $E*G$. 

In the latter situation,  we are going to introduce the property of Sylvester rank functions on $E*G$ that will be central in the proof of the main theorem of Section \ref{proofs}, namely, the Hughes-free property. This property is closely related to the one imposed to construct diagram (\ref{dia6}) and it is the analog of the Hughes-free property for epic division $E*G$-rings that appears in \cite{Hu70}. In fact we will remark that an epic division $E*G$-ring is Hughes-free if and only if the corresponding Sylvester rank function is Hughes-free. Moreover, we will see in Section \ref{proofs} that if $K$ is a subfield of $\CC$ closed under complex conjugation, then any $*$-regular Hughes-free Sylvester rank function on $K[G]$ with positive definite $*$-regular envelope takes integer values, and so its $*$-regular envelope is a division ring.

First let us recall the definition of Hughes-free epic division $E*G$-ring. If $H$ is a non-trivial finitely generated subgroup of $G$, then we can express $H = N\rtimes C$, where $C$ is infinite cyclic. Let $t$ be a preimage of a generator of $C$ under $E^{\times}H \rightarrow E^{\times}H/E^{\times}\cong H$. Then, left conjugation by $t$ induces an automorphism $\tau: E*N \rightarrow E*N$ and $E*H \cong (E*N)[t^{\pm 1}; \tau]$. Moreover, if $(\D, \varphi)$ is an epic division $E*G$-ring, then $\tau$ can be extended to an automorphism (also denoted by $\tau$) of $\D_{N,\D}$, the division closure of $\varphi(E*N)$ in $\D$. \color{black}

We say that an epic division $E*G$-ring $(\D,\varphi)$ is {\bf Hughes-free} if for every non-trivial finitely generated subgroup $H$ of $G$ and for every  expression $H = N\rtimes C$ as before, we have that $ \D_{H,\D}$ is isomorphic (as an $E*H$-ring) to the Ore ring of fractions of $\D_{N,\D}[t^{\pm 1};\tau]$. Here $\D_{H,\D}$ is the division closure of $\varphi(E*H)$ in $\D$.  This definition is equivalent to saying that $\{t^i\}_{i\in \N}$ are $\D_{N,\D}$-linearly independent, in the sense that there exists no non-trivial expression $a_0+a_1t+\dots+a_nt^n = 0$ in $\D_{H, \D}$ with coefficients in $\D_{N,\D}$. If $(\D,\varphi)$ is Hughes-free, then $\varphi$ is injective and we will consider $E*G$ as a subring of $\D$.

 We  
introduce the following generalization. Let $\rk$ be a Sylvester matrix rank function on the crossed product $E*G$. We say that $\rk$ is {\bf Hughes-free} if for every non-trivial finitely generated subgroup $H$ of $G$ and every expression $H= N\rtimes C$  as above, we have that the natural extension of $\rk|_{E*N}$ exists and coincides with $\rk|_{E*H}$. 

The next lemma states that this is indeed a generalization of the Hughes-free notion for epic division rings.
\begin{lem} \label{rkdiv}
 An epic division $E*G$-ring $\D$ is Hughes-free if and only if $\rk_\D$ is Hughes-free as a Sylvester matrix rank function on $E*G$.
\end{lem}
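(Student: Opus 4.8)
The plan is to reduce the statement, for each relevant subgroup $H$, to a single application of Theorem~\ref{isomdiv} together with Proposition~\ref{prop7.4iv}. Fix a non-trivial finitely generated subgroup $H$ of $G$, a decomposition $H=N\rtimes C$ with $C$ infinite cyclic, a preimage $t$ of a generator of $C$, and let $\tau$ be left conjugation by $t$, so that $E*H\cong (E*N)[t^{\pm 1},\tau]$. Write $\rho:=\rk_\D|_{E*N}$. Since $\D_{N,\D}$ is a division ring, $\rho$ is integer-valued, and because $\tau$ is realised on $\D$ by conjugation by the unit $t$ (so $\tau(A)=tAt^{-1}$ entrywise, which multiplies a matrix by invertible scalars on each side and hence preserves its rank), $\rho$ is $\tau$-compatible; moreover $\D_{N,\D}$ is precisely the epic division envelope of $\rho$. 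I would first record that the extension of $\tau$ to $\D_{N,\D}$ furnished by Proposition~\ref{prop7.4iv}(1) agrees with conjugation by $t$, by uniqueness of the rank-preserving extension.

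Next I would apply Proposition~\ref{prop7.4iv}(2)--(3) to $\rho$: its natural extension $\widetilde{\rho}$ to $E*H$ exists, is integer-valued, and its epic division envelope is, as an $E*H$-ring, the Ore division ring of fractions $Q$ of $\D_{N,\D}[t^{\pm 1},\tau]$. In particular $Q$ is an epic division $E*H$-ring whose induced rank on $E*H$ is exactly $\widetilde{\rho}$. On the other hand, $\D_{H,\D}$ is by definition the division closure of $\varphi(E*H)$, hence an epic division $E*H$-ring whose induced rank is $\rk_\D|_{E*H}$.

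Now both $\D_{H,\D}$ and $Q$ are epic division $E*H$-rings, so Theorem~\ref{isomdiv} applies: they are isomorphic as $E*H$-rings if and only if their induced ranks on matrices over $E*H$ coincide, that is, if and only if $\rk_\D|_{E*H}=\widetilde{\rho}$. The left-hand condition is, for this $H$, precisely the defining condition that $\D$ be Hughes-free; the right-hand condition is, for this $H$, precisely the defining condition that $\rk_\D$ be Hughes-free (note that for the integer-valued rank $\rk_\D$ the existence of $\widetilde{\rho}$ is automatic by Proposition~\ref{prop7.4iv}, so Hughes-freeness of $\rk_\D$ reduces to this coincidence). Quantifying over all non-trivial finitely generated $H$ and all admissible decompositions yields the equivalence.

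I expect the only real friction to lie in the bookkeeping of the last step: checking that the $E*H$-ring isomorphism appearing in the definition of a Hughes-free division ring is the same notion to which Theorem~\ref{isomdiv} refers, and that the two a priori different extensions of $\tau$ (the one used to form $\D_{N,\D}[t^{\pm1},\tau]$ in the definition of Hughes-freeness, and the one produced by Proposition~\ref{prop7.4iv}(1)) genuinely coincide. Both are formal consequences of the uniqueness clauses already available, so once they are settled the argument is just the chain $\D \text{ Hughes-free}\iff \D_{H,\D}\cong_{E*H}Q\iff \rk_\D|_{E*H}=\widetilde{\rho}\iff \rk_\D \text{ Hughes-free}$, read off one $H$ at a time. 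The conceptual heart is the observation that $Q$ plays a double role: it is the ring explicitly named in the division-ring definition and, simultaneously, the epic division envelope of the natural extension by Proposition~\ref{prop7.4iv}(3).
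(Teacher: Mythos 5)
Your proposal is correct and takes essentially the same approach as the paper: the paper's (much terser) proof likewise fixes $H=N\rtimes C$ with the automorphism $\tau$ given by left conjugation by $t$, invokes Proposition~\ref{prop7.4iv}(3) to identify the epic division envelope of the natural extension $\widetilde{\rk_{\D_{N,\D}}}$ with the Ore division ring of fractions $\D_{N,\D}(t,\tau)$, and then concludes by Theorem~\ref{isomdiv} together with the definition of Hughes-freeness. The extra checks you flag (integer-valuedness and $\tau$-compatibility of $\rk_\D|_{E*N}$, and the agreement, via epicness, of the two extensions of $\tau$ to $\D_{N,\D}$) are exactly the bookkeeping the paper leaves implicit.
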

 \begin{proof} Let $H$ be a finitely generated subgroup of $G$ and assume that we have that $H = N\rtimes C $, for $C$ infinite cyclic. Let $t\in E^{\times}H$ be such that its image under $E^{\times}H\rightarrow  H$ generates $C$, and let $\tau$ denote the automorphism of $E*N$ induced by left conjugation by $t$.  By Proposition \ref{prop7.4iv}(3) we know that $\D_{N,\D}(t;\tau)$ is the epic division envelope of $ \widetilde{\rk_{\D_{N,\D}}}$ viewed as a Sylvester matrix rank function on $E*H$. Thus, the lemma follows from the Hughes-freeness definition and Theorem \ref{isomdiv}.
 \end{proof}
 Now, let us present the main example of a Hughes-free Sylvester rank function.

\begin{pro}\label{mainexample}
  Let $G$ be a group, $K$ a subfield of $\CC$ and $H$ a non-trivial finitely generated indicable subgroup of $G$. If $N\trianglelefteq H$ is a normal subgroup of $H$ such that $H/N\cong \Z$, then, as a rank function over $K[H]$, $\rk_H$ is the natural extension of $\rk_N$. In particular, if $G$ is locally indicable, then  $\rk_G$ restricted to $K[G]$ is Hughes-free.
\end{pro}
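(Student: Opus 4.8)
The plan is to identify $\rk_H$ with the natural extension $\widetilde{\rk_N}$ by means of the characterization in Proposition \ref{prop7.7}. First I would record the structural input: since $H/N\cong\Z$ and $\Z$ is free, the quotient map splits, so $H=N\rtimes\langle t\rangle$ and $K[H]\cong K[N][t^{\pm1},\tau]$, where $\tau$ is left conjugation by $t$. The canonical trace of $\mathcal U(H)$ restricts to that of $\mathcal U(N)$ on $K[N]$ (both read off the coefficient of the identity), so $\rk_H|_{K[N]}=\rk_N$; and because $\tau$ is induced by a group automorphism of $N$ it is trace preserving, hence $\rk_N$ is $\tau$-compatible. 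Consequently the natural extension $\widetilde{\rk_N}$ exists (Propositions \ref{prop7.4} and \ref{natext}), and both $\widetilde{\rk_N}$ and $\rk_H$ are rank functions on $K[H]$ restricting to $\rk_N$ on $K[N]$. By Proposition \ref{prop7.7}, to conclude $\rk_H=\widetilde{\rk_N}$ it then suffices to prove that
\[ \rk_H(I_n+At)=n \quad\text{for every } A\in\Mat_n(\mathcal U(N)). \]

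In order to apply Proposition \ref{prop7.7} over a regular ring I would work over $\mathcal U(N)$ rather than over $K[N]$. The coset decomposition $H=\bigsqcup_{k\in\Z}Nt^k$ identifies $l^2(H)$ with $\bigoplus_k l^2(N)t^k$, and the associated conditional expectation $\mathcal U(H)\to\mathcal U(N)$ shows that the natural map $\mathcal U(N)[t^{\pm1},\tau]\to\mathcal U(H)$ is injective; pulling back $\rk_H$ along it gives a rank on $\mathcal U(N)[t^{\pm1},\tau]$ extending $\rk_N$. Two reductions make the target tractable: the set $\{A:\rk_H(I_n+At)=n\}$ is closed in the rank metric, since $|\rk_H(I_n+At)-\rk_H(I_n+Bt)|\le\rk_H((A-B)t)=\rk_N(A-B)$ by subadditivity and invertibility of $t$; and bounded operators $\Mat_n(\mathcal N(N))$ are rank-dense in $\Mat_n(\mathcal U(N))$ via spectral truncation. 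Hence it is enough to treat bounded $A$.

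The heart of the matter is to show that for bounded $A$ the operator $\phi^{I_n+At}_H$ on $(l^2(H))^n$ is injective. Writing $v=\sum_m v_m t^m$ with $v_m\in(l^2(N))^n$, right multiplication by $I_n+At$ has degree-$k$ component $v_k+v_{k-1}\tau^{k-1}(A)$, so $v\in\ker$ forces the recursion $v_k=-v_{k-1}\tau^{k-1}(A)$ for all $k$. Since $\tau$ is $*$-preserving, $\|\tau^{i}(A)\|=\|A\|=:c$, and iterating downwards gives $\|v_{k_0}\|\le c^{\,j}\|v_{k_0-j}\|$; when $c<1$ this forces $v_{k_0}=0$ for every $k_0$, because the norms $\|v_m\|$ are square-summable and hence bounded, and the case of uniformly invertible $A$ is symmetric. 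The remaining, and main, obstacle is the general bounded operator whose singular spectrum meets both sides of $1$ (and possibly $0$): here the twist $\tau$ prevents a direct-integral-over $\widehat{\Z}=S^1$ argument that would otherwise reduce injectivity to the vanishing, for almost every $z\in S^1$, of the point-spectrum dimension $\dim_N\ker(I_n+zA)$. I expect to resolve this by decomposing $(l^2(N))^n$ along the spectral projections of $|A|$ into a part where $A$ is invertible with controlled inverse and a complementary part of small rank, and running the recursion on each piece; this is precisely the step where the difficulties absent in the untwisted case of \cite[Proposition 7.5]{Ja17base} appear. Granting $\rk_H(I_n+At)=n$, Proposition \ref{prop7.7} yields $\rk_H=\widetilde{\rk_N}$, proving the first assertion.

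Finally, for a locally indicable $G$ every non-trivial finitely generated subgroup $H$ is indicable, and every expression $H=N\rtimes C$ with $C$ infinite cyclic is of the form covered above (with $H/N\cong\Z$). Since $\rk_G|_{K[H]}=\rk_H$ and $\rk_G|_{K[N]}=\rk_N$ by restriction compatibility, the first part shows that $\rk_G|_{K[H]}$ is the natural extension of $\rk_G|_{K[N]}$ for each such $H$, which is precisely the statement that $\rk_G$ is Hughes-free on $K[G]$.
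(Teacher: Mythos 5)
Your overall strategy is sensible: reducing, via Proposition \ref{prop7.7}, the identity $\rk_H=\widetilde{\rk_N}$ to the single claim that $\rk_H(I_n+At)=n$ for every $A\in\Mat_n(\mathcal U(N))$, and then reducing to bounded $A$, is a correct skeleton, and your preparatory steps (the splitting $H=N\rtimes\langle t\rangle$, the identity $\rk_H|_{K[N]}=\rk_N$, $\tau$-compatibility, closedness of $\{A:\rk_H(I_n+At)=n\}$ in the rank metric, rank-density of bounded matrices via spectral truncation, and the implicit compatibility of natural extensions with the pullback along $K[N]\to\mathcal U(N)$, which is justified as in Proposition \ref{prop7.4}(2)) are all fine. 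But be aware that the paper itself proves none of this: its entire proof is the citation \cite[Corollary 12.2]{Ja17surv}, and the analytic fact you are left needing is essentially that cited result. So what you have written is a correct reduction of the cited theorem to its core, not a proof of the proposition.

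That core is where your proposal has a genuine gap, and you acknowledge it yourself: for a bounded $A$ whose singular spectrum meets both sides of $1$ (and possibly contains $0$), you only \emph{expect} to conclude by ``decomposing $(l^2(N))^n$ along the spectral projections of $|A|$.'' This sketch does not work as stated. The recursion $v_k=-v_{k-1}\tau^{k-1}(A)$ involves all the conjugates $\tau^{k-1}(A)$, whose spectral data are the $\tau^{k-1}$-images of those of $A$; since $\tau$ has no reason to preserve the spectral projections of $|A|$ (and $A$ need not be normal, so $|A|$ and $|A^*|$ already differ), the ``contracting'' and ``expanding'' parts of your decomposition are reshuffled at every step, and the two easy arguments (for $\|A\|<1$ and for $|A|\ge 1+\epsilon$) cannot be run on the separate pieces. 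This mixing is precisely what makes the twisted case a theorem rather than an exercise; the known proofs of \cite[Corollary 12.2]{Ja17surv} rest on genuinely different input (F{\o}lner-type approximation over the amenable quotient $\Z$, in the spirit of \cite{DM98,DLMSY}, which is also what produces the averaged formula defining the natural extension), not on a spectral splitting of $|A|$. Until $\rk_H(I_n+At)=n$ is proved for \emph{all} bounded $A\in\Mat_n(\mathcal N(N))$, your argument only covers the two special cases, which is not enough to invoke Proposition \ref{prop7.7}, and hence the first assertion of the proposition (and with it the Hughes-freeness of $\rk_G$) remains unproven.
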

 
\begin{proof}
This is a particular application of \cite[Corollary 12.2]{Ja17surv}.
\end{proof}

Observe that if $\rk$ is a $*$-regular Hughes-free Sylvester matrix  rank  function on $K[G]$ with positive definite $*$-regular envelope $(\mathcal U,\rk^\prime,\varphi)$, then we can construct a diagram (\ref{dia6}) for any non-trivial finitely generated subgroup $H$ of $G$. 
In Section \ref{proofs} we will prove that $\mathcal U$ is the Hughes-free epic division $K[G]$-ring, and so, in fact, that $\mathcal U_{H}$ can be seen as a subring of $\mathcal U_{N}((t;\tau))$. 

\section{Rational $U$-semirings}\label{rat}

In this section we recall the notion of rational $U$-semiring for a multiplicative group $U$ and two of the main examples that appear in \cite{DHS}. We also present two new examples. The first is the case of division $E*G$-closures, where $E*G$ denotes a crossed product of a skew field $E$ with a group $G$. The second is the case of epic $*$-regular $K[G]$-rings for any subfield $K$ of $\CC$ closed under complex conjugation, which will be shown to be a $K^{\times}G$-rational semiring with rational operation given by taking relative inverses. Except for some minor notation details, we will stick to the definitions and notation used in \cite{DHS}. 
  
By a semiring $R$ we understand a set together with an associative addition and an associative product with identity element $1_R$ which is distributive over the addition. Let $U$ be a multiplicative group and let $R$ be  a semiring. We say that $R$ is a {\bf rational $U$-semiring} if
  
  \begin{enumerate}
  \item There  is a map  $\diamond: R \rightarrow R$ (with $r\mapsto r^{\diamond}$) defined on $R$ (this is a rational structure on $R$).
  
  \item $R$ is a $U$-biset ($U$ acts on both sides of $R$ in a compatible way, i.e. $(ur)v = u(rv)$ for any $u,v\in U$, $r\in R$).
  
  \item For every $u,v \in U$ and $r\in R$, 
$
(urv)^{\diamond} = v^{-1} r^{\diamond} u^{-1}.
$

  \end{enumerate}
 
 A \textbf{morphism of rational $U$-semirings} $\Phi:R_1\rightarrow R_2$ is a map respecting all of the operations, i.e., satisfying, for all $r,r'\in R_1$ and $u,v\in U$
\begin{enumerate}
\item $ \Phi(r+r') = \Phi(r) + \Phi(r')$,  $\Phi(1_{R_{1}}) = 1_{R_2}$ and  $\Phi(rr') = \Phi(r)\Phi(r')$;
 \item $\Phi(r^{\diamond }) = \Phi(r)^{\diamond}$;
 \item $ \Phi(urv)=u\Phi(r)v$.
\end{enumerate}

Each of the following subsections is devoted to showing a particular example of rational $U$-semiring. Notice that a $U$-semiring is also a $V$-semiring for every $V\leq U$.

\subsection{Finite rooted trees}

Let $\mathcal{T}$ be the set of all finite (oriented) rooted trees up to isomorphism. We will just recall here that $\mathcal{T}$ has a well-order satisfying some desirable properties and that can be trivially seen to be a $U$-semiring for any multiplicative group $U$. This order will define later a measure of complexity of elements in $\Rat (U)$ and, therefore, a measure of complexity of elements in division $E*G$-closures and epic $*$-regular $K[G]$-rings.

Denote by $0_{\mathcal{T}}$  the one-vertex tree. If  $0_{\mathcal{T}}\ne X \in \mathcal{T}$, we denote by $\fm(X)$ the finite family of finite rooted trees obtained from $X$ by deleting the root and all incident edges, and we call {\bf width} of $X$ to the number of elements in $\fm(X)$. The {\bf height} of $X$ is defined recursively as the maximum height of the elements in $\fm(X)$   plus one, with $\hei(0_{\mathcal{T}})=0$. Finally, we denote by $\exp(X$) the tree  obtained from $X$ by adding a new vertex which is declared to be the root of $\exp(X)$, and a new edge joining it to the root of $X$.

Let $X, Y\in \mathcal T$.  The sum of $X$ and $Y$ consists of identifying their roots, and declare it to be the root of $X+Y$. With this operation $\mathcal{T}$ is an additive monoid with neutral element $0_{\mathcal{T}}$.
  The product of $X$ and $Y$ consists of adding pairwise the elements of $\fm(X)$  with the elements of $\fm(Y)$, and then connecting all the resulting finite rooted trees by adding a new vertex (the root of $X\cdot Y$) with incident edges to their roots. In other words,
  $$
   X\cdot Y = \sum_{\substack{X'\in ~\fm(X) \\ Y'\in ~\fm(Y)}} \exp(X'+Y').
  $$
  With this operation, $\mathcal{T}$ is a commutative multiplicative monoid with identity element $1_{\mathcal{T}} = \exp(0_{\mathcal{T}})$, the one-edge rooted tree.    
   The rational map is given by $$X^{\diamond}=\exp^2(X).$$  
 The $U$-semiring structure will be the trivial one, with $uX = Xu = X$ for every $u\in U$.

If $\mathcal{T}_n$ denotes the {subset of $\mathcal T$ consisting of all elements} with at most $n$ edges, the following defines a well-order in $\mathcal{T}$ (\cite{DHS} Lemma 3.3):
\begin{itemize}
 \item[-] $0_{\mathcal{T}}$ is the least element of $\mathcal{T}$. {Set $\mathcal T_0 = \{0_{\mathcal T}\}$}.
 \item[-] Suppose $n\geq 1$ and that $\mathcal{T}_{n-1}$ is already ordered. Take $X,Y \in \mathcal{T}_n\backslash \{0_{\mathcal{T}}\}$. Let log($X$) denote the largest element of {$\mathcal{T}_{n-1}$ in $\fm(X)$}, so $\exp(\log(X$)) is a summand of $X$, and denote its complement by $X-\exp(\log(X))\subseteq \mathcal{T}_{n-1}$. We say that $X>Y$ if either log($X$)$>$log($Y$) or log($X$) = log($Y$) and $X-\exp(\log(X)) > Y- \exp(\log(Y))$.
\end{itemize}
In particular, if $\hei(X)>\hei(Y)$, then $X>Y$, and essentially, what we do to compare two different rooted trees $X$ and $Y$ is to recursively compare the largest element in $\fm(X)$  with the largest element in $\fm(Y)$; if they are equal, we move on to the next largest element in each of the families; and we continue until we can declare $X>Y$ or $Y>X$.

This order satisfies, among many others (cf. \cite[Remark 5.18]{Sthesis}), the following properties.
\begin{lem} \label{>}
Let $X, Y, X', Y' \in \mathcal{T}$:
\begin{itemize}
 \item[(i)] If $X'\leq X$ and $Y'\leq Y$, then $X'+Y'\leq X+Y$, and equality holds if and only if $X'= X$ and $Y' = Y$. In particular, if $Y\neq 0_{\mathcal{T}}$, then $X<X+Y$.
  \item[(ii)] If $X'\leq X$ and $Y'\leq Y$, then $X'\cdot Y' \leq X\cdot Y$.  If $X',Y' \neq 0_{\mathcal T}$, then equality holds if and only if $X' = X$ and $Y' = Y$. In particular, if $X, Y \neq 0_{\mathcal{T}}$, then $X\leq X\cdot Y$ and they are equal if and only if $Y = 1_{\mathcal{T}}$.
  \end{itemize}
\end{lem}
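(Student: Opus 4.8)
The plan is to translate everything into the language of children multisets and then establish strict monotonicity of $+$ and $\cdot$ separately in each variable, combining the two at the end. For a nonzero tree $X$ read $\fm(X)$ as its finite multiset of children; then $\fm(X+Y)=\fm(X)\sqcup\fm(Y)$ (multiset union), while $\exp(c)$ is the tree with the single child $c$, so adding one pendant subtree $\exp(c)$ to $X$ is exactly enlarging $\fm(X)$ by one copy of $c$. Unwinding the recursive definition of the order, $X\le Y$ amounts to comparing the two children multisets, listed in decreasing order, lexicographically from the top, with the convention that a list which is an initial segment of the other is the smaller one (this is how $0_{\mathcal T}$, the empty list, enters as the minimum). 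Two easy observations I would record first: $\exp$ is strictly monotone (compare the one-term lists $(a)$ and $(b)$), and $1_{\mathcal T}=\exp(0_{\mathcal T})$ is the least nonzero tree.

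For part (i) the engine is the atomic claim that adding a single pendant is strictly monotone: if $A<B$ then $A+\exp(c)<B+\exp(c)$ for every $c$. I would prove this by induction on $|\fm(A)|+|\fm(B)|$, inserting $c$ into both decreasing lists and following the recursive comparison: if $c$ lands at (or above) the common top, the comparison reduces to $A<B$ after deleting the inserted copy, whereas if the two lists share their top $m>c$ one deletes $m$ from both, applies the induction hypothesis to the shorter lists, and restores the common head. Iterating over the children of $C$ gives $A<B\Rightarrow A+C<B+C$ (and $A+C=B+C$ when $A=B$), i.e.\ strict monotonicity of $+$ in one variable; by commutativity the same holds in the other. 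Combining, $X'\le X$ and $Y'\le Y$ yield $X'+Y'\le X+Y'\le X+Y$, with equality forcing equality at each step, hence $X'=X$ and $Y'=Y$; in particular $X=X+0_{\mathcal T}<X+Y$ whenever $Y\ne 0_{\mathcal T}$.

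For part (ii) the key consequence of (i) is that for nonzero $X,Y$ the largest child of $X\cdot Y=\sum_{x'\in\fm(X),\,y'\in\fm(Y)}\exp(x'+y')$ is $\log(X)+\log(Y)$, attained exactly by the pairs of largest children, since $x'+y'\le \log(X)+\log(Y)$ with equality iff $x'=\log(X)$ and $y'=\log(Y)$ by (i). I would then prove single-variable strict monotonicity $A<B,\ C\ne 0_{\mathcal T}\Rightarrow A\cdot C<B\cdot C$ by induction on the total number of vertices. If $A=0_{\mathcal T}$ then $A\cdot C=0_{\mathcal T}<B\cdot C$ as $B\cdot C\ne 0_{\mathcal T}$; if $\log(A)<\log(B)$ the largest children satisfy $\log(A)+\log(C)<\log(B)+\log(C)$ by (i), so $A\cdot C<B\cdot C$ at once. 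In the remaining case $\log(A)=\log(B)=:m$ I peel one copy of the top child, writing $A=\hat A+\exp(m)$, $B=\hat B+\exp(m)$ with $\hat A<\hat B$ (the recursive step of the order); distributivity gives $A\cdot C=\hat A\cdot C+W$ and $B\cdot C=\hat B\cdot C+W$ with $W=\exp(m)\cdot C$, so the induction hypothesis $\hat A\cdot C<\hat B\cdot C$ together with part (i) yields $A\cdot C<B\cdot C$. Commutativity extends this to the other variable, and combining as in (i) proves $X'\cdot Y'\le X\cdot Y$ with the stated equality characterization. Finally $X\cdot 1_{\mathcal T}=\sum_{x'\in\fm(X)}\exp(x'+0_{\mathcal T})=X$, and since $1_{\mathcal T}\le Y$ for every nonzero $Y$, monotonicity gives $X=X\cdot 1_{\mathcal T}\le X\cdot Y$, strict unless $Y=1_{\mathcal T}$.

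The step requiring genuine care is part (i): because inserting a common child shifts positions in both sorted lists, the inductive bookkeeping on the recursive comparison must be done cleanly, and this is where essentially all the work sits; once (i) is in hand, (ii) follows almost formally from the product formula, the top-child computation, and the peeling reduction. I would also flag the degenerate cases, namely that the equality characterizations in (i) and (ii) are intended for nonzero trees (for instance $0_{\mathcal T}\cdot Y=0_{\mathcal T}$ makes the ``if and only if'' in (ii) fail when a factor is $0_{\mathcal T}$), although the inequalities themselves hold with no restriction. As a conceptual check, the map sending a tree to the ordinal whose Cantor normal form has as exponents the ordinals attached to its children identifies $(\mathcal T,+,\cdot,\le)$ with the ordinals below $\epsilon_0$ under Hessenberg (natural) sum and product; the two assertions are then the classical strict monotonicity of these operations, which is exactly what the combinatorial induction above reproves.
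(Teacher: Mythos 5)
Your proposal is correct, but note that the paper itself offers no proof to compare against: Lemma \ref{>} is quoted from \cite[Lemma 5.17]{Sthesis} and used as a black box, so your argument is a self-contained reconstruction rather than a variant of anything in the text. The route you take is the natural one and it works: reading the order as lexicographic comparison of the decreasingly sorted children multisets (with the prefix convention), proving strict monotonicity of $+$ through the atomic claim that inserting one pendant $\exp(c)$ preserves strict inequality, and then getting monotonicity of $\cdot$ from three ingredients --- the identity $\log(X\cdot Y)=\log(X)+\log(Y)$ for nonzero $X,Y$ (a consequence of part (i)), the distributivity $(\hat A+\exp(m))\cdot C=\hat A\cdot C+\exp(m)\cdot C$ (immediate from the definition of the product), and the peeling step $A=\hat A+\exp(\log A)$ that drives the induction. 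The ordinal interpretation via Hessenberg sum and product below $\epsilon_0$ is also accurate and is a good sanity check, since the lemma is exactly the classical strict monotonicity of the natural operations.

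Two caveats, neither fatal. First, in the atomic claim your case split (``$c$ lands at or above the common top'' / ``the lists share their top $m>c$'') does not literally cover the configuration in which the sorted lists of $A$ and $B$ already differ at position $1$ (including the prefix case where $A$'s list is empty or runs out); but that is exactly the configuration in which the comparison of the enlarged lists is decided at the head, so filling it in is routine --- the cleanest formulation is to prove the statement for arbitrary sorted lists over a linear order, splitting on the position of the first discrepancy relative to the insertion point. Second, you are right to flag the degenerate cases: as printed, the ``if and only if'' in (ii) fails when a factor is $0_{\mathcal{T}}$ (for instance $X'=X=0_{\mathcal{T}}$ and $Y'=0_{\mathcal{T}}<Y$ give $X'\cdot Y'=X\cdot Y=0_{\mathcal{T}}$), and the last assertion of (ii) needs $X\neq 0_{\mathcal{T}}$ for strictness; the inequalities themselves, which are all the paper ever applies (always to complexities of nonzero elements), hold without restriction.
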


\subsection{The universal rational $U$-semiring}

 Given a multiplicative group $U$, the universal rational $U$-semiring $\Rat (U)$ is constructed inductively as a formal analog of the construction of a division or a $*$-regular closure, starting with the elements of $U$, constructing at each inductive step a bigger rational $U$-semiring by means of sums, products and rational operations $\diamond$ of the object in the previous step, and then taking unions. Before defining $\Rat (U)$, we present some definitions and notation:
\begin{itemize}[leftmargin=.2in]
  \item If $X$ is a set, then the free additive monoid on $X$ is $\mathbb{N}[X]$ and the free additive semigroup on $X$ is $\mathbb{N}[X]\backslash \{0\}$. This way we can consider \textit{formal sums} of elements in $X$. Moreover, when $X$ is a multiplicative monoid with $U$-biset structure, these have a $U$-semiring structure naturally inherited from the one on $X$.

  \item If $X$ is a $U$-biset, then $X^{\times_U^n}$ is the set of equivalence classes of  words in $X$ of length $n$ with respect to the relation generated by 
  $$x_1\ldots (x_i u)x_{i+1}\ldots x_n\sim x_1\ldots x_i (ux_{i+1})\ldots x_n \textrm{\  for all \ } u\in U, 1\le i\le n-1.
  $$
  $X^{\times_U^n}$ has a natural $U$-biset structure given by $$u(x_1x_2\dots x_n) = (ux_1)x_2\dots x_n \textrm{\ and \ }(x_1x_2\dots x_n)u = x_1x_2\dots (x_nu).$$
The multiplicative free monoid on $X$ over $U$ is defined as
  $$
   U\natural X = \bigcup_{n=0}^\infty X^{\times_U^n} 
  $$
  where we understand $X^{\times_U^0}=U$. This object is again a $U$-biset with the natural structure. In this manner we can consider \textit{formal products} of elements of $X$. In addition, observe that  $\mathbb{N}[U\natural X]$ has  a $U$-semiring structure, where the multiplication is naturally inherited from the one on $U\natural X$. 
  
  \item If $X$ is a $U$-biset, then $X^{\diamond}$ denotes a disjoint copy of $X$ together with a bijective map $X\rightarrow X^{\diamond}$, $x\mapsto x^{\diamond}$, and a $U$-biset structure given by $ux^{\diamond}v:=(v^{-1}xu^{-1})^{\diamond}$. This will allow us to construct a \textit{formal rational operation} in $X$.
\end{itemize}

The  \textbf{universal rational $U$-semiring} is defined as follows (compare with the definition of $*$-regular closure).

 \begin{itemize}[leftmargin=.2in]
    \item[-] Consider the $U$-semiring $\mathbb{N}[U]\backslash \{0\}$, and set $X_0:=\emptyset$, $X_1:=(\mathbb{N}[U]\backslash \{0\})^{\diamond}$. Trivially $X_0$ is a $U$-sub-biset of $X_1$.
     \item[-] Suppose $n\geq 1$, $X_n$ is a $U$-biset and $X_{n-1}$ a $U$-sub-biset of $X_n$. Consider the $U$-semiring $\mathbb{N}[U\natural X_n]$ and the $U$-sub-biset $\mathbb{N}[U\natural X_n]\backslash \mathbb{N}[U\natural X_{n-1}]$, and define
     $$
     X_{n+1}:= (\mathbb{N}[U\natural X_n]\backslash \mathbb{N}[U\natural X_{n-1}])^{\diamond} \cup X_n.
     $$
     \item[-] Then, $X=\bigcup X_n$ is a $U$-biset and the universal rational $U$-semiring $\Rat (U)$ is defined as 
 $$
 \Rat (U):= \mathbb{N}[U\natural X] \backslash \{0\}.
 $$
 \end{itemize}
  
 Its rational map $\diamond$ can be shown to carry $\mathbb{N}[U]\backslash \{0\}$ to $X_1$, $\mathbb{N}[U\natural X_n]\backslash \mathbb{N}[U\natural X_{n-1}]$ to $X_{n+1}\backslash X_n$ for $n\geq 1$, $\mathbb{N}[U\natural X_n]\backslash \{0\}$ to $X_{n+1}$ for $n\geq 0$, and $\Rat (U)$ to $X$.

In order to understand the resulting object of this definition, it is important to notice that starting from $U$, at each step we just allow formal sums and products of the elements in the previous step, and define a formal rational operation on the new elements obtained this way.

The universality of $\Rat (U)$ comes from the following property (\cite{DHS} Lemma 4.7).
\begin{lem}\label{Phimap}
If $U$ is a multiplicative group and $R$ a rational $U$-semiring , then there exists a unique morphism of rational $U$-semirings $\Phi:\Rat (U) \rightarrow R$.
\end{lem}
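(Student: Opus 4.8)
The plan is to prove both existence and uniqueness by induction on the inductive construction of $\Rat(U)$, exploiting that at every stage $\Rat(U)$ is assembled from \emph{free} objects---the free additive semigroup $\mathbb{N}[Y]\backslash\{0\}$, the multiplicative free monoid $U\natural X$ over $U$, and the formal $\diamond$-copy $X^{\diamond}$. The guiding principle is that a morphism of rational $U$-semirings is forced to be ``evaluation'': it must replace each formal sum by an honest sum in $R$, each formal product by the product in $R$, each formal $\diamond$ by the rational operation of $R$, and each $u\in U$ by the biset action $u1_R$. Concretely, applying the morphism axiom $\Phi(urv)=u\Phi(r)v$ with $r=1_{\Rat(U)}$ and using $1_{\Rat(U)}=1_U$ already forces $\Phi(u)=u1_R$ for every $u\in U$; from here the axioms $\Phi(r+r')=\Phi(r)+\Phi(r')$, $\Phi(rr')=\Phi(r)\Phi(r')$ and $\Phi(r^{\diamond})=\Phi(r)^{\diamond}$ leave no freedom whatsoever. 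This simultaneously yields \textbf{uniqueness}: two morphisms agreeing on $U$ and respecting the three operations must agree on all of $\Rat(U)=\mathbb{N}[U\natural X]\backslash\{0\}$, since every element is built from $U$ by finitely many sums, products and $\diamond$'s.

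For \textbf{existence} I would construct $\Phi$ level by level. Set $\Phi(u)=u1_R$ on $U=X_0^{\times_U^0}$. Assuming a $U$-biset map $X_n\to R$ has been defined, first extend it to the multiplicative free monoid $U\natural X_n$ using the universal property of the free monoid over $U$ (sending a word to the corresponding product in $R$, with $U$-letters acting via the biset structure), then extend additively to the monoid semiring $\mathbb{N}[U\natural X_n]$; this produces a $U$-semiring morphism on $\mathbb{N}[U\natural X_n]\backslash\{0\}$. Finally define $\Phi$ on the newly adjoined elements $\bigl(\mathbb{N}[U\natural X_n]\backslash\mathbb{N}[U\natural X_{n-1}]\bigr)^{\diamond}$ by $\Phi(s^{\diamond}):=\Phi(s)^{\diamond}$, which extends the map to the $U$-biset $X_{n+1}=(\cdots)^{\diamond}\cup X_n$. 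Because the new $\diamond$-elements at stage $n+1$ are, by construction, disjoint from $X_n$ (this is the role of the set differences $\backslash\,\mathbb{N}[U\natural X_{n-1}]$), these definitions are mutually consistent and glue to a single map on $X=\bigcup_n X_n$, hence to a map on $\mathbb{N}[U\natural X]\backslash\{0\}=\Rat(U)$ that visibly respects $+$, $\cdot$ and $\diamond$.

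The part requiring genuine care---and the main obstacle---is \emph{well-definedness}, i.e.\ checking that evaluation respects the two families of relations built into $\Rat(U)$. First, the elements of $X^{\times_U^n}$ are equivalence classes under $x_1\cdots(x_iu)x_{i+1}\cdots\sim x_1\cdots x_i(ux_{i+1})\cdots$; evaluation respects this exactly because the biset action and multiplication of $R$ are compatible, so that $(\Phi(x_i)u)\Phi(x_{i+1})=\Phi(x_i)(u\Phi(x_{i+1}))$. Second, and most importantly, the $U$-biset structure on the formal copy satisfies $u\,x^{\diamond}v=(v^{-1}xu^{-1})^{\diamond}$, and for $\Phi(s^{\diamond})=\Phi(s)^{\diamond}$ to be $U$-equivariant one needs precisely $u\,\Phi(s)^{\diamond}v=(v^{-1}\Phi(s)u^{-1})^{\diamond}$ in $R$---which is axiom (3) of a rational $U$-semiring. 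Thus the compatibility of $\diamond$ with the $U$-action in $R$ is exactly what makes the formal rational operation transport correctly, and once this is verified at each inductive stage the construction goes through and matches the forced values from the uniqueness argument.
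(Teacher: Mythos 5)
Your proof is correct. Note that the paper itself gives no proof of this lemma---it is quoted from \cite{DHS}, Lemma 4.7---and your argument (uniqueness because the axioms force $\Phi$ to be evaluation; existence by inductively extending through the free constructions $\mathbb{N}[-]$, $U\natural(-)$ and $(-)^{\diamond}$, with the two well-definedness checks: the word relation $x_1\cdots(x_iu)x_{i+1}\cdots\sim x_1\cdots x_i(ux_{i+1})\cdots$ via compatibility of the $U$-action with multiplication, and the biset structure $u\,x^{\diamond}v=(v^{-1}xu^{-1})^{\diamond}$ on the formal copy via axiom (3)) is essentially the proof given in that source. One small caveat: the compatibility of the $U$-action with the product and sum of $R$ that your well-definedness check invokes is not literally part of the abbreviated definition reproduced in this paper, which only records the mutual compatibility $(ur)v=u(rv)$ of the two actions; it is, however, part of the full definition of $U$-semiring in \cite{DHS}, to whose definitions the paper explicitly defers, so your use of it is legitimate.
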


$\Phi$ extends to a morphism of $U$-semirings $\Phi: \Rat (U)\cup \{0\} \rightarrow R$ whenever $R$ has a zero element with $\{0_R\}\cdot R = R\cdot \{0_R\} = \{0_R\}$. 
In particular, we obtain a morphism of $U$-semirings:
$$ \Tree: \Rat (U)\cup\{0\} \rightarrow \mathcal{T}.$$
We call $\Tree (\alpha)$ the \textbf{complexity} of $\alpha$.

As shown in \cite[Example 5.35]{Sthesis}, if $V$ is a subgroup of $U$ then the universal morphism
$$
 \Psi: \Rat(V) \rightarrow \Rat(U)
$$ 
is naturally injective at every inductive step,     
 so we can think that $\Rat(V)\subseteq \Rat(U)$. Moreover, if $\alpha\in \Rat(V)\subseteq \Rat(U)$, then $\Tree(\alpha)$ does not depend on whether we consider $\alpha$ as an element of $\Rat(V)$ or $\Rat(U)$, because the universal property of $\Rat(V)$ implies that $\Tree_V = \Tree_U\circ \Psi$. 
 
{The following lemma collects some of the properties of the complexity. In order to state it properly, add to $\mathcal T$ a new least element $\{-\infty\}$ and turn $\mathcal T \cup \{-\infty\}$ into a semiring by setting $\mathcal T + \{-\infty\} = \mathcal T \cdot \{-\infty\} = \{-\infty\} \cdot \mathcal T = \{-\infty\}$. Now define $\log (0_{\mathcal T}) = -\infty$ and $\log (-\infty) = -\infty$.} 
\begin{lem} \label{tree}
If $\alpha,\beta \in \Rat (U)\cup \{0\}$, then the following holds.
\begin{itemize}
 \item[(i)] $\Tree (\alpha) = 0_{\mathcal{T}}$ if and only if $\alpha = 0$.
 \item[(ii)] $\Tree (\alpha) = 1_{\mathcal{T}}$ if and only if $\alpha \in U$.
 \item[(iii)] $\Tree (\alpha + \beta) = \Tree (\alpha) + \Tree (\beta)$.
 \item[(iv)] $\Tree (\alpha)\leq \Tree (\alpha + \beta)$ and they are equal if and only if $\beta = 0$.
 \item[(v)] $\Tree (\alpha \beta) = \Tree (\alpha)\Tree (\beta)$.
 \item[(vi)] If $\alpha,\beta \neq 0$, then $\Tree (\alpha)\leq \Tree (\alpha \beta)$ and they are equal if and only if $\beta \in U$.
 \item[(vii)] $\log \Tree (\alpha + \beta) = \max\{\log \Tree (\alpha), \log \Tree (\beta)\}$.
 \item[(viii)] $\log \Tree (\alpha \beta) = \log \Tree (\alpha) + \log \Tree (\beta)$.
 \item[(ix)] $\log^2 \Tree (\alpha + \beta) = \max\{\log^2 \Tree (\alpha), \log^2 \Tree (\beta)\}$.
 \item[(x)] $\log^2 \Tree (\alpha \beta) \leq \max\{\log^2 \Tree (\alpha), \log^2 \Tree (\beta)\}$ and they are equal if and only if $\alpha,\beta\neq 0$.
 \item[(xi)] If $\alpha\ne 0$, $\Tree (\alpha^{\diamond}) = \exp^2 \Tree (\alpha)$.
 \item[(xii)] If $\alpha\ne 0$, $\Tree (\alpha^{\diamond})>\log^2  \Tree (\alpha^{\diamond}) = \Tree (\alpha)$.
 \item[(xiii)] If $\alpha \in U\natural X$, then $\wid(\Tree (\alpha))$ = 1.
\end{itemize}
\end{lem}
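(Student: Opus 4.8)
\emph{The plan.} The whole lemma reduces to combinatorics in the ordered monoid $\mathcal T$, because $\Tree$ is \emph{the} morphism of rational $U$-semirings $\Rat(U)\cup\{0\}\to\mathcal T$ (Lemma \ref{Phimap}) and the $U$-biset structure on $\mathcal T$ is trivial. Thus properties (iii), (v) and (xi) are immediate from the morphism axioms together with the definitions of the operations on $\mathcal T$: $\Tree(\alpha+\beta)=\Tree(\alpha)+\Tree(\beta)$, $\Tree(\alpha\beta)=\Tree(\alpha)\Tree(\beta)$, and $\Tree(\alpha^{\diamond})=\Tree(\alpha)^{\diamond}=\exp^2\Tree(\alpha)$. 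Once these three are recorded, everything else is a statement about $\fm$, $\exp$, $\log$ and the order on $\mathcal T$, to be settled using Lemma \ref{>}.

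First I would assemble a short dictionary, valid for nonzero $X,Y\in\mathcal T$. From the definition of the sum, $\fm(X+Y)=\fm(X)\sqcup\fm(Y)$, hence $\log(X+Y)=\max\{\log X,\log Y\}$. From the definition of the product, $\fm(X\cdot Y)=\{X'+Y':X'\in\fm(X),\,Y'\in\fm(Y)\}$, and since $+$ is monotone (Lemma \ref{>}(i)) the largest such summand is $\log X+\log Y$, so $\log(X\cdot Y)=\log X+\log Y$. Finally $\fm(X^{\diamond})=\fm(\exp^2 X)=\{\exp X\}$, whence $\log(X^{\diamond})=\exp X$ and $\log^2(X^{\diamond})=X$. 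I would also note that $\log$ is order-monotone: by the recursive definition of $<$, $X\ge Y$ forces $\log X\ge\log Y$, so $\log(\max\{A,B\})=\max\{\log A,\log B\}$.

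With this dictionary the derivations are immediate. Property (vii) is the sum identity, and (ix) follows by applying the monotone $\log$ to it; (viii) is the product identity, and (x) follows by applying $\log$ once more and reusing $\log(A+B)=\max\{\log A,\log B\}$, the weak inequality and the clause ``iff $\alpha,\beta\neq 0$'' covering the degenerate case in which one factor is $0$ and the product collapses to $0_{\mathcal T}$. For (xii), $\Tree(\alpha^{\diamond})=\exp^2\Tree(\alpha)$ has height two larger than $\Tree(\alpha)$, hence is strictly larger in the order, while $\log^2$ peels off exactly the two added vertices to return $\Tree(\alpha)$. The comparisons (iv) and (vi) come directly from Lemma \ref{>}(i),(ii) applied to $\Tree(\alpha+\beta)=\Tree\alpha+\Tree\beta$ and $\Tree(\alpha\beta)=\Tree\alpha\cdot\Tree\beta$, once we know when $\Tree\beta$ equals $0_{\mathcal T}$ or $1_{\mathcal T}$; that is, once (i) and (ii) are in place.

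The base cases (i) and (ii) I would prove by the inductive construction of $\Rat(U)$. Every element is a nonempty formal sum of monomials in $U\natural X$, each monomial a formal product whose factors lie in $U$ (tree $1_{\mathcal T}$) or have the form $\gamma^{\diamond}$ (tree $\exp^2\Tree(\gamma)$, of height $\ge 2$). Using (iii), (v), (xi) and Lemma \ref{>}(i),(ii), a product of trees $\ne 0_{\mathcal T}$ is $\ne 0_{\mathcal T}$ and a nonempty sum of such is $\ne 0_{\mathcal T}$, giving the nontrivial direction of (i); and any element not in $U$ either has two or more summands or a factor $\gamma^{\diamond}$ of height $\ge 2$, forcing $\Tree>1_{\mathcal T}$, which yields (ii). Property (xiii) is a separate induction on the number of factors of a monomial: each factor has a tree of width one (either $1_{\mathcal T}$ or $\exp^2\Tree(\gamma)$), and if $\fm(X)=\{A\}$ and $\fm(Y)=\{B\}$ then $\fm(X\cdot Y)=\{A+B\}$, so products of width-one trees stay width-one. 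I expect the main obstacle to be not conceptual but bookkeeping: pinning down the product identity $\log(X\cdot Y)=\log X+\log Y$ and the width behavior, and aligning the degenerate values ($0$ and elements of $U$) with the precise ``if and only if'' clauses.
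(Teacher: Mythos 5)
Your proposal is correct, but it is a genuinely different route from what the paper does: the paper offers no argument at all for items (i)--(x) and (xii)--(xiii), simply citing \cite[Lemma 4.9]{DHS} for (i)--(x) and (xii) and \cite{Sthesis}, page 112, for (xiii), with only the one-line observation (which you also make) that (xi) follows from $\Tree$ being a morphism of $U$-semirings, since $X^{\diamond}=\exp^2(X)$ in $\mathcal{T}$. You instead reconstruct the content of those references: the dictionary $\fm(X+Y)=\fm(X)\sqcup\fm(Y)$, $\fm(X\cdot Y)=\{X'+Y'\}$, $\fm(\exp^2 X)=\{\exp X\}$, the monotonicity of $\log$ extracted from the recursive definition of the order, and the consequent identities for $\log$ and $\log^2$; then the structural induction on $\Rat(U)$ (nonempty sums of monomials, each monomial a $U$-translated product of elements $\gamma^{\diamond}$) for (i), (ii) and (xiii). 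I checked the individual steps and they hold: the product identity $\log(X\cdot Y)=\log X+\log Y$ does follow from Lemma \ref{>}(i) applied inside $\fm(X\cdot Y)$; the height of a product is the maximum of the heights, which is what makes your "(ii)" argument work; and width-one trees are closed under the product exactly as you say. What your approach buys is a self-contained proof exhibiting the lemma as elementary tree combinatorics; what the paper's citation buys is brevity and deference to the source where these conventions (in particular the treatment of $\log 0_{\mathcal{T}}$ and the degenerate cases of (vii)--(x) when an argument is $0$, which your dictionary restricted to nonzero trees rightly sets aside) are fixed once and for all. The only caveat worth recording is that the "if and only if" clause in (x) is sensitive to that convention for $\log$ at $0_{\mathcal{T}}$; this is an artifact of the statement as transcribed from \cite{DHS}, not a gap in your argument.
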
 
\begin{proof}
Properties $(i)$-$(x)$ and $(xii)$ can be found in \cite[Lemma 4.9]{DHS}, property $(xi)$ holds because over non-zero elements $\Tree$ is a morphism of rational $U$-semirings, and property $(xiii)$ is observed in \cite{Sthesis}, page 112.
\end{proof}

A crucial step for the inductive method used in \cite{DHS} is the existence, for every element $\alpha$ in $\Rat (U)$, of a subgroup $ \source (\alpha)$ of $U$ with the following properties (\cite{DHS}, Lemma 5.2, Lemma 5.4, Theorem 5.7).
\begin{teo} \label{source}
If $\alpha \in \Rat (U)$, then the following holds.
\begin{itemize}
\item[(i)] $ \source (\alpha)$ is finitely generated and $\alpha\in \Rat ( \source (\alpha))\cdot U$. The elements satisfying  $\alpha\in \Rat ( \source (\alpha))$ are called \textbf{primitive}.
\item[(ii)] The set $P$ of all primitive elements satisfies $PU=UP=\Rat (U)$. 
 {If $\alpha = \alpha^{\prime}u$ with $\alpha^{\prime}\in P$ and $u\in U$, then $\source(\alpha)= \source(\alpha^{\prime})$.}
\item[(iii)] If $V$ is a subgroup of $U$ such that $\alpha\in \Rat (V)\cdot U$, then $ \source (\alpha)\leq V$.
\end{itemize}
\end{teo}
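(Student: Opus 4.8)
The plan is to construct $\source(\alpha)$ explicitly by recursion on the inductive construction of $\Rat(U)$ and then to verify (i)--(iii) by a single induction on the complexity $\Tree(\alpha)$. This induction is legitimate because, by Lemma \ref{tree}(iv),(vi),(xii), every proper subexpression of $\alpha$ (a summand, a factor, or the argument of a $\diamond$) has strictly smaller complexity than $\alpha$ itself, and the atoms $u\in U$ have minimal complexity $1_{\mathcal T}$. The guiding heuristic is that $\source(\alpha)$ should be the subgroup of $U$ generated by the \emph{ratios} of the group elements occurring in a rational expression for $\alpha$. For example, for $\beta=\sum_i n_iu_i\in\N[U]\setminus\{0\}$ with the $u_i$ distinct one has the identity $\beta=(\sum_i n_i\,u_iu_1^{-1})\,u_1$, so $\beta\in\Rat(\langle u_iu_1^{-1}: i\rangle)\cdot U$ and, as the computation below shows, no smaller subgroup works; this forces $\source(\beta)=\langle u_iu_1^{-1}: i\rangle$, which is finitely generated. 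Abstractly, the goal is to prove that $\source(\alpha)$ is the minimum of the family $\mathcal V(\alpha)=\{V\le U:\alpha\in\Rat(V)\cdot U\}$, so that (i) and (iii) together assert exactly that this minimum exists, is finitely generated, and is witnessed by a primitive representative.

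Concretely, I would set $\source(u)=1$ for $u\in U$; declare $\source$ unchanged under the formal rational map, $\source(\gamma^{\diamond})=\source(\gamma)$, since $\diamond$ introduces no new group elements; for a product $p=x_1\cdots x_m\in U\natural X$ let $\source(p)$ be the join of the $\source(x_i)$ together with the connecting group elements read off from the word; and for a sum $\alpha=p_1+\dots+p_k$ (a decomposition that is unique, as $\Rat(U)=\N[U\natural X]\setminus\{0\}$ is free additive on $U\natural X$) let $\source(\alpha)$ be the join of the $\source(p_j)$ together with the ratios of the leading group elements of the $p_j$, as in the displayed example. Granting that this is well defined, finite generation in (i) is immediate, since at each of the finitely many construction steps only finitely many group elements are adjoined. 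The containment $\alpha\in\Rat(\source(\alpha))\cdot U$ and the existence of a primitive representative then follow by the same recursion: after normalising by right multiplication by the leading unit, each summand and factor is, by induction, primitive over $\source(\alpha)$, and explicit identities such as $\beta=(\sum_i n_i u_iu_1^{-1})u_1$ assemble these into a rational expression for $\alpha$ over $\source(\alpha)$. This simultaneously yields (ii): $PU=\Rat(U)$ because every $\alpha$ equals a primitive element times a unit, $UP=\Rat(U)$ by the symmetric left-handed argument, and $\source(\alpha'u)=\source(\alpha')$ because right multiplication by $u\in U$ only reparametrises the leading units without changing any ratio.

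For (iii) I would argue that if $\alpha=\gamma v$ with $\gamma\in\Rat(V)$ and $v\in U$, then every generator produced by the recursive definition of $\source(\alpha)$ already lies in $V$; this is proved by induction on complexity by pushing the factorisation $\alpha=\gamma v$ through each construction step and noting that every ratio of occurring group elements is, after cancelling the common unit $v$, a ratio of elements of $V$ (this is precisely the computation that rules out smaller subgroups in the example above). The main obstacle --- and the reason the bookkeeping is delicate rather than routine --- is the $U$-biset sliding relation $\dots(x_iu)x_{i+1}\dots\sim\dots x_i(ux_{i+1})\dots$: because group elements migrate across the factors of a word, the ``group elements occurring in $\alpha$'' are not individually well defined, only their relative positions are, so one must check that the proposed $\source$ is invariant under $\sim$ and under the left and right $U$-actions before any of the above makes sense. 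Establishing this invariance --- equivalently, pinning down a canonical primitive normal form for each orbit $\alpha U$ --- is the technical heart of the argument; once it is in place, parts (i)--(iii) fall out of the induction as sketched. This is exactly the program carried out in \cite{DHS} (Lemmas 5.2 and 5.4 and Theorem 5.7) and in \cite{Sthesis}, so it suffices here to invoke their results.
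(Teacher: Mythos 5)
The paper offers no proof of this theorem at all: it is stated as a quotation of Lemma 5.2, Lemma 5.4 and Theorem 5.7 of \cite{DHS}, and your proposal, after its heuristic sketch of the recursive construction of $\source$, ends by invoking exactly those same results, so it takes essentially the same route as the paper. Your sketch is a plausible reconstruction of the \cite{DHS} construction (modulo delicate points you yourself flag, e.g.\ that $\source(\gamma^{\diamond})$ is only a conjugate of $\source(\gamma)$ when $\gamma$ is not primitive, because $(urv)^{\diamond}=v^{-1}r^{\diamond}u^{-1}$), but since the technical heart is deferred to the reference, the proof is in substance the same citation the paper uses.
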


\subsection{Division  $E*G$-closures}\label{ss:divclo} 

The following example is a modification of \cite[Example 1.43(d)]{Sthesis} in the case we deal with division closures. Let $R$ be a subring of a ring $S$. We will denote by $\D_{R,S}$ the division closure of $R$ in $S$. 
As it happens with the $*$-regular closure, it is easy to see that it can be constructed as follows.
\begin{itemize}
 \item[-] Put $ {Q}_0:=R$.
 \item[-] Suppose $n\geq 1$ and that we have constructed a subring $Q_n$ of $S$. Then $Q_{n+1}$ is the subring of $S$ generated by the elements of $Q_n$ and its inverses (whenever they exist).
 \item[-] $\D_{R,S} = \bigcup_{n=0}^{\infty} Q_n$.
\end{itemize}
Observe that if $S$ is $*$-regular, then $\D_{R,S}$ is contained in the $*$-regular closure of $R$ in $S$. The following lemma is just a straightforward consequence of the definitions.
\begin{lem} \label{divreg}
Let $R$ be a subring of a ring $S$, and let $\D_{R,S}$ denote the division closure of $R$ in $S$. Then
\begin{itemize}
 \item[(1)] If $T$ is a subring of $R$, then $\D_{T,S} = \D_{T,\D_{R,S}} \subseteq \D_{R,S}$.
 \item[(2)] If $\mathcal U$ is a regular subring of $S$ containing $R$, then $\D_{R,S} = \D_{R,\mathcal U}$. 
\end{itemize} 
\end{lem}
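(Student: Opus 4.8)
The plan is to isolate a single elementary observation and deduce both parts from it. Call a subring $D$ of $S$ \emph{inverse-closed in $S$} if whenever $d\in D$ has a two-sided inverse in $S$, that inverse already lies in $D$. The key point, on which everything rests, is that two-sided inverses in a ring are unique: if $d\in D\subseteq S$ is invertible in $S$, then the element $d^{-1}\in S$ is the \emph{same} whether computed in $S$ or, when it happens to lie in $D$, in $D$. Thus for an inverse-closed $D$ and an element $d\in D$, ``invertible in $S$'' and ``invertible in $D$'' mean the same thing and produce the same inverse.

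The general lemma I would prove is: if $T\subseteq D\subseteq S$ and $D$ is inverse-closed in $S$, then $\D_{T,D}=\D_{T,S}$. I would argue this by induction on the stages of the inductive construction recalled just before the statement. Writing $Q_n^S$ and $Q_n^D$ for the $n$-th stage of $\D_{T,S}$ and of $\D_{T,D}$ respectively, one has $Q_0^S=Q_0^D=T\subseteq D$. Assuming $Q_n^S=Q_n^D\subseteq D$, every $x$ in this common stage lies in $D$, so by the uniqueness remark $x$ is invertible in $S$ if and only if it is invertible in $D$, with the same inverse; hence the set of adjoined inverses is identical, and the subring they generate together with $Q_n$ is the same whether formed inside $D$ or inside $S$ (it is in any case contained in $D$). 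Therefore $Q_{n+1}^S=Q_{n+1}^D\subseteq D$, and taking unions gives $\D_{T,D}=\D_{T,S}$.

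For part (1) I would first note that $\D_{R,S}$ is itself inverse-closed in $S$: if $x\in\D_{R,S}=\bigcup_n Q_n$ is invertible in $S$, then $x\in Q_n$ for some $n$, so $x^{-1}\in Q_{n+1}\subseteq\D_{R,S}$. Since $T\subseteq R\subseteq\D_{R,S}$, applying the general lemma with $D=\D_{R,S}$ yields $\D_{T,S}=\D_{T,\D_{R,S}}$; the inclusion $\D_{T,\D_{R,S}}\subseteq\D_{R,S}$ is automatic, being a subring of $\D_{R,S}$. For part (2) the only thing to check is that a von Neumann regular subring $\mathcal U$ with $R\subseteq\mathcal U\subseteq S$ is inverse-closed in $S$; then the general lemma with $D=\mathcal U$ gives $\D_{R,\mathcal U}=\D_{R,S}$ directly.

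The one step carrying genuine content is this last claim, and it is where I expect the (mild) main obstacle to lie. Given $x\in\mathcal U$ invertible in $S$, regularity provides $y\in\mathcal U$ with $xyx=x$. Left-multiplying by $x^{-1}$ gives $yx=1$ and right-multiplying gives $xy=1$, whence $y=x^{-1}\in\mathcal U$. Thus $\mathcal U$ is inverse-closed in $S$, completing part (2). Everything else is bookkeeping over the stage-by-stage construction, which is why the lemma is indeed, as asserted, a straightforward consequence of the definitions.
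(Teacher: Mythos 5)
Your proof is correct and follows essentially the same route as the paper: the paper treats part (1) and the stage-by-stage bookkeeping as immediate, and its proof of part (2) rests on exactly the key fact you establish, namely that for $x$ in a regular subring $\mathcal U$ of $S$, invertibility in $S$ forces $x^{-1}\in\mathcal U$ (the paper phrases this as ``$x$ is a non-zero-divisor in $\mathcal U$, hence invertible by regularity,'' which is the same cancellation computation you perform with $xyx=x$). Your notion of an inverse-closed subring and the induction on stages is just an explicit formalization of what the paper leaves implicit.
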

\begin{proof}
  The first assertion is clear. To prove the second, take $x\in \mathcal U$ and note that if $x$ is invertible in $S$ then it is a non-zero-divisor in $\mathcal U$. Since $\mathcal U$ is regular, this means that $x$ is invertible in $\mathcal U$.
\end{proof}

Now, let $E*G$ be a crossed product of a division ring $E$ with any group $G$, and let $\phi: E*G\to \mathcal A$ be an $E*G$-ring. For each subgroup $H$ of $G$  denote by $\D_{H, \mathcal A}$ the division closure of $\phi(E*H)$ in $\mathcal A$. Then we can define an $E^\times H$-rational structure on $\D_{H,\mathcal A}$ by putting $a^\diamond=a^{-1}$ if $a$ is invertible in $\D_{H,\mathcal A}$ and $a^\diamond=0$ otherwise.

Therefore, for any $H \leq G$ we can apply Lemma \ref{Phimap} to $\D_{H,\mathcal A}$ to obtain a unique morphism of $E^\times H$-semirings 
$$
  \Phi_{H,\mathcal A}: \Rat (E^{\times}H) \cup \{0\}\rightarrow \mathcal D_{H,\mathcal A}
$$
with $\Phi_{H,\mathcal A}(0)=0$. Reasoning as in \cite[Examples 5.37 and 5.38]{Sthesis}, every $\Phi_{H,\mathcal A}$ is surjective and the restriction of $\Phi_{G,\mathcal A}$ to $\Rat(E^{\times}H)$ is a morphism of $E^{\times}H$-rings whose image is $\D_{\phi(E*H),\D_{G,\mathcal A}}=\D_{H,\mathcal A}$. Therefore, the uniqueness in Lemma \ref{Phimap} implies that the following diagram is commutative. 

\begin{equation} \label{diaRat}
\begin{gathered}
     \xymatrixcolsep{3pc}\xymatrix{ \Rat (E^{\times}H) \ar@{->>}[r]^-{\Phi_{H,\mathcal A}} \ar@{^{(}->}[]+<0ex,-3ex>;[d] & \D_{H,\mathcal A} \ar@{^{(}->}[]+<0ex,-3ex>;[d] \\
     \Rat (E^{\times}G) \ar@{->>}[r]^-{\Phi_{G,\mathcal A}} & \D_{G,\mathcal A}
     }
\end{gathered}
\end{equation}
    
Now we can define the $H$-complexity of any element of $\mathcal D_{H,\mathcal A}$. Let $a\in \mathcal D_{H,\mathcal A}$. Then we put
$$
\Tree _H(a)= \min \{\Tree (\alpha): \alpha \in \Rat (E^{\times}H)\cup \{0\}, \Phi(\alpha) = a\}.
$$
This notion is always defined since the rooted trees are  well-ordered. Notice also that $\Tree_G (a)\leq \Tree _H(a)$, for all $a\in \mathcal D_{H,\mathcal A}$. We will say that $\alpha$ {\bf realizes} the $H$-complexity of $a\in  \mathcal D_{H,\mathcal A}$ if $\Phi(\alpha)=a$ and $\Tree(\alpha)=\Tree_H(a)$.

As an important remark, suppose that $(\mathcal A_1, \phi_1)$ and $(\mathcal A_2,\phi_2)$ are two $E*H$-rings such that $\D_{H,\mathcal A_1}$ and $\D_{H,\mathcal A_2}$ are isomorphic $E*H$-rings. If we denote this isomorphism by $\varphi$, then for every $a\in \D_{H,\mathcal A_1}$ we have that $\Tree_H(a) = \Tree_H(\varphi(a))$. Indeed, since $\varphi$ is an isomorphism of $E*H$-rings, it preserves the $E^{\times}H$-rational structure, and so $\varphi \circ \Phi_{H,\mathcal A_1}$ is a morphism of $E^{\times}H$-semirings. Uniqueness in Lemma \ref{Phimap} implies that $\varphi \circ \Phi_{H,\mathcal A_1} = \Phi_{H,\mathcal A_2}$, and the claim follows.

We finish the section with some comments in the case we are really interested in. Suppose that $G$ is a locally indicable group, $H$ a finitely generated subgroup, and  $H=N\rtimes C$, where $C$ is infinite cyclic. Let $t$ be an element of $E^{\times}H$ such that its image under the map $E^{\times}H\rightarrow   H$ generates $C$, and let $\tau: E^{\times}N \rightarrow E^{\times}N$ denote the automorphism given by left conjugation by $t$. Then $\tau$ can be extended, respectively, to an automorphism of $\D_{N,\mathcal A}$ and to an automorphism of the semiring $\Rat(E^{\times}N)$. Both extensions will also be denoted by $\tau$. We will write $\Rat (E^{\times}N)<t>$ to refer to the multiplicative submonoid of $\Rat (E^{\times}H)$ whose elements are of the form  $\alpha t^n$, for $\alpha \in \Rat (E^{\times}N)$ and $n\in \mathbb{Z}$, and with $(\alpha t^n)\cdot(\beta t^m) = \alpha \tau^n(\beta) t^{n+m}$. Observe in particular that the following holds.
  \begin{itemize}
    \item[-] If $\alpha \in \Rat (E^{\times}N)$, then $t^n \alpha = \tau^n(\alpha)t^n\in \Rat (E^{\times}N)<t>$. 
      
    \item[-] If $\alpha, \beta \in \Rat (E^{\times}N)<t>$, then $\alpha\beta \in \Rat (E^{\times}N)<t>$.
    
    \item[-] If $\alpha, \beta \in \Rat (E^{\times}N)t^n$, then $\alpha+\beta \in \Rat (E^{\times}N)t^n$.
  \end{itemize}

\subsection{Epic $*$-regular $K[G]$-rings as $K^{\times}G$-semirings}
The next example of rational semiring was central in a previous version of our proof of the Atiyah conjecture for locally indicable groups \cite{JL18} and, although it will not play a role in the proof shown in this paper, we think it can be relevant for future reference.

 For the rest of the section let $G$ be a group, $K$ a subfield of the complex numbers $\mathbb{C}$ closed under complex conjugation, and endow the group ring $K[G]$ with the usual proper  involution $*$, which is defined by $(\lambda g)^* = \bar{\lambda}g^{-1}$ and extended by linearity.

 Suppose that we have a $*$-regular $K[G]$-ring $\mathcal U$ with an epic $*$-homomorphism $\varphi: K[G]\rightarrow \mathcal U$. The following lemma shows that $\U$ is a rational $K^{\times}G$-semiring.

\begin{lem} \label{Rsemiring}
If $K[G]$ is a $*$-subring of a $*$-regular ring $\mathcal U$ such that $K[G]  \hookrightarrow \mathcal U$ is epic, then $\mathcal U$ is a rational $K^{\times}G$-semiring with rational operation given by taking relative inverses. 
\end{lem}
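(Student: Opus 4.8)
The plan is to verify directly the three defining conditions of a rational $K^{\times}G$-semiring, with the rational operation taken to be the relative inverse, $r^{\diamond}:=r^{[-1]}$. Two of the conditions are essentially formal. The semiring structure is free of charge: $\mathcal U$ is a ring, hence in particular a semiring with associative addition and product, distributivity, and multiplicative identity $1_{\mathcal U}$. For the $K^{\times}G$-biset structure I would let $u\in K^{\times}G$ act by $u\cdot r:=\varphi(u)r$ and $r\cdot u:=r\varphi(u)$; since every $u=\lambda g$ with $\lambda\in K^{\times}$, $g\in G$ is a unit of $K[G]$, its image $\varphi(u)$ is a unit of $\mathcal U$, so this is a genuine two-sided action, and the compatibility $(ur)v=u(rv)$ is simply associativity of multiplication in $\mathcal U$.

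The whole content of the lemma is therefore condition (3), namely $(urv)^{\diamond}=v^{-1}r^{\diamond}u^{-1}$ for $u,v\in K^{\times}G$ and $r\in \mathcal U$. Here I would invoke the characterization of the relative inverse recalled in Subsection \ref{*reg}: $x^{[-1]}$ is the unique $y\in\mathcal U$ with $xyx=x$, $yxy=y$, and such that $xy=\LP(x)$ and $yx=\RP(x)$ are projections. Setting $x=\varphi(u)r\varphi(v)$ and $y=\varphi(v)^{-1}r^{[-1]}\varphi(u)^{-1}$, the first two identities fall out immediately after cancelling $\varphi(u)^{-1}\varphi(u)$ and $\varphi(v)\varphi(v)^{-1}$ and using $rr^{[-1]}r=r$ and $r^{[-1]}rr^{[-1]}=r^{[-1]}$. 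The two products left to control are $xy=\varphi(u)\,\LP(r)\,\varphi(u)^{-1}$ and $yx=\varphi(v)^{-1}\,\RP(r)\,\varphi(v)$, which must be shown to be projections; idempotency is clear from $\LP(r)^2=\LP(r)$ and $\RP(r)^2=\RP(r)$, so the only real issue is self-adjointness.

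This self-adjointness is the one genuinely non-formal point, and it is where I expect the entire difficulty to lie. It rests on the observation that elements of $K^{\times}G$ are scalar multiples of unitaries: for $u=\lambda g$ one computes $\varphi(u)\varphi(u)^{*}=\varphi\bigl((\lambda g)(\bar\lambda g^{-1})\bigr)=\varphi(|\lambda|^{2})=|\lambda|^{2}\,1_{\mathcal U}$, and symmetrically $\varphi(u)^{*}\varphi(u)=|\lambda|^{2}\,1_{\mathcal U}$, so that $\varphi(u)^{*}=|\lambda|^{2}\varphi(u)^{-1}$ with $|\lambda|^{2}$ a central scalar. Using $\LP(r)^{*}=\LP(r)$ together with $(\varphi(u)^{-1})^{*}=|\lambda|^{-2}\varphi(u)$, one gets $\bigl(\varphi(u)\,\LP(r)\,\varphi(u)^{-1}\bigr)^{*}=(\varphi(u)^{-1})^{*}\LP(r)\,\varphi(u)^{*}=|\lambda|^{-2}\varphi(u)\,\LP(r)\,|\lambda|^{2}\varphi(u)^{-1}=\varphi(u)\,\LP(r)\,\varphi(u)^{-1}$, the central scalars cancelling; the identical computation with $v$ handles $yx$. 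Hence $xy$ and $yx$ are projections, uniqueness of the relative inverse forces $x^{[-1]}=y$, i.e.\ $(urv)^{\diamond}=v^{-1}r^{\diamond}u^{-1}$, and the degenerate case $r=0$ (so $x=0$ and $0^{[-1]}=0$) is trivial. This verifies all three axioms and shows $\mathcal U$ is a rational $K^{\times}G$-semiring.
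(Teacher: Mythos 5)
Your overall strategy coincides with the paper's: take $r^{\diamond}=r^{[-1]}$, dispose of the semiring and biset axioms as formal, and verify that $y=\varphi(v)^{-1}r^{[-1]}\varphi(u)^{-1}$ satisfies the characterizing properties of the relative inverse of $x=\varphi(u)r\varphi(v)$, the identities $xyx=x$, $yxy=y$ and the computations $xy=\varphi(u)\LP(r)\varphi(u)^{-1}$, $yx=\varphi(v)^{-1}\RP(r)\varphi(v)$ being routine, so that everything hinges on self-adjointness of these two conjugated projections. But at exactly that point there is a genuine gap. You assert that $|\lambda|^{2}$ --- that is, the element $\varphi(|\lambda|^{2})\in\mathcal U$ --- is ``a central scalar'' and cancel it across $\LP(r)$ in the chain $|\lambda|^{-2}\varphi(u)\,\LP(r)\,|\lambda|^{2}\varphi(u)^{-1}=\varphi(u)\,\LP(r)\,\varphi(u)^{-1}$. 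The element $\LP(r)$ lies in $\mathcal U$, in general far outside $\varphi(K[G])$, so moving $\varphi(|\lambda|^{2})$ past it requires knowing $\varphi(K)\subseteq Z(\mathcal U)$. This is not automatic: it is precisely what the epic hypothesis buys, via Lemma \ref{center} ($Z(R)\subseteq Z(S)$ for an epic embedding $R\hookrightarrow S$). Your proof never invokes epicness anywhere, which is already a warning sign, because the statement of Lemma \ref{Rsemiring} is false without it.

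Concretely, take $G=\{1\}$ and $K=\Q(\pi)$ (a subfield of $\CC$ closed under complex conjugation), and embed $K[G]=K$ into the $*$-regular ring $\mathcal U=\Mat_2(\CC)$ by $f(\pi)\mapsto\diag\!\left(f(\pi),f(e)\right)$. This is a $*$-embedding, but it is not epic: by Lemma \ref{center} an epic embedding would force $\varphi(K)\subseteq Z(\Mat_2(\CC))=\CC I$, while $\diag(\pi,e)\notin\CC I$. Now let $u=v=\pi$ and let $r=\frac12\bigl(\begin{smallmatrix}1&1\\1&1\end{smallmatrix}\bigr)$, a projection, so $r^{[-1]}=r=\LP(r)$. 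One computes $\varphi(u)\,r\,\varphi(u)^{-1}=\frac12\bigl(\begin{smallmatrix}1&\pi/e\\ e/\pi&1\end{smallmatrix}\bigr)$, which is idempotent but not self-adjoint; since $(uru)(u^{-1}ru^{-1})=uru^{-1}$, the candidate $u^{-1}r^{[-1]}u^{-1}$ violates the Moore--Penrose conditions and $(uru)^{[-1]}\neq u^{-1}r^{[-1]}u^{-1}$. In particular your preliminary claim that elements of $K^{\times}G$ map to ``scalar multiples of unitaries'' also fails here: $\varphi(u)\varphi(u)^{*}=\varphi(|\lambda|^{2})$ need not be central, let alone a multiple of $1_{\mathcal U}$. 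The repair is one line, and it is exactly how the paper's proof begins: first record that $K\subseteq Z(\mathcal U)$ by Lemma \ref{center} --- the only place the epic hypothesis is used --- after which your computation (which is otherwise essentially identical to the paper's) goes through verbatim.
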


\begin{proof}
  We have to show that, for every $u,v\in K^{\times}G$ and $x\in \mathcal U$, the equality $(uxv)^{[-1]}=v^{-1}x^{[-1]}u^{-1}$ holds. Observe first that $K\subseteq Z(\mathcal U)$ by Lemma \ref{center}. Put $e= \RP(x), f = \LP(x)$. Then, by the previous observation and the definition of $\LP(x)$, we have that $\LP(uxv) = ufu^{-1}$. Indeed, if $u=\lambda g$ for some $\lambda\neq 0$ and $g\in G$,
\begin{itemize} 
  \item[-] $ufu^{-1}$ is idempotent, and 
  $$
   (ufu^{-1})^*= \bar{\lambda}^{-1}gf\bar{\lambda}g^{-1}= \bar{\lambda}gf\bar{\lambda}^{-1}g^{-1} = ufu^{-1}
  $$
   so it is a projection.
  \item[-] $uxvR = uxR = ufR = ufu^{-1}R$.
\end{itemize}
Similarly we have that $\RP(uxv) = v^{-1}ev$. To conclude the result, just observe that $(uxv)(v^{-1}x^{[-1]}u^{-1}) = \LP(uxv)$, $v^{-1}x^{[-1]}u^{-1}uxv = \RP(uxv)$, and $v^{-1}x^{[-1]}u^{-1} = \RP(uxv)v^{-1}x^{[-1]}u^{-1}\LP(uxv)$.
\end{proof}

As a consequence, in the previous setting we obtain a morphism of rational $K^{\times}G$-semirings 
$$
  \Phi_G: \Rat (K^{\times}G) \rightarrow \mathcal U.
$$
 
Again, as in the case of division closures, for any $H\leq G$ we can think of $\Phi_H$ as the restriction of $\Phi_G$ to $\Rat(K^{\times}H)$ and, as a mere rewriting of \cite[Example 5.37 and 5.38]{Sthesis}, we obtain that the image of $\Phi_H$ is $\mathcal U_{H}$, the $*$-regular closure of $K[H]$ inside $\mathcal U$.  In particular $\Phi_G$ is surjective. 
Therefore, we can understand that for every $N\leq H \leq G$, the following diagram is commutative
    $$
     \xymatrix{ \Rat (K^{\times}N) \ar@{->>}[r]^-{\Phi_N} \ar@{^{(}->}[d] & \mathcal U_{N} \ar@{^{(}->}[d] \\
     \Rat (K^{\times}H) \ar@{->>}[r]^-{\Phi_H} & \mathcal U _{H}
     }
    $$
Using this, we can push forward to $\mathcal U_{H}$ the notion of $H$-complexity in the same way we defined it before.

\section{A key auxiliary result and first applications}\label{mainsection}
 In this section we are going to present a key result for the proof of the Atiyah and the L\"uck approximation conjectures, and one of its immediate applications. The structure of the proof mimics the steps of the proof of Hughes theorem presented in \cite{Sthesis}. In what follows, for any $E*G$-ring $(A,\phi)$ and for any subgroup $H\leq G$, $\mathcal D_{H,A}$ will denote the division closure of $\phi(E*H)$ in $A$. If we are working with an indexed family of $E*G$-rings, say $\{(A_i,\phi_i)\}$, then we will replace the previous notation by $\mathcal{D}_{H,i}$ for the sake of readability.

\subsection{A key auxiliary result}
Let $H$ be a finitely generated group and let $N$ be a normal subgroup of $H$ such that $H/N\cong \Z$.  Let $E$ be a division algebra, let $E*H$ be a crossed product of $E$ and $H$ and take  $t$ a preimage in $E^{\times}H$ of a generator of the quotient  $E^\times H/E^\times N$. Denote by $\tau$ the automorphism of $E*N$ induced by the left conjugation by  $t$, i.e., $ta=\tau(a)t$ for $a\in E*N$.   Then $E*H$ is isomorphic to the skew Laurent polynomial ring $(E*N)[t^{\pm 1}; \tau]$.  Assume that we have the following
\begin{itemize}
  \item[(i)] A von Neumann regular $E*N$-ring ($\mathcal A$, $\phi$).
  \item[(ii)] An automorphism of $\mathcal A$, also denoted by $\tau$, such that $\tau\circ\phi = \phi\circ\tau$.
  \item[(iii)] A ring $\mathcal{P}$ such that $\mathcal A((t; \tau)) \subseteq \mathcal{P}$.
\end{itemize} 
Then $\phi$ can be extended to a homomorphism 
$$\phi :E*H\cong (E*N)[t^{\pm 1};\tau]\to \mathcal A((t;\tau))\subseteq \mathcal P,$$
and so we can consider $\mathcal D_{N,\mathcal P}$ and $\mathcal D_{H,\mathcal P}$. As it was explained in Subsection \ref{ss:divclo}, we can define a notion of $H$-complexity on $\D_{H,\mathcal P}$ by means of the corresponding map $\Phi:\Rat(E^\times H)\to \mathcal D_{H,\mathcal P}$. 

There are two important things to notice before stating our key result. Firstly, since $\mathcal A$ is regular, Lemma \ref{divreg} states that $\D_{N,\mathcal P}$ equals $\D_{N, \mathcal A}$. Secondly, it follows from the condition $(ii)$ and the construction of a division closure that the restriction of $\tau$ to $\D_{N,\mathcal A}$ is an automorphism of $\D_{N,\mathcal A}$. Indeed, let $D_{N,\mathcal A} = \bigcup Q_i$ as at the beginning of this section, with $Q_0 = \phi(E*N)$. Condition $(ii)$ assures that $\tau(Q_0) = Q_0$. Now assume $i\geq 1$ and $\tau(Q_{i})= Q_i$. If $x\in Q_i$ is invertible, then $\tau(x^{-1}) = \tau(x)^{-1}\in Q_{i+1}$ by the induction hypothesis. From here $\tau(Q_{i+1}) \subseteq Q_{i+1}$, and since we can play the same game with $\tau^{-1}$, we are done. Therefore, it makes sense to consider $\D_{N,\mathcal P}((t;\tau))$, a subring of $\mathcal A((t;\tau))$.

\begin{pro} \label{key}
{Assume the previous notation.} Let $a\in \mathcal D_{H,\mathcal P}$ and assume that for every $0\ne c \in\mathcal D_{H,\mathcal P}$ such that $\Tree_H(c)<\Tree_H(a)$, $c$ is invertible in  $\mathcal D_{H,\mathcal P}$. Then for every $b\in \D_{H,\mathcal P}$ such that $\Tree_H(b)\le \Tree_H(a)$ the following holds

\begin{enumerate}
\item $b$ belongs to $\mathcal D_{N,\mathcal P}((t;\tau))$ and 

\item  if  $0\ne b = \sum b_k$ with $b_k\in \mathcal D_{N,\mathcal P}t^k$, then $$\Tree _{H}(b_k)\leq \Tree _H(b)$$ for all $k$, and the equality holds for some $n$ if and only if  $b =b_n \in \mathcal D_{N,\mathcal P}t^n$ and $$\bigg\lbrace \beta  \in \Rat (E^{\times}H)\cup \{0\}: \Phi(\beta )=b \textrm{~and~}\Tree (\beta )= \Tree _H(b)\bigg\rbrace\subseteq \Rat (E^{\times}N) t^n.$$

\end{enumerate}

\end{pro}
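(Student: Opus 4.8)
The plan is to prove (1) and (2) together by well-founded induction on the value $\Tree_H(b)$ in the well-ordered set $\mathcal T$, following the inductive scheme of Dicks--Herbera--S\'anchez. Fix a preimage $\beta\in\Rat(E^{\times}H)$ realizing $\Tree_H(b)$, so that $\Phi(\beta)=b$ and $\Tree(\beta)=\Tree_H(b)$, and argue according to how $\beta$ is built inside $\Rat(E^{\times}H)=\mathbb N[E^{\times}H\natural X]\setminus\{0\}$. The base case is $\Tree_H(b)=1_{\mathcal T}$, i.e. $b=\phi(u)$ with $u\in E^{\times}H$ (Lemma \ref{tree}(ii)); writing the image of $u$ in $H/N\cong\Z$ as $n$ gives $b\in\phi(E^{\times}N)t^{n}\subseteq\mathcal D_{N,\mathcal P}t^{n}$, a single homogeneous term, so (1) is immediate and (2) holds because every complexity-$1_{\mathcal T}$ preimage is a monomial lying in $E^{\times}N\,t^{n}\subseteq\Rat(E^{\times}N)t^{n}$.

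For the inductive step of (1), if $\beta=\beta_1+\beta_2$ is a genuine sum then $\Tree(\beta_i)<\Tree(\beta)$ by Lemma \ref{tree}(iv), so $\Phi(\beta_i)\in\mathcal D_{N,\mathcal P}((t,\tau))$ by induction and hence so does their sum $b$. If $\beta$ is a monomial involving two or more rational letters $\gamma^{\diamond}\in X$, we split it between two such letters as $\beta=\beta_1\beta_2$ with both factors non-units; then $\Tree(\beta_i)<\Tree(\beta)$ by Lemma \ref{tree}(vi), induction applies to both, and the product of two skew Laurent series is again one. The decisive case is a monomial with a single rational letter, $\beta=u\,\gamma^{\diamond}v$ with $u,v\in E^{\times}H$: here $\phi(u),\phi(v)$ are homogeneous units, so the content is the inversion of $c:=\Phi(\gamma)$. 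By Lemma \ref{tree}(xii), $\Tree(\gamma)=\log^{2}\Tree(\beta)<\Tree(\beta)$; disposing of the trivial possibility $c=0$ (which forces $b=0$), we get $\Tree_H(c)\le\Tree(\gamma)<\Tree_H(b)\le\Tree_H(a)$, so the standing hypothesis of the proposition makes $c$ invertible in $\mathcal D_{H,\mathcal P}$ and $b=\phi(u)c^{-1}\phi(v)$, while induction gives $c\in\mathcal D_{N,\mathcal P}((t,\tau))$.

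The main obstacle is to invert $c$ \emph{inside} $\mathcal D_{N,\mathcal P}((t,\tau))$. Write $c=\sum_{k\ge m}c_k$ with $c_k\in\mathcal D_{N,\mathcal P}t^{k}$ and $c_m=d_m t^{m}\neq0$. Applying (2) inductively to $c$, together with $\Tree_H(d_m)=\Tree_H(c_m)$ (because $t^{m}$ is a unit, Lemma \ref{tree}(vi)), yields $\Tree_H(d_m)\le\Tree_H(c)<\Tree_H(a)$, so $d_m$ is invertible in $\mathcal D_{H,\mathcal P}\subseteq\mathcal P$. Since $\mathcal A\hookrightarrow\mathcal P$, invertibility in $\mathcal P$ forces $d_m$ to be a non-zero-divisor in $\mathcal A$, and regularity of $\mathcal A$ then promotes it to a unit of $\mathcal A$; as $d_m\in\mathcal D_{N,\mathcal A}=\mathcal D_{N,\mathcal P}$ (Lemma \ref{divreg}), its inverse already lies in the division closure $\mathcal D_{N,\mathcal P}$. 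Thus $c_m$ is invertible in $\mathcal D_{N,\mathcal P}((t,\tau))$, the element $c_m^{-1}(c-c_m)$ has strictly positive $t$-order, and the $t$-adically convergent geometric series $\sum_{j\ge0}(-1)^{j}\bigl(c_m^{-1}(c-c_m)\bigr)^{j}$ produces $c^{-1}=\bigl(\sum_{j\ge0}(-1)^{j}(c_m^{-1}(c-c_m))^{j}\bigr)c_m^{-1}\in\mathcal D_{N,\mathcal P}((t,\tau))$. This completes (1).

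For (2), the degreewise decomposition $b=\sum_k b_k$ is realized on the level of $\Rat$ by the formal coefficient-extraction morphism $\Psi\colon\Rat(E^{\times}H)\to\Rat(E^{\times}N)((t,\tau))\cup\{\infty\}$ of diagram (\ref{dia1}), which is purely combinatorial and hence available here; evaluating it at the realizing $\beta$ produces preimages $\psi_k\in\Rat(E^{\times}N)t^{k}$ of the coefficients $b_k$ with $\Tree(\psi_k)\le\Tree(\beta)$, so that $\Tree_H(b_k)\le\Tree(\psi_k)\le\Tree_H(b)$. These estimates, and the rigidity of their equality cases, are governed by the sharp clauses of Lemma \ref{tree}(iv),(vi)--(x) describing the behaviour of $\log$ and $\log^{2}$ under sums and products, and they propagate through the base, sum, product and inversion cases above. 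The delicate point is the equality characterization: tracing these sharp inequalities shows that $\Tree_H(b_n)=\Tree_H(b)$ can persist only when no degree other than $n$ contributes, i.e. $b=b_n\in\mathcal D_{N,\mathcal P}t^{n}$, and only when every realizing preimage already lies in $\Rat(E^{\times}N)t^{n}$; coordinating this rigidity with the inversion step is where the bookkeeping is heaviest.
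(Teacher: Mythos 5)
Your treatment of part (1) is correct and is essentially the paper's own proof: the same case split on a minimal preimage $\beta$ (unit, genuine sum, word of length at least two, single $\diamond$-letter), with the inversion case resolved exactly as in the paper --- the standing hypothesis together with the inductive form of (2) applied to $c=\Phi(\gamma)$ makes the lowest coefficient invertible in $\D_{H,\PP}$, regularity of $\mathcal A$ pulls that inverse into $\D_{N,\PP}=\D_{N,\mathcal A}$, and the geometric series inverts $c$ inside $\D_{N,\PP}((t,\tau))$.

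Part (2), however, contains a genuine gap, and it is the half of the proposition that every application actually uses (it is what forces ``at least two summands'' in the later proofs). Your argument rests on two unproven claims about the coefficient-extraction morphism $\Psi$ of diagram (\ref{dia1}). First, the assertion that $\Psi$ produces \emph{preimages} of the $b_k$, i.e.\ that $\Phi(\psi_k)=b_k$, is not ``purely combinatorial.'' In \cite{DHS} the commutativity of that diagram is obtained from the universal property \emph{after} one knows the Hughes-free embedding $\D_{H,\D}\hookrightarrow \D_{N,\D}((t,\tau))$; here no such embedding is available --- producing a substitute for it is precisely the purpose of Proposition \ref{key}. Concretely, the formal rational operation on $\Rat(E^{\times}N)((t,\tau))$ expands around the \emph{formal} lowest coefficient, and its evaluation agrees with actual inversion in $\D_{N,\PP}((t,\tau))$ only when the images of those formal lowest coefficients are invertible (with the paper's convention $x^{\diamond}=0$ for non-invertible $x$, a dead or non-invertible leading coefficient derails the evaluation entirely); ruling this out requires exactly the inductive control via the standing hypothesis that you never carry out. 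Second, the inequality $\Tree(\psi_k)\le\Tree(\beta)$ is asserted without proof, and proving it is not easier than the direct bookkeeping; more importantly, the equality characterization --- that $\Tree_H(b_n)=\Tree_H(b)$ forces $b=b_n\in\D_{N,\PP}t^n$ and forces \emph{every} realizing preimage into $\Rat(E^{\times}N)t^n$ --- is simply deferred (``where the bookkeeping is heaviest''), yet it is the crux: it is needed to close the very induction that your part (1) invokes.

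What the paper does instead, and what your proposal is missing, is to avoid $\Psi$ altogether: it takes \emph{minimal} preimages $\beta_n,\gamma_n,\delta_n$ of the image coefficients $b_n,c_n,d_n$ and bounds them through the algebraic relations $b_n=\sum_m c_md_{n-m}$, $b_n=c_n+d_n$, or the expansion of $c^{-1}$, using the $\log$ and $\log^2$ rules of Lemma \ref{tree}(vii)--(x). Because the order on $\mathcal T$ is lexicographic in the largest branch, $\log\Tree_H(b_n)<\log\Tree_H(b)$ (resp.\ $\log^2<\log^2$ in the inversion case) already yields $\Tree_H(b_n)<\Tree_H(b)$; and when equality of the (iterated) logarithms occurs for some $n$, the width-one property of words (Lemma \ref{tree}(xiii)) in the product case, resp.\ the strictness of the tail estimate (\ref{eq2}) in the inversion case, forces the relevant coefficient of $c$ (or $d$) to realize the full complexity, so the inductive equality clause of (2) applies and delivers both the monomial conclusion and the statement about all realizing preimages (here it matters that $\beta$ was an \emph{arbitrary} realizing preimage). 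Without this dichotomy-plus-rigidity analysis, part (2) is not proved.
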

\begin{proof}
 For $b=0$ it is clear, so let $b\ne 0$. If $\Tree _H(b)=1_{\mathcal{T}}$ and $\beta$ realizes the $H$-complexity of $b$, then we have that $\beta \in E^{\times}H = E^{\times}N< t >$ and $b\in \phi(E^{\times}H) = \phi(E^{\times}N)<t>$, so the result holds. Suppose now that $\Tree _H(b)> 1_{\mathcal{T}}$ and that the result holds for every element $c\in \mathcal \D_{H,\PP}$ with $\Tree _H(c)<\Tree _H(b)$. Fix an arbitrary element $\beta\in \Rat (E^{\times}H)$ realizing the $H$-complexity of $b$. We are going to divide $\Rat (E^{\times}H)$ in four disjoint subsets
  $$
  \begin{matrix}
     U=E^{\times}H & & X & & U\natural X \backslash (X\cup U) & & \mathbb{N}[U\natural X]\backslash (U\natural X \cup \{0\}).
  \end{matrix}
  $$
As far as we are assuming $\Tree_H (b)>1_{\mathcal{T}}$, we know that $\beta\notin U$, so we have three possibilities left:

{\bf Case 1.} If $\beta\in U\natural X \backslash (X\cup U)$, then there exist $\gamma,\delta \in U\natural X\backslash U$ such that $\beta = \gamma\delta$.  By Lemma \ref{tree}(vi), 
  $$
  \Tree (\gamma),\Tree (\delta)<\Tree (\beta).
  $$ 
Setting $c = \Phi(\gamma), d = \Phi(\delta)$, we obtain a decomposition $b = cd$. We claim that $\gamma$ realizes the $H$-complexity of $c$, i.e., $\Tree _H(c) = \Tree (\gamma)$. Otherwise, there would exist $\gamma'$ with $\Phi(\gamma') = c$ satisfying $\Tree (\gamma') < \Tree (\gamma)$, from where using Lemma \ref{tree}(v) and Lemma \ref{>}(ii)
$$
  \begin{matrix*}[l]
  \Tree (\gamma'\delta) &= \Tree (\gamma')\Tree (\delta) < \Tree (\gamma)\Tree (\delta) \\[6pt]
  &= \Tree (\gamma\delta) = \Tree (\beta).
  \end{matrix*}
$$
Since $\Phi(\gamma'\delta) = b$, this contradicts the minimality of $\beta$. Similarly, $\delta$ realizes the $H$-complexity of $d$, and therefore we have found a decomposition $b = cd$ with $\Tree _H(b)>\Tree _H(c),\Tree _H(d)$. Now, by the induction hypothesis, we can write $c = \sum c_n$, $d = \sum d_n$ with $\Tree _H(c_n)\leq \Tree _H(c)$ and $\Tree _H(d_n)\le \Tree _H(d)$. Hence, we have an expression $b = \sum b_n$ with $b_n = \sum c_md_{n-m}$. Let $\beta_n$, $\gamma_n$, $\delta_n$ be elements in $\Rat (E^{\times}H)\cup \{0\}$ such that $\Tree (\beta_n) = \Tree _H(b_n)$, $\Tree (\gamma_n) = \Tree _H(c_n)$, $\Tree (\delta_n)= \Tree _H(d_n)$, for all $n$. From the previous expression we obtain 
$$
\Tree (\beta_n) \leq \sum \Tree (\gamma_m)\Tree (\delta_{n-m}).
$$ 
Therefore, using Lemma \ref{tree},
$$
\begin{matrix*}[l]
  \log \Tree _H(b_n) &\leq \log \left(\sum \Tree (\gamma_m)\Tree (\delta_{n-m})\right) \\[6pt]
  &= \max\left\lbrace \log (\Tree (\gamma_m)\Tree (\delta_{n-m})) \right\rbrace \\[6pt]
  & = \max\left\lbrace \log \Tree (\gamma_m)+ \log \Tree (\delta_{n-m}) \right\rbrace \\[6pt]
  & \leq \log \Tree (\gamma) + \log \Tree (\delta) = \log \Tree (\gamma\delta) \\[6pt]
  & = \log \Tree (\beta)= \log \Tree _H(b).
\end{matrix*}
$$
If $\log \Tree _H(b_n)<\log \Tree _H(b)$ for all $n$, then $\Tree _H(b_n)<\Tree _H(b)$ for all $n$. If there exists $n$ such that the equality holds, then by the previous expression there exists some integer $m$ such that
$$
\begin{matrix}
\log \Tree (\gamma_m) = \log \Tree (\gamma) & & & \log \Tree (\delta_{n-m}) = \log \Tree (\delta).
\end{matrix}
$$ 
Since $\gamma,\delta \in U\natural X$, Lemma \ref{tree}(xiii) tells us that $\wid(\gamma)$ = $\wid(\delta)$ = 1, and consequently $\Tree (\gamma_m)\geq \Tree (\gamma)$ and $\Tree (\delta_{n-m}) \geq \Tree (\delta)$. Therefore, we have equality, and the induction hypothesis says that there exist $c_m', d_{n-m}' \in \D_{N,\PP}$, $\gamma', \delta'\in \Rat (E^{\times}N)$ such that $c = c_m = c_m't^m$, $d = d_{n-m}'t^{n-m}$, $\gamma = \gamma't^m$, $\delta = \delta' t^{n-m}$, and so 
$$
 \begin{matrix*}[l]
    &b = cd = c_m'\tau^{m}(d_{n-m}')t^n \in \D_{N,\PP}t^n \\[6pt]
    &\beta = \gamma\delta = \gamma'\tau^m(\delta') t^n \in \Rat (E^{\times}N)t^n.
 \end{matrix*}
$$

{\bf Case 2.}  If $\beta \in \mathbb{N}[U\natural X]\backslash (U\natural X \cup \{0\})$, then there exist $\gamma,\delta \in \mathbb{N}[U\natural X]\backslash \{0\}$ such that $\beta = \gamma+\delta$.  By Lemma \ref{tree}(iv), 
  $$
  \Tree (\gamma),\Tree (\delta)<\Tree (\beta).
  $$ 
Setting $c = \Phi(\gamma), d = \Phi(\delta)$, we obtain a decomposition $b = c+d$. We claim that $\gamma$ realizes the $H$-complexity of $c$, i.e., $\Tree _H(c) = \Tree (\gamma)$. Otherwise, there would exist $\gamma'$ with $\Phi(\gamma') = c$ satisfying $\Tree (\gamma') < \Tree (\gamma)$, from where using Lemma \ref{tree}(iii) and Lemma \ref{>}(i)
$$
  \begin{matrix*}[l]
  \Tree (\gamma'+\delta) &= \Tree (\gamma')+\Tree (\delta) < \Tree (\gamma)+\Tree (\delta) \\[6pt]
  &= \Tree (\gamma+\delta) = \Tree (\beta).
  \end{matrix*}
$$
Since $\Phi(\gamma'+\delta) = b$, this contradicts the minimality of $\beta$. Similarly, $\delta$ realizes the $H$-complexity of $d$, and therefore we have found a decomposition $b = c+d$ with $\Tree _H(b)>\Tree _H(c),\Tree _H(d)$. Now, by the induction hypothesis, we can write $c = \sum c_n$, $d = \sum d_n$ with $\Tree _H(c_n)\leq \Tree _H(c)$ and $\Tree _H(d_n)\le \Tree _H(d)$. Hence, we have an expression $b = \sum b_n$ with $b_n = c_n+d_n$. Let $\beta_n$, $\gamma_n$, $\delta_n$ be elements in $\Rat (E^{\times}H)\cup \{0\}$ such that $\Tree (\beta_n) = \Tree _H(b_n)$, $\Tree (\gamma_n) = \Tree _H(c_n)$, $\Tree (\delta_n)= \Tree _H(d_n)$, for all $n$. From the previous expression we obtain that, for any $n$,  
$$
\begin{matrix*}[l]
  \Tree _H(b_n) &= \Tree (\beta_n) \leq \Tree (\gamma_n)+ \Tree (\delta_{n}) \\[6pt]

  & \leq \Tree (\gamma) + \Tree(\delta) = \Tree _H(b).
\end{matrix*}
$$
If there exists $n$ such that the equality holds, then 
$$
\begin{matrix}
\Tree (\gamma_n) = \Tree (\gamma) & & & \Tree (\delta_n) = \Tree (\delta)
\end{matrix}
$$ 
and by induction there exist $c_n', d_n' \in \D_{N,\PP}$, $\gamma', \delta'\in \Rat (E^{\times}N)$ such that $c = c_n = c_n't^n$, $d = d_{n}'t^{n}$, $\gamma = \gamma't^n$, $\delta = \delta't^{n}$. Hence, 
$$
 \begin{matrix*}[l]
    &b = c+d = (c_n'+d_n')t^n \in \D_{N,\PP}t^n \\[6pt]
    &\beta = \gamma+\delta = (\gamma'+\delta')t^n\in \Rat (E^{\times}N)t^n.
 \end{matrix*}
$$ 

{\bf Case 3.} If $\beta\in X$, then there exists $\gamma\in \mathbb{N}[U\natural X]\backslash \{0\}$ such that $\beta = \gamma^{\diamond}$. By Lemma \ref{tree}(xii), $\Tree (\gamma)<\Tree (\beta)$, and setting $c = \Phi(\gamma)\in \D_{H,\PP}$ we obtain that $$
b = \Phi(\gamma^{\diamond})= c^{\diamond}
$$
and since $b$ is non-zero, $b = c^{-1}$.
We claim that $\gamma$ realizes the $H$-complexity of $c$, i.e., $\Tree _H(c)=\Tree (\gamma)$. Otherwise, there would exist $\gamma'$ with $\Phi(\gamma')=c$ satisfying $\Tree (\gamma')<\Tree (\gamma)$, from where using Lemma \ref{tree}(xi)
$$
 \Tree ((\gamma')^{\diamond})<\Tree (\gamma^{\diamond}) = \Tree (\beta).
$$
Since $\Phi(\gamma') = c^{-1} = b$, this contradicts the minimality of $\beta$. Hence, $b = c^{-1}$ with $\Tree _H(c)<\Tree _H(b)$. Now, by the induction hypothesis, we can write $c = \sum c_n$ with $\Tree _H(c_n) \leq \Tree _H(c)$. It is important to notice also that 
$$
   \Tree _H(c_n) \leq \Tree _H(c)<\Tree _H(b)
             \leq \Tree _{H}(a)$$
Thus, all non-zero $c_n$ are invertible in $\D_{H,\PP}$. Therefore, $c_nt^{-n}\in \D_{N,\PP}$ is invertible in $\D_{H,\PP}$, and hence in $\D_{N,\PP}$, and so $c$, which is invertible in $\D_{H,\PP}$ with inverse $b$, is also invertible as an element of $\D_{N,\PP}((t;\tau))$.  Thus,   
we can express $b$ as a Laurent series $b = \sum b_n$ by taking the inverse of $\sum c_n$. 

Let $k=\min\{n:c_n\ne 0\}$. Then $b_n$ can be expressed using sums and products of elements $c_k^{-1}$ and ${- c_m}$, for $m \in C_n=  \{k+1,\dots,2k+n\}$. Let $\beta_n, \gamma_n,\in \Rat (E^{\times}H)\cup \{0\}$ be such that $\Tree (\beta_n) = \Tree _H(b_n)$, $\Tree (\gamma_n) = \Tree _H({- c_n})$, for all $n$. By Lemma \ref{tree}(xii),

\begin{equation}
\label{eq1}
 \log^2\Tree  (\gamma_k^{\diamond}) = \Tree  (\gamma_k)\leq \Tree  (\gamma) = \log^2 \Tree (\gamma^{\diamond})                           = \log^2 \Tree  (\beta), 
  \end{equation} and observe that we also have
 
\begin{equation}\label{eq2}
\log^2\Tree (\gamma_m)  <  \Tree (\gamma_m) \leq \Tree (\gamma) 
      = \log^2\Tree (\beta).
  \end{equation} 
Therefore, using Lemma \ref{tree} (ix) and (x),
$$
\log^2\Tree (\beta_n) \leq \max \left \lbrace \log^2\Tree (\gamma_k^{\diamond}), \displaystyle{\max_{m\in C_n}} \left\lbrace \log^2\Tree (\gamma_m) \right\rbrace \right\rbrace   \leq \log^2\Tree (\beta).
  $$ 
If for every $n$, $\log^2\Tree (\beta_n) < \log^2\Tree (\beta)$, then we conclude that for every $n$, 
$\Tree (\beta_n)<\Tree (\beta)$. If equality holds for some $n$, then since the inequality in (\ref{eq2}) is strict, we obtain from (\ref{eq1})
 that $\Tree (\gamma_k) = \Tree (\gamma)$. By induction, there exist $c_k'\in \D_{N,\PP}$, $\gamma'\in \Rat (E^{\times}N)$ such that $$c = c_k = c_k't^k,\ \gamma = \gamma't^k, $$ and so
$$
 \begin{matrix*}[l]
    &b = c^{-1} = t^{-k}(c_k')^{-1} = \tau^{-k}(c_k')t^{-k} \in \D_{N,\PP}t^{-k} \\[6pt]
    &\beta = \gamma^{\diamond} {=} t^{-k}(\gamma')^{\diamond} = \tau^{-k}((\gamma')^{\diamond})t^{-k}\in \Rat (E^{\times}N)t^{-k}.
 \end{matrix*}
$$
This finishes the proof.
 \end{proof}
 
\subsection{The uniqueness of Hughes-free epic division rings for locally indicable groups}
 In this subsection we give an example of the use of Proposition \ref{key}, presenting  an alternative argument for the last part of the proof from \cite{DHS} of the uniqueness of Hughes-free epic division ring for locally indicable groups. 
 \begin{teo} Let $E$ be a division ring, $G$ a locally indicable group and $E*G$ a crossed product of $E$ and  $G$. Let $(\mathcal D_1,\varphi_1)$ and $(\mathcal D_2,\varphi_2)$ be two Hughes-free epic division $E*G$-rings. Then $\mathcal D_1$ and $\mathcal D_2$ are isomorphic as $E*G$-rings.
 
 \end{teo}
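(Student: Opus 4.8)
The plan is to reduce the statement to a single assertion about one ring and then prove that assertion by induction on complexity, using Proposition \ref{key} as the inductive engine. First I would form the product $\mathcal{D}_1\times\mathcal{D}_2$, which is von Neumann regular, together with the $E*G$-ring structure $\phi=(\varphi_1,\varphi_2)$, and set $\mathcal{A}_0:=\mathcal{D}_{G,\mathcal{D}_1\times\mathcal{D}_2}$, the division closure of $\phi(E*G)$. The two coordinate projections restrict to surjective $E*G$-ring homomorphisms $\pi_i\colon\mathcal{A}_0\to\mathcal{D}_i$ (surjective because the image of $\pi_i$ is the division closure of $\varphi_i(E*G)$, which is all of $\mathcal{D}_i$). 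Hence, \emph{if} $\mathcal{A}_0$ turns out to be a division ring, each $\pi_i$ has kernel a proper two-sided ideal of a division ring, so $\ker\pi_i=0$, each $\pi_i$ is an isomorphism, and $\pi_2\circ\pi_1^{-1}\colon\mathcal{D}_1\to\mathcal{D}_2$ is the desired $E*G$-ring isomorphism. Thus everything reduces to showing that every nonzero element of $\mathcal{A}_0$ is invertible.

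To prove this I would argue by transfinite induction on the well-ordered set of complexities $\Tree_G$ coming from the surjection $\Phi_{G}\colon\Rat(E^{\times}G)\to\mathcal{A}_0$ of Subsection \ref{ss:divclo}. The inductive statement $P(\mathfrak{t})$ reads: for every subgroup $K\le G$ and every nonzero $a\in\mathcal{D}_{K,\mathcal{A}_0}$ with $\Tree_K(a)\le\mathfrak{t}$, the element $a$ is invertible in $\mathcal{D}_{K,\mathcal{A}_0}$. The base case $\mathfrak{t}=1_{\mathcal{T}}$ holds since then $a\in\phi(E^{\times}K)$ is a unit. For the inductive step, given $a$ with $\Tree_K(a)=\mathfrak{t}$, Theorem \ref{source} lets me choose a primitive $\alpha'$ realizing the complexity and replace $a$ by $a'=\Phi(\alpha')$, which differs from $a$ by a unit (so invertibility is equivalent, by Lemma \ref{tree}(vi) the complexity is unchanged) and lies in $\mathcal{D}_{H,\mathcal{A}_0}$ for the finitely generated group $H=\source(\alpha')$, with $\Tree_H(a')=\mathfrak{t}$. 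If $H$ is trivial we are done; otherwise local indicability gives $H=N\rtimes C$ with $C$ infinite cyclic.

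Now I would build the environment required by Proposition \ref{key}. Take $\mathcal{A}=\mathcal{D}_{N,\mathcal{D}_1}\times\mathcal{D}_{N,\mathcal{D}_2}$, which is regular as a product of division rings, with $\tau$ acting coordinatewise by conjugation by $t$, and set $\mathcal{P}=\mathcal{A}((t,\tau))\cong\prod_i\mathcal{D}_{N,\mathcal{D}_i}((t,\tau))$. Hughes-freeness of each $\mathcal{D}_i$ is exactly what identifies $\mathcal{D}_{H,\mathcal{D}_i}$ with the division closure of $\varphi_i(E*H)$ inside $\mathcal{D}_{N,\mathcal{D}_i}((t,\tau))$, so $\phi$ extends to $E*H\cong(E*N)[t^{\pm1},\tau]\to\mathcal{A}((t,\tau))\subseteq\mathcal{P}$ and, factorwise, these identifications assemble to an isomorphism of $E*H$-rings $\mathcal{D}_{H,\mathcal{A}_0}\cong\mathcal{D}_{H,\mathcal{P}}$; by the remark in Subsection \ref{ss:divclo} it preserves $\Tree_H$ and the sets of complexity-realizing elements, while Lemma \ref{divreg} gives $\mathcal{D}_{N,\mathcal{P}}=\mathcal{D}_{N,\mathcal{A}}=\mathcal{D}_{N,\mathcal{A}_0}$. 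The crucial point is that, because $H=\source(\alpha')$ and $H\not\le N$, Theorem \ref{source}(iii) forbids $\alpha'\in\Rat(E^{\times}N)\cdot U$; therefore the equality (monomial) case in the second conclusion of Proposition \ref{key} is \emph{excluded}. The induction hypothesis, transported through the isomorphism, supplies precisely the invertibility of all strictly less complex elements that Proposition \ref{key} requires, and I obtain a skew-Laurent expansion $a'=\sum_k c_kt^k$ with $c_k\in\mathcal{D}_{N,\mathcal{A}_0}$ and all $\Tree_H(c_k)<\mathfrak{t}$ \emph{strictly}.

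It remains to invert this series. By the induction hypothesis (applied with $K=H$) every nonzero $c_k$ is invertible in $\mathcal{D}_{H,\mathcal{A}_0}$; in particular the lowest nonzero coefficient $c_m$ is invertible in $\mathcal{A}_0$, and since $\mathcal{D}_{N,\mathcal{A}_0}$ is closed under inverses existing in $\mathcal{A}_0$ we get $c_m^{-1}\in\mathcal{D}_{N,\mathcal{A}_0}$. Factoring $a'=c_mt^m\bigl(1+(c_mt^m)^{-1}\sum_{k>m}c_kt^k\bigr)$, the second factor has the form $1+(\text{positive }t\text{-order})$, hence is invertible in $\mathcal{D}_{N,\mathcal{A}_0}[[t,\tau]]$ by the geometric series; so $a'$ is invertible in $\mathcal{D}_{N,\mathcal{A}_0}((t,\tau))\subseteq\mathcal{P}$, and since $a'\in\mathcal{D}_{H,\mathcal{P}}$ (a division closure, closed under inverses in $\mathcal{P}$) its inverse lies there too. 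Transporting back and multiplying by the unit relating $a$ and $a'$ shows $a$ is invertible in $\mathcal{D}_{K,\mathcal{A}_0}$, completing $P(\mathfrak{t})$ and hence the proof that $\mathcal{A}_0$ is a division ring. I expect the main obstacle to be the compatibility bookkeeping of the third paragraph: verifying that $\mathcal{D}_{H,\mathcal{A}_0}$ and $\mathcal{D}_{H,\mathcal{P}}$ agree as $E*H$-rings (so that $\Tree_H$ and Proposition \ref{key} transfer faithfully), and recognizing that choosing $H=\source(\alpha')$ is exactly what rules out the monomial case and thereby makes the single induction on $\Tree$ well-founded.
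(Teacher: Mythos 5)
Your proposal is correct and is essentially the paper's own proof: the same product $\mathcal D_1\times\mathcal D_2$ and division closure, the same induction on $\Tree$-complexity through $\Rat(E^{\times}G)$, $\source$ and primitivity, the same use of Hughes-freeness to place everything inside $\mathcal A((t,\tau))$ so that Proposition \ref{key} together with Theorem \ref{source}(iii) rules out the monomial (equality) case, and the same inversion of the lowest-order term of the resulting series. One small repair: a priori the image $\pi_i(\mathcal{A}_0)$ need not be division closed in $\mathcal D_i$ (an element of $\mathcal{A}_0$ can be invertible in one coordinate but not the other), so your parenthetical justification of surjectivity is not valid upfront; however, surjectivity does hold exactly where you use it, since once $\mathcal{A}_0$ is a division ring its image $\pi_i(\mathcal{A}_0)$ is a division subring of $\mathcal D_i$ containing $\varphi_i(E*G)$, hence equals $\mathcal D_i$ because $\varphi_i$ is epic.
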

 \begin{proof} Set $\mathcal S = \mathcal D_1\times \mathcal D_2$, $\varphi=(\varphi_1,\varphi_2):E*G\to \mathcal S$ and $\mathcal D=\mathcal D_{G,\mathcal S}$. Denote by $\pi_i:\mathcal D\to\mathcal D_i$ ($i=1,2$) the canonical projections. The ring $\mathcal D$ is an $E^\times G$-rational semiring on which we can define the notion of $G$-complexity using the surjective map $\Phi:\Rat (E^\times G)\to \D$. By induction on the $G$-complexity $\Tree_G(a)$ of $a$ we will show that any non-zero element $a\in \mathcal D$ is invertible. This would imply that $\pi_1$ and $\pi_2$ are two $E*G$-isomorphisms,
  and so, $\mathcal D_1$ and $\mathcal D_2$ are isomorphic as $E*G$-rings. 

The base of induction, when $\Tree_G(a)= 1_{\mathcal T}$, is clear, because in this case $a\in \varphi(E^\times G)$. Now assume that $\Tree_G(a)> 1_{\mathcal T}$ and that for every $0\ne b \in\mathcal D$ such that $\Tree_G(b)<\Tree_G(a)$, $b$ is invertible. Let $\alpha\in \Rat (E^\times G)$ realize the $G$-complexity of $a$.
Using  Proposition \ref{source}, we obtain a finitely generated  subgroup $\source (\alpha)$ of $E^{\times}G$, and we can assume without loss of generality that $\alpha$ is primitive, since multiplying by a unit in $E^{\times}G$ does not change the complexity nor the conclusion for $a$. Let $H$ be the image of  $\source (\alpha)$ in $G\cong E^\times G/E^\times$. Observe that $H$ is finitely generated as well, and since $\alpha\in \Rat(\source(\alpha))$, then $a\in \mathcal D_{H,\D} = \D_{H,\mathcal S}$. In particular we obtain the corresponding diagram (\ref{diaRat}), and as a consequence
$$
 \Tree_H(a) = \Tree_G(a) = \Tree(\alpha)
$$

Clearly we can assume that $H\ne \{1\}$, so there exists a normal subgroup $N$ of $H$ such that $H/N\cong \Z$. Take $t\in E^{\times}H$ whose image under the compositions of canonical maps $E^{\times}H \rightarrow  H \rightarrow   H/N$ generates  $H/N$. Set $\mathcal{A} = \D_{N,1}\times \D_{N,2}$ and $\mathcal{B} = \D_{H,1}\times \D_{H,2}$. Inasmuch as $\D_i$ is Hughes-free, we have {an embedding (of $E*H$-rings)} $\D_{H,i} \hookrightarrow \D_{N,i}((t;\tau_i))$, where $\tau_i$ denotes the automorphism of $D_{N,i}$ induced by left conjugation by $t$  in $E*N$. Therefore, $\mathcal{B}$ embeds in $\mathcal P = \mathcal A((t;\tau)) \cong \D_{N,1}((t;\tau_1))\times \D_{N,2}((t;\tau_2))$,  
where $\tau = (\tau_1,\tau_2)$. Since $\mathcal A$ and $\mathcal B$ are regular and the following diagram commutes
$$
    \xymatrix{ E*N \ar@{^{(}->}[]+<5ex,0ex>;[r] \ar@{->}[d]^{\varphi} & E*G  \ar@{->}[d]^{\varphi} & E*H \ar@{->}[d]^{\varphi} \ar@{_{(}->}[]+<-5ex,0ex>;[l] \\
     \mathcal A \ar@{^{(}->}[]+<3ex,0ex>;[r] \ar@{^{(}->}[]+<2ex,-2ex>;[dr] & \mathcal S & \mathcal B \ar@{_{(}->}[]+<-3ex,0ex>;[l] \ar@{_{(}->}[]+<-2ex,-2ex>;[dl] \\
     &\mathcal P&
     }
$$
Lemma \ref{divreg} implies that $\D_{N,\D} = \D_{N,\mathcal S} = \D_{N,\mathcal P}$ and $\D_{H,\D} = \D_{H,\mathcal S} = \D_{H,\mathcal P}$.  Observe that, for every $0\neq b\in \D_{H,\mathcal S}$ with $\Tree_{H}(b)< \Tree_H(a)$, we have 
$$
  \Tree_G(b)\leq \Tree_H(b) < \Tree_H(a) = \Tree_G(a)
$$
and consequently the induction hypothesis implies that $b$ is invertible. Thus, the conditions in Proposition \ref{key} are satisfied, and therefore we obtain that $a$ belongs to $\mathcal D_{N,\mathcal S}((t;\tau))$. Moreover, if we write  $a = \sum a_k$ with $a_k\in \mathcal D_{N,\mathcal S}t^k$, then there are at least two non-zero summands. In the contrary case, if $a = a_n$, then by the same proposition $\alpha\in \Rat(E^\times N) t^n$, from where Proposition \ref{source}(iii) states that $H\le N$, a contradiction.
 
 Hence, $\Tree_H(a_k)<\Tree_H(a)$ for all $k$. In particular, if $n$ is the smallest $k$ such that $a_k$ is non-zero, we deduce as before that the element $a_n\in  \mathcal D_{N,\mathcal S}t^n$ is invertible in $\mathcal D_{H,\mathcal S}$. This implies that $a$ is invertible in $\mathcal D_{N,\mathcal S}((t;\tau))\subseteq \mathcal P$ , and hence in $\D_{H,\mathcal P} = \D_{H,D} \subseteq D$.
  \end{proof}

\section{The Atiyah conjecture for locally indicable groups}
\label{proofs}
\subsection{A generalization of Theorem \ref{main}} This subsection is entirely devoted to stating and proving one of the main theorems in this paper, related to $*$-regular Hughes-free ranks, and its immediate consequence regarding the strong Atiyah conjecture for locally indicable groups. 
\begin{teo}\label{maintheorem}
Let $G$ be a locally indicable group, $K$ a subfield of $\mathbb{C}$ closed under complex conjugation. Let  $\rk$ be a $*$-regular Hughes-free Sylvester rank function on $K[G]$ with epic positive definite $*$-regular envelope $(\mathcal U,\phi)$. Then $\mathcal{U}$ is a division ring.
\end{teo}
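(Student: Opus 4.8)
The plan is to mimic the proof of the uniqueness theorem for Hughes-free epic division rings just given, running an induction on $G$-complexity inside the single ring $\mathcal U$. Since $\phi$ is epic we have $\mathcal U=\mathcal U_G$, and applying the division-closure construction of Subsection \ref{ss:divclo} with $\mathcal A=\mathcal U$ (taking $E=K$, $E*G=K[G]$) we obtain a surjection $\Phi_{G,\mathcal U}\colon\Rat(K^{\times}G)\cup\{0\}\to\D_{G,\mathcal U}$ and hence a $G$-complexity $\Tree_G$ on $\D_{G,\mathcal U}$. I would prove that \emph{every} nonzero $a\in\D_{G,\mathcal U}$ is invertible in $\D_{G,\mathcal U}$, so that $\D_{G,\mathcal U}$ is a division ring. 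To finish I would observe that $\D_{G,\mathcal U}$ is $*$-closed (because $\phi(K[G])$ is, and adjoining inverses of $x$ to a $*$-closed ring also adjoins $(x^{-1})^{*}=(x^{*})^{-1}$), hence is itself a $*$-regular subring containing $\phi(K[G])$; minimality of the $*$-regular closure then forces $\D_{G,\mathcal U}=\mathcal U_G=\mathcal U$, giving the theorem.

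\textbf{The induction.} The base case $\Tree_G(a)=1_{\mathcal T}$ gives $a\in\phi(K^{\times}G)$, a unit. For the inductive step, assume every nonzero $b$ with $\Tree_G(b)<\Tree_G(a)$ is invertible, and let $\alpha$ realize $\Tree_G(a)$. Using Theorem \ref{source} I would replace $\alpha$ by a primitive element (this changes neither the complexity nor the conclusion for $a$) and let $H$ be the finitely generated image of $\source(\alpha)$ in $G$; then $a\in\D_{H,\mathcal U}$ and $\Tree_H(a)=\Tree_G(a)$, and we may assume $H\neq\{1\}$. Since $G$ is locally indicable, $H=N\rtimes C$ with $C$ infinite cyclic; choose $t$ and let $\tau$ be conjugation by $t$, so that $K[H]\cong K[N][t^{\pm1},\tau]$.

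\textbf{Manufacturing the environment.} The heart of the argument is to produce the scenario required by Proposition \ref{key}. I would set $\mathcal A=\mathcal U_N$, which is positive definite $*$-regular and to which $\tau$ extends by Proposition \ref{prop7.4}(1); here the Hughes-free hypothesis is precisely condition \eqref{cond}, $\rk_{|K[H]}=\widetilde{\rk_{|K[N]}}$, and it is what lets me build the ring $\mathcal P=\mathcal P_{\omega,\tau}^{\mathcal U_N}$ of Section \ref{nat} together with the compatible embeddings $\mathcal U_N((t,\tau))\hookrightarrow\mathcal P$ and $\mathcal U_H\hookrightarrow\mathcal P$ of diagram \eqref{dia6}. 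Because $\mathcal A$ is regular, Lemma \ref{divreg} identifies $\D_{N,\mathcal P}=\D_{N,\mathcal U}$ and $\D_{H,\mathcal P}=\D_{H,\mathcal U}$, and the global induction hypothesis, together with the fact that division closures absorb inverses, makes every nonzero element of $\D_{H,\mathcal P}$ of complexity below $\Tree_H(a)$ invertible in $\D_{H,\mathcal P}$. Proposition \ref{key} then yields $a=\sum_k a_k\in\D_{N,\mathcal U}((t,\tau))$ with $\Tree_H(a_k)\le\Tree_H(a)$. If $a=a_n$ were a single monomial, the same proposition would force $\alpha\in\Rat(K^{\times}N)t^n\subseteq\Rat(K^{\times}N)\cdot K^{\times}H$, whence $\source(\alpha)\le K^{\times}N$ and $H\le N$ by Theorem \ref{source}(iii), contradicting $H=N\rtimes C$. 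So there are at least two nonzero summands, and by the equality clause of Proposition \ref{key} each satisfies $\Tree_H(a_k)<\Tree_H(a)$. Taking the least $n$ with $a_n\neq0$, the global induction makes $a_n=a_n't^n$ invertible, so $a_n'\in\D_{N,\mathcal U}$ is invertible; hence $a$ has invertible leading coefficient and is a unit in $\D_{N,\mathcal U}((t,\tau))\subseteq\mathcal P$. Since $a\in\D_{H,\mathcal P}$ and this division closure absorbs inverses taken in $\mathcal P$, we conclude $a^{-1}\in\D_{H,\mathcal U}\subseteq\D_{G,\mathcal U}$, completing the induction.

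\textbf{Main obstacle.} The step I expect to be hardest is the construction and verification of $\mathcal P_{\omega,\tau}^{\mathcal U_N}$ with its two compatible embeddings: identifying $(\mathcal U_H,\rk'_{|\mathcal U_H})$ as the epic $*$-regular envelope of $\rk_{|K[H]}$ realized \emph{inside} $\mathcal P$ is exactly where the Hughes-free condition \eqref{cond} and the uniqueness of $*$-regular envelopes (Theorem \ref{isom}) are indispensable, and it also requires knowing that $\mathcal U_N$ is positive definite so that Proposition \ref{prop7.8} applies. Once this environment is in place, everything downstream is a faithful transcription of the combinatorics of $\Rat(K^{\times}G)$ already packaged in Proposition \ref{key}.
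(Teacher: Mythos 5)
Your proposal is correct and follows essentially the same route as the paper's proof: the induction on $G$-complexity in $\D_{G,\mathcal U}$, the passage to the finitely generated image $H$ of $\source(\alpha)$ in $G$ with $H=N\rtimes C$, the construction of $\PP=\PP^{\,\mathcal U_N}_{\omega,\tau}$ from the Hughes-free condition (\ref{cond}) and diagram (\ref{dia6}), and Proposition \ref{key} to expand $a$ as a Laurent series with at least two summands of strictly smaller complexity, whose lowest nonzero coefficient is invertible by induction. The only deviation is the endgame: the paper deduces $\mathcal U=\D_{G,\mathcal U}$ from epicness of $\phi$ together with regularity of the division ring $\D_{G,\mathcal U}$, while you use $*$-closedness of the division closure plus minimality of the $*$-regular closure — both observations are valid and interchangeable.
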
 
\begin{proof}
Let $\D = \D_{G,\U}$ be the division closure of $\phi(K[G])$ in $\U$, and for any subgroup $H\le G$, denote by $\D_{H,\U}$ and $\U_H$ the division and the $*$-regular closures of $\phi(K[H])$, respectively, in $\U$. Consider the universal morphism of rational $K^{\times}G$-semirings $\Phi: \Rat(K^{\times}G)\rightarrow \D$. We are going to show that $\D$ is a division ring by induction on the $G$-complexity. Since $\phi$ is epic, this will imply that $\U = \D$. 

Consider a non-zero element $a\in \D$. If $\Tree_G(a) = 1_{\mathcal T}$, then $a\in \phi(K^{\times}G)$ is invertible. Now assume that $\Tree_G(a)>1_{\mathcal T}$ and that the result holds for all $0\ne b\in \D$ with $\Tree_G(b)<\Tree_G(a)$. Take $\alpha\in \Rat(K^{\times}G)$ realizing the $G$-complexity of $a$. We can assume that $\alpha$ is primitive because multiplying by a unit in $K^{\times}G$ does not change the complexity nor the conclusions about the invertibility of $a$. Set $H$ to be the image of $\source(\alpha)$ under $K^{\times}G \rightarrow K^{\times}G/K^{\times} = G$. By Proposition \ref{source}, $H$ is finitely generated and $\alpha \in \Rat(K^{\times}H)$, so $a\in \D_{H,\D} = \D_{H,\U}=\D_{H,\U_H}$, this latter equality due to Lemma \ref{divreg}. If $H$ is trivial, then $\D_{H,\U_H} \cong K$ and since $a$ is non-zero, it is invertible. If $H$ is non-trivial, then there exists a normal subgroup $N\trianglelefteq H$ and an element $t\in H$ of infinite order such that $H = N\rtimes_{\tau} <t>$, where $\tau$ is given by left conjugation by $t$. Set $\mathcal A = \U_N$, fix any non-principal ultrafilter $\omega$ on $\N$ and consider $\PP = \PP_{\omega,\tau}^{\U_N}$, which is $*$-regular since $\U$ is positive definite. We have an injective $*$-homomorphism
$$
 f_{\omega}: \mathcal A((t;\tau))\rightarrow \PP
$$
 and, since $\rk$ is Hughes-free, we can identify, as discussed after (\ref{cond}), the $K[H]$-rings $\mathcal R(f_{\omega} (\phi(K[N])[t^{\pm 1};\tau]), \PP)$ and $\U_H$. Hence, $\D_{H,\PP} \cong \D_{H,\U_H}$ as $K[H]$-rings and $\D_{N,\PP} = \D_{N,\U_N}$. Now, observe that we can apply Proposition \ref{key} to $\D_{H,\PP}$, since in an isomorphism of $K[H]$-rings the invertibility and $H$-complexity of elements is preserved and, for every $0\ne b \in \D_{H,\U_H}$ with $\Tree_H(b)<\Tree_H(a)$, we have by definition that 
$$
 \Tree_G(b)\le \Tree_H(b) < \Tree_H(a) = \Tree_G(a)
$$
and therefore the induction hypothesis states that $b$ is invertible in $\D$, and so in $\D_{H,\U_H}$. By an abuse of notation, we denote the image of $a$ under the above isomorphism of $K[H]$-rings also by $a$. Then, we have that $a \in \D_{N,\U_N}((t;\tau))$, and we claim that if $a = \sum a_i$, then there are at least two non-zero summands. Otherwise, we would have that $\alpha\in \Rat(K^{\times}N)t^n$ for some $n$, and so Proposition \ref{source} would tell us that $\source(\alpha)\subseteq K^{\times}N$, and hence $H\le N$, a contradiction. Thus, the same Proposition \ref{key} implies that $\Tree_H(a_i)<\Tree_H(a)$ for all $i$. By the inductive assumption, if  $a_i\ne 0$, then it  is invertible. As a consequence, $a$ is invertible in $\D_{N,\U_N}((t;\tau))\subseteq \PP$, and hence in $\D_{H,\PP} \cong \D_{H,\U_H}$. This finishes the proof.
\end{proof}

 Theorem \ref{maintheorem} implies in particular Theorem \ref{main}, as we show in the next corollary. 
\begin{cor}\label{cor4.3} 
 Let $G$ be a locally indicable group. Then $\mathcal R_{\CC[G]}$ is a Hughes-free {epic} division $\CC[G]$-ring. In particular, $G$ satisfies the strong Atiyah conjecture over $\CC$.
\end{cor}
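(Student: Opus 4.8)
The plan is to apply Theorem~\ref{maintheorem} to the canonical rank function $\rk_G$ on $\CC[G]$, so that the entire argument amounts to checking that $\rk_G$ satisfies the three hypotheses of that theorem with $K=\CC$ and then reading off the two conclusions. First I would reduce to finitely generated groups. By the very definition of $\rk_G$ (see (\ref{rkG}) and the paragraph following it), for any matrix $A$ over $\CC[G]$ we have $\rk_G(A)=\rk_H(A)$, where $H$ is the finitely generated subgroup generated by the group elements occurring in $A$; moreover $\mathcal R_{\CC[G]}$ is by definition the direct union of the $\mathcal R_{\CC[H]}$ over finitely generated $H\le G$. Since every finitely generated subgroup of a locally indicable group is again locally indicable, it suffices to prove the statement for finitely generated, hence countable, locally indicable $G$, and the general case then follows by passing to the direct union of the (sub)division rings $\mathcal R_{\CC[H]}$.

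For countable $G$ I would use the standard $*$-homomorphism $\varphi\colon \CC[G]\hookrightarrow \mathcal U(G)$ into the ring of affiliated operators, with involution $(\lambda g)^*=\bar\lambda g^{-1}$. The ring $\mathcal U(G)$ is $*$-regular and $\rk_G=\varphi^{\sharp}(\rk)$ for the canonical faithful rank $\rk$ on $\mathcal U(G)$; replacing $\mathcal U(G)$ by the $*$-regular closure $\mathcal R(\CC[G],\mathcal U(G))=\mathcal R_{\CC[G]}$ makes $\varphi$ epic, so that $(\mathcal R_{\CC[G]},\rk_G,\varphi)$ is an epic $*$-regular $\CC[G]$-ring; this is the first hypothesis. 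The second hypothesis, Hughes-freeness of $\rk_G$ on $\CC[G]$, is precisely Proposition~\ref{mainexample}, which applies because $G$ is locally indicable. For the third hypothesis I would note that $\Mat_n(\mathcal U(G))$ is the ring of affiliated operators of the finite von Neumann algebra $\Mat_n(\mathcal N(G))$, hence $*$-regular; in particular its conjugate-transpose involution is proper, so $\mathcal U(G)$ is positive definite. Then $\Mat_n(\mathcal R_{\CC[G]})$ is von Neumann regular (a matrix ring over the regular ring $\mathcal R_{\CC[G]}$), and its involution is proper, being the restriction of the proper involution on $\Mat_n(\mathcal U(G))$; thus $\Mat_n(\mathcal R_{\CC[G]})$ is $*$-regular for every $n$, i.e., $\mathcal R_{\CC[G]}$ is positive definite.

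With the three hypotheses in place, Theorem~\ref{maintheorem} (valid for $K=\CC$, which is closed under complex conjugation) gives that $\mathcal R_{\CC[G]}$ is a division ring. I would then extract the two assertions. Since $\rk_G$ is a faithful rank valued in a division ring, it is integer-valued; because a locally indicable group is torsion-free we have $\lcm(G)=1$, and therefore $\rk_G(A)\in\Z=\frac{1}{\lcm(G)}\Z$ for every matrix $A$ over $\CC[G]$, which is exactly the strong Atiyah conjecture over $\CC$. Moreover $\mathcal R_{\CC[G]}$ is an epic division $\CC[G]$-ring whose associated rank $\rk_G$ is Hughes-free, so by Lemma~\ref{rkdiv} it is a Hughes-free epic division $\CC[G]$-ring.

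The main obstacle is not a new conceptual difficulty, since the substance is carried entirely by Theorem~\ref{maintheorem}; it is rather the bookkeeping needed to certify the hypotheses. The two points that require genuine care are verifying positive definiteness of the envelope $\mathcal R_{\CC[G]}$ (which here rests on properness of the involution being inherited from $\Mat_n(\mathcal U(G))$ and on regularity passing to matrix rings), and ensuring that the reduction to finitely generated subgroups is compatible both with the Hughes-free property, so that Proposition~\ref{mainexample} supplies it on each subgroup, and with the division-ring conclusion, so that the uncountable case follows from the direct-union description of $\mathcal R_{\CC[G]}$.
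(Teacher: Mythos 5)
Your proposal is correct and follows essentially the same route as the paper: reduce to the countable case, note that $\mathcal R_{\CC[G]}$ is a positive definite epic $*$-regular envelope of $\rk_G$, invoke Proposition~\ref{mainexample} for Hughes-freeness, and conclude via Theorem~\ref{maintheorem} together with Lemma~\ref{rkdiv}. The only difference is that you spell out details the paper leaves implicit (the direct-union reduction, the properness/regularity argument for positive definiteness, and the integer-valuedness of ranks over a division ring), all of which are correct.
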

\begin{proof}
It is enough to prove the result when $G$ is countable. $\mathcal R_{\CC[G]}$ is positive definite and Proposition \ref{mainexample} tells us that the von Neumann rank $\rk_G$ is Hughes-free. Now, Theorem \ref{maintheorem} and Lemma \ref{rkdiv} imply that  $\mathcal R_{\CC[G]}$ is a Hughes-free  {epic} division $\CC[G]$-ring.  This implies that $\rk_G$ (as the unique Sylvester rank function on a division ring) takes integer values, from where the strong Atiyah conjecture follows.
\end{proof}
 
\begin{cor}\label{cor5.4}
 Let $G$ be a locally indicable group and $K$ a subfield of $\mathbb{C}$. Then the division closure $\D_{K[G]}$ of $K[G]$ in $\mathcal R_{\CC[G]}$ is a Hughes-free epic division $K[G]$-ring.
\end{cor}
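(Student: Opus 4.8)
The plan is to reduce everything to the single fact, established in Corollary \ref{cor4.3}, that $\mathcal R_{\CC[G]}$ is a division ring. First I would observe that, since $\mathcal R_{\CC[G]}$ is a division ring, the division closure $\D_{K[G]}$ of $K[G]$ inside it is again a division ring: every non-zero element of $\D_{K[G]}$ is invertible in $\mathcal R_{\CC[G]}$, and a division closure is division closed, so its inverse already lies in $\D_{K[G]}$. Moreover the embedding $K[G]\hookrightarrow \D_{K[G]}$ is epic. Indeed, given two ring homomorphisms out of $\D_{K[G]}$ agreeing on $K[G]=Q_0$, an immediate induction along the construction $\D_{K[G]}=\bigcup_n Q_n$ of Subsection \ref{ss:divclo} shows that they agree on every $Q_n$: if $x\in Q_n$ is invertible, both homomorphisms must send $x^{-1}$ to the unique inverse of the common image of $x$. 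Hence $(\D_{K[G]},\varphi)$ is an epic division $K[G]$-ring.

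Next I would verify the Hughes-free condition. Fix a non-trivial finitely generated subgroup $H\le G$, a decomposition $H=N\rtimes \langle t\rangle$ with $\langle t\rangle$ infinite cyclic, and let $\tau$ be conjugation by $t$. Write $\D_{K[H]}$ and $\D_{K[N]}$ for the division closures of $K[H]$ and $K[N]$ in $\mathcal R_{\CC[G]}$; by Lemma \ref{divreg}(1) these coincide with the corresponding division closures taken inside $\D_{K[G]}$. Conjugation by the unit $t$ is an automorphism of $\mathcal R_{\CC[G]}$ that preserves $K[N]$, hence restricts to an automorphism (still denoted $\tau$) of $\D_{K[N]}$, and the skew Laurent polynomial ring $\D_{K[N]}[t^{\pm 1},\tau]$ is a subring of $\mathcal R_{\CC[G]}$ containing $K[H]\cong K[N][t^{\pm 1},\tau]$. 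Being a skew Laurent polynomial ring over a division ring, $\D_{K[N]}[t^{\pm 1},\tau]$ is a (left and right) Ore domain, so it admits an Ore ring of fractions $\mathcal Q$.

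The heart of the argument, which I expect to be the only delicate point, is the identification of $\mathcal Q$ with $\D_{K[H]}$. Since $\mathcal R_{\CC[G]}$ is a division ring, the inclusion $\D_{K[N]}[t^{\pm 1},\tau]\hookrightarrow \mathcal R_{\CC[G]}$ sends every non-zero element to a unit, so by the universal property of Ore localization it extends to a homomorphism $\mathcal Q\to \mathcal R_{\CC[G]}$, which is injective because $\mathcal Q$ is a division ring. Its image is the set of all $ab^{-1}$ with $a,b\in \D_{K[N]}[t^{\pm 1},\tau]$ and $b\ne 0$, that is, the division closure of $\D_{K[N]}[t^{\pm 1},\tau]$ in $\mathcal R_{\CC[G]}$. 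This division closure equals $\D_{K[H]}$: on one hand it contains $K[H]$ and hence contains $\D_{K[H]}$, and on the other hand $\D_{K[N]}[t^{\pm 1},\tau]\subseteq \D_{K[H]}$, because $\D_{K[N]}\subseteq \D_{K[H]}$ by Lemma \ref{divreg}(1) and $t\in K[H]$, so its division closure lies in the division ring $\D_{K[H]}$. Therefore $\D_{K[H]}\cong \mathcal Q$ as $K[H]$-rings, which is precisely the statement that $\D_{K[H]}$ is the Ore ring of fractions of $\D_{K[N]}[t^{\pm 1},\tau]$. This establishes the Hughes-free property and completes the proof. Note that no hypothesis on $K$ beyond $K\subseteq \CC$ is used, since $K[G]$ enters only as a subring of the division ring $\mathcal R_{\CC[G]}$, and all the remaining input is the stability of division closures recorded in Lemma \ref{divreg}.
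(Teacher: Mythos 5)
Your argument has a genuine gap, and it occurs at the step you treat as routine rather than at the step you flag as delicate. You assert that ``the skew Laurent polynomial ring $\D_{K[N]}[t^{\pm 1},\tau]$ is a subring of $\mathcal R_{\CC[G]}$'', i.e.\ that the subring of $\mathcal R_{\CC[G]}$ generated by $\D_{K[N]}$ and $t^{\pm 1}$ is isomorphic to the formal skew Laurent polynomial ring. This is exactly the claim that the powers $\{t^i\}_{i\in \Z}$ are linearly independent over $\D_{K[N]}$, equivalently that the sum $\sum_{i\in\Z}\D_{K[N]}t^i$ is direct, and it is precisely the essential content of the Hughes-free property. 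It does \emph{not} follow from the fact that $\mathcal R_{\CC[G]}$ is a division ring: disjointness of the cosets $Nt^i$ gives directness of $\sum_i K[N]t^i$ inside $K[G]$, but after replacing $K[N]$ by its division closure $\D_{K[N]}$ new relations can appear. Indeed this can genuinely fail for division closures inside division rings: for $G$ free of rank two, $K[G]$ is known to admit non-isomorphic division rings of fractions, and by Hughes' uniqueness theorem at most one isomorphism class among them is Hughes-free; in a non-Hughes-free one there are nontrivial relations $a_0+a_1t+\dots+a_nt^n=0$ with coefficients in the division closure of $K[N]$. So no argument using only ``$\D_{K[G]}$ is a division closure inside a division ring'' can establish this step, and your closing remark that the only inputs are the division-ring part of Corollary \ref{cor4.3} and Lemma \ref{divreg} is exactly where the essential hypothesis has been dropped.

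The repair is to use the full strength of Corollary \ref{cor4.3}: $\mathcal R_{\CC[G]}$ is a \emph{Hughes-free} epic division $\CC[G]$-ring, not merely a division ring. Hughes-freeness over $\CC$ says that $\mathcal R_{\CC[H]}$ is the Ore ring of fractions of $\mathcal R_{\CC[N]}[t^{\pm 1},\tau]$; in particular $t$ is $\mathcal R_{\CC[N]}$-linearly independent, and since $\D_{K[N]}\subseteq \mathcal R_{\CC[N]}$ it is a fortiori $\D_{K[N]}$-linearly independent. This is precisely how the paper argues. Once that independence is available, the remainder of your proof --- the Ore localization of $\D_{K[N]}[t^{\pm 1},\tau]$, the extension of the embedding to the fraction ring, and the identification of its image with $\D_{K[H]}$ via division-closedness --- is correct, and in fact it is a careful write-up of the step the paper compresses into ``hence $\D_{K[G]}$ is Hughes-free''; your verifications that $\D_{K[G]}$ is a division ring and that the embedding $K[G]\hookrightarrow\D_{K[G]}$ is epic are also fine.
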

\begin{proof} By Corollary \ref{cor4.3}, $\D_{K[G]}$ is a division ring. Let us check the Hughes-free condition. For any finitely generated $H\leq G$ and decomposition $H = N\rtimes_{\tau} <t>$   with $t\in H$ of infinite order, Corollary \ref{cor4.3} tells us that the elements $\{t^i\}_{i \in \N}$ are $\mathcal R_{\CC[N]}$-linearly independent,  i.e., there exists no non-trivial expression $a_0+a_1t+\dots+a_nt^n= 0$ with coefficients in $\mathcal R_{\CC[N]}$. In particular, the elements $\{t^i\}_{i\in \N}$ are $\D_{K[N]}$-linearly independent, and hence $\D_{K[G]}$ is Hughes-free.
\end{proof}
   
 It was proved in \cite{DL07} that if $G$ is a locally indicable group of homological dimension one, then any two-generator subgroup is free. From Corollary \ref{cor4.3} and \cite[Theorem 2]{KLL}, which we mentioned during the introduction, we deduce the result for any finitely generated subgroup.
\begin{cor}
  Any locally indicable group of homological dimension one is locally free.  
\end{cor}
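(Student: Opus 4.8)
The plan is to read off the corollary as an immediate consequence of Corollary \ref{cor4.3} together with the theorem of Kropholler, Linnell and L\"uck recorded in the introduction: a group of homological dimension one that satisfies the strong Atiyah conjecture over $\Q$ is locally free \cite[Theorem 2]{KLL}. Thus the whole task reduces to verifying that the hypothesis of that theorem is met, and all of the substantive work for this has already been carried out in Corollary \ref{cor4.3}.

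First I would supply the Atiyah input. A locally indicable group is torsion-free (every nontrivial finitely generated subgroup maps onto $\Z$ and is in particular infinite, so a torsion element would generate a finite nontrivial indicable subgroup, which is impossible), whence $\lcm(G)=1$ and the strong Atiyah conjecture over a subfield $K$ of $\CC$ asserts exactly that $\rk_G(A)\in\Z$ for every matrix $A$ over $K[G]$. By Corollary \ref{cor4.3}, $G$ satisfies this over $\CC$; restricting attention to matrices over $\Q[G]\subseteq\CC[G]$ then yields the strong Atiyah conjecture over $\Q$. Combined with the standing assumption that $G$ has homological dimension one, \cite[Theorem 2]{KLL} applies directly and gives that $G$ is locally free, which is precisely the assertion; this upgrades the two-generator statement of \cite{DL07} to arbitrary finitely generated subgroups.

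Should the formulation of \cite{KLL} require the group to be finitely generated, I would instead argue subgroup by subgroup, since local freeness only concerns finitely generated subgroups. For a finitely generated $H\le G$ both hypotheses persist: any subgroup of a locally indicable group is locally indicable (a finitely generated subgroup of $H$ is one of $G$), and homological dimension does not grow under passage to subgroups because $\Z[G]$ is free as a $\Z[H]$-module, so a projective resolution of $\Z$ over $\Z[G]$ restricts to one over $\Z[H]$ and $\operatorname{hd}(H)\le\operatorname{hd}(G)=1$. Applying Corollary \ref{cor4.3} and \cite[Theorem 2]{KLL} to $H$ shows $H$ is locally free, and being finitely generated it is free. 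I expect no genuine obstacle here: the only content beyond citation is the elementary permanence of these two properties and the routine bookkeeping that passes from coefficients in $\CC$ to coefficients in $\Q$.
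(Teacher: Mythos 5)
Your proposal is correct and follows exactly the paper's own route: the paper deduces the corollary from Corollary \ref{cor4.3} together with \cite[Theorem 2]{KLL}, just as you do. The extra details you supply (torsion-freeness of locally indicable groups, the passage from coefficients in $\CC$ to $\Q$, and heredity of both hypotheses under passage to subgroups) are correct fillings-in of steps the paper leaves implicit.
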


  We finish this subsection with another application of the techniques used in the proof of Proposition \ref{key}, regarding the stability of the strong Atiyah conjecture under extensions by locally indicable groups. This was pointed out by Fabian Henneke and Dawid Kielak. 
  
\begin{pro} Let  $K$ be a subfield of $\CC$. Let  $G_2$ be a group and  $G_1$   a torsion-free normal subgroup of $G_2$   satisfying the strong Atiyah conjecture over $K$. Assume that $G_2/G_1$ is locally indicable. Then $G_2$ satisfies the strong Atiyah conjecture over $K$.
 \end{pro}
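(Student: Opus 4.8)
The plan is to recognize $K[G_2]$ as a crossed product of a division ring with the locally indicable quotient $Q:=G_2/G_1$, and then to run the Hughes-free machinery of Theorem \ref{maintheorem} with this division ring in place of the base field. First I would record that $G_2$ is torsion free: any torsion element maps to a torsion element of $Q$, which is trivial because locally indicable groups are torsion free, so the element lies in the torsion-free group $G_1$ and hence is trivial. Therefore $\lcm(G_2)=1$, and the strong Atiyah conjecture over $K$ for $G_2$ amounts to the assertion that $\rk_{G_2}(A)\in\Z$ for every $A\in\Mat(K[G_2])$, equivalently that the division closure $\mathcal D_{G_2}$ of $K[G_2]$ in $\mathcal U:=\mathcal R_{\CC[G_2]}$ is a division ring.

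Since $G_1$ is torsion free and satisfies the strong Atiyah conjecture over $K$, the restriction $\rk_{G_1}=\rk_{G_2}|_{K[G_1]}$ is faithful and integer-valued, so its epic division envelope $E:=\mathcal D_{K[G_1]}$ is a genuine division ring. Choosing coset representatives of $G_1$ in $G_2$ exhibits $K[G_2]$ as a crossed product $K[G_1]*Q$; conjugation by a representative is an automorphism of $K[G_1]$ preserving $\rk_{G_1}$, so by Theorem \ref{isomdiv} it extends to $E$, while the cocycle values lie in $G_1\subseteq E^{\times}$. Thus the crossed product extends to $E*Q$, a crossed product of the division ring $E$ with the locally indicable group $Q$, embedded in $\mathcal U$ so that the division closure of $E*Q$ coincides with $\mathcal D_{G_2}$.

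The heart of the argument is to show that $\rk_{G_2}$, viewed through this embedding as a Sylvester rank function on $E*Q$, is Hughes free, and then to apply the crossed-product analogue of Theorem \ref{maintheorem}: the induction on $\Tree_Q$-complexity via $\Phi:\Rat(E^{\times}Q)\to\mathcal D_{G_2}$ bottoms out at the units $E^{\times}Q$ (Higman's theorem \cite{Hi40}) and at the trivial subgroup, where the relevant closure is just $E$, already a division ring; the inductive step is Proposition \ref{key}, for which the positive definite $*$-regular ambient $\mathcal U$ supplies the environment $\PP_{\omega,\tau}^{\mathcal U_N}$ exactly as in diagram (\ref{dia6}). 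To verify Hughes freeness I must check, for every non-trivial finitely generated $H=M\rtimes\langle c\rangle\le Q$, the condition (\ref{cond}) that the natural extension of $\rk_{G_2}|_{E*M}$ exists and equals $\rk_{G_2}|_{E*H}$. The plan here is to pass to the preimages $P':=G_2^{(M)}$ and $P:=G_2^{(H)}$ in $G_2$: a lift of $c$ splits the extension $1\to P'\to P\to\Z\to 1$, so $P=P'\rtimes\Z$, and Proposition \ref{mainexample} (equivalently \cite[Corollary 12.2]{Ja17surv}) shows that $\rk_{P}$ is the natural extension of $\rk_{P'}$. I would then transfer this identity from $K[P']\subseteq K[P]$ to $E*M\subseteq E*H$ using that $E=\mathcal D_{K[G_1]}$, so that $E*M$ and $K[P']$ have the same division closure in $\mathcal U$, and apply the characterization of the natural extension on the regular envelope from Proposition \ref{prop7.7}.

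I expect the main obstacle to be precisely this transfer: reconciling the natural extension computed over the coefficient division ring $E$ (which only sees $K[G_1]$) with the von Neumann rank of $P=P'\rtimes\Z$ (which a priori involves $\CC[G_1]$), and in particular controlling the interaction between the base field $K$ and the complex conjugation built into the $*$-structure of $\mathcal U$; this is where one either checks that the relevant closures agree or reduces at the outset to $K$ closed under complex conjugation so that $E*Q$ is a $*$-subring of $\mathcal U$ and the envelopes $\mathcal U_{E*M}$ are available. Once condition (\ref{cond}) is secured, Proposition \ref{key} applies as in the proof of Theorem \ref{maintheorem} and forces every non-zero element of $\mathcal D_{G_2}$ to be invertible; hence $\mathcal D_{G_2}$ is a division ring, $\rk_{G_2}$ is integer-valued on $K[G_2]$, and $G_2$ satisfies the strong Atiyah conjecture over $K$.
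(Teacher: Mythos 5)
Your strategy (pass to the crossed product $E*Q$ with $E=\D_{K[G_1]}$, $Q=G_2/G_1$, verify Hughes-freeness over $E$, and run a crossed-product version of Theorem \ref{maintheorem}) is a legitimate-looking alternative, and you correctly located the sore point, but the fix you offer for it fails, and this is a genuine gap. Everything you want to use at the inductive step --- the $*$-regular closures $\U_{E*M}$, the identification of envelopes via Theorem \ref{isom}, condition (\ref{cond}) and diagram (\ref{dia6}) taken for $E*M\subseteq E*H$, and indeed the ``crossed-product analogue of Theorem \ref{maintheorem}'' itself (which does not exist in the paper and would have to be proved) --- requires $E*Q$ to be a $*$-subring of the ambient $*$-regular ring, i.e.\ $E$ to be closed under the involution. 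Your fallback, ``reduce at the outset to $K$ closed under complex conjugation,'' is not available: the hypothesis is only that $G_1$ satisfies the strong Atiyah conjecture over $K$, and this does not imply the conjecture over the field generated by $K$ and $\overline{K}$; enlarging the coefficient field strictly strengthens the Atiyah hypothesis, and no base-change theorem is available for a group that is merely torsion-free with the strong Atiyah conjecture (the paper proves base change only for locally indicable groups, and leaves the general statement as a conjecture at the end of Section \ref{proofs}). A second, more technical, problem is your verification of (\ref{cond}): it goes through the preimages $P'\le P$ of $M\le H$ in $G_2$, which contain $G_1$ and are in general not finitely generated, whereas Proposition \ref{mainexample} is stated and used only for finitely generated subgroups, so an extra limiting argument would be needed there too.

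The repair is exactly the device of the paper's own proof, and once adopted it makes the $E*Q$ superstructure unnecessary. The paper keeps the complexity induction over $\Rat(K^{\times}G_2)$ (not $\Rat(E^{\times}Q)$) acting on $\D_{K[G_2]}\subseteq \mathcal R_{\CC[G_2]}$: for a primitive $\alpha$ realizing the complexity of $a$ it sets $H=\pi(\source(\alpha))\le G_2$ and splits into two cases. If $H\le G_1$, then $a$ is invertible directly from the Atiyah hypothesis on $G_1$ (this is where your base case ``the closure of $E$ is a division ring'' reappears). If $H\not\le G_1$, then $HG_1/G_1$, and hence the finitely generated group $H$, is indicable, so $H=N\rtimes\Z$, and Proposition \ref{mainexample} applied over $\CC$ to $H$ itself gives condition (\ref{cond}) and diagram (\ref{dia6}) with $\U=\mathcal R_{\CC[G_2]}$ and $\mathcal A=\mathcal R_{\CC[N]}$; conjugation-closedness is free here because all the $*$-regular work happens over $\CC$, while $K$ enters only through the rational semiring. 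Proposition \ref{key}, applied with base field $K$ rather than base division ring $E$, then finishes the inductive step exactly as in Theorem \ref{maintheorem}. In short, your proposal identifies the right obstacle but resolves it with an invalid reduction; the working argument never forms $E*Q$ and handles arbitrary $K$ by doing the regular-closure constructions over $\CC$.
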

\begin{proof}
 Let $\D_{K[G_2]}$ be the division closure of $K[G_2]$ in $\mathcal R_{\CC[G_2]}$ and let $$\Phi_{G_2}: \Rat(K^{\times}G_2)\rightarrow \D_{K[G_2]}$$ be the universal map. Take $0\ne a \in \D_{K[G_2]}$. If $\Tree_{G_2}(a)= 1_{\mathcal T}$, then $a\in K^{\times}G_2$, and so it is invertible. Suppose that $\Tree_{G_2}(a)>1_{\mathcal{T}}$ and that any non-zero element of lower $G_2$-complexity is invertible. Take an $\alpha\in \Rat(K^{\times}G_2)$ realizing the $G_2$-complexity of $a$, and observe that we can assume that $\alpha$ is primitive. We put  $H = \pi(\source(\alpha))$, where $\pi: K^{\times}G_2\rightarrow K^{\times}G_2/K^{\times} = G_2$. Observe that   $a$ lies in the division closure of $K[H]$ in $\mathcal R_{\CC[G_2]}$. Now, if  $H\le G_1$, then  $a$ is invertible because $G_1$ satisfies the strong Atiyah conjecture over $K$. Otherwise, $HG_1/G_1$, and so $H$, is indicable, and therefore there exists $N\trianglelefteq H$ with $H/N \cong \Z$. Proposition \ref{mainexample} allows us to construct the corresponding diagram (\ref{dia6}) (for $\CC$, and setting $\mathcal U=\mathcal R_{\CC[G_2]}$), and so, using Proposition \ref{key} we deduce, as in Theorem \ref{maintheorem}, that $a$ is invertible in $\D_{K[G_2]}$. Thus, $\D_{K[G_2]}$ is a division ring and $G_2$ satisfies the strong Atiyah conjecture over $K$.
 
\end{proof}

 \subsection{The proof of other  corollaries}

In this subsection, we make use of the existence and uniqueness of the Hughes-free epic division ring to prove some other related conjectures regarding the group ring $K[G]$ where $G$ is locally indicable.

\begin{cor}[The independence conjecture]\label{ind}
Let $G$ be a locally indicable group, $K$ a field of characteristic zero and $\varphi_1, \varphi_2: K \to \mathbb{C}$ two different embeddings of $K$. Then, for every matrix $A\in \Mat_{n\times m}(K[G])$,
$$
  \rk_G(\varphi_1(A)) = \rk_G(\varphi_2(A))
$$
\end{cor}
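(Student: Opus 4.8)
The plan is to deduce the statement from the uniqueness of Hughes-free epic division $K[G]$-rings proved in Section \ref{mainsection}, together with Corollary \ref{cor5.4}. For $i=1,2$, the embedding $\varphi_i:K\to\CC$ induces a ring homomorphism $\varphi_i:K[G]\to\CC[G]$ that is the identity on $G$ and $\varphi_i$ on coefficients, and hence a Sylvester rank function $\rk_i:=\varphi_i^{\sharp}(\rk_G)$ on $K[G]$; the assertion to be proved is precisely that $\rk_1=\rk_2$. Since $G$ is locally indicable it is torsion-free, so $\lcm(G)=1$ and Corollary \ref{cor4.3} shows that $\rk_G$ is integer-valued. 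Consequently each $\rk_i$ is an integer-valued rank function on $K[G]$, and by Theorem \ref{isomdiv} it is enough to show that their epic division envelopes are isomorphic as $K[G]$-rings.

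First I would identify the envelope of $\rk_i$. Put $K_i:=\varphi_i(K)$, a subfield of $\CC$, so that $\varphi_i$ is a field isomorphism $K\cong K_i$ inducing a $G$-equivariant isomorphism of group algebras $K[G]\cong K_i[G]$. By Corollary \ref{cor5.4}, applied with $K_i$ in place of $K$, the division closure $\D_i:=\D_{K_i[G]}$ of $K_i[G]$ in $\mathcal R_{\CC[G]}$ is a Hughes-free epic division $K_i[G]$-ring; being a division subring of $\mathcal R_{\CC[G]}$, its unique rank function is the restriction of $\rk_G$. Composing the inclusion $K_i[G]\hookrightarrow\D_i$ with $\varphi_i$ yields an epic homomorphism $\psi_i:K[G]\to\D_i$, and for any matrix $A$ over $K[G]$ we have $\rk_{\D_i}(\psi_i(A))=\rk_G(\varphi_i(A))=\rk_i(A)$. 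Thus $(\D_i,\psi_i)$ is the epic division envelope of $\rk_i$.

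Next I would transport the Hughes-free property along $\varphi_i$. Because the isomorphism $K[G]\cong K_i[G]$ fixes $G$ pointwise, it carries each decomposition $H=N\rtimes\langle t\rangle$ and the associated conjugation automorphism $\tau$ to the corresponding data for $K_i[G]$, and it matches the division closures $\D_{N,\D_i}$ and their Ore rings of fractions. Hence the Hughes-freeness of $\D_i$ as a $K_i[G]$-ring is identical to the Hughes-freeness of $(\D_i,\psi_i)$ as a $K[G]$-ring, so both $(\D_1,\psi_1)$ and $(\D_2,\psi_2)$ are Hughes-free epic division $K[G]$-rings. Applying the uniqueness theorem of Section \ref{mainsection} with $E=K$ and $E*G=K[G]$, we obtain a $K[G]$-ring isomorphism $\D_1\cong\D_2$, and Theorem \ref{isomdiv} then forces $\rk_{\D_1}=\rk_{\D_2}$ on $K[G]$, i.e.
\[
\rk_G(\varphi_1(A))=\rk_1(A)=\rk_{\D_1}(A)=\rk_{\D_2}(A)=\rk_2(A)=\rk_G(\varphi_2(A)).
\]

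The argument is essentially a bookkeeping exercise once the deep inputs (Corollaries \ref{cor4.3} and \ref{cor5.4} and the uniqueness theorem) are in hand; the only point requiring care is the third step, namely verifying that Hughes-freeness is preserved under the base-field relabeling $\varphi_i$. Since $\varphi_i$ is the identity on $G$ this is straightforward, so I do not expect a genuine obstacle — the mathematical weight of the corollary lies entirely in the previously established existence and uniqueness of Hughes-free epic division rings.
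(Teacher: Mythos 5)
Your proof is correct and follows essentially the same route as the paper: both arguments rest on Corollary \ref{cor5.4} (the division closures $\D_{K_i[G]}$ of $K_i[G]$ in $\mathcal R_{\CC[G]}$ are Hughes-free epic division $K[G]$-rings, after transporting the structure along $\varphi_i$) and the uniqueness of Hughes-free epic division rings from Section \ref{mainsection}, which yields a $K[G]$-isomorphism $\D_{K_1[G]}\cong\D_{K_2[G]}$ compatible with $\varphi_1,\varphi_2$ and hence the equality of ranks. Your detour through the integer-valuedness of $\rk_G$ (Corollary \ref{cor4.3}) and Theorem \ref{isomdiv} is harmless extra bookkeeping; the paper concludes directly from the isomorphism, since a division ring carries a unique rank function.
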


\begin{proof}
Let us denote $\varphi_1(K) = K_1$ and $\varphi_2(K) = K_2$.   Corollary \ref{cor5.4} tells us that the division closures of $K_1[G]$ and $K_2[G]$ in $\mathcal{R}_{\CC[G]}$ are both Hughes-free epic division $K[G]$-rings, and so by uniqueness there exists a commutative diagram  
 $$
  \xymatrix{  & \D_{K_1[G]} \ar[dd]^{\cong} \\
            K[G] \ar[ru]^{\varphi_1} \ar[rd]_{\varphi_2} & \\
               & \D_{K_2[G]}
    }
 $$
 In particular, $\rk_G(\varphi_1(A)) = \rk_G(\varphi_2(A))$. \end{proof}
Now we are ready to prove Corollary \ref{exh}.
\begin{cor} \label{existence}
 Let $G$ be a locally indicable group and $K$ a field of characteristic zero. Then $K[G]$ has a Hughes-free {epic} division ring.
\end{cor}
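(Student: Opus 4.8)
The plan is to reduce the case of an arbitrary field $K$ of characteristic zero to the already-established case of a subfield of $\CC$ (Corollary \ref{cor5.4}) by constructing directly an integer-valued Hughes-free Sylvester matrix rank function $\rk$ on $K[G]$. Once such a rank function is available, the discussion preceding Theorem \ref{isomdiv} (Malcolmson's result) produces its epic division envelope $(\D,\varphi)$, Theorem \ref{isomdiv} shows that it is unique up to $K[G]$-isomorphism, and Lemma \ref{rkdiv} guarantees that $\D$ is Hughes-free. This $\D$ is then the desired Hughes-free epic division $K[G]$-ring.

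To define $\rk$ I would exploit that every matrix is finitely supported. Given $A\in \Mat_{n\times m}(K[G])$, its finitely many nonzero coefficients lie in $K$ and generate a finitely generated subfield $K_0\subseteq K$. A finitely generated field of characteristic zero has finite transcendence degree over $\Q$, and since $\CC$ has uncountable transcendence degree over $\Q$, there exists an embedding $\iota\colon K_0\hookrightarrow \CC$. Applying $\iota$ entrywise turns $A$ into a matrix $\iota(A)$ over $\iota(K_0)[G]\subseteq \CC[G]$, and I would set $\rk(A):=\rk_G(\iota(A))$, the von Neumann rank over $G$.

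The crux is well-definedness, and this is exactly where the independence conjecture enters. Independence of the choice of embedding $\iota$ is precisely the content of Corollary \ref{ind} applied to the locally indicable group $G$ and the finitely generated field $K_0$: any two embeddings of $K_0$ into $\CC$ assign $A$ the same von Neumann rank. Independence of the chosen subfield $K_0$ follows by comparing any two candidates inside a common finitely generated subfield containing them and using that an embedding of the larger field restricts to embeddings of the smaller ones, so Corollary \ref{ind} again forces agreement. I expect this reconciliation of the different local choices to be the main obstacle, although all of it is a direct appeal to Corollary \ref{ind}.

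It remains to check that $\rk$ has the required structural properties, and each of these reduces to the corresponding fact over $\CC$ because the defining conditions involve only finitely many matrices at a time. The Sylvester axioms (SMat1)--(SMat4) are inherited from $\rk_G$ on $\CC[G]$ after embedding a large enough finitely generated subfield. Integer-valuedness follows from Corollary \ref{cor4.3}, which says $\mathcal R_{\CC[G]}$ is a division ring, so $\rk_G$ takes values in $\Z$ on matrices over $\CC[G]$. Finally, Hughes-freeness is a local condition: for any finitely generated $H=N\rtimes \langle t\rangle\leq G$ one must verify $\rk|_{K[H]}=\widetilde{\rk|_{K[N]}}$, and since the natural extension is determined by rank values, which are preserved by $\iota$, this coincidence transfers from Proposition \ref{mainexample} over $\CC$. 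With $\rk$ thus an integer-valued Hughes-free rank function, its epic division envelope is the sought Hughes-free epic division $K[G]$-ring, completing the proof.
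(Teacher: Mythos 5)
Your proposal is correct and follows essentially the same route as the paper's own proof: define $\rk$ on $K[G]$ locally by embedding finitely generated subfields of $K$ into $\CC$ and pulling back $\rk_G$, use Corollary \ref{ind} for well-definedness, note integer-valuedness and Hughes-freeness transfer from the complex case, and conclude via the epic division envelope and Lemma \ref{rkdiv}. The only difference is that you spell out details (existence of embeddings, gluing over different subfields, verification of the Sylvester axioms) that the paper leaves implicit.
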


\begin{proof}

Let $K_0$  be a finitely generated subfield of $K$. Let $\varphi:K_0\to \CC$ be 
 any embedding of $K_0$ into $\mathbb{C}$. Extend this embedding to $\varphi:K_0[G]\to {\CC[G]}$. For any matrix $A$ over $K_0[G]$,  we put
 $$\rk(A)=\rk_G(\varphi(A)).$$ By Corollary \ref{ind}, the value of $\rk(A)$ does not depend on the embedding $\varphi$. Thus, we have constructed a Sylvester matrix rank function $\rk$ on $K[G]$ which takes only integer values. Therefore,  it has an epic division envelope $\D$ which is a division ring. Moreover, since $\rk_G$ is Hughes-free, $\rk$ is also  Hughes-free. Hence by Lemma \ref{rkdiv}, $\D$ is a Hughes-free epic division $K[G]$-ring.
 \end{proof}

Given any field $K$ and a field extension $L/K$ we can, under some extra assumptions, relate the Hughes-free epic division rings of $K[G]$ and $L[G]$. We record it as a lemma.
\begin{lem}\label{basechange}
  Let $G$ be a locally indicable group, $K$ a field and $L/K$ a field extension. If there exists a Hughes-free epic division $K[G]$-ring $\D$ and $\D\otimes_K L$ is a domain, then the (left) classical division ring of quotients $\mathcal{Q}_l(\D\otimes_K L)$ is a Hughes-free epic division ring for $L[G]$.
\end{lem}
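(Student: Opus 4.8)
The plan is to set $R=\D\otimes_K L$ and to realise $\mathcal{Q}_l(R)$ as an epic, Hughes-free division ring over $L[G]\cong K[G]\otimes_K L$. Throughout I would exploit that, since $K[G]\hookrightarrow\D$ is epic, Lemma \ref{center} gives $K\subseteq Z(\D)$; consequently $\D$ and each of its division subclosures $\D_{H,\D},\D_{N,\D}$ is a division ring with $K$ central, so all the tensor products $\D_{H,\D}\otimes_K L$ are honest $L$-algebras, and they are domains, being subrings of $R$ (a domain by hypothesis).

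\emph{Step 1: $R$ is a two-sided Ore domain.} This is the point I expect to require the most care, since a priori a tensor product of a division ring with a field need be neither Noetherian nor Ore. The saving observation is that the Ore condition involves only finitely many elements at a time: any $a,b\in R$ already lie in $\D\otimes_K L_0$ for some finitely generated subextension $L_0/K$, and since $-\otimes_K L$ is flat an Ore solution found there remains valid in $R$. Writing $L_0=\operatorname{Frac}(A)$ for a finitely generated $K$-subalgebra $A$, the ring $\D\otimes_K A$ is a quotient of a polynomial ring $\D[y_1,\dots,y_m]$ in central variables, hence two-sided Noetherian by the Hilbert basis theorem; localising at the central multiplicative set $A\setminus\{0\}$ shows $\D\otimes_K L_0$ is Noetherian. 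A Noetherian domain is a two-sided Ore domain by Goldie's theorem, so $R$ is Ore and $\widetilde{\D}:=\mathcal{Q}_l(R)=\mathcal{Q}(R)$ is a division ring.

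\emph{Step 2: $L[G]\hookrightarrow\widetilde{\D}$ is epic.} The composite $L[G]\cong K[G]\otimes_K L\hookrightarrow R\hookrightarrow\widetilde{\D}$ is injective, so it remains to see that $\widetilde{\D}$ is the division closure of $L[G]$. Since $K[G]\subseteq\D\subseteq\widetilde{\D}$ and $\D$ is already a division ring, the division closure of $K[G]$ in $\widetilde{\D}$ is exactly $\D$; thus the division closure of $L[G]$ contains both $\D$ and the image of $L$, hence the subring they generate, which is the image of $R=\D\otimes_K L$. Being a division ring containing the Ore domain $R$, it contains every fraction $s^{-1}r$, i.e.\ all of $\widetilde{\D}$. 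So the embedding is epic and $\widetilde{\D}$ is an epic division $L[G]$-ring.

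\emph{Step 3: Hughes-freeness.} Fix a non-trivial finitely generated $H\le G$, a decomposition $H=N\rtimes\langle t\rangle$ with $t$ a group element, and let $\tau$ be conjugation by $t$; as $\tau$ fixes the central scalars it acts on $\D_{N,\D}\otimes_K L$ as $\tau_0\otimes\operatorname{id}_L$, where $\tau_0$ is the automorphism attached to $\D$. Arguing as in Step 2 identifies $\widetilde{\D}_{N,\widetilde{\D}}=\mathcal{Q}(\D_{N,\D}\otimes_K L)$ and $\widetilde{\D}_{H,\widetilde{\D}}=\mathcal{Q}(\D_{H,\D}\otimes_K L)$. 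The heart of the matter is that $t$ is left linearly independent over $\widetilde{\D}_{N,\widetilde{\D}}$: then $\widetilde{\D}_{N,\widetilde{\D}}[t^{\pm1},\tau]$ embeds in $\widetilde{\D}_{H,\widetilde{\D}}$ as a skew Laurent ring over a division ring, whose division closure (equivalently, Ore ring of fractions) is forced to be all of $\widetilde{\D}_{H,\widetilde{\D}}$ by the same generation argument as in Step 2. To prove the independence, suppose $\sum_i a_i t^i=0$ with $a_i\in\widetilde{\D}_{N,\widetilde{\D}}$; clearing a common left denominator in the Ore domain $\D_{N,\D}\otimes_K L$ I may assume $a_i=b_i\in\D_{N,\D}\otimes_K L$. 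Writing $b_i=\sum_j d_{ij}\otimes\ell_j$ with the $\ell_j$ $K$-linearly independent and using that $t=t\otimes 1$ centralises $L$, the relation becomes $\sum_j\bigl(\sum_i d_{ij}t^i\bigr)\otimes\ell_j=0$ in $\D_{H,\D}\otimes_K L$. Since this ring is free as a left $\D_{H,\D}$-module on any $K$-basis of $L$, each $\sum_i d_{ij}t^i$ vanishes in $\D_{H,\D}$; but $\D$ is Hughes-free, so the powers of $t$ are left $\D_{N,\D}$-linearly independent and all $d_{ij}=0$, a contradiction unless the original relation was trivial. This yields $\widetilde{\D}_{H,\widetilde{\D}}\cong\mathcal{Q}\bigl(\widetilde{\D}_{N,\widetilde{\D}}[t^{\pm1},\tau]\bigr)$ as $L[H]$-rings, completing the verification that $\widetilde{\D}$ is Hughes-free.
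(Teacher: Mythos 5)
Your proposal is correct and follows essentially the same route as the paper's proof: reduce the Ore condition to finitely generated subextensions via the Hilbert basis theorem, identify the relevant division closures in $\mathcal{Q}_l(\D\otimes_K L)$ with Ore rings of fractions of $\D_{N,\D}\otimes_K L$ and $\D_{H,\D}\otimes_K L$, and deduce the linear independence of the powers of $t\otimes 1$ from the Hughes-freeness of $\D$. Your basis-of-$L$ computation is just the element-wise form of the paper's isomorphism $\langle \D_{N,\D},t\rangle\otimes_K L\cong(\D_{N,\D}\otimes_K L)[x,\tau\otimes 1]$, with somewhat more detail supplied at the localization and division-closure identification steps.
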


\begin{proof}
 First of all, note that for any subfield $L'$ of $L$ which is a finitely generated extension of $K$, the tensor product $\D\otimes_K L'$ is noetherian by the Hilbert basis theorem, and therefore it is a left Ore domain. Hence, $\D\otimes_K L$ is a left Ore domain and it makes sense to consider its left classical division ring of fractions $\mathcal{Q}_l(\D\otimes_K L)$.
 
 Now, for any subgroup $N\leq G$, and identifying $L[G]\cong K[G]\otimes_K L$, we have that the division closure of $L[N]$ in $\mathcal{Q}_l(\D\otimes_K L)$ is $\mathcal{Q}_l(\D_{N,\D}\otimes_K L)$.  Indeed, we have $L[N]\cong K[N]\otimes_K L \subseteq \mathcal Q_l(\D_{N,\D} \otimes_K L)$, and since the latter is a division subring of $\mathcal Q_l(\D \otimes_K L)$, we conclude that the division closure of $K[N]\otimes_K L$ is contained in $\mathcal Q_l(\D_{N,\D} \otimes_K L)$.     
     Conversely, any element in $\D_{N,\D}\otimes_K L$, and hence in $\mathcal Q_l(\D_{N,\D}\otimes _KL)$, is obtained from elements of $K[N]\otimes_K L$ by means of sums, products and inverses (by definition of $\D_{N,\D}$), so we have equality.  
        
 Therefore, we need to prove that for every finitely generated subgroup $H\leq G$ and any decomposition $H = N\rtimes_\tau <t>$ where $t\in H$ has infinite order and $\tau$ is given by left conjugation by $t$, the elements $\{(t\otimes 1)^i\}_{i\in \N}$ are $\mathcal{Q}_l(\D_{N,\D}\otimes_K L)$-linearly independent. Clearing denominators, it suffices to prove that the elements $\{(t\otimes 1)^i\}_{i\in \N}$ are  $\D_{N,\D}\otimes_K L$-linearly independent, and this is clear because
 $$
 \begin{array}{lll}
    < \D_{N,\D}\otimes_K L, t\otimes 1 >&=& <\D_{N,\D}, t> \otimes_K L ~\stackrel{(*)}{\cong}~ (\D_{N,\D}[x;\tau]) \otimes_K L \\[6pt]
                                                    &\cong& (\D_{N,\D} \otimes_K L)[x; \tau \otimes \Id] 
  \end{array}
 $$
 where $(*)$ comes from the Hughes-freeness of $\D$.
\end{proof}

\begin{cor}[The strong algebraic eigenvalue conjecture]\label{eigen}
Let $G$ be a countable locally indicable group and $K$ a subfield of $\mathbb{C}$. Then, for any $\lambda\in \mathbb{C}$ which is not algebraic over $K$ and for any $A\in \Mat_n(\mathcal{R}_{K[G]})$, the matrix $A-\lambda I$ is invertible in $\mathcal{U}(G)$.
\end{cor}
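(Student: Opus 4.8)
The plan is to reduce the statement to invertibility in the division ring $\mathcal R_{\CC[G]}$ and then to realize $\lambda$ as a generic central indeterminate. Recall first that by Corollary \ref{cor4.3} the ring $\mathcal R_{\CC[G]}$ is a division ring embedded in $\mathcal U(G)$, and that $\mathcal R_{K[G]}$ coincides with the division closure $\D_{K[G]}$ of $K[G]$ in $\mathcal R_{\CC[G]}$: inside a division ring the relative inverse of a nonzero element is its ordinary inverse, so the $*$-regular and the division closures of $K[G]$ agree. Hence $\D:=\mathcal R_{K[G]}=\D_{K[G]}$ is itself a division ring, the matrix $A-\lambda I$ has all its entries in $\mathcal R_{\CC[G]}$, and it suffices to prove that $A-\lambda I$ is invertible over $\mathcal R_{\CC[G]}$, equivalently that $\rk_{\mathcal R_{\CC[G]}}(A-\lambda I)=n$.

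Set $L=K(\lambda)\subseteq\CC$. Since $\lambda$ is transcendental over $K$, the extension $L/K$ is purely transcendental, so the central polynomial ring $\D[\lambda]=\D\otimes_K K[\lambda]$ is an Ore domain (a principal ideal domain over the division ring $\D$), and $\D\otimes_K L$ is its localization, namely the Ore division ring of fractions $\D(\lambda)$. By Corollary \ref{cor5.4}, $\D=\D_{K[G]}$ is a Hughes-free epic division $K[G]$-ring, so $\D\otimes_K L$ is a domain and Lemma \ref{basechange} applies: $\D^{L}:=\mathcal Q_l(\D\otimes_K L)\cong\D(\lambda)$ is a Hughes-free epic division $L[G]$-ring, with $\lambda$ central and $\D\subseteq\D^{L}$. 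On the other hand, Corollary \ref{cor5.4} also gives that $\mathcal R_{L[G]}=\D_{L[G]}$, the division closure of $L[G]$ in $\mathcal R_{\CC[G]}$, is a Hughes-free epic division $L[G]$-ring. By the uniqueness of Hughes-free epic division rings for locally indicable groups, there is an $L[G]$-ring isomorphism $\Psi\colon\mathcal R_{L[G]}\to\D^{L}$, and since every division ring carries a unique rank function, $\Psi$ preserves the ranks of matrices.

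It remains to compute both sides under $\Psi$. Using Lemma \ref{divreg}(2), the division closure of $K[G]$ equals $\D=\mathcal R_{K[G]}$ inside both $\mathcal R_{L[G]}$ and $\D^{L}$, so $\Psi$ restricts to a $K[G]$-ring automorphism $\sigma$ of $\mathcal R_{K[G]}$; as $\Psi$ fixes $L$, it also fixes the central element $\lambda$. Consequently
$$
\rk_{\mathcal R_{\CC[G]}}(A-\lambda I)=\rk_{\mathcal R_{L[G]}}(A-\lambda I)=\rk_{\D^{L}}\big(\sigma(A)-\lambda I\big),
$$
the first equality because $\mathcal R_{L[G]}\subseteq\mathcal R_{\CC[G]}$ are division rings sharing a unique rank function. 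Finally, for any matrix $B$ over $\D$ the matrix $B-\lambda I$ is invertible over $\D(\lambda)$: if $v=\sum_i v_i\lambda^{i}\in\D[\lambda]^{n}$ had top degree $d$ with $v_d\neq 0$ and satisfied $(B-\lambda I)v=0$, i.e. $Bv=\lambda v$, then comparing the coefficient of $\lambda^{d+1}$ (which is $0$ on the left and $v_d$ on the right) would force $v_d=0$, a contradiction; clearing denominators via the Ore condition, $B-\lambda I$ has trivial kernel on $\D(\lambda)^{n}$ and is thus a unit. Applying this with $B=\sigma(A)$ yields $\rk_{\D^{L}}(\sigma(A)-\lambda I)=n$, whence $A-\lambda I$ is invertible in $\mathcal R_{\CC[G]}$ and therefore in $\mathcal U(G)$. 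The main point requiring care is the bookkeeping of the previous step: one must verify that the abstract isomorphism $\Psi$ sends the central $\lambda$ to $\lambda$ and carries the entries of $A$ back into $\mathcal R_{K[G]}$, which is exactly what the identification of division closures through Lemma \ref{divreg} provides; the underlying algebraic fact that $B-\lambda I$ is a unit over $\D(\lambda)$ is then elementary.
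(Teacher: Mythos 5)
Your proof is correct and follows essentially the same route as the paper's: both arguments combine Corollary \ref{cor5.4}, Lemma \ref{basechange} and the uniqueness of Hughes-free epic division rings to identify the Hughes-free epic division ring of $K(\lambda)[G]$ (the paper writes $K(\lambda^{-1})$, which is the same field) with the Ore division ring of fractions $\D_{K[G]}(\lambda)$, so that $\lambda$ becomes a central indeterminate and everything reduces to inverting a matrix over $\D_{K[G]}$ minus $\lambda I$. The only divergences are cosmetic --- the paper settles that final invertibility by embedding into the Laurent series ring $\D_{K[G]}((x))$ with $\lambda\mapsto x^{-1}$ and pulling back the non-zero-divisor property, while you argue directly by comparing degrees in $\D_{K[G]}[\lambda]$, and you are more explicit about the bookkeeping of the uniqueness isomorphism --- apart from one harmless misstatement: $\D\otimes_K L$ is only the central localization of $\D_{K[G]}[\lambda]$, in general a proper subring of $\D_{K[G]}(\lambda)$ (for instance $(g-\lambda)^{-1}$, with $g\in G$ of infinite order, does not lie in it), but this is immaterial since what your argument actually uses is the correct identification $\mathcal{Q}_l(\D\otimes_K L)\cong\D_{K[G]}(\lambda)$.
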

\begin{proof}
  Let $\D_{K[G]}$ and $\D_{K(\lambda^{-1})[G]}$ denote, respectively, the Hughes-free epic division $K[G]$ and $K(\lambda^{-1})[G]$-rings, which can be constructed as division subrings of $\mathcal{U}(G)$ by Corollary \ref{cor5.4}. 
  
  Since $\lambda$ is not algebraic over $K$, 
 we have that $\D_{K[G]}\otimes_K K[\lambda^{-1}] \cong \D_{K[G]}[x]$, the polynomial ring in the indeterminate $x$. In particular, $\D_{K[G]}\otimes_K K[\lambda^{-1}]$, and hence $\D_{K[G]}\otimes_K K(\lambda^{-1})$, is a domain. In addition, notice that $\mathcal Q_l(\D_{K[G]}\otimes_K K(\lambda^{-1})) = \mathcal Q_l(\D_{K[G]}\otimes_K K[\lambda^{-1}])$, so adding up we obtain
$ \mathcal Q_l(\D_{K[G]}\otimes_K K(\lambda^{-1})) \cong \D_{K[G]}(x) $.

Thus, Lemma \ref{basechange} and the uniqueness of Hughes-free epic division ring, tell us \color{black} that $\D_{K(\lambda^{-1})[G]} \cong \D_{K[G]}(x) $ and, therefore, we can define an injective homomorphism 
$$
\begin{matrix}
  \D_{K(\lambda^{-1})[G]} & \longrightarrow & \D_{K[G]}((x))
\end{matrix}
$$
in which $\lambda \mapsto x^{-1}$. The image $A-x^{-1}I~$ 
 of the matrix $A-\lambda I\in \Mat_n(\D_{K(\lambda^{-1})[G]})$ under the above homomorphism is invertible,  and so, in particular, it is a non-zero-divisor. The injectivity implies that $A-\lambda I$ is a non-zero-divisor in the von Neumann  regular ring $\Mat_n(\D_{K(\lambda^{-1})[G]})$, and hence invertible.
\end{proof}

Just as a remark before stating the next corollary, recall that if  $G$ is a countable group and $H\leq G$, then  we can identify any element $\varphi$ of the group von Neumann algebra $\mathcal{N}(H)$ with the element of $\mathcal{N}(G)$ that assigns to any tuple in $\oplus_{t\in T} \,t\,l^2(H)$ the tuple obtained by applying $\varphi$ component-wise.  
In addition, an element of $G\backslash H$ (as an operator in $\mathcal{N}(G)$) does not fix the components of  $\oplus_{t\in T} \,t\,l^2(H)$. Therefore, we conclude that $\mathcal{N}(H) \cap G = H$.  Since the elements of $G$, viewed as operators on $l^2(G)$, are bounded, we also have that $\mathcal{U}(H) \cap G = H$.  Therefore,  $\mathcal R_{\CC [H]} \cap G = H$ and, in particular,    for any subfield $K$ of $\CC$, the division closure $\D_{K[H]}$ of $K[H]$ in $\mathcal{U} (H)$ satisfies $\D_{K[H]} \cap G = H$.  The former equality also implies that $\mathcal R_{\CC[H]} \cap K^{\times}G = K^{\times}H$.  

The following result  contains  the center conjecture as a particular case when the field $K$ is closed under complex conjugation. 
\begin{cor}\label{cent} 
Let $G$ be a countable locally indicable group and $K$ a subfield of $\mathbb{C}$. If $\D_{K[G]}$ is the division closure of $K[G]$ in $\mathcal{U}(G)$ then 
$$
 \D_{K[G]}  \cap \mathbb{C} = K.
$$
\end{cor}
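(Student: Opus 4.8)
The inclusion $K\subseteq \D_{K[G]}\cap\CC$ is clear, so the content is the reverse inclusion, and the plan is to argue by contradiction: assume $\lambda\in \D_{K[G]}\cap\CC$ with $\lambda\notin K$ and derive a contradiction from the independence conjecture together with a rigidity argument. First I record the ingredients. By Corollary \ref{cor5.4} (together with Lemma \ref{divreg}(2), which identifies the division closure in $\mathcal U(G)$ with the one inside the regular ring $\mathcal R_{\CC[G]}$), for every subfield $F$ of $\CC$ the division closure $\D_{F[G]}$ of $F[G]$ in $\mathcal U(G)$ is a Hughes-free epic division $F[G]$-ring; in particular $\D_{K[G]}$ is a division ring. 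Since $\lambda\in\CC$ is central in $\mathcal U(G)$ and $\lambda\in \D_{K[G]}$, the division ring $\D_{K[G]}$ contains $K(\lambda)[G]$, and as it is the division closure of $K[G]$ we obtain $\D_{K(\lambda)[G]}=\D_{K[G]}$; more generally $\D_{K[G]}\subseteq \D_{F[G]}$ whenever $K\subseteq F\subseteq\CC$.

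Write $L=K(\lambda)$ and let $\iota\colon L\hookrightarrow\CC$ be the inclusion. Because $\lambda\notin K$ there is a $K$-embedding $\psi\colon L\to\CC$ with $\psi(\lambda)=\lambda^{*}\neq\lambda$: if $\lambda$ is algebraic over $K$, send it to another root of its minimal polynomial (which is separable of degree $\ge 2$), and if $\lambda$ is transcendental, send it to, say, $\lambda+1$. Regarding $L[G]$ as an abstract ring, the two distinct embeddings $\iota\neq\psi$ make $\D_{K(\lambda)[G]}$ and $\D_{K(\lambda^{*})[G]}$ into epic division $L[G]$-rings whose induced rank functions on $L[G]$ are $A\mapsto \rk_G(\iota(A))$ and $A\mapsto \rk_G(\psi(A))$. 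The independence conjecture, Corollary \ref{ind}, applied to the field $L$ gives $\rk_G(\iota(A))=\rk_G(\psi(A))$ for every matrix $A$ over $L[G]$, so by the uniqueness of epic division $R$-rings, Theorem \ref{isomdiv}, there is an isomorphism of $L[G]$-rings
$$\Theta\colon \D_{K(\lambda)[G]}=\D_{K[G]}\;\longrightarrow\;\D_{K(\lambda^{*})[G]}$$
satisfying $\Theta(\iota(r))=\psi(r)$ for all $r\in L[G]$.

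The final step is a rigidity observation. Restricting to $K[G]$, where $\iota$ and $\psi$ both coincide with the canonical map, yields $\Theta|_{K[G]}=\mathrm{id}$. By the first paragraph both source and target sit inside $\mathcal U(G)$ with $\D_{K[G]}\subseteq \D_{K(\lambda^{*})[G]}$, the target is a division ring, and sums, products and inverses of elements of $\D_{K[G]}$ are computed in the common overring $\mathcal U(G)$. Writing $\D_{K[G]}=\bigcup_n Q_n$ as in Subsection \ref{ss:divclo} with $Q_0=K[G]$, an induction on $n$ shows that $\Theta$ fixes every $Q_n$ pointwise: it fixes $Q_0$, it commutes with sums and products, and if $x\in Q_n$ is invertible then $\Theta(x^{-1})=\Theta(x)^{-1}=x^{-1}$ because inverses are unique in $\mathcal U(G)$. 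Hence $\Theta$ is the inclusion of $\D_{K[G]}$, and in particular $\lambda^{*}=\psi(\lambda)=\Theta(\iota(\lambda))=\Theta(\lambda)=\lambda$, contradicting $\lambda^{*}\neq\lambda$. Therefore $\lambda\in K$, which proves $\D_{K[G]}\cap\CC=K$.

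I expect the main obstacle to be this rigidity step: one must ensure that the abstractly given isomorphism $\Theta$ is forced to be the concrete inclusion, which works precisely because both division closures have been realized inside the single ambient ring $\mathcal U(G)$, so that the inductive construction of the division closure is preserved elementwise. Arranging the two embeddings so that Corollary \ref{ind} applies, in particular producing a genuine second value $\lambda^{*}\ne\lambda$ uniformly in the algebraic and transcendental cases, is the other point requiring care, but it is routine.
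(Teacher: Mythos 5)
Your proposal is correct, but it follows a genuinely different route from the paper. The paper's proof runs the complexity machinery directly: for a nonzero $a\in \D_{K[G]}\cap\CC$ with $\Tree_G(a)>1_{\mathcal T}$ it writes $a=a'u$ with $a'$ the image of a primitive element and $u\in K^{\times}G$, applies the Hughes-free embeddings $\D_{K[H]}\hookrightarrow \D_{K[N]}((t,\tau))$ together with Proposition \ref{key} (exactly as in Theorem \ref{maintheorem}) to expand $a'$ as a Laurent series with at least two nonzero summands, and then uses the observation $\mathcal R_{\CC[H]}\cap K^{\times}G=K^{\times}H$ to deduce that the scalar $a=a'u$ would have a Laurent expansion with two or more summands, which is absurd since complex numbers sit in the constant term. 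You instead give a Galois-type descent: the independence conjecture (Corollary \ref{ind}, which is proved before and independently of Corollary \ref{cent}, so there is no circularity) plus the uniqueness theorem for epic division rings (Theorem \ref{isomdiv}) produce an $L[G]$-isomorphism $\Theta$ between the division closures attached to the two embeddings $\iota,\psi$ of $L=K(\lambda)$, and your rigidity step — an isomorphism between division closures realized inside the common ambient ring $\mathcal U(G)$ which fixes $K[G]$ must be the identity, by induction on the stages $Q_n$ and uniqueness of two-sided inverses — forces $\psi(\lambda)=\lambda$, a contradiction. The supporting points you lean on are all sound: the second embedding exists by separability (algebraic case) or by $\lambda\mapsto\lambda+1$ (transcendental case); the rank of each division closure, viewed as an epic division $L[G]$-ring, is the restriction of $\rk_G$ because a division ring carries a unique Sylvester matrix rank function; and $\D_{K[G]}=\D_{K(\lambda)[G]}\subseteq\D_{K(\lambda^*)[G]}$ via Lemma \ref{divreg}. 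What your approach buys is economy: it reuses already-established corollaries and avoids re-running the complexity induction, showing that the center conjecture is a formal consequence of independence plus uniqueness plus rigidity of division closures in a common overring. What the paper's approach buys is the structural Laurent-series picture of elements of $\D_{K[H]}$ over $\D_{K[N]}$, which it exploits again immediately afterwards (e.g., in Corollary \ref{centICC}).
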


\begin{proof}

By Corollary \ref{cor5.4}, $\D_{K[G]}$ is an epic Hughes-free division $K[G]$-ring. Thus, for every finitely generated subgroup $H\leq G$ and decomposition $H = N\rtimes <t>$ with $t\in H$ of infinite order, we can construct the following diagram 
$$
\xymatrix{ & \D_{K[H]} \ar@{^(->}[]+<4ex,0ex>;[r] \ar@{^(->}[]+<0ex,-2.5ex>;[d]  & \D_{K[N]}((t;\tau)) \ar@{^(->}[]+<0ex,-2.5ex>;[d] \\
\CC \ar@{^(->}[]+<2ex,0ex>;[r] &  \D_{\CC [H]} \ar@{^(->}[]+<4ex,0ex>;[r] & \D_{\CC [N]}((t;\tau))}
$$
The elements of $\CC$ are identified then with the Laurent series  $\D_{\CC [N]}((t;\tau))$ with just one possible non-zero summand corresponding to the constant term.   Consider the universal morphism $\Phi: \Rat(K^{\times}G) \rightarrow \D_{K[G]}$, and take any non-zero element $a\in  \D_{K[G]}  \cap \mathbb{C}$. If $\Tree_G(a) = 1_{\mathcal{T}}$, then $a\in K^{\times}G \cap \CC = K^{\times}$.

 If $\Tree_G(a)>1_{\mathcal{T}}$, then let $\alpha \in \Rat(K^{\times}G)$ realize the $G$-complexity of $a$. There exist a primitive element $\alpha'$ and $u\in K^{\times}G$ such that $\alpha = \alpha'u$. Setting $H = \pi (\source(\alpha)) = \pi(\source(\alpha'))$, where $\pi: K^{\times}G\rightarrow K^{\times} G/K^{\times} = G$ is the natural map,  and $a' = \Phi(\alpha')$, we obtain that $a'\in \D_{K[H]}$ and $\Tree_H(a') = \Tree(\alpha')$. The same reasoning from Theorem \ref{maintheorem} applies and gives us a decomposition of $a'$ as an element of $\D_{K[N]}((t;\tau))$ with at least two summands. Moreover, we have that $u = (a')^{-1}a \in \mathcal{R}_{\CC [H]} \cap K^{\times}G = K^{\times}H$, so $a=a'u \in \D_{K[H]}$ is a complex number whose representation as a Laurent series has at least two summands, a contradiction. We deduce that a non-zero element $a$ in $\D_{K[G]} \cap \mathbb{C}$ must have $\Tree_G(a) = 1_{T}$, and therefore $a\in K$.
 
\end{proof}

Recall that a group $G$ is called {\bf ICC} if all non-trivial conjugacy classes of $G$ are infinite.
\begin{cor}\label{centICC} Let $G$ be a locally indicable  ICC   group, $K$ a field of characteristic zero and $\D$ a Hughes-free epic division $K[G]$-ring. Then $Z(\D)=K$.
\end{cor}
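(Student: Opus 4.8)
The plan is to run the same inductive scheme used for Corollary \ref{cent}, with the \textbf{ICC} hypothesis now playing the role that ``being a complex number'' played there. First observe that since $K[G]\hookrightarrow\D$ is epic, Lemma \ref{center} gives $K\subseteq Z(\D)$, and in fact $Z(\D)=C_{\mathcal D}(G)$, the centralizer of $G$ in $\D$: any $x$ commuting with $G$ commutes with $K[G]$ and hence, since division closures are built from $K[G]$ by sums, products and inverses, with all of $\D$. So it suffices to show that every $0\neq z$ commuting with $G$ lies in $K$, and I would prove this by induction on the $G$-complexity $\Tree_G(z)$ via the universal morphism $\Phi\colon\Rat(K^{\times}G)\to\D$ (which exists because the division ring $\D$ is a rational $K^{\times}G$-semiring, and is surjective by epicness). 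In the base case $\Tree_G(z)=1_{\mathcal T}$ we have $z=\lambda g$ with $\lambda\in K^{\times}$ and $g\in G$; centrality forces $gh=hg$ for all $h\in G$, i.e. $g\in Z(G)$, and since $G$ is ICC we get $Z(G)=\{1\}$, whence $z=\lambda\in K$. This is precisely where ICC enters the base case.

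For the inductive step, suppose $\Tree_G(z)>1_{\mathcal T}$ and let $\alpha$ realize the $G$-complexity of $z$. Writing $\alpha=\alpha'u$ with $\alpha'$ primitive and $u\in K^{\times}G$ (Theorem \ref{source}(ii)), set $H=\pi(\source(\alpha'))$, a finitely generated subgroup, and $z'=\Phi(\alpha')\in\D_{H,\D}$ with $\Tree_H(z')=\Tree(\alpha')$. One checks $H\neq\{1\}$, since otherwise $z'\in K^{\times}$ and $\Tree_G(z)=1_{\mathcal T}$; so $H=N\rtimes\langle t\rangle$ with $\tau$ the conjugation by $t$. Now $\D_{N,\D}((t,\tau))$ is again a division ring, into which $\D_{H,\D}$ embeds by Hughes-freeness, so taking $\mathcal A=\D_{N,\D}$ and $\PP=\D_{N,\D}((t,\tau))$ the hypotheses of Proposition \ref{key} hold trivially (every nonzero element of $\D_{H,\D}=\D_{H,\PP}$ is invertible). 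Applying it to $z'$ gives $z'\in\D_{N,\D}((t,\tau))$, and writing $z'=\sum_k z'_k$ with $z'_k\in\D_{N,\D}t^k$, one sees there are at least two nonzero summands: a single summand would force $\alpha'\in\Rat(K^{\times}N)t^n$ and hence, by Theorem \ref{source}(iii), $H\leq N$, a contradiction. As in Corollary \ref{cent}, using that $z$ is central and that $\D_{H,\D}\cap K^{\times}G=K^{\times}H$, one deduces $u\in K^{\times}H$, so that $z=z'u\in\D_{H,\D}$ itself has a skew-Laurent expansion $z=\sum_j d_j t^j$ in $\D_{N,\D}((t,\tau))$ with at least two nonzero terms.

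The remaining step, which I expect to be \textbf{the main obstacle}, is to derive a contradiction from a central $z$ having a term of nonzero $t$-degree. Because $z$ commutes with $t$ and with all of $\D_{N,\D}$, comparison of coefficients yields $\tau(d_j)=d_j$ and, for every $j$ with $d_j\neq0$, the relation $c\,d_j=d_j\,\tau^{j}(c)$ for all $c\in\D_{N,\D}$; that is, $\tau^{j}$ equals the inner automorphism $c\mapsto d_j^{-1}cd_j$ of $\D_{N,\D}$. Thus a nonzero off-degree term produces a nonzero element $d_jt^j$ of nonzero $t$-degree centralizing all of $\D_{H,\D}$, equivalently a power $t^{j}$ ($j\neq0$) whose conjugation action on $\D_{N,\D}$ is implemented from inside $\D_{N,\D}$. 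The hard part is to exclude this using the ICC property of $G$: I would argue that such an inner realization of $\tau^{j}$ cannot occur for a subgroup of an ICC locally indicable group, equivalently that the Ore extension $\D_{H,\D}\cong \mathrm{Ore}\big(\D_{N,\D}[t^{\pm1},\tau]\big)$ admits no central units of nonzero degree (the natural mechanism being that such an element would translate into a nontrivial element of $G$ with finite conjugacy class, violating ICC).

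Granting this, all coefficients $d_j$ with $j\neq0$ vanish, so $z=d_0\in\D_{N,\D}$ reduces to a single summand, contradicting the conclusion of Proposition \ref{key}. Hence no central element has complexity exceeding $1_{\mathcal T}$, and the base case gives $Z(\D)=K$. As a cross-check, and as an alternative that sidesteps the degree analysis, when $K$ is a finitely generated subfield of $\CC$ one may realize $\D$ inside $\mathcal U(G)$ via Corollary \ref{cor5.4}; then a central $z$ commutes with $G$, so it lies in the center of $\mathcal U(G)$, which equals $\CC$ exactly because $G$ is ICC, and Corollary \ref{cent} forces $z\in K$. The general case would then follow by writing any $z$ over a finitely generated subfield $K_0\subseteq K$ (using only the finitely many scalars appearing in a realizer $\alpha$) and applying Lemma \ref{basechange} together with the uniqueness of the Hughes-free division ring.
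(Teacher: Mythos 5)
Your primary (inductive, purely algebraic) argument has a genuine gap exactly at the step you yourself flag as ``the main obstacle,'' and this step is not a technicality: it is the entire content of the corollary. After applying Proposition \ref{key} you have a central $z=\sum_j d_jt^j\in \D_{N,\D}((t,\tau))$ with at least two nonzero terms, and centrality gives you that some nonzero power $\tau^{j}$ is inner on $\D_{N,\D}$. But that is not in itself contradictory, and your proposed fix is not correct as stated: ``the Ore extension $\D_{H,\D}$ admits no central units of nonzero degree'' is false at the level of $H$. Nothing forces $H=\pi(\source(\alpha'))$ to inherit any ICC-type property from $G$ --- subgroups of ICC groups need not be ICC or even have trivial center; e.g.\ $H$ can be $\Z^2$ (inside the ICC locally indicable group $F_2\times F_2$, say), in which case $\tau=\iid$ on $\D_{N,\D}$ and $t$ itself is a unit of nonzero degree central in $\D_{H,\D}$. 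So the contradiction cannot come from $H$ alone; it must use that $z$ commutes with the elements of $G$ \emph{outside} $H$, i.e.\ the ICC hypothesis on $G$ has to act on the coefficients $d_j$, and your sketch offers no mechanism for turning the division-ring element $d_jt^j$ into ``a nontrivial element of $G$ with finite conjugacy class.'' A secondary gap: your deduction $u\in K^{\times}H$ invokes $\D_{H,\D}\cap K^{\times}G=K^{\times}H$, which the paper proves only for subfields of $\CC$, via the decomposition of $l^2(G)$ over $\mathcal U(H)$; for an abstract field $K$ of characteristic zero and an abstract Hughes-free $\D$ this also requires proof.

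Your closing ``alternative'' is, in substance, the paper's actual proof: pass to a finitely generated subfield $K_0$ containing the scalars of a realizer of $z$, embed $K_0$ into $\CC$, identify the relevant division closure with the one inside $\mathcal R_{\CC[\,\cdot\,]}$ by uniqueness of Hughes-free epic division rings (Corollary \ref{cor5.4}), and conclude from $C_{\mathcal U(\,\cdot\,)}(\,\cdot\,)=Z(\mathcal U(\,\cdot\,))=\CC$ for ICC groups together with Corollary \ref{cent}. The one point you omit, and which the paper handles explicitly, is the reduction in the \emph{group} variable: $\mathcal U(G)$ and Corollary \ref{cent} require $G$ countable, and since a subgroup of an ICC group need not be ICC you cannot simply work over the finitely generated subgroup supporting $z$. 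The paper first embeds that subgroup $H_0$ into a \emph{countable ICC} subgroup $H=\bigcup_i H_i\le G$, where each $H_i$ is chosen so that all $H_i$-conjugacy classes of nontrivial elements of $H_{i-1}$ are infinite, and then runs the von Neumann factor argument over $H$. With that addition your second route is correct and coincides with the paper's; without it (and without a proof of the inner-automorphism exclusion), your primary route remains incomplete at its crux.
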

\begin{proof} Assume that $a\in Z(\D)\setminus K$. Then there are a finitely generated subgroup $H_0\le G$ and a finitely generated subfield $K_0$ of $K$ such that $a\in \D_{K_0[H_0]}$ (here $\D_{K_0[H_0]}$ denotes the division closure of $K_0[H_0]$ in $\D$). We can embed $H_0$ in a countable  ICC subgroup $H$ of $G$. Indeed, starting with $H_0$ we can define for every $i>0$ a countable subgroup $H_i$ of $G$ such that all $H_i$-conjugacy classes of non-trivial elements of $H_{i-1}$ are infinite, and so $H = \bigcup H_i$ is a countable ICC group containing $H_0$. 

Embed now $K_0$ into $\CC$. By Corollary \ref{cor5.4}, $\D_{K_0[H]}$ is isomorphic to the division closure $\D'_{K_0[H]}$ of $K_0[H]$ inside $\mathcal R_{\CC[H]}$ . Since $H$ is ICC,
 $$C_{ \mathcal U(H)}(H)=Z(\mathcal U(H))=\CC.$$ Thus, by Corollary \ref{cent}, 
  $ C_{\D'_{K_0[H]}}(H) =K_0.$
  and, therefore, we obtain that
 $a\in C_{\D_{K_0[H]}}(H) =K_0.$
 This contradicts our assumption on $a$.
  
\end{proof}

  Using the previous result and Corollary \ref{existence} we can prove that the conditions on Lemma \ref{basechange} are satisfied when we deal with fields of characteristic zero.

\begin{cor}
Let $L/K$ be an extension of fields of characteristic zero, $G$ a locally indicable group and  $\D$  a Hughes-free epic division $K[G]$-ring. Then $\D\otimes _K L$ is a domain. In particular the Hughes-free epic division $L[G]$-ring is isomorphic to the classical ring of quotients of $\D\otimes_K L$
\end{cor}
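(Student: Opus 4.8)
The plan is to prove that $\D\otimes_K L$ is a domain; the final ``in particular'' assertion is then immediate from Lemma~\ref{basechange}. Since a hypothetical relation $uv=0$ with $u,v\neq 0$ involves only finitely many elements of $L$, and $L$ is free over any subextension, it suffices to treat finitely generated $L/K$. I first reduce to the case of a \emph{finite} extension. As $\operatorname{char}K=0$ I may choose a transcendence basis and write $K\subseteq K':=K(x_1,\dots ,x_r)\subseteq L$ with $L/K'$ finite and separable. By Lemma~\ref{center} we have $K\subseteq Z(\D)$ (because $K\subseteq Z(K[G])$ and $K[G]\hookrightarrow\D$ is epic), so $\D\otimes_K K[x_1,\dots ,x_r]=\D[x_1,\dots ,x_r]$ is a polynomial ring in central indeterminates over a division ring, hence a Noetherian Ore domain; inverting the central set $K[x_1,\dots ,x_r]\setminus\{0\}$ shows that $\D\otimes_K K'$ is a domain. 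Lemma~\ref{basechange} then makes $\D':=\mathcal{Q}_l(\D\otimes_K K')$ a Hughes-free epic division $K'[G]$-ring, and since $\D\otimes_K L=(\D\otimes_K K')\otimes_{K'}L\hookrightarrow \D'\otimes_{K'}L$, everything reduces to the following finite case:

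\medskip\noindent $(\ast)$ \emph{If $\operatorname{char}K=0$, $G$ is locally indicable, $\D$ is a Hughes-free epic division $K[G]$-ring and $L/K$ is finite, then $\D\otimes_K L$ is a domain.}

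\medskip To prove $(\ast)$, write $L=K(\theta)$ with minimal polynomial $f\in K[x]$ of degree $d=[L:K]$, so that $\D\otimes_K L\cong \D[x]/(f)$ with $x$ central. By Corollary~\ref{existence} there is a Hughes-free epic division $L[G]$-ring $\D_L$. Restricting the inclusions $K[G]\subseteq L[G]\hookrightarrow\D_L$, the division closure of $K[G]$ in $\D_L$ is again Hughes-free over $K[G]$: for a finitely generated $H=N\rtimes\langle t\rangle$ the element $t$ is already linearly independent over the division closure of $L[N]$ in $\D_L$, hence over the smaller division closure of $K[N]$, exactly as in Corollary~\ref{cor5.4}. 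By the uniqueness theorem this closure is isomorphic to $\D$, so I may regard $\D\subseteq\D_L$. Since $\theta\in L\subseteq Z(\D_L)$, the subring $\D[\theta]$ is a finite-dimensional domain over the division ring $\D$, hence itself a division ring; and as $L[G]$ is generated as a ring by $K[G]$ and $\theta$, its division closure gives $\D_L=\D(\theta)=\D[\theta]$.

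The heart of the matter is the index formula $[\D_L:\D]=[L:K]=d$. I would obtain it from the multiplicativity of Sylvester dimension under restriction of scalars: the restriction of the (integer-valued) rank $\rk_{\D_L}$ to $K[G]$ is precisely $\rk_\D$, and for every finitely presented left $L[G]$-module $M$, viewed as a finitely presented $K[G]$-module through the degree-$d$ free extension $K[G]\subseteq L[G]$, one has $\dim_{\D}(M)=[\D_L:\D]\cdot\dim_{\D_L}(M)$. Taking $M=L[G]$, whose underlying $K[G]$-module is $K[G]^d$, yields $d=\dim_\D(K[G]^d)=[\D_L:\D]\cdot 1$, i.e. $[\D_L:\D]=d$. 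Consequently $1,\theta,\dots ,\theta^{d-1}$, which span $\D_L=\D[\theta]$, form a $\D$-basis and are in particular $\D$-linearly independent. Hence the $K$-algebra homomorphism $\D\otimes_K L\to\D_L$ sending $a\otimes\ell\mapsto a\ell$ (well defined since $L\subseteq Z(\D_L)$) carries the $\D$-basis $\{x^i\}$ of $\D\otimes_K L\cong\D[x]/(f)$ to the $\D$-basis $\{\theta^i\}$, so it is injective. Thus $\D\otimes_K L\cong\D_L$ is a division ring, proving $(\ast)$; unwinding the reductions gives that $\D\otimes_K L$ is a domain for arbitrary $L/K$, and the last statement follows from Lemma~\ref{basechange}.

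I expect the main obstacle to be exactly the index computation $[\D_L:\D]=[L:K]$: one must set up the restriction-of-scalars bookkeeping for the integer-valued Sylvester rank functions carefully and verify that rewriting $L[G]$-modules (equivalently $L[G]$-matrices) over $K[G]$ via a basis multiplies dimensions by the $\D$-degree $[\D_L:\D]$. This is precisely what forces $f$ to remain irreducible over $\D$, ruling out finite-dimensional pathologies such as $\mathbb{H}\otimes_{\R}\CC$; here it is guaranteed by the existence, uniqueness and Hughes-freeness already established. Everything else is a routine assembly of central localization, freeness of field extensions, and Lemma~\ref{basechange}.
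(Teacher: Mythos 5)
Your reductions are fine---the passage to finitely generated extensions, the central Ore localization handling the purely transcendental part, and the identification $\D_L=\D[\theta]$ of the Hughes-free epic division $L[G]$-ring with the division subring generated by $\D$ and $\theta$ all work, and the last one parallels Corollary~\ref{cor5.4}. The gap is exactly where you locate it, and it is not a matter of ``careful bookkeeping'': the multiplicativity of Sylvester dimension under restriction of scalars that you invoke is not a theorem, and in the generality in which you use it, it is false. Indeed, for a finitely presented $L[G]$-module $M$ one has $\D\otimes_{K[G]}M\cong(\D\otimes_K L)\otimes_{L[G]}M$, so your identity $\dim_{\D}(M|_{K[G]})=[\D_L:\D]\cdot\dim_{\D_L}(M)$, applied to $M=L[G]$, asserts precisely that the multiplication map $\mu\colon \D\otimes_K L\to\D_L$ is injective, i.e.\ that $\D\otimes_K L$ is a domain---the very statement being proved. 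It also does not follow formally from ``$\rk_{\D_L}$ restricts to $\rk_{\D}$ on $K[G]$'': consider the abstract analogue $R=\CC$ viewed as an $\R$-algebra, $S=R\otimes_{\R}\CC\cong\CC\times\CC$, and $\D_L=\CC$ via the first projection (an epic division $S$-ring). Then the division closure of $R$ in $\D_L$ is $\D=\CC$, the restriction of $\rk_{\D_L}$ to $R$ is $\rk_{\D}$, yet $\dim_{\D}(S|_R)=2\neq 1=[\D_L:\D]\cdot\dim_{\D_L}(S)$. So some input beyond ``$K\subseteq Z(\D)$'' (Lemma~\ref{center}) and the formal Hughes-free setup is indispensable.

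What is missing is control of the center of $\D$. Since $\theta\in Z(\D_L)$, its minimal polynomial $h$ over $\D$ has coefficients in $Z(\D)$ (conjugating $h$ by any unit of $\D$ gives another monic annihilating polynomial of the same degree, hence $h$ itself), $[\D_L:\D]=\deg h$, and $h\mid f$ in $Z(\D)[x]$; thus your index formula $[\D_L:\D]=[L:K]$ holds if and only if $f$ remains irreducible over $Z(\D)$. Nothing in your argument excludes a proper factorization: knowing only $K\subseteq Z(\D)$ is compatible with $Z(\D)$ containing nontrivial algebraic extensions of $K$, which is exactly the $\CC\otimes_{\R}\CC$ failure mode. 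This is why the paper's proof runs through the center computation: for ICC groups Corollary~\ref{centICC} gives $Z(\D)=K$, so $\D\otimes_K L$ is a \emph{simple} ring, and simplicity forces the nonzero homomorphism $\mu$ into the Hughes-free epic division $L[G]$-ring to be injective; the general case then follows by embedding $G$ into the locally indicable ICC group $G\wr\Z$ and restricting. To repair your proof you would have to replace the appeal to restriction of scalars by this center computation (Corollary~\ref{centICC}, which rests on Corollary~\ref{cent}) or an equivalent statement ruling out proper factors of $f$ over $Z(\D)$.
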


\begin{proof}
First let us assume that $G$ is ICC. In this case  $Z(\D)=K$ by Corollary \ref{centICC}, and therefore $\D \otimes_K L$ is a simple ring. By Corollary  \ref{existence}, there exists a Hughes-free epic division $L[G]$-ring $\tilde \D$. Identify $\D$ with the division closure of $K[G]$ in $\tilde \D$. Since $\D \otimes_K L$ is simple, it is isomorphic to the subring of $\tilde \D$ generated by $\D$ and $L$, and hence it is a domain. 

For an arbitrary $G$, the wreath product $G\wr \Z$ is again locally indicable and ICC, and so $\D\otimes_K L$ can be embedded in a domain.  This concludes the proof, and the last assertion follows from Lemma \ref{basechange}. 
\end{proof}

 If $K$ and $L$ are subfields of $\CC$, this corollary states that the division closure $\D_{L[G]}$ of $L[G]$ inside $\mathcal U(G)$ is isomorphic to the classical   ring of quotients of $\D_{K[G]}\otimes_K L$. It is proved throughout \cite{Ja17base} that the same statement holds when we consider sofic groups satisfying the strong Atiyah conjecture  instead of locally indicable groups. We expect that this property holds in general.

\begin{Conjecture} Let $L/K$ be an extension of subfields of $\CC$ and $G$ any group satisfying the strong Atiyah conjecture. Let $\D_{K[G]}$ and $\D_{L[G]}$ denote, respectively, the division closures of $K[G]$ and $L[G]$ in $\mathcal U(G)$. Then $\D_{L[G]}$ is isomorphic to the classical   ring of quotients $\D_{K[G]}\otimes_K L$.
\end{Conjecture}

\section{The L\"uck approximation for locally indicable groups} \label{Lucksection}

The goal of this section is to give a proof of the L\"uck approximation conjecture in the space of marked groups when the group being approximated is virtually  locally indicable. More precisely, we want to prove the following statement.

\begin{teo} \label{Lueck}  Let $F$ be a finitely generated free group and assume that $(M_i)_{i\in \N}$ converges to $M$ in the space of marked groups of $F$. If $F/M$ is virtually  locally indicable, then $\rk_{F/M_i}$ converges to $\rk_{F/M}$ in the space of Sylvester matrix rank functions on $\CC[F]$.
 \end{teo}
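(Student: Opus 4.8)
The plan is to fix a non-principal ultrafilter $\omega$ on $\N$ and, using that $\mathbb{P}(\CC[F])$ is compact, to set $\rk_\omega=\lim_\omega\rk_{F/M_i}\in\mathbb{P}(\CC[F])$; it then suffices to prove $\rk_\omega=\rk_{F/M}$ for every $\omega$, since a bounded sequence in $\mathbb{P}(\CC[F])$ all of whose ultralimits coincide converges. Each $\rk_{F/M_i}$ lies in the closed set $\mathbb{P}_{*reg}(\CC[F])$ (its envelope is the affiliated operators ring $\mathcal U(F/M_i)$, which is positive definite $*$-regular), so $\rk_\omega\in\mathbb{P}_{*reg}(\CC[F])$, with positive definite envelope $\big(\prod_i\mathcal U(F/M_i)\big)/\ker\rk_\omega$, von Neumann regular by Proposition \ref{proper}. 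Convergence in $\MG(F)$ means the multiplication table on each ball eventually stabilizes, so for fixed $x\in\CC[F]$ mapping to $0$ in $\CC[F/M]$ one has $\rk_{F/M_i}(x)=0$ for $\omega$-almost all $i$; hence $\rk_\omega$ descends to a positive definite $*$-regular rank on $\CC[F/M]$, still denoted $\rk_\omega$. Finally, if $F/M$ is only virtually locally indicable I would first pass to a finite-index normal locally indicable subgroup $G_0$ of index $d$: for large $i$ the finitely many coset relations defining the preimage of $G_0$ persist, giving index-$d$ subgroups $H_i\le F/M_i$ with $H_i\to G_0$, and the identity $\rk_{F/M_i}=\tfrac1d\,\rk_{H_i}\circ(\text{unfolding})$ reduces everything to the locally indicable case.

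So assume $G=F/M$ is locally indicable. The easy half is the inequality $\rk_\omega\ge\rk_{F/M}$, which holds with no restriction on the coefficients. For $A\in\Mat_{n\times m}(\CC[F])$ the spectral measures $\mu_i$ of $A^*A$ with respect to the normalized traces on $\CC[F/M_i]$ all have support in a common interval $[0,\|A^*A\|]$, and their moments $\Tr((A^*A)^k)$ are coefficients of the identity in a fixed bounded region, hence stabilize along $M_i\to M$; therefore $\mu_i\to\mu$ weakly, $\mu$ being the spectral measure over $F/M$. Since $\{0\}$ is closed, the portmanteau inequality gives $\limsup_i\mu_i(\{0\})\le\mu(\{0\})$, i.e.\ $\liminf_i\rk_{F/M_i}(A)\ge\rk_{F/M}(A)$, so $\rk_\omega\ge\rk_{F/M}$.

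To upgrade this to equality I would show that $\rk_\omega$ is Hughes-free and then invoke the division-ring machinery: once $\rk_\omega$ is a positive definite $*$-regular Hughes-free rank on $\CC[G]$, Theorem \ref{maintheorem} forces its envelope to be a division ring, so $\rk_\omega$ is integer-valued; its epic division envelope is then a Hughes-free epic division $\CC[G]$-ring, which by the uniqueness of such rings is $\CC[G]$-isomorphic to $\mathcal R_{\CC[G]}$ (Corollary \ref{cor4.3}), whence $\rk_\omega=\rk_{F/M}$ by Theorem \ref{isomdiv}. For the Hughes-free property, fix a non-trivial finitely generated $H=N\rtimes\langle t\rangle\le G$ with $\CC[H]\cong\CC[N][t^{\pm1},\tau]$; I must check that $\rk_\omega|_{\CC[H]}$ is the natural extension of $\rk_\omega|_{\CC[N]}$. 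Restriction respects marked convergence, so after lifting $H$ and $N$ to the free group on the chosen generators these restrictions are themselves ultralimits of von Neumann ranks of the corresponding subgroups of $F/M_i$, and Corollary \ref{wlimext} lets me commute the ultralimit with the formation of natural extensions. By the criterion of Proposition \ref{prop7.7} it then suffices to verify $\rk_\omega(I_n+At)=n$; since $\rk_{F/M}$ is itself Hughes-free (Proposition \ref{mainexample}), the same proposition gives $\rk_{F/M}(I_n+At)=n$, and the easy inequality $\rk_\omega\ge\rk_{F/M}$ forces $\rk_\omega(I_n+At)=n$ as soon as $A$ has entries in $\CC[N]$.

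The hard part will be exactly this last verification, namely passing from matrices $A$ over $\CC[N]$ to matrices over the regular closure $\mathcal U_N^\omega$ demanded by Proposition \ref{prop7.7}. Concretely, the approximating subgroups $H_i\le F/M_i$ need not split as $N_i\rtimes\Z$ and need not be indicable, so Hughes-freeness cannot be read off from the approximants termwise: it is a genuinely limiting phenomenon. I expect to resolve it by an induction paralleling the proof of Theorem \ref{maintheorem}, carried on the $G$-complexity of elements: the inductive hypothesis supplies $\rk_\omega|_{\CC[N]}=\rk_{F/M}|_{\CC[N]}$ (so that $\mathcal U_N^\omega$ collapses to the division ring $\mathcal R_{\CC[N]}$ and Proposition \ref{prop7.7} only needs to be tested over that division ring), while the diagram (\ref{dia6}) together with Proposition \ref{key} and the inequality $\rk_\omega\ge\rk_{F/M}$ control the series expansions in $t$. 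Establishing that the limiting transcendence of $t$ over $\mathcal R_{\CC[N]}$ survives the ultralimit — equivalently, that no spurious atom at $0$ is created for the operators $I_n+At$ — is where the locally indicable hypothesis and the previously proved Atiyah theorem for $G$ are indispensable.
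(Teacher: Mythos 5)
Your reduction of the virtually locally indicable case to the locally indicable case is a step that fails. Write $F'$ for the preimage of $G_0$ in $F$, so $[F:F']=d$. The unfolding identity $\rk_{F/M_i}=\tfrac1d\,\rk_{H_i}\circ(\mathrm{unfolding})$ is valid only when $H_i=F'M_i/M_i$ has index exactly $d$ in $F/M_i$, i.e. only when $M_i\subseteq F'$, and marked convergence does not give this: two transversal elements $t_j,t_k$ fall into the same $H_i$-coset as soon as some element of the infinite coset $F't_jt_k^{-1}$ lies in $M_i$, a condition invisible to balls of any fixed radius, so the index can collapse for every $i$. Concretely, take $F=\langle x\rangle$, $M=\{1\}$, $M_i=\langle x^{2i+1}\rangle$, $F'=\langle x^{2}\rangle$: then $M_i\to M$ in $\MG(F)$ but $F'M_i=F$ for all $i$, so $H_i=F/M_i$ has index $1$; for $A=x-1$ one computes $\rk_{F/M_i}(A)=1-\tfrac{1}{2i+1}$ while $\tfrac12\rk_{H_i}(\widetilde A)=1-\tfrac{1}{2(2i+1)}$, so the identity you invoke is simply false under marked convergence alone. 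This is precisely why the paper's proof of Theorem \ref{Lueck} spends its second half on this case: the locally indicable result only identifies the limit rank with $\rk_{F/M}$ on $\CC[G']$ (where $G'=F'/M$), and extending the resulting $\CC[G']$-isomorphism $\mathcal R_{\CC[G']}\cong \U_{G'}$ to a $\CC[G]$-isomorphism $\mathcal R_{\CC[G]}\cong\U$ requires the crossed-product decomposition $\mathcal R_{\CC[G]}=\mathcal R_{\CC[G']}*G/G'$, semisimplicity, simplicity of $\mathcal R_{\CC[G]}$ when $G$ is ICC, and a final reduction of the general case to the ICC case by passing to $G*\Z$; none of that is formal bookkeeping.

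In the locally indicable case your outline (prove that $\rk_\omega$ is Hughes-free, then quote Theorem \ref{maintheorem} and uniqueness of Hughes-free epic division rings) is reasonable, and you have correctly located the hard point, but your sketch for resolving it cannot be run as an induction. The hypothesis you want the induction to supply, $\rk_\omega|_{\CC[N]}=\rk_{F/M}|_{\CC[N]}$, quantifies over matrices over $\CC[N]$ of arbitrary complexity, so it cannot be the inductive hypothesis of an induction on the complexity of a single element; moreover $N$ is in general infinitely generated, and recursing through splittings of its finitely generated subgroups produces no well-founded scheme. Note that the paper never proves Hughes-freeness of $\rk_\omega$ as an intermediate step (it only follows a posteriori). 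Its key device, absent from your proposal, is to \emph{change the approximating sequence} when descending to $H=N\rtimes\langle t\rangle$: lifting to $F'$ and setting $K_i'=M_i'\cap N'$, Proposition \ref{appchange} shows that $\rk_{\{F'/K_i'\}}$ is, by construction, the natural extension of the restriction of $\rk_{\{F'/M_i'\}}$ to $\CC[N]$, and that $\rk_{\{F'/M_i'\}}\le\rk_{\{F'/K_i'\}}$. This yields the chain $\rk_H\le\rk_{\{F'/M_i'\}}\le\rk_{\{F'/K_i'\}}$, and Corollary \ref{key2} then transfers invertibility of $\Phi(\alpha)$ from the ambient ring $\mathcal R_{\CC[H]}\times\U_H$ (where no natural-extension property is available) to the ambient ring $\mathcal R_{\CC[H]}\times\U'$ built from $\{K_i'\}$, where diagram (\ref{dia6}) and Proposition \ref{key} do apply. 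Because the approximation changes at each descent, the induction in Proposition \ref{maintheoremluck} must run simultaneously over all pairs consisting of a locally indicable group and an approximation of it, with element complexity as the sole induction variable. Without this change-of-approximation mechanism the ``genuinely limiting phenomenon'' you point to remains unproved, and your argument does not close.
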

In this context, by convergence of the rank functions $\rk_{F/M_i}$ to $\rk_{F/M}$ we mean that, for every matrix $A \in \Mat(\CC[F])$, we have
$$
 \lim_{i\rightarrow \infty} \rk_{F/M_i}(A) = \rk_{F/M}( A).  
$$
Our strategy will be to prove that  for any non-principal ultrafilter $\omega$ on $\N$  
$$
 \lim_{\omega} \rk_{F/M_i}= \rk_{F/M} \textrm {\ as Sylvester rank functions on $\CC[F]$.}
$$
During the proof, the capability to compare between all the involved rank functions will play an important role.  
If $R$ is any ring and $\rk_1$, $\rk_2 \in \mathbb{P}(R)$, we write $\rk_1\le \rk_2$ if, for every matrix $A\in \Mat(R)$, we have
$$
\rk_1 (A) \leq \rk_2(A).
$$ 
In this section we fix a non-principal ultrafilter $\omega$ on $\N$. The starting point of our proof is the Kazhdan inequality (see, for example, \cite[Proposition 10.7]{Ja17surv}).
\begin{pro}\label{kazhdan}
With the previous notation $$
 \lim_{\omega} \rk_{F/M_i}\ge \rk_{F/M} \textrm {\ as Sylvester rank functions on $\CC[F]$.}
$$

\end{pro}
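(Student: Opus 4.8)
The plan is to route both ranks through the spectral measure of a single positive self-adjoint matrix over $\CC[F]$ and to prove that the relevant measures converge weakly. Fix $A\in\Mat_{n\times m}(\CC[F])$ and set $B=AA^*\in\Mat_n(\CC[F])$, a self-adjoint element. For any quotient $Q=F/M_i$ or $Q=F/M$ one has $\ker\phi_Q^A=\ker\phi_Q^B$, since $vB=vAA^*=0$ forces $\|vA\|^2=\langle vAA^*,v\rangle=0$; this is just the identity $\ker T=\ker T^*T$ for $T=\phi_Q^A$. Hence, by (\ref{rkG}),
$$
\rk_Q(A)=n-\dim_Q\ker\phi_Q^B=n-\mu_{B,Q}(\{0\}),
$$
where $\mu_{B,Q}$ is the spectral measure of the positive operator $\phi_Q^B$ on $l^2(Q)^n$, normalized by $\mu_{B,Q}(S)=\Tr_Q(\chi_S(\phi_Q^B))$ so that $\mu_{B,Q}(\R)=n$. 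Writing $\mu_i:=\mu_{B,F/M_i}$ and $\mu:=\mu_{B,F/M}$, the statement reduces to proving $\limsup_i\mu_i(\{0\})\le\mu(\{0\})$.

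The first substantial step is to show that all these measures live in a common compact interval and that their moments converge. The operator norm of $\phi_Q^B$ is bounded by the $\ell^1$-norm of the coefficients of $B$, a quantity independent of the quotient $Q$, so there is a constant $C$ with $\supp\mu_i,\supp\mu\subseteq[0,C]$ for every $i$. For each $k\ge 0$ the $k$-th moment of $\mu_{B,Q}$ equals $\Tr_Q(\phi_Q^{B^k})$, which is the sum over $j=1,\dots,n$ of the coefficient of $1_Q$ in the $(j,j)$ entry of $B^k$. Now $B^k$ is a fixed matrix over $\CC[F]$ whose entries are supported in a fixed finite set $S_k\subseteq F$ (contained in a ball of radius $k\cdot r$, where $r$ bounds the support of $B$). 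The definition of convergence in $\MG(F)$ guarantees that for $i$ large enough the elements of $S_k$ lying in $M_i$ are exactly those lying in $M$; thus the coefficient of the identity in $B^k$ is eventually the same in $F/M_i$ as in $F/M$, giving
$$
\lim_i\int \lambda^k\,d\mu_i(\lambda)=\int\lambda^k\,d\mu(\lambda)\qquad(k\ge0).
$$

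Since all the measures have total mass $n$ and are supported in the fixed compact interval $[0,C]$, convergence of every moment implies weak convergence $\mu_i\to\mu$ (polynomials are dense in $C([0,C])$ by Stone--Weierstrass). To extract the atom at $0$, fix $\varepsilon>0$ and take the continuous cutoff $f_\varepsilon(\lambda)=\max\{0,\,1-\lambda/\varepsilon\}$, so that $\chi_{\{0\}}\le f_\varepsilon\le\chi_{[0,\varepsilon]}$. Then
$$
\limsup_i\mu_i(\{0\})\le\limsup_i\int f_\varepsilon\,d\mu_i=\int f_\varepsilon\,d\mu\le\mu([0,\varepsilon]),
$$
and letting $\varepsilon\to0$ yields $\limsup_i\mu_i(\{0\})\le\mu(\{0\})$ by continuity from above. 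Combining this with the reduction of the first paragraph gives $\liminf_i\rk_{F/M_i}(A)\ge\rk_{F/M}(A)$, and since $\lim_\omega\rk_{F/M_i}(A)\ge\liminf_i\rk_{F/M_i}(A)$ for every non-principal ultrafilter $\omega$ (the sequence being bounded in $[0,n]$), the claimed inequality follows for all $A$.

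The norm bound and the Stone--Weierstrass density are routine; the one point demanding care is the bridge between the combinatorial convergence $M_i\to M$ in $\MG(F)$ and the analytic convergence of the spectral measures. Concretely, one must verify that for each $k$ the support $S_k$ of $B^k$ is a fixed finite subset of $F$, so that the traces genuinely stabilize once the balls of the appropriate radius agree; this is exactly where the hypothesis is used, and it is the conceptual heart of the argument even though, once set up, it is elementary.
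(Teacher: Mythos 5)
Your proof is correct. Be aware, though, that the paper never proves Proposition \ref{kazhdan} internally: it invokes the Kazhdan inequality as a known result, citing Proposition 10.7 of the survey \cite{Ja17surv}, so your argument is a self-contained substitute for that citation rather than an alternative to an in-paper proof. What you wrote is the classical L\"uck--Kazhdan semicontinuity mechanism, which is also how the cited source argues: replace $A$ by $B=AA^*$ so that $\rk_Q(A)=n-\mu_{B,Q}(\{0\})$ (the kernel identity $\ker\phi_Q^A=\ker\phi_Q^{AA^*}$ is handled correctly); note that convergence of $\{M_i\}$ to $M$ in $\MG(F)$ forces $M_i\cap S_k = M\cap S_k$ for large $i$, where $S_k$ is the finite support of $B^k$, so the moments $\Tr_{F/M_i}(B^k)$ are eventually equal to $\Tr_{F/M}(B^k)$; conclude weak convergence of the spectral measures from the uniform norm bound, equal total masses, and Stone--Weierstrass; and finally bound the atom at $0$ from above via the continuous cutoff $f_\varepsilon$ and continuity from above of $\mu$. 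The concluding passage from $\liminf_i$ to the ultrafilter limit is also valid, since any ultrafilter limit of a bounded real sequence lies between its $\liminf$ and $\limsup$. In short, the proposal is a faithful and complete expansion of exactly the argument the paper delegates to its reference.
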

\subsection{Comparing Sylvester matrix rank functions}
We present in this subsection some general results on the comparison between rank functions, and some more specific ones regarding the von Neumann rank functions that appear in our context. 
We begin with a useful observation, namely, if we have $\rk_1 \le \rk_2$ on $R$, then this property extends to its division closure $\D$ inside a certain ring.
\begin{pro} \label{comppr} Let $R$ be a ring and let $\{\rk_i\}_{i=1}^n$ be a family of Sylvester rank functions on $R$ such that $\rk_{i}\le \rk_{i+1}$ for any $i$. Assume that  $( S_i,\varphi_i, \rk^\prime_i)$ is an envelope of $\rk_i$, and set $S=\prod S_i$, $\varphi=(\varphi_i):R\to S$. If $\D$ is the division closure of $\varphi(R)$ in $S$ and $\pi_i:\D\to S_i$ is the standard projection, then $$\pi_i^{\sharp}(\rk_i^\prime)\le \pi_{i+1}^{\sharp}(\rk'_{i+1}).$$ In particular, $\pi_{n}$ is injective.
\end{pro}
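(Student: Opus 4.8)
The plan is to reduce every Sylvester rank computation over $\D$ to one over $R$, where the hypothesis $\rk_i\le\rk_{i+1}$ is directly available, by \emph{linearizing} matrices over the division closure. First I would recall that $\D$ is contained in the rational closure of $\varphi(R)$ in $S=\prod_j S_j$; hence, by the standard theory of rational closures (Cohn's rules of operation, which one proves by induction on the filtration $\D=\bigcup_k Q_k$ of Subsection \ref{ss:divclo}, using a Schur complement to absorb each newly adjoined inverse), every matrix $A\in\Mat(\D)$ admits a representation
$$A = E - B\,U^{-1}C \quad\text{in } S,$$
where $E,B,C$ and the square matrix $U$ all have entries in $\varphi(R)$ and $U$ is invertible in $S$. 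The decisive feature is that this identity lives in $S$ and is the \emph{same} for every index $j$; only the rank functions $\rk'_j$ will change when we project.

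Next, I would fix $A$ together with such a linearization. Since invertibility in a product is componentwise, $\pi_j(U)$ is invertible in $S_j$ for each $j$, and applying the ring homomorphism $\pi_j$ gives $\pi_j(A)=\pi_j(E)-\pi_j(B)\pi_j(U)^{-1}\pi_j(C)$ in $S_j$. The Schur-complement factorization
$$\begin{pmatrix} I & -\pi_j(B)\pi_j(U)^{-1} \\ 0 & I\end{pmatrix}\begin{pmatrix}\pi_j(E) & \pi_j(B)\\ \pi_j(C) & \pi_j(U)\end{pmatrix}\begin{pmatrix} I & 0\\ -\pi_j(U)^{-1}\pi_j(C) & I\end{pmatrix} = \begin{pmatrix}\pi_j(A) & 0\\ 0 & \pi_j(U)\end{pmatrix},$$
together with (SMat2), (SMat3) and the fact that an invertible $s\times s$ matrix has rank $s$ (as $s=\rk'_j(I_s)=\rk'_j(MM^{-1})\le\rk'_j(M)\le s$), yields
$$\rk'_j(\pi_j(A)) = \rk'_j\begin{pmatrix}\pi_j(E) & \pi_j(B)\\ \pi_j(C) & \pi_j(U)\end{pmatrix} - s,$$
where $s$ is the size of $U$. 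Since the block matrix has all its entries in $\varphi(R)$, it equals $\varphi(W)$ for a single matrix $W$ over $R$; as $\pi_j\circ\varphi=\varphi_j$, this gives $\rk'_j(\pi_j(A))=\rk'_j(\varphi_j(W))-s=\rk_j(W)-s$, with $W$ and $s$ depending only on $A$, not on $j$.

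Applying the last identity for $j=i$ and $j=i+1$ and invoking $\rk_i\le\rk_{i+1}$ on $R$, I obtain
$$\pi_i^{\sharp}(\rk'_i)(A) = \rk_i(W)-s \le \rk_{i+1}(W)-s = \pi_{i+1}^{\sharp}(\rk'_{i+1})(A),$$
which is the claimed inequality. For the final assertion I would use faithfulness of the envelopes: given $x\in\D$ with $\pi_n(x)=0$, the chain $\pi_i^{\sharp}(\rk'_i)\le\cdots\le\pi_n^{\sharp}(\rk'_n)$ just established gives $\rk'_i(\pi_i(x))\le\rk'_n(\pi_n(x))=0$ for every $i$; faithfulness of each $\rk'_i$ then forces $\pi_i(x)=0$ for all $i$, i.e. $x=0$ in $S$. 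Hence $\pi_n$ is injective.

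The main obstacle is the linearization step: one must produce, for an arbitrary matrix over $\D$, a denominator matrix $U$ and numerator blocks defined over $\varphi(R)$ (rather than over some intermediate $Q_k$), and verify that one and the same representation serves all components $S_j$ simultaneously. This is precisely where closure of the class of linearizable matrices under sums, products and inversion-in-$S$ is needed; once it is in place, everything else is a formal Schur-complement manipulation valid for an arbitrary Sylvester matrix rank function, and faithfulness enters only in the concluding injectivity statement.
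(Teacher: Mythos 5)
Your proof is correct and follows essentially the same route as the paper: the paper invokes Cohn's Proposition 4.2.2 together with Cramer's rule to write $A\oplus I_k = PA'Q$ with $A'$ over $\varphi(R)$ and $P,Q$ invertible over $\D$, which is exactly the stable-association statement that your linearization $A = E - BU^{-1}C$ plus the Schur-complement factorization re-proves by hand. In both arguments the decisive point is the same — the matrix over $R$ and the integer ($k$, respectively $s$) do not depend on the index $j$, so the inequality $\rk_i\le\rk_{i+1}$ on $R$ transfers to $\D$ — and the concluding injectivity argument via faithfulness of the $\rk_i'$ is identical to the paper's.
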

 \begin{proof}
 Let $A$ be a matrix over $\D$.
 By \cite[Proposition 4.2.2]{CohSF} and Cramer's rule (\cite[Proposition 4.2.3]{CohSF}), there exist $k\ge 1$, a matrix $A'$ over $\varphi(R)$ and invertible matrices $P$, $Q$ over $\D$ such that
 $$
    A \oplus I_k = PA'Q.
 $$ 
Suppose $A' = \varphi(B)$ for some $B\in \Mat(R)$. This implies that
 $$\rk_i'(\pi_i(A))+k = \rk_i'(\pi_i(A')) = \rk_i'(\varphi_i(B)) = \rk_i(B) $$
and the first assertion of the proposition follows because $\rk_i \leq \rk_{i+1}$ for all $i$. Finally, if $d = (d_i)\in \D$ is such that $\pi_n(d) = 0$, then by faithfulness of $\rk_i'$ we deduce that $\pi_i(d) = 0$ for all i, and hence $d = 0$. 
 \end{proof}
 Notice that in the previous proposition the image of the standard projection $\pi_i$ actually lies in $\D_i$, the division closure of $\varphi_i(R)$ in $S_i$. This is due to the fact that every element $(a_i)\in \mathcal \D$ is obtained from  $\varphi(R)$ by taking sums, products and inverses, so every $a_i$ is obtained from $\varphi_i(R)$ by means of the same operations. Thus, we can consider $\pi_i: \D\rightarrow \D_i$, what will be used in the subsequent consequences.  

\begin{cor} \label{commphi}
 Let $G$ be a group, and let $\rk_1, \rk_2$ be regular Sylvester matrix rank functions on $R = \CC[G]$ with regular envelopes $(\mathcal U_i, \rk_i',\varphi_i)$, $i = 1, 2$. Let $\D_{R,\mathcal U}$ and $\D_{R,1}$ be the division closures of the image of $R$ in $\mathcal U = \mathcal U_1 \times \mathcal U_2$ and $\mathcal U_1$, respectively. Then, if $\rk_1\le \rk_2$, the following diagram commutes
 $$
   \xymatrix{ \Rat (\CC^{\times}G) \ar@{->}[r]^-{\Phi_{\mathcal U}} \ar@{->}[rd]_{\Phi_1} & \D_{R, \mathcal U} \ar@{->}^{\pi_1}[d] \\
      & \D_{R,1}
     }
 $$
where $\Phi_{\mathcal U}$ and $\Phi_1$ are the universal morphisms induced by the $\CC^{\times}G$-rational structures on $\D_{R,\mathcal U}$ and $\D_{R,1}$ described in subsection \ref{ss:divclo}, and $\pi_1$ is the standard projection. 
\end{cor}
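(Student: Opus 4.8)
The plan is to show that the standard projection restricts to a morphism of rational $\CC^{\times}G$-semirings $\pi_1\colon \D_{R,\mathcal U}\to \D_{R,1}$; once this is established, the composite $\pi_1\circ \Phi_{\mathcal U}$ is a morphism of rational $\CC^{\times}G$-semirings from $\Rat(\CC^{\times}G)$ to $\D_{R,1}$, and the uniqueness part of Lemma \ref{Phimap} forces $\pi_1\circ\Phi_{\mathcal U}=\Phi_1$, which is precisely the commutativity asserted by the diagram.

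First I would check that $\pi_1$ indeed lands in $\D_{R,1}$. Writing $\D_{R,\mathcal U}=\bigcup_n Q_n$ and $\D_{R,1}=\bigcup_n Q_n^{(1)}$ for the two stagewise constructions of Subsection \ref{ss:divclo} (with $Q_0=\varphi(R)$ and $Q_0^{(1)}=\varphi_1(R)$), an easy induction yields $\pi_1(Q_n)\subseteq Q_n^{(1)}$: the base case is $\pi_1(\varphi(R))=\varphi_1(R)$, and in the inductive step, if $x=(x_1,x_2)\in Q_n$ is invertible in $\mathcal U$ then $x_1=\pi_1(x)$ is invertible in $\mathcal U_1$ with $\pi_1(x^{-1})=x_1^{-1}$, so the projection of an inverse is again the inverse of an element of $Q_n^{(1)}$. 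Hence $\pi_1\colon \D_{R,\mathcal U}\to\D_{R,1}$ is a well-defined ring homomorphism; being a homomorphism of $\CC[G]$-rings, it automatically respects addition, multiplication, the identity, and the two-sided $\CC^{\times}G$-action.

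The remaining structure to verify is the rational operation $a\mapsto a^{\diamond}$, and this is exactly where the hypothesis $\rk_1\le\rk_2$ enters. Recall that in a division closure an element is invertible precisely when it is invertible in the ambient regular ring, so for $a=(a_1,a_2)\in\D_{R,\mathcal U}$ invertibility means that $a_1$ and $a_2$ are simultaneously invertible in $\mathcal U_1$ and $\mathcal U_2$; by Lemma \ref{faithreg} this amounts to $\rk_1^{\prime}(a_1)=1$ and $\rk_2^{\prime}(a_2)=1$. Applying Proposition \ref{comppr} with $n=2$ gives $\pi_1^{\sharp}(\rk_1^{\prime})\le\pi_2^{\sharp}(\rk_2^{\prime})$ on $\D_{R,\mathcal U}$, i.e. $\rk_1^{\prime}(a_1)\le\rk_2^{\prime}(a_2)$ for every $a=(a_1,a_2)$. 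If $a$ is invertible in $\D_{R,\mathcal U}$, then $a^{\diamond}=a^{-1}=(a_1^{-1},a_2^{-1})$, so $\pi_1(a^{\diamond})=a_1^{-1}=\pi_1(a)^{\diamond}$. If $a$ is not invertible, then $a^{\diamond}=0$ and $\pi_1(a^{\diamond})=0$; here I claim $a_1=\pi_1(a)$ cannot be invertible in $\D_{R,1}$, for otherwise $\rk_1^{\prime}(a_1)=1$ would force $\rk_2^{\prime}(a_2)\ge 1$, hence $\rk_2^{\prime}(a_2)=1$ since a single element has rank at most $\rk_2^{\prime}(1)=1$, making $a_2$ and therefore $a$ invertible, a contradiction. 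Thus $\pi_1(a)^{\diamond}=0=\pi_1(a^{\diamond})$.

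The step I expect to be the crux is this last case distinction: the a priori possibility that a non-invertible $a$ projects to an invertible $a_1$ is exactly what the comparison $\rk_1\le\rk_2$, transported to the division closure by Proposition \ref{comppr}, rules out. With $\pi_1$ shown to preserve $\diamond$, it is a morphism of rational $\CC^{\times}G$-semirings, and the argument concludes by the uniqueness in Lemma \ref{Phimap} as outlined above.
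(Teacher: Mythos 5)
Your proof is correct and takes essentially the same route as the paper's: both use Proposition \ref{comppr} to transport $\rk_1\le\rk_2$ to the inequality $\rk_1'(a_1)\le\rk_2'(a_2)$ on $\D_{R,\mathcal U}$, deduce from regularity (Lemma \ref{faithreg}) that $a$ is invertible in $\D_{R,\mathcal U}$ if and only if $\pi_1(a)$ is invertible in $\D_{R,1}$, conclude that $\pi_1$ preserves the rational operation $\diamond$, and finish by the uniqueness in Lemma \ref{Phimap}. The extra details you supply (the stagewise induction showing $\pi_1$ lands in $\D_{R,1}$, and the explicit case analysis for $\diamond$) merely spell out what the paper leaves implicit.
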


\begin{proof}
  For any $d=(d_1,d_2)\in \D_{R,\mathcal U}$, we have $\rk_1'(d_1)\le \rk_2'(d_2)$ by Proposition \ref{comppr}. Since $\mathcal{U}_2$ is regular, we deduce from here that $d$ is invertible in $\D_{R,\mathcal U}$ if and only if $\pi_1(d) = d_1$ is invertible in $\D_{R,1}$. This implies that $\pi_1$ respects the rational operation, i.e., $\pi_1(d^{\diamond}) = \pi_1(d)^{\diamond}$, and therefore, it is also a morphism of rational $\CC^{\times}G$-semirings. By the uniqueness of $\Phi_1$, necessarily $\Phi_1 = \pi_1 \circ \Phi_{\mathcal U}$.
\end{proof}
The following corollary is a technical result which will be essential for the proof of the L\"uck approximation conjecture.

\begin{cor} \label{key2}
 Let $H$ be a group, and let $\rk_1, \rk_2, \rk_3$ be regular Sylvester matrix rank functions on $R = \CC[H]$ with regular envelopes $(\mathcal U_i, \rk_i',\varphi_i)$, $i = 1, 2, 3$. Let $\D_{R,12}$ and $\D_{R,13}$ be the division closures of the image of $R$ in $\mathcal U_1 \times \mathcal U_2$ and $\mathcal U_1 \times \mathcal U_3$, respectively. Assume that $\rk_1\le \rk_2 \le \rk_3$ and consider the universal morphisms 
 $$
 \begin{matrix}
   \Phi_{12}: \Rat(\CC^{\times}H)\rightarrow \D_{R,12} & \Phi_{13}: \Rat(\CC^{\times}H)\rightarrow \D_{R,13}
 \end{matrix}  
 $$ 
Then, for any $\alpha \in \Rat(\CC^{\times}H)$, if $\Phi_{12}(\alpha)$ is non-zero then $\Phi_{13}(\alpha)$ is non-zero. Moreover, $\Phi_{12}(\alpha)$ is invertible if and only if $\Phi_{13}(\alpha)$ is invertible.
\end{cor}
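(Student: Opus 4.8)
The plan is to realize both $\Phi_{12}$ and $\Phi_{13}$ as coordinate projections of a single universal morphism defined over the common refinement $\mathcal U_1 \times \mathcal U_2 \times \mathcal U_3$. First I would set $\mathcal U = \mathcal U_1 \times \mathcal U_2 \times \mathcal U_3$, take $\varphi = (\varphi_1,\varphi_2,\varphi_3)\colon R \to \mathcal U$, and let $\D_{R,123}$ be the division closure of $\varphi(R)$ in $\mathcal U$, equipped with the $\CC^{\times}H$-rational structure of Subsection \ref{ss:divclo} and its universal morphism $\Phi_{123}\colon \Rat(\CC^{\times}H) \to \D_{R,123}$. Since $\rk_1 \le \rk_2 \le \rk_3$, Proposition \ref{comppr} applied to this chain yields, for every $d = (d_1,d_2,d_3)\in \D_{R,123}$, the sandwich
$$ \rk_1'(d_1)\le \rk_2'(d_2)\le \rk_3'(d_3). $$
A short induction on the construction of the division closure shows that the projections $p_{12}\colon \mathcal U \to \mathcal U_1\times\mathcal U_2$ and $p_{13}\colon \mathcal U\to\mathcal U_1\times\mathcal U_3$ carry $\D_{R,123}$ into $\D_{R,12}$ and $\D_{R,13}$, respectively.

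The key step, and the main obstacle, is to prove that $p_{12}$ and $p_{13}$ preserve invertibility and hence are morphisms of rational $\CC^{\times}H$-semirings. By Lemma \ref{divreg} together with Lemma \ref{faithreg}, an element $d=(d_1,d_2,d_3)$ of $\D_{R,123}$ is invertible precisely when $\rk_i'(d_i)=1$ for all $i$, and $p_{12}(d)$ (resp.\ $p_{13}(d)$) is invertible precisely when the two relevant components have rank $1$. The sandwich above makes these conditions equivalent: if $\rk_1'(d_1)=\rk_2'(d_2)=1$ then $\rk_3'(d_3)\ge \rk_2'(d_2)=1$ forces $\rk_3'(d_3)=1$, and if $\rk_1'(d_1)=\rk_3'(d_3)=1$ then $1=\rk_1'(d_1)\le \rk_2'(d_2)\le \rk_3'(d_3)=1$ forces $\rk_2'(d_2)=1$. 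Thus $d$ is invertible if and only if $p_{12}(d)$ is, if and only if $p_{13}(d)$ is. Consequently both projections respect the rational operation $a\mapsto a^{\diamond}$ (the inverse when invertible, $0$ otherwise) and, being ring homomorphisms compatible with the $\CC^{\times}H$-biset structure, they are morphisms of rational $\CC^{\times}H$-semirings. The uniqueness in Lemma \ref{Phimap} then forces $\Phi_{12}=p_{12}\circ \Phi_{123}$ and $\Phi_{13}=p_{13}\circ \Phi_{123}$.

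With this identification both assertions fall out at once. Writing $d=\Phi_{123}(\alpha)$, so that $\Phi_{12}(\alpha)=(d_1,d_2)$ and $\Phi_{13}(\alpha)=(d_1,d_3)$: if $\Phi_{12}(\alpha)\ne 0$ then either $d_1\ne 0$, whence $\Phi_{13}(\alpha)\ne 0$ trivially, or $d_2\ne 0$, whence faithfulness gives $\rk_2'(d_2)>0$ and the sandwich gives $\rk_3'(d_3)\ge \rk_2'(d_2)>0$, so $d_3\ne 0$ and again $\Phi_{13}(\alpha)\ne 0$. The invertibility equivalences established above give immediately that $\Phi_{12}(\alpha)$ is invertible if and only if $d$ is, if and only if $\Phi_{13}(\alpha)$ is. I expect the delicate point to be precisely this invertibility-preservation, since in the two pairwise envelopes there is a priori no relation between the $\mathcal U_2$- and $\mathcal U_3$-components; passing to the triple refinement is exactly what allows Proposition \ref{comppr} to sandwich $\rk_2'$ between $\rk_1'$ and $\rk_3'$ and thereby link the two full-rank conditions.
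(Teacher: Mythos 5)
Your proposal is correct and follows essentially the same route as the paper: pass to the triple product $\mathcal U_1\times\mathcal U_2\times\mathcal U_3$, use Proposition \ref{comppr} to sandwich the ranks of the components of elements of the division closure, show the projections onto $\D_{R,12}$ and $\D_{R,13}$ preserve the rational structure (the paper does this by citing the argument of Corollary \ref{commphi}, which you spell out explicitly), and then invoke uniqueness of the universal morphism to identify $\Phi_{12}$ and $\Phi_{13}$ with the composites, from which both assertions follow. The only cosmetic difference is that the paper phrases the invertibility step as "both are invertible if and only if $a_1$ is invertible," while you phrase it via the full-rank conditions; these are the same argument.
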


\begin{proof}
 Let $\mathcal U = \mathcal U_1\times\mathcal U_2\times\mathcal U_3$, $\varphi = (\varphi_i)$, $\D_{R,\mathcal U}$ the division closure of $\varphi(R)$ in $\mathcal U$, and $\Phi_{\mathcal{U}}: \Rat(\CC^{\times}H)\rightarrow \D_{R,\mathcal U}$ the universal morphism. Denote by $\pi_{12}$ and $\pi_{13}$ the natural projections from $\D_{R,\mathcal U}$ to $\D_{R,12}$ and $\D_{R,13}$, respectively. Reasoning as in Corollary \ref{commphi}, both $\pi_{12}$ and $\pi_{13}$ preserve the corresponding rational structure, and this implies that we have a commutative diagram
 $$
   \xymatrix{ & \ar@{->}[dl]_{\pi_{12}} \D_{R,\mathcal U} \ar@{->}[rd]^{\pi_{13}} & \\
       \D_{R,12}  & \ar@{->}[l]_{\Phi_{12}} \Rat(\CC^{\times}H) \ar@{->}[r]^{\Phi_{13}} \ar@{->}[u]_{\Phi_{\mathcal U}} & \D_{R,13}  
     }
 $$
Take any $\alpha \in \Rat(\CC^{\times}H)$ and set $\Phi_{\mathcal U}(\alpha) = (a_1,a_2,a_3)$. Hence, we have that 
 $$
 \begin{matrix}
    \Phi_{12}(\alpha) = (a_1, a_2), & \Phi_{13}(\alpha) = (a_1,a_3),
  \end{matrix} 
 $$ 
and also, by Proposition \ref{comppr}, $\rk_1'(a_1)\leq \rk_2'(a_2) \leq \rk_3'(a_3)$. Thus, if $\Phi_{12}(\alpha)$ is non-zero, then $\rk_3'(a_3)>0$, and hence $\Phi_{13}(\alpha)$ is non-zero. In addition, since $\U_i$ is regular for any $i$, both of them are invertible if and only if $a_1$ is invertible. 
  
\end{proof}
The motivation and necessity of this result lies in the fact that, at a certain point, we will need to prove that an element, expressible as $\Phi_{12}(\alpha)$, is invertible. Corollary \ref{key2} will then allow us to pass from the ambient $\mathcal U_1 \times \mathcal U_2$ to an appropriate ambient $\mathcal P = \mathcal U_1 \times \mathcal U_3$ on which the conditions of Proposition \ref{key} are satisfied, and therefore, to tackle instead the invertibility of the non-zero element $\Phi_{13}(\alpha)$ by means of induction on the complexity.

Now we turn to a  specific case we are going to deal with later. Let $F$ be a finitely generated free group and assume that $(M_i)_i$ converges to $M$ in the space of marked groups of $F$. 
   Assume that there exists a normal subgroup $N$ of $F$ such that $M\leq N$ and $F/N \cong \Z$.   We put  $K_i=M_i \cap N$. It is clear that $(K_i)_i$ also    converges to $M$. 
In the proof of  Theorem \ref{Lueck} we will pass from $(M_i)_i$ to  $(K_i)_i$ and, therefore, to get the result we need to know the relation between the following rank functions on $\CC[F]$
$$
\rk_{(F/M_i)_i} = \displaystyle \lim_{\omega}  \rk_{F/M_i} \textrm{ and } \rk_{(F/K_i)_i}=\displaystyle \lim_{\omega} \rk_{F/K_i}.$$

 The next lemmas will be useful to our purpose.

\begin{lem}\label{natextt+1}
Let $\U$ be a regular ring, $\rk$ a Sylvester matrix rank function on $\U$, and let $\wrk$ denote the natural extension of $\rk$ to $\U[t]$. Then, for any matrix $A = A(t)\in \Mat(\U[t])$,
$$
 \wrk(A(t)) = \wrk(A(t+1)).
$$ 
\end{lem}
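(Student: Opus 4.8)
The plan is to recognize the substitution $t\mapsto t+1$ as the ring automorphism $\sigma$ of $\mathcal U[t]$ fixing $\mathcal U$ pointwise with $\sigma(t)=t+1$, and to read the assertion as the $\sigma$-invariance of $\wrk$. Writing $\rho:=\sigma^{\sharp}(\wrk)$ for the pulled-back Sylvester matrix rank function on $\mathcal U[t]$, so that $\rho(A(t))=\wrk(A(t+1))$, the claim becomes exactly $\rho=\wrk$ on $\mathcal U[t]$. Since $\sigma$ fixes $\mathcal U$, $\rho$ restricts to $\rk$ on $\mathcal U$; and since $\rho(t)=\wrk(t+1)=\wrk(I_1+1\cdot t)=1$ by Proposition \ref{prop7.7}, the element $t$ is full for $\rho$, so $\rho$ extends to a Sylvester matrix rank function on $\mathcal U[t^{\pm1}]$ by \cite[Corollary 5.5]{Ja17base}. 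Because $\tau=\mathrm{id}$ here, $\rk$ is trivially $\tau$-compatible and $\wrk$ is its natural extension to $\mathcal U[t^{\pm1}]$; hence, by the characterization in Proposition \ref{prop7.7}, it suffices to verify
$$\rho(I_n+At)=n\qquad\text{for every }A\in\Mat_n(\mathcal U),$$
which is the same as $\wrk\big(I_n+A(1+t)\big)=n$ for every $A\in\Mat_n(\mathcal U)$.

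To establish this last identity I would unwind the definition of the natural extension. Right multiplication by $1+t$ on the free $\mathcal U$-module $\mathcal U[t]/\mathcal U[t]t^{i}$ (with basis $1,t,\dots,t^{i-1}$) is the invertible unipotent operator $J_i=I_i+S_i$, where $S_i$ is the nilpotent shift; hence the matrix of $\phi^{I_n+A(1+t)}_{\mathcal U,i}$ is $I_{ni}+A\otimes J_i$, and
$$\wrk_i\big(I_n+A(1+t)\big)=\tfrac1i\,\rk\big(I_{ni}+A\otimes J_i\big).$$
As $\wrk\le n$ on an $n\times n$ matrix, it remains to show that the corank $ni-\rk(I_{ni}+A\otimes J_i)$ stays bounded as $i\to\infty$, for then $\wrk_i\to n$. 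The guiding reason is that $I_n+A(1+t)$ specializes at $t=-1$ to $I_n$: indeed $I_n+A(1+t)$ is invertible over the completion $\mathcal U[[t+1]]$ by the Neumann series $\sum_{k\ge0}(-A(1+t))^{k}$, so its cokernel $C=\mathcal U[t]^{n}/\mathcal U[t]^{n}(I_n+A(1+t))$ is a bounded $t$-power-torsion $\mathcal U[t]$-module, and $\dim_{\rk}(C/t^{i}C)=ni-\rk(I_{ni}+A\otimes J_i)$ stabilizes.

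The step I expect to be the main obstacle is precisely this uniform corank bound over a general regular ring $\mathcal U$. Over a field it is immediate: $\det(I_n+A(1+t))$ is a nonzero polynomial, since its value at $t=-1$ equals $\det(I_n)=1$, so $C$ is a finite-dimensional torsion module and $\dim(C/t^{i}C)$ is eventually constant. Reproducing this over an arbitrary regular ring equipped with a Sylvester rank function requires replacing the determinant/torsion argument by the length-function formalism of Proposition \ref{extlength}, showing that the associated normalized length $\wdim$ of the localized cokernel $C[t^{-1}]$ vanishes. A convenient alternative is the Chinese Remainder decomposition $\mathcal U[t]/(t^{i}(t-1)^{i})\cong \mathcal U[t]/(t^{i})\times \mathcal U[t]/((t-1)^{i})$ combined with $\sigma$-equivariance, which reduces the equality of the two normalized limits to the insensitivity of $\tfrac{1}{\deg f}\rk(\phi^{A}_{\mathcal U[t]/(f)})$ to the choice of a monic modulus $f$ of large degree. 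Either way, the real content is to promote the field-theoretic ``generic fullness at $t=0$'' to the Sylvester-rank setting, and that is where the care is needed.
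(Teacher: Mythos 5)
Your reduction is valid: pulling $\wrk$ back along the automorphism $\sigma$ of $\U[t]$ that fixes $\U$ and sends $t\mapsto t+1$, checking that $\rho=\sigma^{\sharp}(\wrk)$ restricts to $\rk$ on $\U$ and satisfies $\rho(t)=\wrk(1+t)=1$, extending $\rho$ to $\U[t^{\pm 1}]$ via \cite[Corollary 5.5]{Ja17base}, and invoking Proposition \ref{prop7.7} does correctly reduce the lemma to the identity $\wrk\bigl((I_n+A)+At\bigr)=n$ for every constant matrix $A\in\Mat_n(\U)$. But that identity is exactly where your proposal stops being a proof: you yourself flag it as ``the main obstacle'' and offer only heuristics. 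Moreover, the heuristic you do give is false: the cokernel $C=\U[t]^n/\U[t]^n\bigl(I_n+A(1+t)\bigr)$ is not $t$-power-torsion, even over a field. (Take $n=1$ and $A=a$ with $a\neq 0,-1$: then $C\cong \U[t]/\bigl((1+a)+at\bigr)$ and $t$ acts invertibly on $C$; the torsion of $C$ sits at the zeros of $\det\bigl(I_n+A(1+t)\bigr)$, which need not include $t=0$.) Neither of your two suggested repairs (vanishing of $\wdim$ of a localized cokernel, or a Chinese-remainder comparison of moduli) is carried out, and there is a structural reason the truncated computation resists completion: the truncation $\U[t]/\U[t]t^i$ is not $\sigma$-stable, since $\sigma(t^i)=(t+1)^i\not\equiv t^i \pmod{t^i}$, so the substitution automorphism --- the only symmetry available --- cannot be brought to bear on the coranks $ni-\rk(I_{ni}+A\otimes J_i)$ in your formula. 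In effect, the special case you are left with is just as hard as the lemma itself.

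The paper's proof avoids truncation altogether. It uses the formula of \cite[Lemma 7.3]{Ja17base}: $\wrk(A(t))=\lim_{i\to\infty}\dim\bigl((Q_{i-1})^nA(t)\bigr)/i$, where $Q_{i-1}\subseteq\U[t]$ is the set of polynomials of degree at most $i-1$ and $\dim$ is the length function on $\U$-modules associated to $\rk$. The set $Q_{i-1}$ is $\sigma$-stable (a polynomial has degree at most $i-1$ in $t$ if and only if it does in $t+1$), and $\sigma$ is a $\U$-module automorphism of $\U[t]^m$, so it restricts to a $\U$-module isomorphism $(Q_{i-1})^nA(t)\cong(Q_{i-1})^nA(t+1)$; since $\dim$ is a $\U$-module invariant, the two limits coincide and the lemma follows in a few lines. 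If you want to salvage your route, the cleanest fix is to prove your remaining identity by this same degree-filtration formula rather than via the endomorphism rings of $\U[t]/\U[t]t^i$ --- but at that point the reduction through Proposition \ref{prop7.7} buys you nothing, since the degree-filtration argument proves the general statement directly.
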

\begin{proof}
 By \cite[Lemma 7.3]{Ja17base}, we have that  for any $A\in \Mat_{n\times m}(\U[t])$, 
 $$
   \wrk(A(t)) = \lim_{i\rightarrow \infty} \frac{\dim ((Q_{i-1})^nA(t))}{i},
 $$
where $Q_{i-1}$ is the set of polynomials in $\U[t]$ of degree at most $i-1$ {and $\dim$ denotes the Sylvester module rank function associated to $\rk$}. Observe that, as a set, $Q_{i-1}$ coincides with $Q_{i-1}(t+1)$, the set of polynomials of degree at most $i-1$ in the indeterminate $t+1$, and that there exists a natural $\U$-isomorphism $(Q_{i-1})^nA(t) \cong (Q_{i-1}(t+1))^nA(t+1)$. Therefore 
$$
 \wrk(A(t)) = \lim_{i\rightarrow \infty} \frac{\dim ((Q_{i-1})^nA(t))}{i} = \lim_{i\rightarrow \infty} \frac{\dim ((Q_{i-1})^nA(t+1))}{i} = \wrk(A(t+1)).
$$
\end{proof}
\begin{lem}\label{claim3.4}
 Let $G$ be a group and assume that $G = H\times \Z$ for some $H\leq G$. If $\pi: \CC[G]\rightarrow \CC[H]$ is the map induced by the projection $\pi: G\rightarrow G/\Z\cong H$, then $\pi^{\sharp}(\rk_H) \le \rk_G$.
\end{lem}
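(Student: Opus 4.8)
The plan is to reduce to the finitely generated case and then exploit the fact that, for $G = H \times \Z$, the rank $\rk_G$ is the natural extension of $\rk_H$ to a Laurent polynomial ring. First I would fix a matrix $A$ and note that all group elements occurring in $A$ lie in $H_0 \times \langle t\rangle$ for some finitely generated $H_0 \le H$, where $t$ generates the $\Z$-factor. Since $\rk_G(A)=\rk_{H_0\times\Z}(A)$ and $\rk_H(\pi(A))=\rk_{H_0}(\pi(A))$ by the definition of the von Neumann rank on non-finitely-generated groups, I may assume $H=H_0$ is finitely generated. Then $G_0:=H\times\Z$ is finitely generated and indicable with $G_0/H\cong\Z$, so Proposition \ref{mainexample} gives that $\rk_G$, restricted to $\CC[G_0]=\CC[H][t^{\pm1}]$, equals the natural extension $\wrk$ of $\rk_H$. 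Here the conjugation automorphism $\tau$ is trivial because the product is direct, so $\CC[G_0]$ is the ordinary Laurent polynomial ring and $\pi$ is evaluation at $t=1$. The statement to prove thus becomes $\rk_H(A(1))\le \wrk(A(t))$.

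Since $t$ is invertible, multiplying $A$ by a suitable power of $t$ changes neither $\wrk(A)$ (multiplication by the invertible matrix $t^N I$ preserves the rank) nor $\pi(A)=A(1)$ (as $\pi(t)=1$), so I may assume $A\in\Mat_{n\times m}(\CC[H][t])$ is an honest polynomial matrix. The bridge between ``evaluation at $1$'' and ``constant term'' is Lemma \ref{natextt+1}: applied through the regular envelope $\mathcal U_H=\mathcal R_{\CC[H]}$ of $\rk_H$ (the lemma requires a regular ring, and by Proposition \ref{prop7.4} the natural extension transports along $\CC[H]\hookrightarrow \mathcal U_H$), it yields $\wrk(A(t))=\wrk(A(t+1))$. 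Writing $C(t)=A(t+1)=\sum_{j\ge 0}C_jt^j$, its constant term is $C_0=A(1)=\pi(A)$, so it remains to show that $\wrk(C)\ge \rk_H(C_0)$.

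For this last inequality I would return to the approximants $\wrk_i$. By definition $\wrk_i(C)=\tfrac1i\,\rk_H(B_i)$, where $B_i\in\Mat_{in\times im}(\CC[H])$ is the matrix of right multiplication by $C$ on the free $\CC[H]$-module $(\CC[H][t]/(t^i))^{\,\cdot}$ with respect to the basis $1,t,\dots,t^{i-1}$. A direct computation (using $\tau=\mathrm{id}$, so that $t^jC\equiv\sum_{l\ge j}C_{l-j}t^l \pmod{t^i}$) shows that in the natural $i\times i$ block decomposition $B_i$ is block-triangular Toeplitz with every diagonal block equal to $C_0$. Applying property (SMat4) repeatedly, after a reversing permutation of block rows and columns which preserves the rank and brings $B_i$ to block upper-triangular form, gives $\rk_H(B_i)\ge i\,\rk_H(C_0)$, hence $\wrk_i(C)\ge \rk_H(C_0)$ for every $i$. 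Letting $i\to\infty$ yields $\wrk(C)\ge \rk_H(C_0)$, and combining the equalities above we conclude
$$
\rk_H(\pi(A))=\rk_H(C_0)\le \wrk(C)=\wrk(A)=\rk_G(A).
$$

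The main obstacle, and the conceptual heart of the argument, is the mismatch between the two natural operations: the diagonal-block estimate via (SMat4) only controls $\wrk$ by the \emph{constant term} $A(0)$, whereas $\pi$ records the \emph{sum of coefficients} $A(1)$. Reconciling them is exactly what the shift $t\mapsto t+1$ achieves, which is why the shift-invariance of the natural extension (Lemma \ref{natextt+1}) is the decisive input; the remaining care is the passage from the Laurent to the polynomial setting (clearing denominators by a power of $t$) and the correct identification of the block-Toeplitz shape of the operator defining $\wrk_i$.
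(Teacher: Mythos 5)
Your proof is correct and follows the same route as the paper: identify $\rk_G$ with the natural extension $\wrk_H$ via Proposition \ref{mainexample}, reduce to polynomial matrices, apply the shift-invariance of Lemma \ref{natextt+1}, and finish with the inequality ``natural extension $\ge$ rank of the constant term.'' The paper's proof is exactly this skeleton, except that it outsources the last inequality to an external reference (Proposition 7.6 of \cite{Ja17base}), whereas you prove it from scratch: the identification of $\phi^{C}_{R,i}$ as a block-triangular Toeplitz matrix with diagonal blocks $C_0$, followed by repeated use of (SMat4), is precisely the content of that cited result, so your argument is self-contained where the paper's is not. You also tidy up two points the paper glosses over: the reduction to finitely generated $H$ (needed to invoke Proposition \ref{mainexample} literally, since it is stated for finitely generated indicable subgroups), and the fact that Lemma \ref{natextt+1} is stated for regular rings, which you handle correctly by transporting the natural extension through the regular envelope $\mathcal R_{\CC[H]}$ using Proposition \ref{prop7.4}. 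Both additions are legitimate and make the argument more complete than the paper's own write-up.
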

\begin{proof}
 Observe that $\CC[G] = (\CC[H])[t^{\pm 1}]$, and that for any polynomial $p=\sum a_it^i$ with coefficients in $\CC[H]$, $\pi(p) = \sum a_i$. By Proposition \ref{mainexample}, we know that as rank functions over $\CC[G]$, $\rk_G$ is the natural extension of $\rk_H$. Therefore, we want to show that, for any matrix $A = A_0+A_1t+\dots+A_nt^n\in \Mat(\CC[H][t])$, 
 $$
 \wrk_H(A)\myeq{Lemma \ref{natextt+1}}  \wrk_H(A(t+1))\ge \rk_H\left(\sum_{i=0}^n A_i\right),
 $$
what follows from   \cite[Proposition 7.6]{Ja17base}.
\end{proof}
 Now we are ready to compare $\rk_{(F/M_i)_i}$   and $\rk_{(F/K_i)_i}$. 
\begin{pro} \label{appchange}
 Let $F$ be a finitely generated free group and assume that $(M_i)_i$ converges to $M$ in the space of marked groups of $F$. Assume that there exists a normal subgroup $N$ of $F$ such that $M\leq N$ and $F/N \cong \Z$, and set $K_i=M_i \cap N$. Then the following holds:
 \begin{itemize}
   \item[(1)] $\rk_{(F/K_i)_i}$ is the natural extension of the restriction of $\rk_{(F/M_i)_i}$ to $\CC[N]$.
   \item[(2)] $\rk_{(F/M_i)_i} \leq \rk_{(F/K_i)_i}$.
 \end{itemize}
\end{pro}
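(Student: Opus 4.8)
The plan is to prove each statement first for a fixed index $i$ and then take the $\omega$-limit. Note first that $K_i=M_i\cap N$ is normal in $F$, as an intersection of two normal subgroups, so $F/K_i$ is a finitely generated group, and it is indicable since it surjects onto $F/N\cong\Z$. Writing $\bar N=N/K_i$ and choosing a lift $t\in F$ of a generator of $F/N$, we obtain $F/K_i=\bar N\rtimes_\tau\langle \bar t\rangle$ with $\tau$ induced by conjugation by $t$, so that $\CC[F/K_i]=\CC[\bar N][t^{\pm1},\tau]$ in a way compatible with $\CC[F]=\CC[N][t^{\pm1},\tau]$. Finally $F/M_i=(F/K_i)/(M_i/K_i)$, and $M_i/K_i\cong M_iN/N$ is a subgroup of $F/N\cong\Z$, hence either trivial or infinite cyclic.

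For (1), I would first identify $\rk_{F/M_i}|_{\CC[N]}$. The image of $N$ in $F/M_i$ is $NM_i/M_i\cong N/K_i$, and since the von Neumann rank is unchanged under the inclusion of a subgroup, $\rk_{F/M_i}|_{\CC[N]}=\rk_{N/K_i}$. Applying Proposition \ref{mainexample} to the finitely generated indicable group $F/K_i$ with normal subgroup $N/K_i$ of infinite cyclic quotient gives, over $\CC[F/K_i]$, that $\rk_{F/K_i}$ is the natural extension of $\rk_{N/K_i}$; pulling back along $\CC[F]\to\CC[F/K_i]$ and using that the natural extension commutes with pullback along $\tau$-intertwining homomorphisms (immediate from the defining limit $\wrk=\lim_j\wrk_j$, cf. Proposition \ref{prop7.4}(2)), this becomes, on $\CC[F]$, the identity $\rk_{F/K_i}=\widetilde{\rk_{F/M_i}|_{\CC[N]}}$. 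Each $\rk_{N/K_i}$ is a $*$-regular, $\tau$-compatible rank on $\CC[N]$ — $\tau$-compatibility holding because conjugation by $t$ is an inner, hence rank-preserving, automorphism of $\CC[F/K_i]$ restricting to $\tau$ on $\CC[N]$ — so Corollary \ref{wlimext} permits interchanging $\lim_\omega$ with the natural extension. This yields $\rk_{\{F/K_i\}}=\lim_\omega\widetilde{\rk_{N/K_i}}=\widetilde{\lim_\omega\rk_{N/K_i}}=\widetilde{\rk_{\{F/M_i\}}|_{\CC[N]}}$, which is exactly (1).

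For (2), it suffices to show $\rk_{F/M_i}\le\rk_{F/K_i}$ on $\CC[F]$ for each $i$ and then take $\lim_\omega$. If $M_i/K_i$ is trivial this is an equality. Otherwise $F/M_i$ is the quotient of $F/K_i$ by the infinite cyclic, in particular infinite amenable, normal subgroup $L=M_i/K_i$, and I would invoke the monotonicity of the von Neumann rank under passage to such a quotient: for $A\in\Mat(\CC[F/K_i])$ with image $\bar A\in\Mat(\CC[F/M_i])$ one has $\rk_{F/M_i}(\bar A)\le\rk_{F/K_i}(A)$ whenever $L$ is infinite (see \cite{Lubook}). Applying this to the images of an arbitrary matrix over $\CC[F]$ gives the per-$i$ inequality, and $\lim_\omega$ finishes.

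The crux is the monotonicity invoked in (2). It fails for finite kernels — e.g. for $\Z/2\to\{1\}$ the element $1+\sigma$ has rank $\tfrac12$ but maps to the rank-one element $2$ — so the argument must exploit that $L=M_i/K_i$ is infinite. This is precisely where the hypothesis $M\le N$ enters: it forces $M_i/K_i$ to embed in the torsion-free group $F/N\cong\Z$, hence to be infinite cyclic or trivial. The remaining content of both parts is the routine assembly of Proposition \ref{mainexample}, Corollary \ref{wlimext}, and the compatibility of restriction and of the natural extension with $\omega$-limits.
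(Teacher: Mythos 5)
Your treatment of part (1) is correct and is essentially the paper's own argument: identify $\rk_{F/M_i}|_{\CC[N]}$ with $\rk_{N/K_i}$, apply Proposition \ref{mainexample} to $F/K_i$ with normal subgroup $N/K_i$, pull back along $\CC[F]\to\CC[F/K_i]$, and interchange the $\omega$-limit with the natural extension via Corollary \ref{wlimext}.

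Part (2), however, contains a genuine gap. The monotonicity you invoke --- $\rk_{F/M_i}(\bar A)\le\rk_{F/K_i}(A)$ ``whenever $L=M_i/K_i$ is infinite, see \cite{Lubook}'' --- is not a result in L\"uck's book, and as you state it (infiniteness of the kernel) it is false: take $G=\bigoplus_{j\ge 1}\Z/2$ and $L=G$, so $G/L=\{1\}$; the element $1+\sigma_1$, with $\sigma_1$ a generator of the first factor, has $\rk_G(1+\sigma_1)=1/2$, while its image $2\in\CC$ has rank $1$. Thus infiniteness of the kernel, even together with amenability, does not suffice, and your diagnosis that ``the argument must exploit that $L$ is infinite'' misidentifies the mechanism. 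For kernels isomorphic to $\Z$ the inequality may well hold, but it is not a citable known fact when the quotient $F/M_i$ is an arbitrary finitely generated group (not assumed torsion-free, sofic, or to satisfy the Atiyah conjecture); claims of this shape are instances of the universality-type questions that this very paper poses as open in Section \ref{universal}.

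The paper sidesteps the general claim by exploiting the structure forced by $K_i=M_i\cap N$: the diagonal map embeds $F/K_i$ into the direct product $G_i=F/M_i\times F/N\cong F/M_i\times\Z$, compatibly with the projection $\pi_2$ onto $F/M_i$, so the required inequality only needs monotonicity for the coordinate projection $\CC[H\times\Z]\to\CC[H]$, and then one restricts along $\pi_1:\CC[F]\to\CC[G_i]$, using that ranks of matrices over $\CC[F/K_i]$ do not change when computed in the overgroup $G_i$. That special case is Lemma \ref{claim3.4}, and it is itself nontrivial: its proof uses that $\rk_{H\times\Z}$ is the natural extension of $\rk_H$ (Proposition \ref{mainexample}), the substitution invariance $\wrk(A(t))=\wrk(A(t+1))$ of Lemma \ref{natextt+1}, and \cite[Proposition 7.6]{Ja17base}. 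Your proof is missing exactly this content; to repair it, replace the appeal to \cite{Lubook} by the embedding $F/K_i\hookrightarrow F/M_i\times F/N$ together with a proof of the direct-product monotonicity of Lemma \ref{claim3.4}.
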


\begin{proof}
 (1) Since $F/N \cong \Z$, there exists $t\in F$ such that $\CC[F] \cong (\CC[N])[t^{\pm 1};\tau]$ and $\CC[F/K_{i}] \cong (\CC[N/K_i])[t^{\pm 1};\tau]$, where $\tau$ is induced by left conjugation by $t$. Proposition \ref{mainexample} states that, as a rank function over $\CC[F/K_i]$, $\rk_{F/K_i}$ is the natural extension of $\rk_{N/K_i}$ as a rank function on $\CC[N/K_i]$, and consequently, as a rank function on $\CC[F]$, $\rk_{F/K_i}$ is the natural extension of $ \rk_{N/K_i} $ as a rank function on $\CC[N]$. Thus, by Corollary \ref{wlimext}, $\rk_{(F/K_i)_i}$ is the natural extension of $\lim_{\omega}  \rk_{N/K_i} $. Since $K_i = M_i\cap N$,   then, as rank functions on $\CC[N]$, $ \rk_{N/K_i} $ coincides with the restriction of $\rk_{F/M_i}$.
 Hence, the result follows.
 
 (2)  For each $i\in\N$ we set $G_i = F/M_i \times F/N$. The natural maps $F\to G_i$ and $G_i\to F/M_i$ induce the homomorphisms 
 $$\pi _1 :\CC[F]\to \CC[ G_i] \textrm{\ and \ } \pi_2: \CC[G_i]\to \CC[F/M_i].$$   
 Since $K_i = M_i \cap N$, $ \pi_1(F)\cong F/K_i$. Thus, we obtain that
 $$\rk_{F/K_i}=\pi_1^\sharp(\rk_{G_i})  \textrm{\ and \ } \rk_{F/M_i}=(\pi_2\circ \pi_1)^\sharp(\rk_{F/M_i})=\pi_1^\sharp(\pi_2^\sharp(\rk_{F/M_i}))$$  as    rank functions on $ \CC[F]$.
 (In the second and third appearances of $\rk_{F/M_i}$ in the previous formula we see it as a rank function on $\CC[F/M_i]$.)  By  Lemma \ref{claim3.4},
 $\pi_2 ^\sharp(\rk_{F/M_i}) \le \rk_{G_i}$. Hence $\rk_{F/M_i}\le \rk_{F/K_i}$ as  rank functions on $ \CC[F].$ This clearly implies that $\rk_{(F/M_i)_i} \leq \rk_{(F/K_i)_i}$.  \end{proof}

\subsection{The proof of the L\"uck approximation conjecture}

As at the beginning of the section, let $F$ be a finitely generated free group and assume that $(M_i)_i$ converges to $M$ in the space of marked groups of $F$. We put $G=F/M$. Consider the positive definite $*$-regular ring $\mathcal R = \prod \mathcal R_{\CC[F/M_i]}$ and let $p_i: \mathcal R \rightarrow \mathcal R_{\CC[F/M_i]}$ be the $i$-th projection. Then  $\rk = \lim_{\omega} p_i^{\sharp}(\rk_{F/M_i})$ defines a rank function on $\mathcal R$.  If $\pi:\CC[F]\to \mathcal R$ denotes the canonical map, then  we have that
$ \rk_{(F/M_i)_i} = \pi^{\sharp} (\rk)$. 
 
Set $\mathcal R_{\omega}=\mathcal R/\ker \rk$. Then $\pi$ induces a $*$-homomorphism 
$
 \pi_{\omega}: \CC[F] \rightarrow \mathcal R_{\omega}$.
Note that $\mathcal R_{\omega}$ is positive definite because $\mathcal R$ is positive definite and $\ker \rk$ is an ideal of $\mathcal R$.  
Hence, if $\U = \mathcal R(\pi_{\omega}(\CC[F]), \mathcal R_{\omega}))$, then $(\mathcal U, \pi_{\omega},\rk)$ is the positive definite $*$-regular envelope of $\rk_{(F/M_i)_i}$. 

Moreover, since $(M_i)_i$ approximates $M$, we have  that the kernel of $\pi_\omega$ is the ideal $I$ generated by $\{m-1: m\in M\}$. Indeed, for any $m$ in $M$ we have $\rk(\pi_{\omega}(m-1)) = 0$. Thus, the properties of rank functions imply that the image of any element in $I$ has zero rank, and so $I\subseteq\ker \pi_{\omega}$. On the other hand, Proposition \ref{kazhdan} tells us that $ \rk_{F/M}\le \pi_{\omega}^{\sharp}(\rk)$. Hence, if $a\notin I$, then $\rk_{F/M}(a)\ne 0$ and consequently $\rk(\pi_\omega(a))\ge \rk_{F/M}(a)> 0$, from where $\pi_{\omega}(a)\ne 0$. This means $a\not \in \ker \pi_\omega$, and so,   $\pi_{\omega}(\CC[F]) \cong \CC[F]/I \cong \CC[G]$.  

 Let $\psi$ denote the induced isomorphism between $\CC[G]$ and $\pi_\omega(\CC[F])$. Then we can think of $\rk_{(F/M_i)_i}$ also as a $*$-regular rank function on $\CC[G]$ with positive definite $*$-regular envelope $(\U, \psi, \rk)$. 
  
   In addition, observe that if $H$ is a non-trivial finitely generated subgroup of $G$ and $F'$ is a finitely generated subgroup of $F$ such that $H = F'M/M \cong F'/(F'\cap M)$, then $(M_i^\prime=F^\prime \cap M_i)_i$ converges to $M^\prime=F^\prime\cap M$ in the space $\MG(F^\prime)$ and
$$ 
  \rk_{(F/M_i)_{i |\CC[F']} }=  \rk_{(F^\prime/M_i^\prime )_i} \textrm{\ as rank functions on \ } \CC[F^\prime].
$$
We will obtain Theorem \ref{Lueck} as a consequence of the following proposition.

\begin{pro} \label{maintheoremluck}
 Let $F$ be a finitely generated free group, and assume that $(M_i)_i$ converges to $M$ in the space of marked groups of $F$ and that $G=F/M$ is locally indicable. Let $(\mathcal U, \psi, \rk)$ be the $*$-regular envelope of $\rk_{(F/M_i)_i}$ as a rank function on $\CC[G]$. Let $\varphi=(i,\psi): \CC[G] \rightarrow \mathcal R_{\CC[G]} \times \U$ be the induced map. 
 Then, the division closure of $\varphi(\CC[G])$ in $\mathcal R_{\CC[G]}\times \U$ is a division ring.
\end{pro}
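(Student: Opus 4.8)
The plan is to mimic the proof of Theorem \ref{maintheorem} and of the uniqueness theorem for Hughes-free division rings, showing by induction on $G$-complexity that every nonzero element of $\mathcal D := \mathcal D_{G,\,\mathcal R_{\CC[G]}\times \mathcal U}$ is invertible. Since $\varphi$ is epic into each factor, this forces the projection of $\mathcal D$ onto $\mathcal D_{G,\mathcal R_{\CC[G]}}$ (a division ring by Corollary \ref{cor4.3}) to be an isomorphism, which proves the statement. Writing $\mathcal S = \mathcal R_{\CC[G]}\times \mathcal U$ and letting $\Phi:\Rat(\CC^{\times}G)\to \mathcal D$ be the universal morphism, the base case $\Tree_G(a)=1_{\mathcal T}$ is immediate. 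For the inductive step I would take $0\ne a$ with $\Tree_G(a)>1_{\mathcal T}$, choose a primitive $\alpha$ realizing the complexity, set $H=\pi(\source(\alpha))$ so that $a\in \mathcal D_{H,\mathcal S}$ and $\Tree_H(a)=\Tree_G(a)=\Tree(\alpha)$, and (the trivial-$H$ case being clear) assume $H=N\rtimes_{\tau}\langle t\rangle$.

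The difficulty, compared to Theorem \ref{maintheorem}, is that the approximation rank $\rk_{\{F/M_i\}}$ whose envelope is $\mathcal U$ is not known to be Hughes-free, so $\mathcal U_H$ need not embed into a skew Laurent series ring over $\mathcal U_N$ and Proposition \ref{key} does not apply in the ambient $\mathcal S$ directly. To repair this I would first pass to a finitely generated $F'\le F$ with $H\cong F'/(F'\cap M)$; as recorded before the statement, $\{M_i'=F'\cap M_i\}$ converges to $M'=F'\cap M$ and $\rk_{\{F/M_i\}}$ restricts to $\rk_{\{F'/M_i'\}}$ on $\CC[F']$. Lifting $N\le H$ to a normal $\tilde N\trianglelefteq F'$ with $M'\le \tilde N$ and $F'/\tilde N\cong \Z$, and setting $K_i'=M_i'\cap \tilde N$, Proposition \ref{appchange} produces a third rank $\rk_3=\rk_{\{F'/K_i'\}}$ that is the natural extension of $\rk_{\{F'/M_i'\}}$ restricted to $\CC[N]$ — hence whose $*$-regular envelope $\mathcal V$ is Hughes-free with respect to the $N$-decomposition — and that dominates $\rk_2=\rk_{\{F'/M_i'\}}$. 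Together with the Kazhdan inequality (Proposition \ref{kazhdan}) this gives the chain $\rk_1=\rk_{F'/M'}\le \rk_2\le \rk_3$ on $\CC[H]$ required by Corollary \ref{key2}.

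Now I would use Corollary \ref{key2} to transfer the problem from the ambient $\mathcal R_{\CC[H]}\times \mathcal U_H$ (where $a=\Phi_{12}(\alpha)$ lives) to the ambient $\mathcal R_{\CC[H]}\times \mathcal V$: the corollary gives that $a':=\Phi_{13}(\alpha)$ is nonzero and that $a$ is invertible if and only if $a'$ is. In this new ambient both factors are Hughes-free with respect to $N$, so, exactly as in the construction around diagram (\ref{dia6}), one embeds $\mathcal D_{H,13}\hookrightarrow \mathcal P:=\mathcal R_{\CC[N]}((t,\tau))\times \mathcal P^{\mathcal V_N}_{\omega,\tau}$ with $\mathcal A((t,\tau))\subseteq \mathcal P$ for $\mathcal A=\mathcal R_{\CC[N]}\times \mathcal V_N$, placing us in the setting of Proposition \ref{key}. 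Applying it to $a'$ yields $a'=\sum_k a_k'$ with $a_k'\in \mathcal D_{N,13}t^k$, each of strictly smaller $H$-complexity, and at least two nonzero summands (a single summand would force $\source(\alpha)\le \CC^{\times}N$, i.e. $H\le N$, by Theorem \ref{source}(iii)). Invertibility of the lowest nonzero coefficient then makes the Laurent series $a'$ invertible, and transferring back through Corollary \ref{key2} gives the invertibility of $a$.

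The main obstacle — and the genuinely new point — is verifying the invertibility hypothesis of Proposition \ref{key} in the $13$-ambient, since the induction hypothesis only controls the original ambient $\mathcal S$. The resolution I would use is that Proposition \ref{key} invokes invertibility only for the coefficients, which lie in $\mathcal D_{N,13}$, and that on $\CC[N]$ one has $\rk_2|_{N}=\rk_3|_{N}$, because a natural extension restricts to the original rank. By uniqueness of the $*$-regular envelope (Theorem \ref{isom}) this produces a rank- and complexity-preserving isomorphism $\mathcal D_{N,12}\cong \mathcal D_{N,13}$ under which nonzero-ness and invertibility correspond; a nonzero coefficient of strictly smaller complexity is therefore invertible by the induction hypothesis applied in $\mathcal S$. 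This coincidence of $\rk_2$ and $\rk_3$ at the $N$-level is exactly what prevents the potential mismatch — a coefficient vanishing in $\mathcal U$ but not in $\mathcal V$ — from occurring, and so closes the induction.
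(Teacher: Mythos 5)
Your overall route is the paper's: pass to a finitely generated $F'\le F$, build the chain $\rk_{F'/M'}\le \rk_{\{F'/M_i'\}}\le \rk_{\{F'/K_i'\}}$ via Proposition \ref{appchange} and the Kazhdan inequality (Proposition \ref{kazhdan}), transfer $a$ to $a'=\Phi_{13}(\alpha)$ by Corollary \ref{key2}, and run Proposition \ref{key} in the ambient built from the third rank. The genuine gap is exactly at the point you flag as the new one: your induction quantifies only over the fixed pair $(G,\{F/M_i\})$, and your verification of the invertibility hypothesis of Proposition \ref{key} in the $13$-ambient does not work. Proposition \ref{key} controls the $H$-complexity of the Laurent coefficients, not their $N$-complexity, whereas the isomorphism $\D_{N,12}\cong\D_{N,13}$ coming from $\rk_2|_{\CC[N]}=\rk_3|_{\CC[N]}$ and Theorem \ref{isom} is an isomorphism of $\CC[N]$-rings: it intertwines only the universal maps on $\Rat(\CC^{\times}N)$ and hence transfers elements with controlled $N$-complexity. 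Concretely, a nonzero coefficient $c\in\D_{N,13}$ with $\Tree_H(c)<\Tree(\alpha)$ is realized by some $\beta\in\Rat(\CC^{\times}H)$ with $\Tree(\beta)<\Tree(\alpha)$; if $\Phi_{12}(\beta)\neq 0$ you can indeed invoke your induction hypothesis in $\s$ together with Corollary \ref{key2}, but if $\Phi_{12}(\beta)=0$ you are stuck, because the implication in Corollary \ref{key2} goes from the $12$-ambient to the $13$-ambient and not back. In that bad case $c$ has zero first component, hence is not invertible in the $13$-ambient at all, so even your weakened ``coefficients only'' form of the hypothesis of Proposition \ref{key} would be false; nothing in an induction over the single pair $(G,\{F/M_i\})$ excludes this. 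Transferring $c$ through $\D_{N,12}\cong\D_{N,13}$ does not help either: the transferred element is nonzero, but its only expression of complexity $<\Tree(\alpha)$ lives in $\Rat(\CC^{\times}H)$ on the $13$-side, so you get no bound on its $G$-complexity in $\s$ and the induction hypothesis there cannot be applied.

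The paper's fix is structural rather than a transfer argument: the induction is run simultaneously over \emph{all} pairs (locally indicable group, approximation), on the complexity $\Tree(\alpha)$ of the realizing expression. The induction hypothesis then applies verbatim to the new pair $(H,\{F'/K_i'\})$, whose associated ambient is exactly your $13$-ambient $\mathcal R_{\CC[H]}\times \mathcal V$ (the paper's $\mathcal R_{\CC[H]}\times \U'$); this verifies the hypothesis of Proposition \ref{key} there for \emph{all} elements of $\D_{H,\PP}$ of smaller complexity, not just coefficients, with no transfer back to $\s$ needed until the very end, when Corollary \ref{key2} converts invertibility of $a'$ into invertibility of $a$. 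Your proof becomes correct once you strengthen the induction statement in this way; as written, the closing step fails.
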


\begin{proof}
 Observe that $\rk_{(F/M_i)_i}$, and hence $\U$, is uniquely determined by the approximation $(F/M_i)_i$ of $G$. Set $\s = \mathcal R_{\CC[G]} \times \U$, let $\D = \D_{G,\s}$ be the division closure of $\varphi(\CC[G])$ in $\s$ and denote, for any subgroup $H\le G$, $\s_H = \mathcal R_{\CC[H]} \times \U_H$, where $\U_H$ is the $*$-regular closure of $\psi(\CC[H])$ in $\U$. Consider the universal morphism of rational $\CC^{\times}G$-semirings $\Phi_{G,\s}: \Rat(\CC^{\times}G)\rightarrow \D$. We are going to prove the result for all locally indicable groups $G$ and all approximations $(F/M_i)_i$ at the same time, using induction on the complexity. More precisely, we are going to show that if $\alpha\in \Rat(\CC^{\times}G)$ realizes the $G$-complexity of a non-zero $a\in \D_{G,\s}$ for some locally indicable group $G$ and for some approximation $(F/M_i)_i$ of $G$, then $a$ is invertible in  $\D_{G,\s}$. Since $\Phi_{G,\s}$ is surjective for every choice of $G$ and $(F/M_i)_i$, this gives the result.
 
 For any locally indicable group $G$ and for any approximation $(F/M_i)_i$ of $G$, if $\alpha\in \Rat(\CC^{\times}G)$ satisfies $\Tree(\alpha) = 1_{\mathcal T}$, then $\alpha \in \CC^{\times}G$ realizes the complexity of $a = \Phi_{G,\s}(\alpha) \in \varphi(\CC^{\times}G)$, and $a$ is invertible. Now suppose that $\Tree(\alpha)>1_{\mathcal T}$ realizes the $G$-complexity of a non-zero $a\in \D_{G,\s}$ for some locally indicable $G$ and for some approximation $(F/M_i)_i$ of $G$, and assume by induction that we have proved the result for all locally indicable $G'=F'/M'$, for all approximations $(F'/M'_i)_i$ of $G'$ and for all $\beta\in \Rat(\CC^{\times}G')$ with $\Tree(\beta)<\Tree(\alpha)$. We want to show that $a$ is invertible in $\D_{G,\s}$. 
 
 Observe that we can assume, without loss of generality, that $\alpha$ is primitive, because multiplying by a unit in $\CC^{\times}G$ does not change the complexity of $\alpha$ nor the invertibility of its image $a$. Define $H$ to be the image of $\source(\alpha)$ under $\CC^{\times}G \rightarrow \CC^{\times}G/\CC^{\times} = G$ and observe that $H$ is finitely generated by Proposition \ref{source}. Then, $\alpha \in \Rat(\CC^{\times}H) $ and therefore $a\in \D_{H,\D} = \D_{H,\s} = \D_{H,\s_H}$, this latter equality due to Lemma \ref{divreg}. If $H$ is trivial, then $\D_{H,\s_H} \cong \CC$ and the result is clear. Otherwise, there exists a normal subgroup $N\trianglelefteq H$ such that $H/N \cong \Z$. 
 
 We would like to apply Proposition \ref{key} to $a$ in order to decompose it into elements of lower complexity, but a priori we do not have a candidate for the ring $\mathcal{A}$. Therefore, we are going to use Propositions \ref{appchange} and \ref{key2} to rephrase the question of invertibility of $a$ to the question of invertibility of a certain $a'$, lying in an appropriate ring on which we can use Proposition \ref{key}. 
 
 First, we can take a finitely generated subgroup $F'$ of $F$ such that $H =  F'M/M \cong F'/(F'\cap M)$. In this case, the preimage $N'$ of $N$ is a normal subgroup of $F'$ containing $M' = F' \cap M$ such that $F'/N' \cong \Z$. Set $M_i' = F'\cap M_i$ and $K_i' = M_i'\cap N'$, and observe that $H = F'/M'$ is a locally indicable group and that both $(M_i')_i$ and $(K_i')_i$ converge to $M'$ in the space of marked groups of $F'$. 
 
 Choose an element $t\in F^\prime$ such that $F^\prime=\langle N^\prime ,t\rangle$.   Hence $$\CC[H] \cong (\CC[N])[t^{\pm 1};\tau]\textrm{\ and \ } \CC[F'] \cong (\CC[N'])[t^{\pm 1};\tau],$$ where $\tau$ is given by left conjugation by $t$. Since $\tau$ can be extended to an automorphism of both $\mathcal R_{\CC[N]}$ and $\U_N$, it extends to an automorphism of the regular ring $\mathcal A = \s_N = \mathcal R_{\CC[N]}\times \U_N$. Therefore, we have a homomorphism 
 $$
   \varphi: \CC[H] \cong (\CC[N])[t^{\pm 1};\tau] \rightarrow \mathcal A((t;\tau)).
 $$
Moreover, since $\mathcal R_{\CC[G]}$ is the epic Hughes-free division $\CC[G]$-ring, we have that $\mathcal R_{\CC[H]} \cong \mathcal R_{\CC[N]}(t;\tau)$, the Ore division ring of fractions of $\mathcal R_{\CC[N]}[t^{\pm 1};\tau]$, and so $\varphi(\CC[H]) $ lies in a subring of  $\mathcal A((t;\tau))$ isomorphic to  $\mathcal R_{\CC[H]}\times \U_N((t;\tau))$. 

Recall that  for the given ultrafilter $\omega$ we can construct, as in (\ref{eq4}), a ring $\mathcal P_{\omega,\tau}^{\U_N}$ which is $*$-regular and positive definite (since $\U_N$ is both), an injective $*$-homomorphism 
$$
f_{\omega} : \mathcal U_N[t^{\pm 1};\tau] \rightarrow \mathcal P_{\omega,\tau}^{\U_N}
$$ 
and a rank function $\rk_{\omega}$ such that $f_{\omega}^{\sharp}(\rk_{\omega})$ is the natural extension of the restriction of $\rk$ to $\U_N$. 

 Since $\psi$ embeds  $\CC[N]$ into $\mathcal U_N$, the subring $\psi(\CC[N])[t^{\pm 1};\tau]$ of  $\mathcal U_N[t^{\pm 1};\tau]$ is isomorphic to $\CC[H]$. Denote this isomorphism by $\psi^\prime$.  Proposition \ref{prop7.4} implies that $ f_{\omega}^{\sharp}(\rk_{\omega})$, as a rank function on  $\CC[N][t^{\pm 1};\tau]$, is the natural extension of the restriction of $\rk_{(F/M_i)_i} $ to $\CC[N]$.  Thus, from Proposition \ref{appchange}(1), we obtain that  $  ( f_{\omega}\circ\psi^\prime)^{\sharp}(\rk_{\omega})$  equals $\rk_{(F'/K_i')_i}$ as rank functions on $\CC[H]$. Let  $\U' = \mathcal R((f_{\omega}\circ \psi^\prime)(\CC[H]), \mathcal P_{\omega,\tau}^{\U_N})$ and let $\rk^\prime$ be the restriction of $\rk_\omega$ to $\U^\prime$. Then $(\mathcal U^\prime,\psi^\prime,\rk^\prime )$ is a $*$-regular envelope of $\rk_{(F'/K_i')_i}$.  
 
 In addition, by Proposition \ref{appchange}(2) and Proposition \ref{kazhdan}, we have that
\begin{equation}\label{rk>}
\rk_H\le \rk_{(F'/M_i')_i} \le  \rk_{(F'/K_i')_i}\ \textrm{ \ as rank functions on\  }\CC[H].\end{equation}
Observe that $f_{\omega}$ can be extended to an injective $*$-homomorphism  $f_{\omega}:\U_N((t;\tau))\rightarrow \mathcal P_{\omega,\tau}^{\U_N}$ as in (\ref{eqLaurent}), and therefore, setting $\mathcal P = \mathcal R_{\CC[N]}((t;\tau))\times \mathcal P_{\omega,\tau}^{\U_N}$, we have an embedding
 $$\mathcal A ((t;\tau)) \xrightarrow{(i,f_{\omega})} \mathcal P.$$ 
By Lemma \ref{divreg}, regularity of $\mathcal A$ implies that $\D_{N,\mathcal P} = \D_{N,\s_N}$, and regularity of $\mathcal B = \mathcal R_{\CC[H]} \times \U'$ implies that $\D_{H,\mathcal P} =  \D_{H,\mathcal B}$. 
Proposition \ref{key2}, applied to the triple of ranks appearing in (\ref{rk>})  (whose $*$-regular envelopes are, respectively, $\mathcal R_{\CC[H]}$, $\U_H$ and $\U'$), tells 
us that $a$ is invertible if and only if the non-zero element $a' = \Phi_{H,\mathcal B}(\alpha) \in \D_{H,\mathcal B} =\D_{H,\mathcal P} $ is invertible, where $\Phi_{H,\mathcal B}: \Rat(\CC^{\times}H)\rightarrow \D_{H,\mathcal B}$ is the universal morphism. 

Observe that, by definition, $\Tree_{H}(a')\le \Tree(\alpha)$, and therefore any $0\ne b\in \D_{H,\mathcal P}$ with $\Tree_H(b)<\Tree_H(a')$ is invertible by the induction hypothesis (now the locally indicable group is $H\cong F^\prime/M^\prime$ and we approximate $M^\prime$ by $(F'/K_i')_i$). Now we are in position to use Proposition \ref{key}, that assures that $a' \in \D_{N,\mathcal P}((t;\tau))$. Moreover, $a' = \sum a'_k$ with at least two non-zero summands, because otherwise $\alpha\in \Rat(\CC^{\times}N)t^n$ and therefore $H\le N$, a contradiction. The same proposition then states that $\Tree_H(a_k')<\Tree_H(a')$, for all $k$, and consequently every non-zero $a_k'$ is invertible in $\D_{H,\mathcal P}$. Let $n$ be the smallest $k$ such that $a'_k$ is non-zero. Then $a'_nt^{-n} \in \D_{N,\mathcal P}$ is invertible in $\D_{H,\mathcal P}$, and hence in $\D_{N,\mathcal P}$. Thus, $a' = \sum a'_k \in \D_{H,\mathcal P}$ is invertible in $\D_{N,\mathcal P}((t;\tau)) = \D_{N,\mathcal A}((t;\tau))\subseteq \mathcal P$, and hence in $\D_{H,\mathcal P}$. As we already mentioned, this implies the invertibility of $a$, and the result follows.
\end{proof}
With the above result, the L\"uck approximation theorem follows.
\begin{proof}[Proof of Theorem \ref{Lueck}] First assume that $G = F/M$ is locally indicable.
Borrowing the notation of Proposition \ref{maintheoremluck}, we know that $\D_{G,\s}$, the division closure of $\varphi(\CC[G])$ in $\s = \mathcal R_{\CC[G]} \times \U$, is a division ring, and this implies that the projections from $\D_{G,\s}$ onto each factor are $\CC[G]$-isomorphisms. Therefore $\mathcal R_{\CC[G]}$ and $\U$ are isomorphic division $\CC[G]$-rings, and hence $\psi^{\sharp}(\rk) = \rk_G$, from where $\rk_{(F/M_i)_i} =  \rk_{F/M}$, as we wanted to show.

Now assume that $G=F/M$ is virtually locally indicable and ICC. Then there exists a normal subgroup $ F^\prime $ of $F$ of finite index such that  $M\le F^\prime$ and $G^\prime=F^\prime /M$ is locally indicable. If $(M_i)_i$ converges to $M$ in $\MG(F)$, then $(M_i \cap F')_i$ converges to $M$ in $\MG(F^\prime)$. Hence, if $(\U,\psi,\rk)$ is the $*$-regular envelope of $\rk_{(F/M_i)_i}$ as a rank function on $\CC[G]$ and $\U_{G'}$ denotes the $*$-regular closure of $\psi(\CC[G'])$ in $\U$, we obtain that $\mathcal R_{\CC[G^\prime]}$ and $\mathcal U_{G^\prime}$ are isomorphic $\CC[G^\prime]$-rings. Denote this $*$-isomorphism by $\phi$.  We are going to show that $\phi$ extends to a $*$-isomorphism of $\CC[G]$-rings between $\mathcal R_{\CC[G]}$ and $\U$.

Denote by  $\mathcal R_{\CC[G']}G$  the $\CC$-vector subspace of  $\mathcal R_{\CC[G]}$ generated by  $\mathcal R_{\CC[G']}$ and $G$. 
It is easy to check that every element in $G$ normalizes $\mathcal R_{\CC[G']}$. Thus, in fact,   $\mathcal R_{\CC[G']}G$ is a $*$-subring of $\mathcal R_{\CC[G]}$ containing $\CC[G]$, and using \cite[Lemma 2.1]{Li93}, we see that $\mathcal R_{\CC[G']}G = \mathcal R_{\CC[G']}*G/G'$.   Moreover, $\mathcal R_{\CC[G']}$ is a division subring of $\mathcal R_{\CC[G']}G$, so $\mathcal R_{\CC[G']}G$ is artinian. Since a proper $*$-ring cannot have non-trivial nilpotent ideals,  
$\mathcal R_{\CC[G']}G$ is also semisimple, and hence regular. Thus, $\mathcal R_{\CC[G]} = \mathcal R_{\CC[G']}G$. 

Since $\mathcal R_{\CC[G]}$ is semisimple, $\mathcal R_{\CC[G]}$ coincides with its $\rk_{G}$-completion, and since $G$ is ICC, it follows from \cite[Propositions 5.7 and 5.8]{Ja17base} that $\mathcal R_{\CC[G]}$ is simple and that $\rk_G$ is the only Sylvester matrix rank function on $\mathcal R_{\CC[G]}$.

  Let $\{t_1,\dots, t_n\}$ be a transversal of $G'$ in $G$   and observe that we have $\mathcal R_{\CC[G]} = \bigoplus_{i}  t_i\mathcal R_{\CC[G']}$. 
We extend $\phi$ to the map $\phi :\mathcal  R_{\CC[G]}\to \mathcal  U$ by sending $\sum_it_ia_i$ ($a_i\in \mathcal R_{\CC[G^\prime]}$) to $\phi(\sum_it_ia_i):=\sum_i\psi(t_i)\phi(a_i)$. Since the above sum is direct, $\phi$ is well-defined, and it is a $*$-homomorphism of $\CC[G]$-rings because for any $a\in \mathcal R_{\CC[G']}$, one can show  
that 
$$
  \phi(t_iat_i^{-1}) = \psi(t_i)\phi(a)\psi(t_i^{-1})
$$
Since $\mathcal R_{\CC[G]}$ is simple, $\phi$ is injective, and taking into account that $\phi$ is epic and $\mathcal R_{\CC[G]}$ is regular, $\phi$ is surjective (see \cite[Proposition XI.1.2]{Ste}). Therefore, $\phi$ is bijective. By uniqueness of the rank function on $\mathcal R_{\CC[G]}$, we obtain  $\rk_G=\phi^\sharp(\rk$) as rank functions on $\mathcal R_{\CC[G]}$ and, hence, since $\phi$ is a $\CC[G]$-isomorphism, we have $\rk_G = \psi^{\sharp}(\rk) = \rk_{(F/M_i)_i}$ as rank functions on $\CC[G]$.

Finally, we assume only that $G=F/M$ is virtually locally indicable.  Consider the free product  $F^\prime=F*\Z$. Let  $M^\prime$ be the normal subgroup of $F^\prime$ generated by $M$ and for each $i$, let $M^\prime_i$ be  the normal subgroup of $F^\prime$ generated by $M_i$. 

Then $G^\prime=F^\prime/M^\prime\cong G*\Z$ is virtually locally indicable and ICC. Indeed, if $N$ is a finite index locally indicable normal subgroup of $G$, then the normal subgroup $N^\prime$ of $G^\prime$ generated by $N$ and the free factor $\Z$,   which is precisely the kernel of the natural map $G*\Z \rightarrow G/N$ sending $\Z$ to the trivial element,  is of finite index and it is isomorphic to the free product of $N$ and $|G:N|$ copies of $\Z$.

Observe also that $(M_i^\prime)_i$ converges to $M^\prime$,  and so $\rk_{G^\prime}=\rk_{(F'/M_i')_i}$. Notice that $\rk_G$ is the restriction of $\rk_{G^\prime}$ to $\CC[G]$ and $\rk_{(F/M_i)_i}$ is the restriction of $\rk_{(F'/M_i')_i}$. Hence we are done.

\end{proof} 

\section{On the universality of Hughes-free Sylvester matrix rank functions}\label{universal}

Considering different division $R$-algebras for a given ring $R$,
it is  natural to ask whether there exists   the largest possible in some sense. P. Cohn made this notion precise by introducing
the notion of universal epic division ring of a ring \cite[Section 7.2]{CohFI}. In our language an epic division $R$-ring $\D$ is {\bf universal} if for every division $R$-ring $\E$ we have that
$$ \rk_\E \le  \rk_\D \textrm{\ as Sylvester matrix rank functions on $R$} .$$
  If, additionally, $R$ embeds in $\D$, then $\D$ is its \textbf{universal division ring of fractions}.  Given a family $\mathcal F\subseteq \mathbb P(R)$ of Sylvester matrix rank functions on $R$, we say that $\rk \in \mathcal F$ is {\bf universal in $\mathcal F$} if 
$$\rk^\prime \le \rk \textrm{\ for every\ }\rk^\prime\in \mathcal F.$$
For example, if $F$ is a free group, then $\rk_F$ is universal in $\mathbb P(\CC[F])$,  because it coincides with the inner rank (see, for example, the proof of \cite[Proposition 5.3]{Ja17HN}). The proof of Theorem \ref{mainluck} suggests that some of the following questions might have a positive answer.
\begin{Question} Let $G$ be a locally indicable group. Is it true that $\rk_G$ is universal in  $\mathbb P(\Q[G])$, $\mathbb P_{reg} (\Q[G]), \mathbb P_{*reg}(\Q[G])$ or  $\mathbb P_{div}(\Q[G])$? 
\end{Question}
It is shown in \cite{Ja19rk} that if $\rk_{\{1\}}\le \rk_G$ as  Sylvester matrix rank functions on $\Q[G]$, then $G$ is locally indicable.  $\{1\}$ denotes here the trivial group and, therefore, $\rk_{\{1\}}$, as a rank on $\Q[G]$, is obtained from the map $\Q[G]\rightarrow \Q$ that sends any $g\in G$ to $1$. Because of this, we  consider  the previous questions for locally indicable groups only.

In this section we consider again the more general situation of crossed products  and we prove the following theorem.

\begin{teo}\label{maximal}
Let $E*G$ be a crossed product of a division ring $E$ and a locally indicable group $G$, and assume that there exists a Hughes-free epic division $E*G$-ring $ \D$. Then the  Sylvester matrix  rank function $ \rk_\D $ is maximal in  $\mathbb P_{div}(E*G)$. \end{teo}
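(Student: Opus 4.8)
The plan is to establish a rigidity statement from which maximality follows at once: I will show that if $\rk'\in\mathbb{P}_{div}(E*G)$ satisfies $\rk_\D\le\rk'$, then necessarily $\rk'=\rk_\D$. Let $(\D',\varphi')$ be the epic division envelope of $\rk'$, which exists because $\rk'$ is integer-valued. Imitating the proof of Proposition \ref{maintheoremluck}, I would set $\s=\D\times\D'$, $\varphi=(\varphi_\D,\varphi'):E*G\to\s$, and let $\mathcal D=\D_{G,\s}$ be the division closure of $\varphi(E*G)$ in $\s$. If $\mathcal D$ turns out to be a division ring, then the two projections $\pi_1:\mathcal D\to\D$ and $\pi_2:\mathcal D\to\D'$, being nonzero homomorphisms out of a division ring, are injective; their images are division subrings containing $\varphi_\D(E*G)$, respectively $\varphi'(E*G)$, hence coincide with $\D$ and $\D'$. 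Thus $\D\cong\mathcal D\cong\D'$ as $E*G$-rings, and Theorem \ref{isomdiv} gives $\rk_\D=\rk_{\D'}=\rk'$, which is exactly maximality.

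To prove that $\mathcal D$ is a division ring I would argue by induction on the $G$-complexity through the universal morphism $\Phi:\Rat(E^\times G)\to\mathcal D$, exactly as in Theorem \ref{maintheorem}. Given $0\ne a\in\mathcal D$, choose a primitive $\alpha\in\Rat(E^\times G)$ realizing $\Tree_G(a)$ and let $H$ be the image of $\source(\alpha)$ in $G$; then $H$ is finitely generated and $a\in\D_{H,\s}$. The case $H=\{1\}$ is immediate, so assume $H=N\rtimes\langle t\rangle$ with $H/N\cong\Z$ and $\tau$ the conjugation automorphism. The new difficulty, absent in the uniqueness theorem, is that only $\D$ is Hughes-free: one has $\D_{H,\D}\hookrightarrow\D_{N,\D}((t,\tau))$, but no such embedding is available a priori for $\D'$. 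I would circumvent this by introducing the third rank $\rk_3:=\widetilde{\rk'|_{E*N}}$, the natural extension of $\rk'|_{E*N}$, whose epic division envelope is, by Proposition \ref{prop7.4iv}(3), the Ore ring $\D_{N,\D'}(t,\tau)\subseteq\D_{N,\D'}((t,\tau))$. Setting $\mathcal A=\D_{N,\D}\times\D_{N,\D'}$ (regular, with $\tau$ acting componentwise) and $\PP=\D_{H,\D}\times\D_{N,\D'}(t,\tau)\subseteq\mathcal A((t,\tau))$, the hypotheses of Proposition \ref{key} are satisfied.

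The bridge between $\mathcal D$ and $\PP$ is the crossed-product analogue of Corollary \ref{key2}, whose proof rests only on Proposition \ref{comppr} and the rational $E^\times H$-semiring structure of Subsection \ref{ss:divclo} and so carries over; I would apply it to the chain
$$
\rk_\D|_{E*H}\le\rk'|_{E*H}\le\widetilde{\rk'|_{E*N}},
$$
with envelopes $\D_{H,\D}$, $\D_{H,\D'}$ and $\D_{N,\D'}(t,\tau)$. This identifies the invertibility of $a=\Phi_{12}(\alpha)$ in $\D_{H,\s}$ with that of $a'=\Phi_{13}(\alpha)$ in $\D_{H,\PP}$. The induction hypothesis makes every nonzero element of lower complexity invertible in $\mathcal D$, hence, again by Corollary \ref{key2}, in $\D_{H,\PP}$, so Proposition \ref{key} applies to $a'$: it lies in $\D_{N,\PP}((t,\tau))=\D_{N,\mathcal A}((t,\tau))$ and decomposes as $a'=\sum a'_k$ with each summand of strictly smaller complexity. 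There are at least two nonzero summands, since otherwise $\alpha\in\Rat(E^\times N)t^n$ and Theorem \ref{source}(iii) would force $H\le N$; hence each nonzero $a'_k$ is invertible, the lowest term is invertible, and so $a'$, and with it $a$, is invertible.

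The main obstacle is the second inequality $\rk'|_{E*H}\le\widetilde{\rk'|_{E*N}}$, which in the L\"uck setting was handed to us for free by Proposition \ref{appchange}(2). In the present generality I would deduce it from the universality of Ore localization: the skew Laurent ring $R:=\D_{N,\D'}[t^{\pm1},\tau]$ over the division ring $\D_{N,\D'}$ is a principal ideal domain, hence a two-sided Ore domain, so its division ring of fractions $\D_{N,\D'}(t,\tau)$ is the universal epic division $R$-ring and its rank dominates that of any other division $R$-ring. The identity-on-$\D_{N,\D'}$ map $R\to\D_{H,\D'}$ with $t\mapsto t$ exhibits $\D_{H,\D'}$ as an epic division $R$-ring whose restriction to $E*H\subseteq R$ recovers $\rk'|_{E*H}$, while the rank of $\D_{N,\D'}(t,\tau)$ restricts to $\widetilde{\rk'|_{E*N}}$ by Proposition \ref{prop7.4iv}(2)--(3); the desired inequality follows by restricting to $E*H$. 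Checking that this ``generic rank dominates specializations'' fact is available in precisely this form, together with verifying the crossed-product versions of Corollaries \ref{commphi} and \ref{key2}, is where the careful work will concentrate.
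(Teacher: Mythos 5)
Your overall architecture tracks the paper's own proof closely: reduce maximality to rigidity, introduce the third rank $\widetilde{\rk'|_{E*N}}$ whose epic division envelope is the Ore ring $\D_{N,\D'}(t,\tau)$, bridge the two ambient products with the crossed-product version of Corollary \ref{key2}, and decompose via Proposition \ref{key}. Your Ore/PID argument for the inequality $\rk'|_{E*H}\le\widetilde{\rk'|_{E*N}}$ is also sound, and is in fact a cleaner substitute for the paper's auxiliary lemma: the paper proves universality of the natural extension among all ranks extending $\rk'|_{E*N}$ via Schofield's localization theorems plus diagonal reduction, whereas you bypass the localization because $\rk'|_{E*H}$ visibly factors through the evaluation map $\D_{N,\D'}[t^{\pm 1},\tau]\to\D_{H,\D'}$.

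The genuine gap is your induction scheme. You induct ``exactly as in Theorem \ref{maintheorem}'', i.e.\ on the $G$-complexity of elements of the single fixed ring $\mathcal D=\D_{G,\D\times\D'}$, and you verify the hypothesis of Proposition \ref{key} --- every nonzero $c\in\D_{H,\PP}$ of complexity below $\Tree_H(a')$ is invertible --- by transferring the inductive hypothesis from $\mathcal D$ through Corollary \ref{key2}. That transfer only works for those $c=\Phi_{13}(\beta)$ whose companion $b=\Phi_{12}(\beta)$ is nonzero: the nonvanishing implication in Corollary \ref{key2} goes only in the direction $\Phi_{12}(\beta)\ne 0\Rightarrow\Phi_{13}(\beta)\ne 0$, never backwards. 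If some $\beta$ of lower complexity has $\Phi_{12}(\beta)=0$ but $\Phi_{13}(\beta)\ne 0$, your inductive hypothesis says nothing about it; worse, by the ``invertible iff invertible'' part of Corollary \ref{key2}, such a $c$ would be a nonzero \emph{non-invertible} element of lower complexity, so the hypothesis of Proposition \ref{key} fails outright and the argument halts. Ruling this case out amounts to knowing that $\ker\Phi_{12}$ and $\ker\Phi_{13}$ agree on low-complexity elements, i.e.\ essentially that $\rk'|_{E*H}=\widetilde{\rk'|_{E*N}}$, which is an instance of the theorem for $H$ --- precisely what is not yet available at that stage. The paper resolves this by running the induction simultaneously over \emph{all} pairs consisting of a locally indicable group admitting a Hughes-free epic division ring and an integer-valued rank dominating the Hughes-free rank; the inductive hypothesis then applies directly to the pair $(H,\widetilde{\rk'|_{E*N}})$, whose associated division closure is exactly your $\D_{H,\PP}$, and no transfer of lower-complexity invertibility is needed (Corollary \ref{key2} is used only once, for the single element $\alpha$). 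Your proof becomes correct once the induction is restated in this stronger form, as in Proposition \ref{maintheoremluck}. As a minor point, Proposition \ref{key} requires $\mathcal A((t,\tau))\subseteq\PP$, so you should take $\PP=\mathcal A((t,\tau))$ itself and use your ring $\D_{H,\D}\times\D_{N,\D'}(t,\tau)$ only to identify $\D_{H,\PP}$ with $\D_{H,\mathcal B}$ via Lemma \ref{divreg}.
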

 As an immediate application of the previous theorem and Corollary \ref{cor5.4} we obtain the following result.
 \begin{cor}
Let $E*G$ be a crossed product of a division ring $E$ and a locally indicable group $G$. If there exist a Hughes-free division ring $\D$ and a universal    epic  division ring $\E$ for $E*G$, then they are isomorphic as $E*G$-rings. In particular, if there exists a universal epic   division $K[G]$-ring for a subfield $K$ of $\CC$, then it is isomorphic to the division closure of $K[G]$ in $\mathcal U(G)$.
\end{cor}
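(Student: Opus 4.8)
The plan is to reduce to finitely generated $G$ and then to exploit that, by Lemma \ref{rkdiv}, the Hughes-free rank is on each indicable piece a \emph{natural extension}, and to compare an arbitrary integer-valued rank to it through a chain of natural extensions. First I would note that every matrix over $E*G$ has its entries in $E*H_0$ for some finitely generated $H_0\le G$, that any $\rk'\in\mathbb{P}_{div}(E*G)$ restricts to an integer-valued rank on $E*H_0$, and that the division closure of $E*H_0$ in $\D$ is again a Hughes-free epic division $E*H_0$-ring (the defining condition for $\D$ quantifies over all finitely generated subgroups of $G$, hence over those of $H_0$). So it suffices to prove $\rk'\le\rk_\D$ when $G$ is finitely generated. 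If $G$ is trivial there is nothing to do; otherwise indicability gives $G=N\rtimes\langle t\rangle$ with $G/N\cong\Z$ and $\tau$ the conjugation by $t$, so that $E*G\cong(E*N)[t^{\pm1},\tau]$, and Lemma \ref{rkdiv} says exactly $\rk_\D=\widetilde{\rk_\D|_{E*N}}$.

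The core of the argument is the inequality chain $\rk'\ \le\ \widetilde{\rk'|_{E*N}}\ \le\ \widetilde{\rk_\D|_{E*N}}\ =\ \rk_\D$. Here the restriction $\rk'|_{E*N}$ is $\tau$-compatible, since conjugation by the unit $t$ preserves any rank on $E*H$, and it is integer-valued, so by Proposition \ref{prop7.4iv} its natural extension exists, is integer-valued, and has the Ore ring $\mathcal{D}'_N(t,\tau)$ as epic division envelope. The final equality is Hughes-freeness, and the middle step will follow from \emph{monotonicity} of the natural extension together with the inductive hypothesis $\rk'|_{E*N}\le\rk_\D|_{E*N}$ (maximality already known on $N$). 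Monotonicity is the clean ingredient: the defining approximants $\wrk_i$ are monotone in the base rank, so $\sigma\le\sigma'$ on $E*N$ forces $\widetilde{\sigma}\le\widetilde{\sigma'}$; equivalently one reads this off the supremum description of $\wdim$ in Proposition \ref{prop7.2}.

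The genuinely hard point, and the one I expect to be the main obstacle, is the first inequality: that $\widetilde{\rk'|_{E*N}}$ is the \emph{largest} integer-valued rank restricting to $\rk'|_{E*N}$. I would argue that the division envelope $\mathcal E$ of $\rk'$ contains the division closure $\mathcal{D}'_N$ of $E*N$ together with the unit $t$, hence a homomorphic image of the skew Laurent ring $\mathcal{D}'_N[t^{\pm1},\tau]$; when $t$ is free over $\mathcal{D}'_N$ this image is all of $\mathcal{D}'_N(t,\tau)$ and $\rk'=\widetilde{\rk'|_{E*N}}$, whereas any nontrivial relation makes $\mathcal E$ a proper specialization on which the rank of each matrix cannot exceed its generic (Ore) value. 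Converting the slogan ``generic rank dominates every specialization'' into a proof is the crux, and for this I would use the supremum formula of Proposition \ref{prop7.2} and the characterization of the natural extension by the condition $\rk'(I_n+At)=n$ in Proposition \ref{prop7.7}.

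Finally, since $N$ need not be finitely generated, the recursion ``maximality on $N$'' cannot be organized as an induction on the size of subgroups; instead I would run it as an induction on $G$-complexity inside $\Rat(E^{\times}G)$, exactly as in the proof of Proposition \ref{maintheoremluck}. Theorem \ref{source} replaces a complexity-realizing element by a primitive one whose $\source$ lies in a finitely generated subgroup, and Proposition \ref{key} rewrites a higher-complexity witness as a series over $N$ whose coefficients have strictly smaller complexity, to which the inductive hypothesis applies; the well-ordering of complexities is then the genuine induction parameter. Equivalently, one may phrase the comparison inside the division closure of $E*G$ in $\D\times\D'$ and, via the Cramer-rule mechanism of Proposition \ref{comppr}, show that the projection onto the Hughes-free factor is injective. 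Either way one obtains $\rk'\le\rk_\D$ for all $\rk'\in\mathbb{P}_{div}(E*G)$, i.e. the maximality of $\rk_\D$; the stated corollary follows because a universal rank and $\rk_\D$ are then both maximal, hence equal, so Theorem \ref{isomdiv} identifies their division rings.
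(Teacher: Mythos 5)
Your proposal attempts to prove something strictly stronger than what the corollary needs, and the extra strength is exactly where it breaks. Several ingredients are fine: the reduction to finitely generated $G$, the inequality $\rk'\le\widetilde{\rk'|_{E*N}}$ (this is precisely the unlabeled auxiliary lemma in Section \ref{universal}, which the paper proves via Schofield's localization theorems rather than via Propositions \ref{prop7.2} and \ref{prop7.7}), monotonicity of natural extensions in the base rank, and the equality $\widetilde{\rk_\D|_{E*N}}=\rk_\D$ from Lemma \ref{rkdiv}. The gap is the middle inequality of your chain, which rests on the ``inductive hypothesis'' $\rk'|_{E*N}\le\rk_\D|_{E*N}$. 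This is the same universality statement for the finitely generated subgroups of $N$, and no well-founded measure decreases: for $G$ free of rank two, $N$ is free of infinite rank and its finitely generated subgroups include copies of $G$ itself. Your proposed repair---running the induction on $G$-complexity ``exactly as in Proposition \ref{maintheoremluck}''---does not apply, because that induction is over elements $\alpha\in\Rat(E^{\times}G)$ and proves invertibility of $\Phi(\alpha)$ in a specific division closure; a rank inequality quantified over all matrices over $E*N$ has no complexity attached to it, and Proposition \ref{key} provides no decomposition of such a statement into ``less complex'' ones. Worse, the machinery that makes the element-wise induction run, Corollary \ref{key2}, needs as input the chain $\rk_{\D_H}\le\rk'|_{E*H}\le\widetilde{\rk'|_{E*N}}$, whose first inequality is exactly the hypothesis $\rk'\ge\rk_\D$ that you are trying to eliminate; similarly, Proposition \ref{comppr} takes a rank comparison as hypothesis and cannot produce one, so your alternative phrasing via injectivity of the projection is circular.

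This is why the paper proves only the conditional statement, Theorem \ref{maximal}: \emph{if} $\rk'\in\mathbb P_{div}(E*G)$ satisfies $\rk'\ge\rk_\D$, \emph{then} the division closure of $E*G$ in $\D\times\E$ is a division ring (the hypothesis feeds Corollary \ref{key2}, which transfers the invertibility question to an environment where Proposition \ref{key} and the complexity induction apply), whence both projections are $E*G$-isomorphisms and $\D\cong\E$. The corollary then follows immediately, because universality of $\E$ hands you precisely the needed hypothesis $\rk_\D\le\rk_\E$. What you set out to prove---$\rk'\le\rk_\D$ for \emph{all} $\rk'\in\mathbb{P}_{div}(E*G)$, which you call maximality but is what the paper calls universality---is exactly the open question posed at the start of Section \ref{universal}, and it does not follow from the tools in this paper.
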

The rest of the section will be dedicated to the proof of Theorem \ref{maximal}, which is very similar to the proof of Theorem \ref{mainluck}. We will need the following auxiliary result.
 
\begin{lem} Let $R$ be a ring and $\tau$ an automorphism of $R$. Set $S=R[t^{\pm 1};\tau]$ and let $\rk$ be a $\tau$-compatible integer-valued Sylvester matrix rank function on $R$. Then the natural extension $\wrk$ of $\rk$ is universal among the Sylvester matrix rank functions on $S$ that extend $\rk$.
\end{lem}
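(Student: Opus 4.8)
The plan is to show that every Sylvester matrix rank function $\rk'$ on $S=R[t^{\pm 1},\tau]$ with $\rk'|_{\Mat(R)}=\rk$ satisfies $\rk'\le\wrk$. Since $t$ is a unit in $S$, multiplying a matrix on either side by a diagonal matrix of powers of $t$ changes neither $\rk'$ nor $\wrk$, so I may assume that $A\in\Mat_{n\times m}(R[t,\tau])$ has polynomial entries of degree at most $d$. I will use the description of the natural extension coming from its construction: writing $\rho_i(A)\in\Mat_{in\times im}(R)$ for the matrix over $R$ representing right multiplication by $A$ on $(R[t,\tau]/R[t,\tau]t^i)^n\to(R[t,\tau]/R[t,\tau]t^i)^m$ in the basis $1,t,\dots,t^{i-1}$, one has $\wrk_i(A)=\tfrac1i\rk(\rho_i(A))$ and, by Proposition \ref{prop7.4iv}, the limit $\wrk(A)=\lim_{i\to\infty}\wrk_i(A)$ exists. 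The only point at which the hypothesis on $\rk'$ is used is the identity $\rk'(\rho_i(A))=\rk(\rho_i(A))$, valid because $\rho_i(A)$ has entries in $R$; this is what transfers information between $S$ and $R$.

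The core of the argument is the estimate
\[
 i\cdot\rk'(A)\ \le\ \rk(\rho_i(A))+C,\qquad C=C(A)\ \text{independent of }i .
\]
To prove it I start from the block–diagonal matrix $\bigoplus^{i}A$, whose rank is exactly $i\cdot\rk'(A)$ by (SMat3). Over $S$ this matrix is carried, by multiplication on both sides by unitriangular matrices built from the nilpotent shift and powers of $t$ (hence invertible in $S$, so rank–preserving), and after the usual stabilization, onto the banded block matrix $\mu_i(A)$ over $R$ describing honest multiplication by $A$ on polynomials of degree $<i$; this is the shift–and–truncate mechanism underlying the construction of $\wrk$ in \cite{Ja17base} (compare Propositions \ref{prop7.2} and \ref{prop7.7}). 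The matrix $\mu_i(A)$ differs from $\rho_i(A)$ only by the truncation modulo $t^i$, which discards at most $d\cdot\max(n,m)$ boundary rows; by (SMat4) and the subadditivity of $\rk'$ both the transformation bookkeeping and the truncation alter the rank by at most a constant, so one may take $C=O\!\big(d\cdot\max(n,m)\big)$. Since $\mu_i(A)$ and $\rho_i(A)$ have entries in $R$, their $\rk'$ coincides with their $\rk$, and the displayed inequality follows.

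Dividing by $i$ gives $\rk'(A)\le \wrk_i(A)+C/i$, and letting $i\to\infty$ yields $\rk'(A)\le\wrk(A)$; as $A$ was arbitrary after the reduction to polynomial entries, $\rk'\le\wrk$, which is exactly the asserted universality of $\wrk$ among the extensions of $\rk$. I expect the main obstacle to be the middle paragraph: pinning down the invertible change of coordinates over $S$ that identifies $\bigoplus^{i}A$ with the truncated multiplication matrix $\rho_i(A)$ up to a boundary band, and verifying that this band contributes a bound independent of $i$. The twist $ta=\tau(a)t$ has to be propagated through the shift matrices, so that the blocks appearing are the appropriate $\tau$–conjugates of the coefficients of $A$; this does not change the combinatorics of the estimate but requires the same care as in the passage from the identity automorphism to a general $\tau$ carried out in the proof of Proposition \ref{prop7.4}.
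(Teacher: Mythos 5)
Your reduction to polynomial matrices and the use of the limit formula $\wrk(A)=\lim_i\tfrac1i\rk(\rho_i(A))$ are fine, but the middle paragraph --- which you yourself flag as the crux --- is not merely incomplete, it is false. Multiplication by invertible matrices over $S$, stabilization by identity blocks, and deletion of a bounded number of rows change the value of \emph{every} Sylvester rank function on $S$ by controlled amounts \emph{in both directions}; so if $\bigoplus^{i}A$ could be carried onto $\mu_i(A)$ (hence onto $\rho_i(A)$ up to a bounded band) as you describe, one would obtain a two-sided bound $|\,i\cdot\rk'(A)-\rk'(\rho_i(A))\,|\le C(A)$ for every Sylvester rank function $\rk'$ on $S$ extending $\rk$. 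This fails in the simplest example: take $R=k$ a field, $\tau=\mathrm{id}$, $S=k[t^{\pm1}]$, $A=(t-1)$, and let $\rk'$ be the pullback of $\rk_k$ along the evaluation homomorphism $S\to k$, $t\mapsto1$, which certainly extends $\rk_k$. Then $i\cdot\rk'(A)=0$, while $\rho_i(A)$ is the $i\times i$ matrix over $k$ with $-1$ on the diagonal and $+1$ on the superdiagonal, which is invertible, so $\rk'(\rho_i(A))=\rk(\rho_i(A))=i$; the same holds for $\mu_i(A)$. Hence $\bigoplus^iA$ is not stably equivalent over $S$ to $\rho_i(A)$, nor indeed to any matrix over $R$. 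The ``shift-and-truncate'' identification you invoke is a feature of the \emph{definition} of $\wrk$ (it is how $\wrk$ is computed from $\rk$), not a matrix identity over $S$ valid under all rank functions. Note moreover that the one-sided estimate you actually need, $i\cdot\rk'(A)\le\rk(\rho_i(A))+C(A)$, already yields $\rk'\le\wrk$ upon dividing by $i$, so it carries the full content of the lemma; it cannot come out of rank-preserving moves and bounded deletions, since those only ever produce two-sided comparisons of the kind refuted above.

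The ingredient your argument is missing is precisely where the paper uses the integer-valued hypothesis, which your proof never touches except to quote Proposition \ref{prop7.4iv} for the existence of $\wrk$. The paper's route is: by Malcolmson's theorem \cite{Mlc}, $\rk$ has an epic division envelope $(\D,\phi)$, and $\wrk=\phi^{\sharp}(\widetilde{\rk_{\D}})$ with $\phi:S\to\D[t^{\pm1},\tau]$; then, localizing $S$ at the set $\Sigma$ of matrices over $R$ of maximal rank and applying Schofield's theorems \cite{Scho}, \emph{every} extension $\rk'$ of $\rk$ to $S$ is shown to factor through $\phi$; finally, $\widetilde{\rk_{\D}}$ is universal in $\mathbb{P}(\D[t^{\pm1},\tau])$ because over this skew Laurent polynomial ring every matrix is a product $PDQ$ with $P,Q$ invertible and $D$ diagonal, and $\widetilde{\rk_{\D}}$ gives every nonzero element rank $1$. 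In the counterexample above this is exactly what saves the statement: the evaluation rank does factor through $k[t^{\pm1}]$ and is dominated by the $k(t)$-rank by diagonal reduction --- not by any comparison between $\bigoplus^iA$ and truncated multiplication matrices.
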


\begin{proof} Let $(\D,\phi)$ be the division envelope of $\rk$. By Proposition \ref{prop7.4iv}   $\tau$ and $\phi$ extend, respectively, to an automorphism $\tau$ of $\D$ and to a homomorphism $\phi: S\to \D[t^{\pm 1};\tau]$, and $\wrk=\phi^\sharp(\widetilde{\rk_\D})$.

 Let $\rk'$ be a Sylvester matrix rank function on $S$ whose restriction to $R$ coincides with $\rk$. We want to show that $\rk^\prime \in \im \phi^\sharp$. Let $\Sigma$ be the set of matrices over $R$   of maximum $\rk'$-rank 
 (and so of maximum $\rk$-rank), and denote by $R_\Sigma$ and $S_\Sigma$ the localizations of $R$ and $S$ at $\Sigma$, respectively. Observe that $\tau(\Sigma) = \Sigma$, and so $\tau$ extends to an automorphism of $R_\Sigma$ and $S_\Sigma\cong R_\Sigma[t^{\pm 1};\tau]$. Now, \cite[Theorem 7.5]{Scho} allows us to extend $\rk'$ to rank functions on $R_\Sigma$ and $S_\Sigma$, that we also denote by $\rk'$. Moreover, since $\rk'$ is integer-valued on $R$, \cite[Theorem 7.6]{Scho} also tells us that $M=\ker \rk'$ is the maximal ideal of $R_\Sigma$ and that $R_\Sigma/M$ is isomorphic to $\D$ as an $R$-ring.   Therefore, $S_\Sigma/S_\Sigma M$ is isomorphic to  $\D[t^{\pm 1};\tau]$ as an $S$-ring, and inasmuch as $\rk^\prime$ can be viewed as a rank function on  $S_\Sigma/S_\Sigma M$, $\rk^\prime \in \im \phi^\sharp$.
  
 Now, $\widetilde{\rk_\D}$ is universal in $\mathbb P(\D[t^{\pm 1};\tau])$, because the $\widetilde{\rk_\D}$-rank of any non-zero element is $1$, and any matrix over $\D[t^{\pm 1};\tau]$ can be written as a product $PDQ$, where $P$ and $Q$ are invertible over $\D[t^{\pm 1};\tau]$ and $D$ is diagonal. 
Thus, $\rk^\prime\le \wrk$. 
\end{proof} 
The following proposition is an analog of Proposition \ref{maintheoremluck}.

\begin{pro}  
Let $E*G$ be a crossed product of a division ring $E$ and  a locally indicable group $G$, and assume that there exists a Hughes-free epic division $E*G$-ring $\D$. Let $\rk$ be an integer-valued rank function on $E*G$ such that $\rk\ge \rk_{\D}$, and with epic division envelope $(\E,\phi)$. If $\varphi=(i,\phi): E*G \rightarrow \mathcal \D \times \E$ denotes the induced map, then the division closure of $\varphi(E*G)$ in $\D\times \E$ is a division ring.
\end{pro}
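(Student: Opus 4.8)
The plan is to follow the proof of Proposition~\ref{maintheoremluck} almost verbatim, with the $*$-regular envelope of the approximation rank replaced by the division envelope $\E$, the Kazhdan inequality replaced by the hypothesis $\rk\ge\rk_\D$, and Proposition~\ref{appchange} replaced by the Lemma immediately preceding this proposition (universality of the natural extension). First I would set $\s=\D\times\E$ and, for each $H\le G$, write $\s_H=\D_{H,\D}\times\D_{H,\E}$, where $\D_{H,\D}$ and $\D_{H,\E}$ are the division closures of $E*H$ in $\D$ and in $\E$. Each $\s_H$ is a product of division rings, hence regular, and contains $\varphi(E*H)$, so Lemma~\ref{divreg} gives $\D_{H,\s}=\D_{H,\s_H}$. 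Let $\Phi_{G,\s}\colon\Rat(E^{\times}G)\to\D_{G,\s}$ be the universal morphism. Exactly as in Proposition~\ref{maintheoremluck}, I would prove by induction on $\Tree$, \emph{uniformly over all admissible quadruples} $(E*G,\D,\rk,\E)$, that if $\alpha\in\Rat(E^{\times}G)$ realizes the $G$-complexity of a nonzero $a\in\D_{G,\s}$ then $a$ is invertible; surjectivity of $\Phi_{G,\s}$ then yields the statement. The base case $\Tree(\alpha)=1_{\mathcal T}$ is immediate, since then $a\in\varphi(E^{\times}G)$ is a unit.

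For the inductive step I would assume $\alpha$ primitive and set $H=\pi(\source(\alpha))$, finitely generated by Theorem~\ref{source} and locally indicable as a subgroup of $G$, with $a\in\D_{H,\s}=\D_{H,\s_H}$. If $H$ is trivial then $\D_{H,\s_H}\cong E$ and $a\ne 0$ is invertible; otherwise write $H=N\rtimes_\tau\langle t\rangle$ and $E*H\cong(E*N)[t^{\pm 1},\tau]$. The heart of the argument is to introduce three integer-valued ranks on $E*H$: $\rk_1=\rk_\D|_{E*H}$, with envelope $\D_{H,\D}$; $\rk_2=\rk|_{E*H}$, with envelope $\D_{H,\E}$; and $\rk_3$, the natural extension of $\rk|_{E*N}$, whose envelope is the Ore ring $\D_{N,\E}(t,\tau)$ by Proposition~\ref{prop7.4iv}(3). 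Here $\rk|_{E*N}$ is $\tau$-compatible because conjugation by the unit $t$ preserves rank. Then $\rk_1\le\rk_2$ (restrict $\rk_\D\le\rk$), and $\rk_2\le\rk_3$ because $\rk_2$ extends $\rk|_{E*N}$ while, by the preceding Lemma, $\rk_3$ is the largest such extension. Since $\D$ is Hughes-free, $\rk_1$ is itself the natural extension of $\rk_\D|_{E*N}$, so $\D_{H,\D}$ is a Hughes-free $E*H$-ring.

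Next I would invoke the crossed-product analog of Proposition~\ref{key2} (its proof, via Proposition~\ref{comppr} and the $E^{\times}H$-rational structure of Subsection~\ref{ss:divclo}, goes through unchanged) for the chain $\rk_1\le\rk_2\le\rk_3$: writing $\mathcal B=\D_{H,\D}\times\D_{N,\E}(t,\tau)$, the element $a=\Phi_{12}(\alpha)\in\D_{H,\s_H}$ is invertible if and only if the nonzero $a'=\Phi_{13}(\alpha)\in\D_{H,\mathcal B}$ is invertible. I would then put $\mathcal A=\s_N=\D_{N,\D}\times\D_{N,\E}$, a regular ring carrying $\tau$, and $\PP=\mathcal A((t,\tau))=\D_{N,\D}((t,\tau))\times\D_{N,\E}((t,\tau))$. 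Hughes-freeness embeds $\D_{H,\D}$ into $\D_{N,\D}((t,\tau))$, and $\D_{N,\E}(t,\tau)\hookrightarrow\D_{N,\E}((t,\tau))$, so $\mathcal B\hookrightarrow\PP$; by Lemma~\ref{divreg}, regularity gives $\D_{H,\PP}=\D_{H,\mathcal B}$ and $\D_{N,\PP}=\D_{N,\mathcal A}$. Crucially, the quadruple $(E*H,\D_{H,\D},\rk_3,\D_{N,\E}(t,\tau))$ is again admissible and $\D_{H,\mathcal B}$ is precisely its associated object ``$\D_{G,\s}$''; since $\Tree_H(a')\le\Tree(\alpha)$, the uniform induction hypothesis makes every nonzero $c\in\D_{H,\PP}$ with $\Tree_H(c)<\Tree_H(a')$ invertible.

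This puts me in position to apply Proposition~\ref{key} to $a'$: it gives $a'\in\D_{N,\PP}((t,\tau))$, and in the expansion $a'=\sum a'_k$ there must be at least two nonzero summands, since a single summand would force $\alpha\in\Rat(E^{\times}N)t^n$ and hence $H\le N$ by Theorem~\ref{source}(iii), a contradiction. The same proposition gives $\Tree_H(a'_k)<\Tree_H(a')$ for all $k$, so by induction every nonzero $a'_k$ is invertible; in particular the lowest-order coefficient is a unit of the regular ring $\mathcal A$, whence $a'$ is invertible in $\mathcal A((t,\tau))=\PP$ and therefore in its division closure $\D_{H,\PP}$. By Proposition~\ref{key2} this forces $a$ to be invertible, closing the induction. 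The main obstacle I anticipate is not any single computation but keeping the induction genuinely uniform: one must check that replacing $\rk$ by the natural extension $\rk_3$ of its restriction to $E*N$ produces a quadruple satisfying all hypotheses (notably $\rk_3\ge\rk_{\D_{H,\D}}$ and the Hughes-freeness of $\D_{H,\D}$), so that $\D_{H,\mathcal B}$ really is an instance of the object under study at strictly smaller complexity. The preceding Lemma is exactly what supplies the inequality $\rk_2\le\rk_3$ that allows Proposition~\ref{key2} to transport the invertibility question into the ambient $\PP$ where Proposition~\ref{key} operates.
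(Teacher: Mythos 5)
Your proposal is correct and follows essentially the same route as the paper's own proof: the same uniform induction on complexity over all admissible data $(E*G,\D,\rk,\E)$, the same passage to $H=\pi(\source(\alpha))$, the same triple of ranks $\rk_{\D}|_{E*H}\le\rk|_{E*H}\le\widetilde{\rk|_{E*N}}$ (with the second inequality supplied by the universality lemma), and the same combination of Proposition \ref{key2} (to move to $\mathcal B=\D_{H,\D}\times\D_{N,\E}(t,\tau)$ inside $\PP=\mathcal A((t,\tau))$) with Proposition \ref{key} (to decompose $a'$ into lower-complexity summands and invert it). The technical points you flag — $\tau$-compatibility of $\rk|_{E*N}$, admissibility of the new quadruple, and the crossed-product version of Proposition \ref{key2} — are exactly the ones the paper handles, so there is nothing to add.
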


\begin{proof}
 Set $\s = \D \times \E$, let $\mathcal L = \D_{G,\s}$ be the division closure of $\varphi(E*G)$ in $\s$ and denote, for any subgroup $H\le G$, $\s_H = \D_H \times \E_H$, where $\D_H$ and $\E_H$ are the division closures of the image of $E*H$ in $\D$ and $\E$, respectively. Consider the universal morphism of rational $E^{\times}G$-semirings $\Phi_{G,\s}: \Rat(E^{\times}G)\rightarrow \mathcal L$. Considering $E$ fixed, we are going to show that if $\alpha\in \Rat(E^{\times}G)$ realizes the $G$-complexity of a non-zero $a\in \D_{G,\s}$ for some locally indicable group $G$ and for some crossed product $E*G$ for which there exists a Hughes-free epic division $E*G$-ring $\D$, then $a$ is invertible in  $\D_{G,\s}$. Since $\Phi_{G,\s}$ is surjective for every $G$ and for every crossed product $E*G$, this gives the result.
 
 For any locally indicable group $G$, and for any crossed product $E*G$ in the previous setting, if $\alpha\in \Rat(E^{\times}G)$ satisfies $\Tree(\alpha) = 1_{\mathcal T}$, then $\alpha \in E^{\times}G$ realizes the $G$-complexity of $a = \Phi_{G,\s}(\alpha) \in \varphi(E^{\times}G)$, and $a$ is invertible. Now suppose that $\Tree(\alpha)>1_{\mathcal T}$ realizes the $G$-complexity of a non-zero $a\in \D_{G,\s}$ for some locally indicable $G$ and for some crossed product $E*G$ for which there exists a Hughes-free epic division $E*G$-ring, and assume by induction that we have proved the result for all locally indicable $G'$, for all crossed products $E*G'$ for which there exists a Hughes-free epic division $E*G'$-ring, and for all $\beta\in \Rat(E^{\times}G')$ with $\Tree(\beta)<\Tree(\alpha)$. We want to show that $a$ is invertible in $\D_{G,\s}$. 
 
We can assume, without loss of generality, that $\alpha$ is primitive. Let $H$ be the (finitely generated) image of $\source(\alpha)$ under $E^{\times}G \rightarrow E^{\times}G/E^{\times} = G$. Then, $\alpha \in \Rat(E^{\times}H) $ and therefore $a\in \D_{H,\mathcal L} = \D_{H,\s} = \D_{H,\s_H}$. If $H$ is trivial, the result is clear. Otherwise, there exists a normal subgroup $N\trianglelefteq H$ such that $H = N\rtimes C$ where $C$ is infinite cyclic. Let $t$ be a preimage in $E^{\times}H$ of a generator of $C$, and let $\tau$ denote the automorphism of $E*N$ induced by left conjugation by $t$. \color{black} Since $\tau$ can be extended to an automorphism of both $\D_N$ and $\E_N$, it extends to an automorphism of the regular ring $\mathcal A = \s_N = \D_N\times \E_N$. Therefore, we have a homomorphism 
 $$
   \varphi: E*H \cong (E*N)[t^{\pm 1};\tau] \rightarrow \mathcal A((t;\tau)) = \PP
 $$
In addition, since $\D$ is the Hughes-free epic division $E*G$-ring, we have that $\D_H \cong \D_N(t;\tau)$  as $E*H$-rings, where $\D_N(t;\tau)$   denotes the Ore division ring of fractions of $\D_N[t^{\pm 1};\tau]$, and so, by an abuse of notation, we can write $\varphi(E*H) \subseteq \D_H\times \E_N(t;\tau)$. Set $\rk' = \rk_{\E_{N}(t;\tau)}$. Since $\phi^{\sharp}(\rk')$ is the natural extension of $\rk|_{E*N}$ by Proposition \ref{prop7.4iv}, we deduce in view of the previous lemma that, as rank functions over $E*H$,
\begin{equation*} 
 \begin{gathered}
   \phi^{\sharp}(\rk')\ge \rk_{|E*H} \ge \rk_{\D_H},
 \end{gathered} 
\end{equation*}
  where the second inequality follows from the hypothesis $\rk \ge \rk_{\D}$.  
By Lemma \ref{divreg}, regularity of $\mathcal A$ implies that $\D_{N,\mathcal P} = \D_{N,\s_N}$, and regularity of $\mathcal B = \D_H \times \E_N(t;\tau)$ implies that $\D_{H,\mathcal P} =  \D_{H,\mathcal B}$. Proposition \ref{key2} applied to the above triple of ranks tells us that $a$ is invertible if and only if the non-zero element $a' = \Phi_{H,\mathcal B}(\alpha) \in \D_{H,\mathcal B} =\D_{H,\mathcal P} $ is invertible, where $\Phi_{H,\mathcal B}: \Rat(E^{\times}H)\rightarrow \D_{H,\mathcal B}$ is the universal morphism. By definition, $\Tree_{H}(a')\le \Tree(\alpha)$, and therefore applying the induction hypothesis to $H$ (which has Hughes-free epic division $E*H$-ring $\D_H$) and $\phi^{\sharp}(\rk')$, we have that any $0\ne b\in \D_{H,\mathcal P}$ with $\Tree_H(b)<\Tree_H(a')$ is invertible. Using Proposition \ref{key}, we obtain that $a' \in \D_{N,\mathcal P}((t;\tau))$. Moreover, $a' = \sum a'_k$ with at least two non-zero summands, because otherwise $\alpha\in \Rat(E^{\times}N) t^n\color{black}$ and therefore $H\le N$, a contradiction. The same proposition then states that $\Tree_H(a_k')<\Tree_H(a')$, for all $k$, and so every non-zero $a_k'$ is invertible in $\D_{H,\mathcal P}$. Let $n$ be the smallest $k$ such that $a'_k$ is non-zero. Then $a'_nt^{-n} \in \D_{N,\mathcal P}$ is invertible in $\D_{H,\mathcal P}$, and hence in $\D_{N,\mathcal P}$. Thus, $a' = \sum a'_k \in \D_{H,\mathcal P}$ is invertible in $\D_{N,\mathcal P}((t;\tau)) = \D_{N,\mathcal A}((t;\tau))\subseteq \mathcal P$, and hence in $\D_{H,\mathcal P}$. Therefore, $a$ is invertible and the result holds.
\end{proof}

Now we are ready to prove Theorem \ref{maximal}.
\begin{proof}[Proof of Theorem \ref{maximal}] 
  Assume that there exists a division $E*G$-ring $\E$ such that $\rk_{\E}\ge \rk_{\D}$ as rank functions on $E*G$. Applying the previous proposition and borrowing the notation, we obtain that the division closure $\D_{G,S}$ of $\varphi(E*G)$ in $S = \D\times \E$ is a division ring, and this implies that the projections from $\D_{G,S}$ onto each factor are $E*G$-isomorphisms. Therefore, $\D$ and $\E$ are isomorphic $E*G$-rings, and in particular $\rk_\D = \rk_\E$. 
\end{proof}

\end{document}